\documentclass[11pt,a4paper]{article}
\usepackage[margin=28mm,headheight=14pt]{geometry}
\usepackage[T1]{fontenc}
\usepackage[utf8]{inputenc}
\usepackage{lmodern}
\usepackage{tgpagella}
\usepackage{amsmath,amssymb,amsthm,mathtools,enumitem}
\usepackage{tikz}
\usetikzlibrary{arrows.meta,backgrounds,fit}
\usepackage[all]{xy}
\usepackage{xcolor,hyperref,fancyhdr,needspace,longtable,array}
\definecolor{linkblue}{HTML}{234F67}
\hypersetup{colorlinks=true,linkcolor=linkblue,citecolor=linkblue,urlcolor=linkblue,
linktoc=all,
pdftitle={Operadic Expression Trees, I. Noetherian forms of non-symmetric free operads},
pdfsubject={Operadic expression tree categories, reconstruction, and exact join decomposition},
pdfauthor={Zurab Janelidze},
pdfkeywords={operadic expression trees, prefix-suffix matching, free non-symmetric operads, factorization systems, noetherian forms, exact join decomposition}}
\setlist{itemsep=3pt,topsep=5pt}
\setlist[enumerate,1]{label=(\alph*),ref=\alph*,font=\normalfont}
\theoremstyle{plain}
\newtheorem{theorem}{Theorem}[section]
\newtheorem{lemma}[theorem]{Lemma}
\theoremstyle{definition}
\newtheorem{definition}[theorem]{Definition}
\newtheorem{axiom}{Axiom}
\theoremstyle{remark}
\newtheorem{remark}[theorem]{Remark}
\newtheorem{example}[theorem]{Example}
\newcommand{\C}{\mathcal C}
\newcommand{\Ocat}{\mathcal M}
\newcommand{\Dcat}{\mathcal E}
\newcommand{\Icat}{\mathcal E^\circ}

\newcommand{\one}{1}
\newcommand{\id}{\operatorname{id}}
\newcommand{\dom}{\operatorname{dom}}
\newcommand{\cod}{\operatorname{cod}}
\newcommand{\Occ}{\mathrm{S}}
\newcommand{\Comp}{\operatorname{Comp}}
\newcommand{\Pos}{\operatorname{Pos}}
\newcommand{\Out}{\mathrm{E}}

\newcommand{\Fill}{\mathcal F}

\newcommand{\inside}{\mathrel{\preceq}}
\newcommand{\emptyword}{()}
\title{\textbf{Noetherian forms of free non-symmetric operads}}
\author{Zurab Janelidze\thanks{%
The author used Chat GPT-6 Astra as a research assistant and
for help with typesetting this document. The substantial ideas
of the paper are due to the author. AI was used as a
computational accessory, including for searching the literature.
The author takes full responsibility for the final draft.%
}\\[4pt]
\small Mathematics Division, Department of Mathematical Sciences\\
\small Stellenbosch University\\
\small National Institute for Theoretical and Computational Sciences (NITheCS)\\
\small Private Bag X1, Matieland 7602, South Africa\\
\small\texttt{zurab@sun.ac.za}}
\date{}
\begin{document}
\maketitle
\begin{abstract}
In this paper, we study certain categories of labeled finite rooted ordered trees over a fixed set of labels where each label is equipped with an arity: a fixed number of children that the vertex with the given label must have. Equivalently, these are expression trees for operations in a free non-symmetric operad. A morphism between these trees matches a pruning of one tree (a prefix) with an entire subtree of another (a suffix). We characterize such categories, up to isomorphism, in terms of suitable exactness properties. It turns out that these categories exhibit strong algebraic behavior, in the sense that every such category, when appended with a strict initial object, has a particularly nice noetherian form.
\end{abstract}

\begin{flushleft}
\small
\textit{Mathematics Subject Classification (2020).}
Primary 18A32; Secondary 18M65, 05C05.\par
\textit{Keywords.} Operadic expression trees; prefix--suffix matching;
free non-symmetric operads; factorization systems; noetherian forms;
exact join decomposition.
\end{flushleft}

\section*{Introduction}
\addcontentsline{toc}{section}{Introduction}

An expression built from operation symbols has a rooted ordered tree in
which the operation symbols label vertices and the children of each
labeled vertex represent placeholders for its arguments. Empty leaves
represent inputs that are available for further insertion. A symbol
therefore prescribes the number of children of every vertex carrying
it. Such trees occur in universal algebra, rewriting, and algorithms
which recognize classes of expressions
\cite{BurrisSankappanavar,BaaderNipkow,TATA}. These trees actually represent the operations of free
non-symmetric operads \cite[Section~2.3]{Leinster}. An operad records operations with finitely many inputs and one
output, together with a rule for composing them by inserting
operations into the inputs of other operations. For the free
operad, this composition is represented by grafting trees into
empty leaves. More generally, operads allow equations between
such composites to be imposed. For example, associativity
identifies the two binary trees representing the two ways of
multiplying three arguments. An algebra for an operad interprets
its abstract operations as actual operations on a set, a vector
space, or another suitable object, respecting composition and
the prescribed equations. Operads thus allow algebraic structures
to be studied through the operations and laws that define them.
Their original applications were in topology, where they describe
composition on spaces of loops and organize the continuous
deformations through which algebraic laws such as associativity
hold. Their extensions also provide a language for higher
categories, where there are morphisms between morphisms, and
further levels of morphisms between these. In this setting,
operads describe how the different compositions fit together
\cite[Introduction and Section~2.2]{Leinster}.

In this paper we associate to every free (non-symmetric) operad a category, which for the sake of brevity we call an \emph{(operadic) expression tree category}. Objects in this category are operations of the free operad, viewed as the trees described above. A morphism between such trees is obtained by combining two processes. A
\emph{pruning} replaces selected subtrees by empty leaves; its
result is called a \emph{prefix} of the original tree. An \emph{occurrence}
selects an entire subtree at a specified position; we call that
subtree a \emph{suffix}. A \emph{prefix--suffix match} from $X$ to $Y$
identifies a prefix of $X$ with a suffix of $Y$, and thus has the
form
\[
\xymatrix@C=5em{
X\ar[r]^{\text{pruning}}&P\ar[r]^{\text{occurrence}}&Y
}
\]
The position of the suffix is part of the morphism; equal subtrees
at different positions give different occurrences. Labels and
corresponding argument places are preserved by the morphisms. To compose two
matchings, one restricts the pruning in the second matching to
the subtree selected by the first, as illustrated in Figure~\ref{fig:matching-composition}.

\begin{figure}[htbp]
\centering
\begingroup
\definecolor{matchingblue}{RGB}{48,87,123}
\definecolor{matchinggreen}{RGB}{35,109,83}
\definecolor{matchingcut}{RGB}{166,63,53}
\begin{tikzpicture}[
  x=1cm,y=1cm,
  every node/.style={font=\small,inner sep=2pt},
  vertex/.style={font=\normalsize,inner sep=2pt},
  empty/.style={vertex,circle,draw=black!55,fill=white,
    line width=.4pt,inner sep=0pt,minimum size=3.5mm},
  branch/.style={draw=black!75,line width=.55pt},
  matching/.style={-{Stealth[length=2mm]},line width=.65pt},
  selected/.style={draw=matchingblue,fill=matchingblue!5,
    rounded corners=3pt,line width=.6pt,inner sep=5pt},
  retained/.style={draw=matchinggreen,fill=matchinggreen!6,
    rounded corners=3pt,line width=.8pt,inner sep=5pt},
  cut/.style={draw=matchingcut,line width=1.1pt},
  explanation/.style={font=\footnotesize,align=center}
]
\node[font=\normalsize] at (0,.75) {$X$};
\node[font=\normalsize] at (5,.75) {$Y$};
\node[font=\normalsize] at (10,.75) {$Z$};

\node[vertex] (xp) at (0,0) {$p$};
\node[vertex] (xa) at (-.65,-.85) {$a$};
\node[vertex] (xb) at (.65,-.85) {$b$};
\node[vertex] (xbone) at (-.15,-1.70) {$b_1$};
\node[vertex] (xbtwo) at (.65,-1.70) {$b_2$};
\node[vertex] (xbthree) at (1.45,-1.70) {$b_3$};
\draw[branch] (xp)--(xa) (xp)--(xb);
\draw[branch] (xb)--(xbone) (xb)--(xbtwo) (xb)--(xbthree);
\draw[cut] (.21,-.49)--(.45,-.35);

\node[vertex] (ym) at (5,0) {$m$};
\node[vertex] (yp) at (4.25,-.85) {$p$};
\node[vertex] (yc) at (5.75,-.85) {$c$};
\node[vertex] (ya) at (3.70,-1.70) {$a$};
\node[empty] (yone) at (4.80,-1.70) {};
\draw[branch] (ym)--(yp) (ym)--(yc)
  (yp)--(ya) (yp)--(yone);
\draw[cut] (3.86,-1.22)--(4.10,-1.36);

\node[vertex] (zr) at (10,0) {$r$};
\node[vertex] (zm) at (9.25,-.85) {$m$};
\node[vertex] (zd) at (11.20,-.85) {$d$};
\node[vertex] (zp) at (8.55,-1.70) {$p$};
\node[vertex] (zc) at (9.95,-1.70) {$c$};
\node[empty] (zone) at (8.00,-2.55) {};
\node[empty] (ztwo) at (9.10,-2.55) {};
\draw[branch] (zr)--(zm) (zr)--(zd)
  (zm)--(zp) (zm)--(zc) (zp)--(zone) (zp)--(ztwo);

\begin{scope}[on background layer]
  \node[selected,fit=(yp)(ya)(yone)] (Pframe) {};
  \node[selected,fit=(zm)(zp)(zc)(zone)(ztwo),inner sep=9pt]
    (Qframe) {};
  \node[retained,fit=(zp)(zone)(ztwo)] (Zframe) {};
\end{scope}
\node[text=matchingblue,anchor=east] at (Pframe.west) {$P$};
\node[text=matchingblue,anchor=west] at (Qframe.east) {$Q$};
\node[text=matchinggreen,anchor=north east,xshift=-2pt,yshift=-2pt]
  at (Zframe.north east) {$R$};

\draw[matching] (1.15,.10)--node[above=3pt] {$f$} (3.45,.10);
\node[explanation] at (2.30,.88)
  {prune $b$; select $P$};
\draw[matching] (6.15,.10)--node[above=3pt] {$g$} (8.45,.10);
\node[explanation] at (7.30,.88)
  {prune $a$; select $Q$};

\node[font=\normalsize] at (5,-3.35) {The composite $g\circ f$};
\node[font=\normalsize] (source) at (0,-4.20) {$X$};
\node[font=\normalsize,text=matchinggreen] at (5,-4.20) {$R$};
\node[vertex] (rp) at (5,-4.90) {$p$};
\node[empty] (rone) at (4.45,-5.75) {};
\node[empty] (rtwo) at (5.55,-5.75) {};
\draw[branch] (rp)--(rone) (rp)--(rtwo);
\begin{scope}[on background layer]
  \node[retained,fit=(rp)(rone)(rtwo)] (Rframe) {};
\end{scope}
\node[font=\normalsize] (target) at (10,-4.20) {$Z$};
\draw[matching] (.45,-4.20)--(3.80,-4.20);
\node[explanation] at (2.12,-3.89) {prune $a$ and $b$};
\draw[matching] (6.20,-4.20)--(9.55,-4.20);
\node[explanation] at (7.87,-3.89) {select $R$};
\node[explanation,text=matchinggreen] at (7.87,-4.58)
  {the subtree at the\\left-left position of $Z$};
\end{tikzpicture}
\endgroup
\caption{Composition of prefix--suffix matchings. The first matching
prunes the entire subtree rooted at $b$, including its three children,
and selects the left subtree $P$ of $Y$. The second prunes $a$ and
selects the left subtree $Q$ of $Z$. Restricting the second pruning
to $P$ gives $R$, whose root has two empty leaves. Thus the composite
prunes $a$ and the subtree rooted at $b$, and selects $R$ at the
left-left position of $Z$. Short marks across edges indicate the
cuts; open circles denote empty leaves.}
\label{fig:matching-composition}
\end{figure}

One of the main results of this paper is an intrinsic characterization of these categories of operadic expression trees. The characterization is in terms of exactness properties: behavior of certain pullbacks, pushouts and special morphisms. The category is initially given without a set of labels,
a set of vertices at each object, or designated pruning and
occurrence maps. The axioms recover all of this data, up to isomorphism of categories (Theorem~\ref{thm:representation}).

We then look into the recovery problem: how much of the starting collection of trees does the expression tree category remember? More precisely, every free non-symmetric operad gives rise to the prefix--suffix matching category of its operations. We first ask whether we can recover the operad up to isomorphism. We then ask whether, given a category satisfying our axioms and an object in it, we can recover an operation in the corresponding operad that describes that object. The first question has a positive answer (Theorem~\ref{thm:operad-recovery}): the basic generating operations of the operad can be taken to be the objects $L$ other than the terminal object $1$ for which every non-identity strong monomorphism into $L$ has the form $1\to L$. The number of these morphisms is the arity of $L$. Here $1$ represents an empty input, and the morphisms $1\to L$ represent the empty children of $L$. Now, the way these basic operations reconstruct all operations depends on the ordering of children in each tree. Without specifying this ordering it is, in general, impossible to determine which object corresponds to which operation in the recovered operad. We do show, however, that once an ordering is specified for each set $\mathsf{hom}(1,L)$, a consistent recovery of individual trees becomes possible (Theorem~\ref{thm:uniform-ordering}). Thus, our expression tree categories can be viewed as ``unoriented'' free non-symmetric operads.

We then move on to an algebraic analysis of our axioms for expression tree categories. The first few axioms guarantee the existence of a proper factorization system given by epimorphisms and strong monomorphisms. We want to know how well behaved this factorization system is from an algebraic perspective. Proper factorization systems that underlie a so-called noetherian form permit standard algebraic theorems dealing with homomorphisms, subobjects and quotients, such as the Noether isomorphism theorems. We show that in our case as well, there is a noetherian form as soon as we freely attach a strict initial object to the category. In fact, this form has an exact join decomposition in the sense of \cite{JanelidzeVanNiekerk} (Theorem~\ref{thm:tree-cosub}). The results of \cite{JanelidzeVanNiekerk} relate this property to the categorical algebra of groups: when coupled with finite limits and colimits and pointedness of the category, it forces the opposite category to be semi-abelian. We then characterize expression tree categories with a strict initial object adjoined among categories admitting a noetherian form with exact join decomposition (Theorem~\ref{thm:ejd-tree-characterization}). Pointed expression tree categories correspond precisely to the free operads whose operations are all unary (Theorem~\ref{thm:wordcharacterization}). We can equivalently view these as categories of words, where morphisms match a prefix of the domain with a suffix of the codomain. In this case too, we have a noetherian form with exact join decomposition, without the need to attach a separate initial object (Theorem~\ref{thm:word-noetherian}).

In the concluding Section~\ref{concl:section} we slightly zoom out from the focus of the rest of the paper. We formulate versions of the axioms
which recognize expression tree categories up to equivalence and
which allow matchings to permute children, showing that a similar
noetherian form also exists in the latter case. We extend the
construction used for insertion at an empty leaf to general
prefix--suffix matchings, and explain its connection with universal
constructions studied in the literature. We also briefly compare our
categories with other categorical descriptions of operadic trees
and give familiar examples satisfying the first six axioms.



\clearpage
\setcounter{tocdepth}{2}
{\small\setlength{\parskip}{0pt}\tableofcontents}

\section{Axioms and the recovery of expression trees}\label{sec:axioms}\label{sec:preparation}

Our first aim is to recognize expression tree categories from the
behavior of their morphisms. We therefore begin with a small
category $\C$, without specifying which of its objects should be
basic operation symbols or which of its morphisms should be prunings
and occurrences. Recall that a monomorphism $m:A\to B$
in $\C$ is said to be \emph{strong} if, for every commutative square
\[
\xymatrix@C=4em@R=2.5em{
P\ar[r]^a\ar[d]_e&A\ar[d]^m\\
Q\ar[r]_b\ar@{-->}[ur]^{\ell}&B
}
\]
where $e$ is an epimorphism, there exists a morphism $\ell:Q\to A$
such that $\ell e=a$ and $m\ell=b$. Such a morphism is necessarily
unique, since $m$ is a monomorphism. See, for instance,
\cite[Section 0.5]{AR}. 

\subsection{The axioms}\label{sec:axiom-list}

We consider the following axioms on $\C$.

\begin{axiom}[terminal object]\label{ax:N}
There is a terminal object $\one$ in $\C$.
\end{axiom}

For an object $X$ in $\C$, write $q_X:X\to\one$ for the unique
morphism to $\one$. A morphism $j:\one\to Y$ is called a
\emph{point} of $Y$. A morphism $f:X\to Y$ is said to be
\emph{constant} if there exists a point $j:\one\to Y$ such that
$f=jq_X$. In other words, $f$ has a factorization of the form
\[
\xymatrix@C=4em{
X\ar[r]^{q_X}&\one\ar[r]^j&Y
}
\]

\begin{axiom}[factorization]\label{ax:F}
Every morphism $f:X\to Y$ admits a factorization
\[
\xymatrix@C=4em{
X\ar[r]^e& I\ar[r]^m&Y
}
\]
where $e$ is an epimorphism and $m$ is a strong monomorphism.
Consequently, the classes of epimorphisms and strong monomorphisms
form a proper factorization system on $\C$.
\end{axiom}

Here a factorization system consists of two classes of morphisms,
closed under composition with isomorphisms, such that every
morphism factors through these classes in the
specified order, and every square with a left-class morphism on
the left and a right-class morphism on the right has a unique
diagonal. The word \emph{proper} means that the left class consists
of epimorphisms and the right class consists of monomorphisms.
In Axiom~\ref{ax:F}, the diagonal property is already part of
the definition of a strong monomorphism. Thus the existence of
the factorizations supplies the remaining requirement. In
particular, two such factorizations of the same morphism are
related by a unique isomorphism between their middle objects.

\begin{axiom}[pullbacks]\label{ax:Dpull}
For every epimorphism $d:X\to Y$ and strong monomorphism
$v:B\to Y$, there is a pullback square
\[
\xymatrix@C=4em@R=2.5em{
A\ar[r]^u\ar[d]_a&X\ar[d]^d\\
B\ar[r]_v&Y
}
\]
and the morphism $a:A\to B$ in this square is an epimorphism.
\end{axiom}

The morphism $u$ in this square is a strong monomorphism, since
strong monomorphisms are preserved by pullback. Thus both
classes of the factorization system are preserved in the
indicated pullback construction.

\begin{axiom}[bicartesian pushouts]\label{ax:Dpush}
For every strong monomorphism $u:A\to X$ and epimorphism
$a:A\to B$, there is a pushout square
\[
\xymatrix@C=4em@R=2.5em{
A\ar[r]^u\ar[d]_a&X\ar[d]^c\\
B\ar[r]_v&Y
}
\]
where $v$ is a strong monomorphism. This square is also a pullback.
\end{axiom}

The morphism $c$ is automatically an epimorphism, since
epimorphisms are preserved by pushout. A square which is both
a pullback and a pushout is called \emph{bicartesian}.

\begin{axiom}[pushout stability]\label{ax:pointpushout}
The class of epimorphisms whose pullbacks along all but at most one
point are isomorphisms is stable under pushout along strong
monomorphisms.
\end{axiom}

For example, for the pushout square in Axiom~\ref{ax:Dpush},
this axiom says the following. If the pullback of $a$ along
$j:\one\to B$ is an isomorphism for all but at most one choice
of $j$, then the same assertion holds for the pullbacks of $c$
along the points of $Y$.

\begin{axiom}[terminal pullback factorizations]\label{ax:U}
For every object $A$ and point $j:\one\to B$, the following
category, denoted by $\Fill(A,j)$, has a terminal object. An
object of $\Fill(A,j)$ is a triple $(C,d,u)$, where
$u:A\to C$ is a strong monomorphism, $d:C\to B$ is an
epimorphism, and the square
\begin{equation}\label{eq:fillsquare}
\vcenter{\xymatrix@C=4em@R=2.5em{
A\ar[r]^u\ar[d]_{q_A}&C\ar[d]^d\\
 \one\ar[r]_j&B
}}
\end{equation}
is a pullback. A morphism from $(C',d',u')$ to $(C,d,u)$ is
a morphism $h:C'\to C$ making the following diagram commute
\[
\xymatrix@C=3.5em@R=2.5em{
&&C'\ar[dl]_h\ar@/^1pc/[ddl]^{d'}\\
A\ar[r]^u\ar[d]_{q_A}\ar@/^1pc/[urr]^{u'}&C\ar[d]^d&\\
\one\ar[r]_j&B&
}
\]
Thus $hu'=u$ and $dh=d'$.
Composition and identities are those of $\C$.
\end{axiom}

Thus a terminal object of $\Fill(A,j)$ is a pullback
factorization~\eqref{eq:fillsquare} to which every other such
factorization has a unique comparison morphism fixing $A$ and
$B$.

\begin{axiom}[finiteness]\label{ax:finite}
For every object $X$, there are finitely many strong monomorphisms
with codomain $X$ and finitely many epimorphisms with domain $X$,
counting morphisms with all possible other endpoints.
\end{axiom}

\begin{axiom}[diagonals]\label{ax:O}
Every commutative square of strong monomorphisms
\[
\xymatrix@C=4em@R=2.5em{
W \ar[r]^{a} \ar[d]_{b} & X \ar[d]^{c} \\
Y \ar[r]_{d} & Z
}
\]
admits either a diagonal $h:X\to Y$ satisfying $ha=b$ and $dh=c$,
or a diagonal $k:Y\to X$ satisfying $kb=a$ and $ck=d$.
\end{axiom}

\begin{axiom}[detection of identities]\label{ax:Ddetect}
If an epimorphism $d:X\to Y$ has the property that its pullback along
every point $j:\one\to Y$ is an isomorphism, then $d$ is an identity.
\end{axiom}

In Axiom~\ref{ax:finite}, morphisms with different domains or
codomains are counted separately. For an expression tree, the two
finite collections will be its subtree occurrences and its possible
prunings. Counting occurrences retains the distinction between
equal subtrees at different positions.

\subsection{The category associated to a free non-symmetric operad}\label{sec:model}
We describe the objects and morphisms of an expression tree category
explicitly before verifying the axioms. The labels in the following
definition will serve as the generating operation symbols of the
free operad.

\begin{definition}\label{def:ranked-set}
Let $L$ be a set of labels, and let
$\operatorname{ar}:L\to\mathbb N$ assign a nonnegative integer to
each label. This integer prescribes the number of children at every
vertex carrying that label. For indexing purposes write
$P_\lambda=\{1,\ldots,\operatorname{ar}(\lambda)\}$.
We call $(L,\operatorname{ar})$ a \emph{finite-arity signature},
or a \emph{ranked set}.
The finite rooted trees over this signature are defined recursively as follows:
\begin{enumerate}
\item The tree $\one$ consists of a single empty leaf.
\item For $\lambda\in L$ and a family of trees $(T_p)_{p\in P_\lambda}$,
there is a tree $\lambda((T_p)_{p\in P_\lambda})$.
\end{enumerate}
Two trees are equal when their root labels agree and their children
at corresponding input places are recursively equal. A label of
arity zero gives a vertex with no children, and hence an operation
with no inputs. The tree $\one$ represents one empty input and will
be the identity operation for grafting. The indices above specify
the argument places in this concrete presentation. Replacing them
by other finite sets, with specified bijections, gives isomorphic
expression tree categories.
\end{definition}

A tree with several labeled vertices represents a composite of the
generating operations. Its inputs are its empty leaves, so its
arity as an operadic operation is the number of those leaves.
The fact that these trees describe a free operad means the following.
If each generating symbol is interpreted as an operation of the same
arity in any other non-symmetric operad, then every tree has a unique
interpretation obtained by composing the operations at its vertices.
This interpretation respects insertion of trees into empty leaves.
Theorem~\ref{thm:operad-recovery} proves this assertion and shows how
the expression tree category recovers the operad.

A position of a tree is either its root or a position within one
of its children, with that child specified. Thus the tree $\one$
has one position, although it has no labeled vertex. For trees $A$
and $X$, a \emph{subtree embedding} $A\to X$ specifies
a position of $X$ whose entire subtree is $A$. Let $\Ocat_0$ be
the category with these arrows; composition concatenates the paths
to the two specified positions. A \emph{pruning map} either replaces a
nonempty tree by $\one$, or retains its root label and independently
prunes each child at its given input place. At $\one$ only the
identity is allowed. Write $\Dcat_0$ for the category having exactly one
arrow $X\to T$ when $T$ is obtainable from $X$ in this way, and
no arrow otherwise. The recursive rule for pruning is transitive,
as follows by composing the pruning maps of the children.

A \emph{prefix} of $X$ is a tree $T$ obtainable by pruning $X$,
including $X$ itself and $1$. A \emph{suffix occurrence} in $Y$ is
a specified subtree embedding $u:T\to Y$. A \emph{prefix--suffix
matching} from $X$ to $Y$ is a pair $(d,u)$ with
$d:X\to T$ in $\Dcat_0$ and $u:T\to Y$ in $\Ocat_0$.
The common tree $T$ is its \emph{overlap}. Its labels and input places
agree exactly on both sides. Distinct suffix occurrences give distinct
matchings. In particular, a matching with overlap $\one$ selects
a specified empty leaf of $Y$. Such matchings are precisely the
constant morphisms in this category.

To describe composition, write $\Pos(X)$ for the finite set of
addresses of positions in $X$, including its empty leaves. The root
has the empty word as its address, and descending through child $p$
prefixes $p$ to an address in that child. Write $X|_a$ for the
subtree at address $a$.
An \emph{effective cut} is a set of addresses $a$ with
$X|_a\ne\one$ such that no selected address is a proper prefix of
another. Such a set is called a \emph{prefix antichain}. The condition
ensures that the selected subtrees are pairwise disjoint.
Pruning replaces the selected subtrees by $\one$. Successive cuts
compose by retaining the prefix-minimal addresses in their union. This
discards any selected address
having a shorter selected prefix; addresses of the second cut are
identified with retained addresses of the first tree. The resulting operation is called the \emph{outermost union}.
Induction on the source tree identifies this description with the
recursive definition of pruning.
A pruning $d$ induces the function $p_d$ that collapses each cut
subtree to its boundary; an embedding $u$ at address $a$ induces
the prefix function $p_u(b)=ab$.

Define the \emph{(operadic) expression tree category} $\C_L$ of the
signature $L$ by taking these trees as objects and setting
\begin{equation}\label{eq:modelhom}
\C_L(X,Y)=\coprod_T\Dcat_0(X,T)\times\Ocat_0(T,Y).
\end{equation}
To compose matchings $(d,u):X\to Y$ and $(e,v):Y\to Z$, let $a$
be the address of $u$. Restrict $e$ to this subtree, obtaining
$e_a:T\to S|_{p_e(a)}$, where $S$ is the codomain of $e$.
If a cut of $e$ lies at or above $a$, this restriction is total
pruning. In the other case it has exactly the cuts of $e$ inside
the selected subtree. Let $u_{p_e(a)}$ be the corresponding
embedding in $S$. The restriction gives the commutative square
\[
\xymatrix@C=4.5em@R=2.5em{
T\ar[r]^u\ar[d]_{e_a}&Y\ar[d]^e\\
S|_{p_e(a)}\ar[r]_{u_{p_e(a)}}&S.
}
\]
We use this square to define the composite by
\[
(e,v)\circ(d,u)=(e_a d,\,v u_{p_e(a)}).
\]
The positional function of this composite is $p_vp_ep_up_d$.
To see why this composition is associative, observe first that a
matching with given source and target is determined by its action
on positions. The image of the source root determines the selected
position of the target, hence the overlap $T$ and its embedding.
The pruning to $T$ is then unique by the definition of $\Dcat_0$.
The two ways of composing three matchings have the same positional
function, since composition of functions is associative. They are
therefore equal. Similarly, the pair consisting of the identity
pruning and the occurrence of the whole tree acts identically on
positions and satisfies both identity laws. These observations also
give a faithful functor $\Pos:\C_L\to\mathbf{Set}$.

The empty leaf $\one$ is terminal: it has only itself as a subtree,
and every tree has exactly one total pruning to it. Its points in a
tree are exactly the embeddings selecting that tree's empty leaves.

\begin{lemma}[Intrinsic arrow classes in the model]\label{prop:modelclasses}
In $\C_L$, the epimorphisms are precisely $\Dcat_0$ and the strong
monomorphisms are precisely $\Ocat_0$.
\end{lemma}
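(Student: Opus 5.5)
We prove the two characterizations in turn, working throughout with the faithful positional functor $\Pos$ and with the observation that a matching is determined by the image of the source root together with its source and target.

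\emph{Prunings are epimorphisms.} Let $d:X\to T$ be a pruning. Suppose $(e,v)\circ d=(e',v')\circ d$ with $(e,v),(e',v'):T\to Z$. Since $p_d$ is surjective onto $\Pos(T)$, the positional functions of $(e,v)$ and $(e',v')$ agree, and faithfulness of $\Pos$ gives $(e,v)=(e',v')$.

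\emph{Subtree embeddings are strong monomorphisms.} An embedding $u:T\to Y$ at address $a$ has the injective positional function $b\mapsto ab$, so it is monic. For strongness, take a commutative square $u(e_1,w_1)=(e_2,w_2)d$ with $d:P\to Q$ a pruning and $(e_1,w_1):P\to T$, $(e_2,w_2):Q\to Y$. Since $d$ is a prefix with the same root, the root of $Q$ goes to the same position of $Y$ as the root of $P$. That position lies in the subtree at $a$, at some address $ac$. Define $\ell:Q\to T$ by restricting the pruning $e_2$ and embedding at address $c$ of $T$. Commutativity $u\ell=(e_2,w_2)$ then holds on positions, and $\ell d=(e_1,w_1)$ follows because $u$ is monic.

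\emph{Converse for epimorphisms.} Let $(d,u)$ be an epimorphism with $u$ at address $a$ in $Y$. If $u$ is not an identity, we seek two distinct morphisms out of $Y$ that agree after $u$. Take two different embeddings $Y\to Z$ into a larger tree that both contain the subtree at $a$ at the same location. For instance, if $a\neq\emptyword$, graft $Y$ into two different empty leaves of some $Z$; if $Y$ has no suitable leaf, use different prunings of $Y$ outside $a$ instead. The cleaner option is to compare $Y\to 1$ with a pruning of $Y$ that cuts everything except the branch leading to $a$.

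This step is the main obstacle. The difficulty is producing two distinct maps out of $Y$ that agree on the image of $u$, using only the fact that $u$ does not hit the root. The concrete choice I will try is the pair consisting of the total pruning $Y\to1$ and a pruning that leaves the root label intact but cuts the child containing $a$ (when $Y|_{a'}\neq 1$ for that child's address $a'$). Both send the image of $T$ to a single leaf position, so they agree after $(d,u)$ as constant maps, yet they differ because their images differ. If the child is $\one$, then $T=\one$ and we argue similarly with $Y\to 1$ and a second point-selection of a leaf.

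\emph{Converse for strong monomorphisms.} Let $(d,u)$ be a strong monomorphism and factor it as $u\circ d$. Apply strongness to the square with $d$ on the left, the identity on the top and $u$ on the bottom. This gives $\ell$ with $\ell d=\id$, so $d$ is a split monomorphism. A pruning that is split monic must be an identity, since positional counts cannot increase under a pruning. Hence $(d,u)=u$ lies in $\Ocat_0$.
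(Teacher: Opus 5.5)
The genuine gap is in your converse for epimorphisms, the one step of this lemma that needs an idea. The pair you settle on, $q_Y:Y\to\one$ and a pruning $\pi:Y\to Y'$ that cuts the child at the first letter $p$ of $a$, is not a parallel pair: $\pi$ keeps the root label, so $Y'\ne\one$. Consequently ``they agree after $(d,u)$ as constant maps'' is not an equation between morphisms with a common codomain, and it says nothing about epicity. Your earlier candidates cannot work either. Two parallel subtree embeddings at addresses $b,b'$ that agree after $u$ satisfy $ba=b'a$, so they coincide. Two parallel prunings are equal because $\Dcat_0$ is thin. What is missing is to postcompose $q_Y$ with the point $j:\one\to Y'$ at the leaf created at $p$. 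Then $jq_Y$ and $\pi$ are both maps $Y\to Y'$. After $u$ both equal $jq_T$, because $\pi$ collapses the entire branch at $p$, which contains the selected subtree, onto that leaf. They are distinct because $\pi$ sends the root of $Y$ to the root, whereas $jq_Y$ sends it to $p$. This is the paper's argument, which uses $\rho_Y:Y\to r_0(Y)$, clearing every immediate child, in place of $\pi$. That choice also covers the case $Y|_p=\one$ uniformly. In that case your ``second point-selection'' is too vague, although the pair $\id_Y$ and $uq_Y$ works there. Since $ud$ epic forces $u$ epic, $u$ must then be an identity.

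Two smaller repairs are needed as well. First, your lifting argument for embeddings only treats squares whose left side is a pruning. It therefore proves strong monicity only once every epimorphism is known to be a pruning, so the converse for epimorphisms must come first, as in the paper. Second, in the last step, ``positional counts cannot increase under a pruning'' does not show that a split-monic pruning is an identity. Cutting a vertex with a nullary label, for instance $q_\lambda:\lambda\to\one$, is bijective on positions. Instead, $d\ell d=d$ and epicity give $d\ell=\id_T$, so $d$ is an isomorphism. Isomorphisms are identities: each embedding factor must select the root, so both directions are prunings, and a nonidentity pruning strictly decreases the number of labeled vertices. Alternatively, $\ell d=\id_X$ makes $\ell$ send the root to the root, so $\ell$ is a pruning, and the outermost union of two cuts is empty only when both cuts are.
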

\begin{proof}
The positional functor sends pruning maps to surjections and
subtree embeddings to injections. Cancellation of functions, followed
by faithfulness, therefore shows that pruning maps are epimorphisms
and subtree embeddings are monomorphisms.

We next show that a proper subtree embedding is never an
epimorphism. Let $u:A\to Y$ be such an embedding, and let $p$
be the first input place on its path from the root of $Y$.
If $Y=\lambda((Y_p)_p)$, put $r_0(Y)=\lambda((\one)_p)$.
Let $\rho_Y:Y\to r_0(Y)$ clear every immediate child, and let
$j_p:\one\to r_0(Y)$ be that root place. The two maps
\[
 \xymatrix@C=5em{
Y \ar@<0.6ex>[r]^{\rho_Y} \ar@<-0.6ex>[r]_{j_pq_Y} & r_0(Y)
 }
\]
send the root of $Y$ to different positions, and are therefore
distinct. Both send the subtree selected by $u$ to the empty leaf
at $p$, so they agree after precomposition with $u$. This proves
the assertion. If a matching $f=ud$ is an epimorphism, its final
factor $u$ must also be an epimorphism and hence must select the
whole target tree. Thus $u$ is an identity, which identifies all
epimorphisms as pruning maps.

To prove that an embedding $m:A\to B$ is strong monic, consider a
commutative square
\[
 \xymatrix@C=4em@R=2.5em{
P\ar[r]^g\ar[d]_d&A\ar[d]^m\\
Q\ar[r]_h\ar@{-->}[ur]^{\ell}&B
 }
\]
where $d:P\to Q$ is a pruning map. Write $g=u_gd_g$ and $h=u_hd_h$.
The pair presentation~\eqref{eq:modelhom} gives
$mu_g=u_h$ and $d_g=d_hd$. It follows that $\ell=u_gd_h:Q\to A$ satisfies $\ell d=g$ and
$m\ell=h$. Monicity of $m$ makes this diagonal unique. Since the
epimorphisms have already been identified as prunings, this proves
that every subtree embedding is strong monic.

Any isomorphism has a bijective positional map. Its subtree-embedding
factor must be an identity, since a proper subtree embedding misses
the target root. Applying the same argument to its inverse shows
that both maps are prunings. A nonidentity pruning strictly
decreases the number of labeled vertices, so these two prunings
must be identities.

Finally, suppose $f=ud:X\to Y$ is a strong monomorphism, where
$d:X\to T$ is epic and $u:T\to Y$ is a subtree embedding.
Apply the lifting property of $f$ to the square
\[
\xymatrix@C=4em@R=2.5em{
X\ar[r]^{\id_X}\ar[d]_d&X\ar[d]^f\\
T\ar[r]_u\ar@{-->}[ur]^{\ell}&Y.
}
\]
It supplies
$\ell:T\to X$ with $\ell d=\id_X$ and $f\ell=u$.
The equality $d\ell d=d$ and epicity of $d$ give
$d\ell=\id_T$. Thus $d$ is an isomorphism, hence an identity,
and $f$ is a subtree embedding.
\end{proof}

\begin{theorem}[Expression tree categories satisfy the axioms]\label{thm:realization}
For every finite-arity signature $L$, the category $\C_L$ satisfies
Axioms~\mbox{\ref{ax:N}--\ref{ax:Ddetect}}.
\end{theorem}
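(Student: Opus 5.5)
We verify the axioms one at a time, using Lemma~\ref{prop:modelclasses} throughout: it lets us replace ``epimorphism'' by ``pruning map in $\Dcat_0$'' and ``strong monomorphism'' by ``subtree embedding in $\Ocat_0$''. We also use the fact that a morphism is determined by its positional function. Axiom~\ref{ax:N} was observed above. For Axiom~\ref{ax:F}, we take the pair presentation~\eqref{eq:modelhom}: every matching is $ud$ with $d$ a pruning and $u$ an embedding. Axiom~\ref{ax:finite} holds because a tree has finitely many positions, so it has finitely many embeddings into it. It also has finitely many effective cuts, hence finitely many prunings out of it.

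For the two square axioms we describe pullbacks and pushouts concretely.

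\emph{Pullbacks.} Take a pruning $d:X\to Y$ and an embedding $v:B\to Y$ at address $b$. Then $b$ is a position of $Y$ not strictly below a cut, so it lifts to a unique address $b'$ of $X$. Let $u:A=X|_{b'}\to X$ be the embedding at $b'$, and let $a$ be the restriction of $d$ to that subtree. For the universal property, suppose $(g,h)$ is a cone. Factor $g=u_gd_g$. Comparing root images along $dg=vh$ shows that $u_g$ selects a position inside $X|_{b'}$. The factorization through $u$ then follows from uniqueness of matchings determined by positions.

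\emph{Pushouts.} Take an embedding $u:A\to X$ at address $a$ and a pruning $A\to B$. Let $Y$ be $X$ with $X|_a$ replaced by $B$, which amounts to applying the same cuts shifted by $a$. Let $c:X\to Y$ be the resulting pruning and $v:B\to Y$ the embedding at $a$. To check that the square is a pushout, take a cocone $(f:X\to Z,\ g:B\to Z)$. Write $f=u_fd_f$. We show that $d_f$ factors through $c$. Since $gd'=fu$, restricting $d_f$ to $a$ gives a pruning refining the cuts of $d'$, or a total pruning above $a$; in either case the outermost union argument gives $d_f=e c$. Uniqueness follows because $c$ is epic. The pullback property follows by the same argument as for Axiom~\ref{ax:Dpull}.

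\emph{Pushout stability.} For Axiom~\ref{ax:pointpushout}, we note that the pullback of a pruning along a point $j$ (an empty leaf of the target) is an isomorphism exactly when that leaf is not the image of a cut. So the class in question consists of prunings with at most one nontrivial cut. The pushout above transports the cuts of $d'$ bijectively to cuts of $c$, and the class is therefore preserved.

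\emph{Detection of identities.} Axiom~\ref{ax:Ddetect} is the case of no cuts, and an empty cut gives an identity.

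\emph{Diagonals.} For Axiom~\ref{ax:O}, a square of embeddings gives addresses $a,c$ and $b,d$ with $ca=db$ as words. So one of $c$ and $d$ is a prefix of the other, and the quotient word gives the required diagonal embedding.

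The main obstacle is Axiom~\ref{ax:U}. Objects of $\Fill(A,j)$ are pairs: a tree $C$ containing $A$ at some address $c$, together with a pruning $C\to B$ whose pullback along the leaf $j$ at address $\ell$ is exactly $u$. By the pullback description this forces $c=\ell$ (lifted), and forces the pruning to have a cut exactly at $c$, with no cut strictly above it. My candidate terminal object is $C_0=B[A/\ell]$, that is, $B$ with $A$ grafted at the leaf $j$, together with the pruning that cuts $A$ at $\ell$ (when $A\ne\one$) and the evident embedding. Given another object $(C,d,u)$, the comparison $h:C\to C_0$ will be the pruning whose cuts are those of $d$ other than the cut at $\ell$. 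One must check that $dh$-compatibility holds, that $hu=u_0$, and that $h$ is unique. Uniqueness should follow because $dh=d'$ fixes the positional function outside $A$ and $hu=u$ fixes it on $A$. The case $A=\one$ needs separate care: there the cut at $\ell$ is trivial, and we must check that no pruning of $C$ can cut at $\ell$.
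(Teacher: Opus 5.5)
Your proposal is correct and follows essentially the same axiom-by-axiom verification as the paper, including the terminal filling obtained by erasing every cut except the marked fibre. The one local difference is in uniqueness of the comparison in Axiom~\ref{ax:U}. You use faithfulness of positions: the equation with the two prunings to $B$ fixes $p_h$ off the marked fibre, and the equation with the inserted copies of $A$ fixes it on that fibre. The paper instead shows via root preservation that a comparison must be a pruning and then uses thinness of $\Dcat_0$, with a separate case $B=\one$ that your argument absorbs. In the pushout step you should also record the second mediator equation $hv=g$, which follows by cancelling the epic pruning $A\to B$, as the paper does.
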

\begin{proof}

Lemma~\ref{prop:modelclasses} identifies the epimorphisms
and strong monomorphisms. The defining pair $(d,u)$ of an arrow
factors that arrow as the pruning $d$ followed by the subtree
embedding $u$, which proves Axiom~\ref{ax:F}.
Consider a commutative square of subtree embeddings, with common
codomain $Z$. The paths in $Z$ to the two intermediate objects are
prefixes of the path to the common source. They are therefore
comparable by extension. Removing the shorter prefix from the longer
one gives an embedding between the intermediate objects, and path
concatenation gives both diagonal equations of Axiom~\ref{ax:O}.
The empty leaf is terminal, giving Axiom~\ref{ax:N}.

For Axiom~\ref{ax:Dpull}, let $d:X\to Y$ be a pruning map
and let $v:B\to Y$ be a subtree embedding with address $b$. The address $b$ is
retained from $X$. Let $w:X|_b\to X$ be its subtree embedding and let
$d_b:X|_b\to B$ be the restricted pruning. We claim that the square
\[
 \xymatrix@C=4em@R=2.5em{
X|_b\ar[r]^w\ar[d]_{d_b}&X\ar[d]^d\\
B\ar[r]_v&Y
 }
\]
is a pullback. Suppose $h:T\to X$ and $k:T\to B$ satisfy
$dh=vk$, and factor $h$ as a pruning followed by a subtree embedding.
The image of the root under $dh$ lies at or below $b$. The same
is then true of the address of the embedding factor of $h$ in $X$:
a pruning can send a position to an ancestor, but cannot move it
into a different branch. Consequently $h$ factors through $w$.
The resulting map $\ell:T\to X|_b$ is unique by monicity of $w$.
The equality $vd_b\ell=dh=vk$ and monicity of $v$ give
$d_b\ell=k$. This proves the pullback property, and its left map
is a pruning as required by Axiom~\ref{ax:Dpull}.

To verify Axiom~\ref{ax:Dpush}, take a subtree embedding $u:A\to X$
and a pruning $a:A\to B$. Replace the subtree at $u$ by $B$,
obtaining a pruning $c:X\to Y$ and an embedding $j:B\to Y$.
These maps form the commutative square
\[
 \xymatrix@C=4em@R=2.5em{
A\ar[r]^u\ar[d]_a&X\ar[d]^c\\
B\ar[r]_j&Y
 }
\]
Its pullback property follows from the preceding description. To prove the pushout property, let $f:X\to T$ and $g:B\to T$
satisfy $fu=ga$, and write $f=ve$, with $e:X\to R$ a pruning
and $v:R\to T$ a subtree embedding. If a cut of $e$ lies at or above the selected
subtree, it absorbs its replacement by $B$. Otherwise the restriction
of $e$ there factors through $a$. To see this, write both $fu$ and
$ga$ as a pruning followed by a subtree embedding and use the
uniqueness of that description. Use that factor at the replaced subtree and
retain all other restrictions of $e$. In both cases this constructs
a mediator $h:Y\to T$ with $hc=f$; the equation $hj=g$ follows by
cancelling the epic $a$. Uniqueness follows from the epicity of $c$.
The pullback of a pruning $d:X\to Y$ along a point of $Y$
identifies the subtree of $X$ sent to that empty leaf. We call this
subtree the \emph{fibre} of $d$ over the point. It is a nonempty
subtree when that leaf was created by a cut; otherwise it is already
an empty leaf of $X$. Consequently, the fibres other than $\one$
are exactly the subtrees removed by the effective cuts. Thus a
pruning satisfies the condition in Axiom~\ref{ax:pointpushout}
precisely when it is an identity or has a single effective cut. Pushing a single-cut map out along a
subtree embedding concatenates the embedding address with the cut
address, giving another single-cut map. Pushouts of identities are
identities. This proves the required stability.
Any nonidentity pruning has a
nonterminal cut subtree; its boundary is a target point whose pullback
fibre is that subtree. This proves Axiom~\ref{ax:Ddetect}.

There are finitely many subtree embeddings into a tree and finitely
many pruning maps out of it, indexed by positions and effective cuts.
This verifies Axiom~\ref{ax:finite}.

For Axiom~\ref{ax:U}, replace the marked empty leaf of $B$ by $A$.
Let $j:\one\to B$ be the marked point. The resulting tree $C$
has an embedding $u:A\to C$ and a pruning $d:C\to B$ fitting into
\[
 \xymatrix@C=4em@R=2.5em{
A\ar[r]^u\ar[d]_{q_A}&C\ar[d]^d\\
 \one\ar[r]_j&B
 }
\]
The description of point fibres above shows that this is a pullback.
Let $(C',d',u')$ be another filling of $A$ at $j$. Its
pullback square identifies the complete subtree erased over $j$
with $A$, by the preceding description of point fibres. Prune all
effective cut subtrees of $d'$ except this marked fibre. The remaining tree is recursively
exactly $C$, giving a comparison. If $B\ne\one$, both $d$ and $d'$ preserve the root. If a comparison
$h:C'\to C$ had a proper subtree embedding as its last factor, then
$dh$ would send the source root to a proper position of $B$, since
$d$ preserves the root label. This contradicts $dh=d'$. Hence $h$ is a pruning. There is at most one pruning with given
source and target by the definition of $\Dcat_0$, so the comparison
is unique.
If $B=\one$, the pullback along $\id_\one$ forces the top map
to be an isomorphism. The concrete category has only identity
isomorphisms, so every filling is $(A,q_A,\id_A)$, and a
comparison preserving $\id_A$ is itself the identity.
\end{proof}

\subsection{Strong monomorphisms and the positions of an object}\label{sec:strong-positions}

We now work towards a representation theorem for a small category
satisfying the nine axioms. The first step is to identify the
positions of each object. We will show that these positions are
described by the strong monomorphisms into the object, whose domains
represent the subtrees at those positions. Factorization of strong
monomorphisms will then describe the relation between a position
and its ancestors.

The following cancellation property is well known; see, e.g.,
\cite[p.~292]{CassidyHebertKelly}.

\begin{lemma}[Cancellation of strong monomorphisms]\label{lem:strong-cancel}
Let $u:A\to B$ and $v:B\to C$ be morphisms. If $vu$ is a
strong monomorphism, then $u$ is a strong monomorphism.
\end{lemma}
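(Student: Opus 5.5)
We must show two things about $u$: that it is a monomorphism, and that it has the diagonal lifting property against every epimorphism. The first is the standard cancellation for monomorphisms. If $ux=uy$, then $vux=vuy$. Since $vu$ is a monomorphism, $x=y$.

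For the lifting property, take a commutative square $ua=be$ with $e:P\to Q$ an epimorphism, $a:P\to A$ and $b:Q\to B$. Postcomposing the right-hand column with $v$ gives a new commutative square
\[
\xymatrix@C=4em@R=2.5em{
P\ar[r]^a\ar[d]_e&A\ar[d]^{vu}\\
Q\ar[r]_{vb}&C
}
\]
because $vua=vbe$. Since $vu$ is a strong monomorphism, this square has a diagonal $\ell:Q\to A$ with $\ell e=a$ and $vu\ell=vb$.

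It remains to check that $\ell$ is a diagonal for the original square, i.e.\ that $u\ell=b$ as well as $\ell e=a$. We cannot cancel $v$ directly, since $v$ need not be a monomorphism. Instead we use that $e$ is an epimorphism. We have $u\ell e=ua=be$, and cancelling $e$ on the right gives $u\ell=b$. Uniqueness of $\ell$ follows because $u$ is a monomorphism, as already noted after the definition of strong monomorphism.

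The only step needing care is the equation $u\ell=b$. The tempting argument is to cancel $v$ from $vu\ell=vb$, which is unavailable. The correct route is to cancel the epimorphism $e$ instead. No axioms are needed; the statement holds in any category.
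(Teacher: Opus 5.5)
Your proof is correct: $u$ is monic because $vu$ is, and the diagonal $\ell$ obtained from the lifting property of $vu$ satisfies $u\ell=b$ once you cancel the epimorphism $e$ from $u\ell e=ua=be$, rather than trying to cancel $v$. The paper does not prove the lemma itself but cites Cassidy--H\'ebert--Kelly for it, and your argument is the standard one behind that citation, valid in any category as you note.
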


\begin{lemma}[Points]\label{lem:points-strong}
Under Axiom~\ref{ax:N}, every point is a strong monomorphism.
\end{lemma}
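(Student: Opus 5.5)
Every point $j:\one\to Y$ is split by the unique morphism $q_Y:Y\to\one$. Indeed, $q_Y j:\one\to\one$ must equal $\id_\one$, since $\one$ is terminal. So $j$ is a split monomorphism, and in particular a monomorphism. It remains to check the lifting property against epimorphisms.

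Consider a commutative square
\[
\xymatrix@C=4em@R=2.5em{
P\ar[r]^a\ar[d]_e&\one\ar[d]^j\\
Q\ar[r]_b&Y
}
\]
with $e$ an epimorphism. I will take the diagonal to be the only possible candidate, $\ell=q_Q:Q\to\one$.

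The upper triangle is automatic: $\ell e=q_Qe$ and $a$ are both morphisms $P\to\one$, so they coincide by terminality.

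For the lower triangle we need $j q_Q=b$. Here we use that $e$ is epic. We have
\[
j q_Q e = j q_P = j a = b e,
\]
where the first and second equalities again use terminality of $\one$ ($q_Qe=q_P=a$) and the last is commutativity of the square. Cancelling the epimorphism $e$ gives $jq_Q=b$. Uniqueness of the diagonal follows from monicity of $j$, as remarked after the definition of strong monomorphism.

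There is no real obstacle. The only point needing care is that the lower triangle needs the epimorphism hypothesis on $e$, through the cancellation step; everything else follows from the universal property of $\one$ in Axiom~\ref{ax:N}.
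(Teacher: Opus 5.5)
Your proof is correct and follows the same route as the paper, which observes that $q_Yj=\id_\one$ makes $j$ split monic and then cites the standard fact that split monomorphisms are strong. Your explicit verification with $\ell=q_Q$ (which equals $q_Yb$, the retraction composed with $b$) is exactly the usual proof of that cited fact, specialized to points.
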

\begin{proof}
For $j:\one\to Y$, terminality gives $q_Yj=\id_{\one}$.
Thus $j$ is a split monomorphism, and every split monomorphism
is strong; see, e.g., \cite[p.~292]{CassidyHebertKelly}.
\end{proof}

\begin{lemma}[Rigidity]\label{lem:rigidity}
Every isomorphism is an identity, and every point is a strong monomorphism.
\end{lemma}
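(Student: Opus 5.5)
The plan is to get the first assertion directly from Axiom~\ref{ax:Ddetect}, and the second from Lemma~\ref{lem:points-strong}, which already proves it under Axiom~\ref{ax:N} alone.

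For the first assertion, let $f:X\to Y$ be an isomorphism. It is in particular an epimorphism, so Axiom~\ref{ax:Ddetect} applies once I check that its pullback along every point $j:\one\to Y$ is an isomorphism. Fix such a point $j$. I will check by hand that the square with top edge $f^{-1}j:\one\to X$, left edge $\id_\one$, right edge $f$ and bottom edge $j$ is a pullback.

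\begin{itemize}
\item The square commutes, since $f f^{-1} j = j$.
\item Given $h:T\to X$ and $k:T\to\one$ with $fh=jk$, necessarily $k=q_T$ and $h=f^{-1}jq_T$.
\item So the unique mediator is $k:T\to\one$ itself. Its uniqueness is clear because $\one$ is terminal.
\end{itemize}

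This is just the standard fact that pulling back an isomorphism gives an isomorphism. Alternatively, I could invoke Axiom~\ref{ax:Dpull}, since $j$ is a strong monomorphism by Lemma~\ref{lem:points-strong}. Pullbacks are unique up to isomorphism, so every pullback of $f$ along $j$ has as its left leg an isomorphism onto $\one$.

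Since this holds for every point of $Y$, Axiom~\ref{ax:Ddetect} forces $f$ to be an identity. In particular $X=Y$ and $f=\id_X$.

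I do not expect a real obstacle. The only point needing care is to state the pullback condition in Axiom~\ref{ax:Ddetect} so that it does not depend on which pullback is chosen. That is exactly why I phrase it as: every pullback of $f$ along $j$ has an isomorphism as its left leg.
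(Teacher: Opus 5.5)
Your proposal is correct and follows essentially the same route as the paper: the point assertion is Lemma~\ref{lem:points-strong}, and for an isomorphism $f$ the pullback along any point is an isomorphism, so Axiom~\ref{ax:Ddetect} forces $f=\id$. Your explicit verification of the pullback square just spells out the step the paper states in one line.
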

\begin{proof}
The assertion about points is Lemma~\ref{lem:points-strong}.
Let $f:X\to Y$ be an isomorphism. Its pullback along any point
of $Y$ is again an isomorphism. Since $f$ is epic,
Axiom~\ref{ax:Ddetect} gives $f=\id_X$.
\end{proof}

Write
\[
\Dcat=\operatorname{Epi}(\C),\qquad
\Ocat=\operatorname{StrongMono}(\C),\qquad
\Occ(X)=\coprod_A\Ocat(A,X),\qquad
\Out(X)=\coprod_Y\Dcat(X,Y).
\]
Here $\Ocat$ and $\Dcat$ have all objects of $\C$ and only the
indicated morphisms. They are subcategories because the two classes
contain identities and are closed under composition. The set
$\Occ(X)$ records a strong monomorphism together with its domain;
$\Out(X)$ records an epimorphism together with its codomain.
They are the finite sets in Axiom~\ref{ax:finite}. We call a
strong monomorphism \emph{proper} when it is not an identity.
We will also use the standard fact that a pullback of a strong
monomorphism is strong whenever that pullback exists; see, e.g.,
\cite[pp.~290 and 292]{CassidyHebertKelly}.

\begin{lemma}\label{lem:empty}
Every $q_X$ is epic. The only strong monomorphism into $\one$ and the only
epimorphism out of $\one$ are $\id_\one$.
\end{lemma}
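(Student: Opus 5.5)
The plan is to treat the three assertions separately, using only Axiom~\ref{ax:N}, the factorization of Axiom~\ref{ax:F}, and Lemma~\ref{lem:rigidity}.

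\emph{Step 1: $q_X$ is epic.} Take any point $j:\one\to X$ with $q_Xj=\id_\one$. If some object has no point, I would argue differently. Let $a,b:\one\to Z$ satisfy $aq_X=bq_X$. Precomposing with any morphism $x:\one\to X$ gives $a=b$, since $q_Xx=\id_\one$. So it suffices that $X$ admits a point. Existence of a point is the obstacle. I would obtain it from the factorization: write $q_X=me$ with $e:X\to I$ epic and $m:I\to\one$ a strong monomorphism. Since $\one$ is terminal, $q_Im=\id_\one$ fails to be automatic the wrong way round; instead $mq_I=\id_I$? That is not given either. The cleaner route is therefore the following. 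Since $m:I\to\one$ is monic and $q_I:I\to\one$ equals $m$ by terminality, any two maps $s,t:T\to I$ satisfy $ms=mt$, hence $s=t$. In particular $q_I$ is monic.

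Now apply Axiom~\ref{ax:Ddetect} to the epimorphism $\id$, or better use Axiom~\ref{ax:Dpull} with $d=\id_\one$ and the strong mono $v=m$. The pullback of $\id_\one$ along $m$ is $m$ itself. Pulling $e$ back along the point $\id_\one$ instead: the axioms applied to $e$ and $\id_\one$ are not available, since $\id_\one$ is a point of $\one$ but $e$ lands in $I$. I would therefore aim directly to show that $m$ is an isomorphism. Consider the square with top $\id_I$, left $e$-type data; specifically, the square with $q_I:I\to\one$ (epic? unknown) on the left. I expect the actual trick to be this. The morphism $m:I\to\one$ is a monomorphism, and every point $j:\one\to I$ would make it split. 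To get a point, use Axiom~\ref{ax:U} with $A=I$ and $j=\id_\one$, which produces a pullback square $I\to C\to\one$ along $\id_\one$. This forces the top map $u$ to be an isomorphism and gives an epimorphism $C\to\one$, but still no point.

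Honestly, the step I would try first is to show that $\id_\one$ is the only strong mono into $\one$ (Step 2) and then deduce $m=\id$, so that $q_X=e$ is epic. For Step 2, take a strong mono $m:I\to\one$ and consider the commutative square with left side the epimorphism $e':I\to I'$ and right side $m$. A better choice is to lift in the square with left $q_I$ if $q_I$ is epic, which is circular. I would instead use the fact that $q_I=m$ is monic, so all parallel pairs into $I$ coincide. Then $I$ is subterminal, and $\id_I$ and any endomorphism agree. Applying Axiom~\ref{ax:Dpush} to $u=\id_I$ and $a=\id_I$ is trivial. Use Axiom~\ref{ax:Ddetect} on $m$ viewed as an epimorphism: $m$ is epic because its codomain $\one$ has every pair $s,t:\one\to Z$ with $sm=tm$. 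The remaining issue is whether $s=t$ holds. The pullback of $m$ along the only point $\id_\one$ is $m$, which is not yet known to be iso. So the genuine obstacle is exhibiting a morphism $\one\to I$. I would get it from Axiom~\ref{ax:Dpull} applied to the epimorphism $q_X$ once known, or from Axiom~\ref{ax:U} with $A=\one$ and $j=\id_\one$, which yields $u:\one\to C$. Combining this with $C\to\one$ factoring through $I$ would give the needed point of $I$, making $m$ split epic and monic, hence iso, hence the identity by Lemma~\ref{lem:rigidity}.

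\emph{Step 3: epimorphisms out of $\one$.} Let $e:\one\to Y$ be epic. Then $e$ is a point, hence a split monomorphism, and a split mono that is epic is an isomorphism: from $eq_Ye=e$ and epicity we get $eq_Y=\id_Y$. Lemma~\ref{lem:rigidity} then gives $e=\id_\one$. Step 2 similarly concludes by rigidity once $m$ is shown to be an isomorphism.
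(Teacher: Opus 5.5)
Your Step~3 is correct and coincides with the paper's argument. Steps~1 and~2, however, are never completed, so neither the epicity of $q_X$ nor the uniqueness of strong monomorphisms into $\one$ is established.

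You reduce everything to producing a point $\one\to I$ of the middle object in $q_X=me$. The one concrete proposal for producing it fails. Axiom~\ref{ax:U} with $A=\one$ and $j=\id_\one$ gives a pullback along an identity. This makes $u:\one\to C$ invertible, so by Lemma~\ref{lem:rigidity} you get $C=\one$ and $d=\id_\one$, which says nothing about $I$. With $C=\one$, the hoped-for factorization of $C\to\one$ through $m:I\to\one$ is literally a section of $m$, that is, the very point you are looking for. So that step is circular. Your other suggestions are also circular (pulling back along $q_X$ ``once known'') or only restate the problem (``$m$ is epic because $sm=tm$ implies $s=t$'').

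The missing step is already in your proposal. Apply Axiom~\ref{ax:U} with $A=X$ itself and $j=\id_\one$. As you observed, the top map $u:X\to C$ is then an isomorphism and $d:C\to\one$ is epic, so $q_X=du$ is epic. No point of $X$ or of $I$ is needed, and $X$ may have none: in $\C_L$ a nullary label has no points.

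Then a strong monomorphism $m:I\to\one$ equals $q_I$, so it is epic. Lift in the square with $\id_I$ on top, $m$ on both vertical sides and $\id_\one$ at the bottom. This makes $m$ invertible, hence equal to $\id_\one$ by Lemma~\ref{lem:rigidity}.

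The paper proves epicity of $q_X$ without Axiom~\ref{ax:U}:
\begin{itemize}
\item it factors $q_X=ve$ and cancels $e$ from $aq_X=bq_X$ to get $av=bv$;
\item it applies Axiom~\ref{ax:O} to this square of strong monomorphisms, the points $a,b$ being split and hence strong;
\item either diagonal is an endomorphism of $\one$, hence the identity, which forces $a=b$.
\end{itemize}
Your filling-based route is the one the paper itself uses later, in Remark~\ref{concl:initial-and-abelian-obstructions} and in the proof of Theorem~\ref{concl:symmetric-representation}.
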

\begin{proof}
Suppose $a,b:\one\to Z$ satisfy $a q_X=b q_X$.
Terminality makes $a,b$ split monomorphisms, hence strong.
By Axiom~\ref{ax:F}, factor $q_X=ve$ with $e:X\to Q$ epic and
$v:Q\to\one$ strong monic. Cancelling $e$ gives $av=bv$, so we have
the commutative square
\[
 \xymatrix@C=4em@R=2.5em{
Q\ar[r]^v\ar[d]_v&\one\ar[d]^a\\
 \one\ar[r]_b&Z
 }
\]
All four maps are strong monomorphisms. Apply Axiom~\ref{ax:O}.
Whichever diagonal the axiom supplies is an endomorphism of $\one$,
and hence is its identity. Its defining equations therefore give
$a=b$, proving that $q_X$ is epic.

A strong monomorphism into $\one$ is thus epic and strong, hence an
isomorphism and an identity. If $d:\one\to X$ is epic,
then $q_Xd=\id_\one$. Cancelling $d$ from $dq_Xd=d$
gives $dq_X=\id_X$, so $d$ too is an identity.
\end{proof}

The children at the root of a tree are its maximal proper subtrees.
Correspondingly, we recover the children of an object from those
proper strong monomorphisms into it which admit no further proper
strong monomorphism between their domain and codomain.
\begin{definition}\label{def:components}
A strong monomorphism $i:A\to X$ is said to be
\emph{indecomposable} if it is nonidentity and, whenever
$i=vu$ with $u:A\to B$ and $v:B\to X$ in $\Ocat$, at least
one of $u,v$ is an identity.
Write $\Comp(X)$ for the set of indecomposable strong monomorphisms into $X$.
The \emph{component decomposition} of $X$ is the family
$(i:X_i\to X)_{i\in\Comp(X)}$.
\end{definition}

Lemma~\ref{lem:strong-cancel} shows that a factor between
strong monomorphisms is again strong. In particular, either
diagonal supplied by Axiom~\ref{ax:O} is a strong monomorphism.

\begin{lemma}\label{prop:paths}
The following assertions hold.
\begin{enumerate}
\item Every endomorphism that is a strong monomorphism is an identity.
\item The relation $u\inside v$ defined by $u=vw$ for some $w\in\Ocat$
is a partial order on the finite set $\Occ(X)$, with greatest
element $\id_X$, and the elements greater than or equal to any fixed
element form a chain. We refer to such a partial order as a
\emph{tree branching order}; greater elements are ancestors.
\item Every strong monomorphism has a unique factorization as a finite composable
list of indecomposable strong monomorphisms. The empty list represents an identity,
and no nonempty list represents an identity.
\item Every morphism has a unique factorization, called its
\emph{normal factorization}, of the form
\begin{equation}\label{eq:normal}
f=ud,\qquad d\in\Dcat,\quad u\in\Ocat.
\end{equation}
\end{enumerate}
\end{lemma}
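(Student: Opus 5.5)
We prove the four assertions in order, using finiteness (Axiom~\ref{ax:finite}) for well-foundedness, Axiom~\ref{ax:O} for the chain property, and Lemma~\ref{lem:rigidity} to turn isomorphisms into identities.

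For (a), let $s:X\to X$ be a strong monomorphism. Its powers $s^n$ are strong monomorphisms into $X$. By Axiom~\ref{ax:finite} there are only finitely many such, so $s^n=s^m$ for some $n<m$. Since $s$ is monic, cancelling gives $s^{m-n}=\id_X$. Hence $s$ is an isomorphism, and by Lemma~\ref{lem:rigidity} it is an identity.

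For (b), reflexivity and transitivity of $\inside$ are immediate from closure of $\Ocat$ under identities and composition. For antisymmetry, suppose $u=vw$ and $v=uw'$ with $u:A\to X$ and $v:B\to X$. Then $u=uw'w$, and monicity gives $w'w=\id_A$; symmetrically $ww'=\id_B$. So $w$ is an isomorphism, hence an identity with $A=B$, and $u=v$. The identity $\id_X$ is the greatest element because $u=\id_X u$. Now let $u\inside v$ and $u\inside v'$, say $u=vw=v'w'$ with $w,w'\in\Ocat$. This gives a commutative square of strong monomorphisms. Axiom~\ref{ax:O} supplies a diagonal $h$ with either $v'h=v$ or $vk=v'$, and by Lemma~\ref{lem:strong-cancel} the diagonal is strong. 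Thus $v$ and $v'$ are comparable, and the ancestors of $u$ form a chain.

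For (c), existence goes by induction on the size of the chain of ancestors of $u$ in the finite poset $\Occ(X)$. If $u$ is an identity, use the empty list. If $u$ is indecomposable, we are done. Otherwise $u=vw$ with $v,w$ both proper, and then $u\inside v$ strictly, since $v=u$ would force $w$ to be an identity. The factor $w$ lives in $\Occ(B)$, which requires a separate induction measure. We induct on the number of proper strong monomorphisms above $u$ in $\Occ(X)$. The ancestors of $w$ in $\Occ(B)$ map injectively, via $t\mapsto vt$, to ancestors of $u$ strictly below or equal to $v$. This covers both factors.

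No nonempty list represents an identity. If $i_1\cdots i_n=\id$ with $n\ge1$, then $i_1$ is split epic and monic, hence an isomorphism and so an identity, contradicting indecomposability.

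For uniqueness, the expected main obstacle, use (b). Given a list $i_1\cdots i_n=u$, the partial composites $i_1\cdots i_k$ form a chain of ancestors of $u$. The key claim is that this chain is the whole ancestor chain of $u$ and that each step is a covering relation. Covering holds because indecomposability of $i_{k+1}$ forbids any strictly intermediate element $c$: such a $c$ would factor $i_{k+1}$ through $\Occ$, via cancellation in Lemma~\ref{lem:strong-cancel}. Since the ancestors of $u$ form a finite chain, any maximal chain of coverings from $\id_X$ down to $u$ is the whole chain. The partial composites are therefore determined. By monicity, so are the factors $i_k$ and their domains.

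For (d), existence is Axiom~\ref{ax:F}. Suppose $f=ud=u'd'$ are two factorizations. The factorization system gives a unique isomorphism $\phi$ between the middle objects with $\phi d=d'$ and $u'\phi=u$. By Lemma~\ref{lem:rigidity}, $\phi$ is an identity, so $d=d'$ and $u=u'$.
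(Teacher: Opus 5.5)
Your proposal is correct and follows the paper's proof essentially step for step. Parts (a), (b) and (d) are argued the same way, and your uniqueness argument in (c) is the paper's observation that any indecomposable factorization enumerates the covers of the finite chain $[u,\id_X]$.

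The only divergence is existence in (c). You split $u=vw$ and induct on a cross-object count of proper ancestors, which works. The paper avoids this measure by reading the factors directly off consecutive elements of that chain: the factor between two consecutive elements is indecomposable by the same monicity argument you already use to show covering.
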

\begin{proof}
If $s:X\to X$ is a strong monomorphism, its powers belong to the
finite set $\Occ(X)$. Thus $s^m=s^n$ for some $m>n\geq0$.
Cancelling the monomorphism $s^n$ gives $s^{m-n}=\id_X$.
Consequently $s$ is an isomorphism, and Lemma~\ref{lem:rigidity} makes it an identity.

For \textup{(b)}, the factorization relation is reflexive and
transitive. If $u=vw$
and $v=ut$, then monicity gives $tw=\id$ and $wt=\id$.
Rigidity makes $w,t$ identities and $u=v$, proving antisymmetry.
If a strong monomorphism $z$ factors through both $u:A\to X$
and $v:B\to X$, write $z=ua=vb$. All four maps in the square
\[
\xymatrix@C=4em@R=2.5em{
\dom(z)\ar[r]^a\ar[d]_b&A\ar[d]^u\\
B\ar[r]_v&X
}
\]
are strong monomorphisms. Axiom~\ref{ax:O} therefore makes $u$
and $v$ comparable. It follows that the elements above $z$ form
a finite chain ending at $\id_X$.

For \textup{(c)}, list the elements of the finite chain
$[u,\id_X]$ in their order.
Between two successive elements there is no intervening element;
this is the meaning of a cover in this order. The factors between
successive elements therefore give a factorization of $u$ into
indecomposable strong monomorphisms. Each intervening factor is unique
by monicity, and every component factorization gives exactly this
chain of covers. A nonempty component list would give a strict chain,
so cannot represent an identity.

For \textup{(d)}, Axiom~\ref{ax:F} supplies a factorization which
is unique up to a unique isomorphism, as recalled immediately after
that axiom. Lemma~\ref{lem:rigidity} makes this isomorphism an
identity. The intermediate object and both factors are therefore
uniquely determined.
\end{proof}

The preceding lemma gives the positions of an object their tree
structure: factoring a strong monomorphism describes passing from
a selected subtree to a larger subtree containing it. Two strong
monomorphisms with the same codomain are \emph{disjoint}
if there is no object $T$ and no strong monomorphism from $T$ to
that codomain factoring through both of them. By
Lemma~\ref{prop:paths}, this is equivalent to incomparability
in the strong-subobject order.

Let $d:X\to Y$ be an epimorphism and let $u:A\to X$ be a strong
monomorphism. Factor $du$ as
\begin{equation}\label{eq:residual}
 \vcenter{\xymatrix@C=4em@R=2.5em{
A\ar[r]^u\ar[d]_{d|_u}&X\ar[d]^d\\
A'\ar[r]_{u^d}&Y
 }}
\end{equation}
where $d|_u:A\to A'$ is epic and $u^d:A'\to Y$ is strong monic.
We call these the \emph{restriction} of $d$ at $u$ and the
\emph{residual strong monomorphism}, respectively. The first describes
what $d$ does to the selected part $A$; the second describes where
the resulting part lies in $Y$. Lemma~\ref{prop:paths} makes both
maps and the intermediate object $A'$ uniquely determined.

\subsection{Pruning one subtree at a time}\label{sec:single-pruning}
We next describe how the axioms allow us to replace a selected
subtree by an empty leaf. The corresponding operation in the
abstract category is to replace the domain of a strong
monomorphism by the terminal object. For a strong monomorphism
$u:A\to X$, write $\pi_u:X\to X/u$
for the map with domain $X$ in the pushout
\[
\xymatrix@C=4em@R=2.5em{
A \ar[r]^{u} \ar[d]_{q_A} & X \ar[d]^{\pi_u} \\
 \one \ar[r]_{j} & X/u
}
\]
Axiom~\ref{ax:Dpush} makes this square both a pushout and a
pullback, with $j$ a point. We denote its lower-right object by
$X/u$. The square expresses the replacement of the selected part
$A$ by the terminal object, with $j$ recording the resulting
empty position. The arrow $\pi_u$ is epic, since pushouts preserve epimorphisms and
$q_A$ is epic by Lemma~\ref{lem:empty}. Rigidity makes the pushout
unique, including its structure arrows.

\begin{lemma}\label{lem:pushoutidentity}
The arrow $\pi_u$ is an identity if and only if $A=\one$.
\end{lemma}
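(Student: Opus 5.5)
We prove the two implications separately, using the bicartesian square that defines $\pi_u$ together with rigidity (Lemma~\ref{lem:rigidity}).

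For the ``if'' direction, suppose $A=\one$. Then $q_A=\id_\one$. The square with top $u$, left $\id_\one$, right $\id_X$ and bottom $u$ commutes. It is a pushout, since a pushout of an identity is an identity. By Lemma~\ref{lem:points-strong}, $u$ is a point and hence a strong monomorphism. Pushouts are unique up to unique isomorphism, and by rigidity that isomorphism is an identity. So $X/u=X$ and $\pi_u=\id_X$. This also agrees with the remark that the pushout is unique including its structure arrows.

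For the ``only if'' direction, suppose $\pi_u=\id_X$, so that $X/u=X$. The pushout square is also a pullback by Axiom~\ref{ax:Dpush}. It is then a pullback of the identity $\id_X$ along the point $j:\one\to X$. The pullback of an identity along $j$ is $\id_\one$, with $j$ as its other leg.

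By uniqueness of pullbacks up to isomorphism, the comparison between $A$ and $\one$ is an isomorphism whose component is $q_A$. Hence $q_A$ is an isomorphism. Rigidity then makes $q_A$ an identity, so $A=\one$.

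Neither direction has a real obstacle. The one point needing care is that both uniqueness-up-to-isomorphism statements are sharpened to equalities by Lemma~\ref{lem:rigidity}.
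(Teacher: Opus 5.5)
Your proof is correct and follows the paper's argument. For $A=\one$, the pushout of $q_{\one}=\id_{\one}$ is an isomorphism and hence an identity by rigidity; conversely, if $\pi_u=\id_X$, then $q_A$ is a pullback of an identity, hence an isomorphism, and rigidity gives $A=\one$ (your appeal to Lemma~\ref{lem:points-strong} is harmless but unnecessary, since $u$ is strong monic by hypothesis).
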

\begin{proof}
If $A=\one$, the left-hand map of the pushout square is an
identity. Hence its pushout $\pi_u$ is an isomorphism and, by
rigidity, an identity. Conversely, if $\pi_u$ is an identity,
its pullback $q_A$ is an isomorphism. Rigidity then gives
$A=\one$.
\end{proof}

The next theorem shows that every epimorphism is obtained by
repeating this construction. At each step, we replace one part
that the given epimorphism sends to an empty leaf. The finiteness
axiom ensures that the process ends after finitely many steps.

\begin{theorem}[Finite decomposition]\label{thm:finitegeneration}
Every epimorphism with domain $X$ is a composite of at most
$|\Out(X)|-1$ nonidentity morphisms of the form $\pi_u$. The empty composite
represents an identity.
\end{theorem}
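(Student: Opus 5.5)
We argue by strong induction on $|\Out(X)|$, which is finite by Axiom~\ref{ax:finite}. Let $d:X\to Y$ be epic. If $d$ is an identity, the empty composite works. Otherwise Axiom~\ref{ax:Ddetect} supplies a point $j:\one\to Y$ whose pullback along $d$ is not an isomorphism. By Axiom~\ref{ax:Dpull}, this pullback is a square with a strong monomorphism $u:A\to X$ on top and the epimorphism $A\to\one$ on the left, which is $q_A$. Since $q_A$ is not an isomorphism, we get $A\ne\one$.

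We next factor $d$ through $\pi_u$. Since $du=jq_A$, the pushout property of the square defining $X/u$ gives a unique $d':X/u\to Y$ with $d'\pi_u=d$ and $d'j_u=j$, where $j_u:\one\to X/u$ is the new point. The morphism $d'$ is epic because $d$ is. Lemma~\ref{lem:pushoutidentity} shows that $\pi_u$ is not an identity.

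To apply the induction hypothesis to $d'$, we compare $\Out(X/u)$ with $\Out(X)$. Precomposition with the epimorphism $\pi_u$ gives a map $\Out(X/u)\to\Out(X)$, $e\mapsto e\pi_u$. It is injective: if $e\pi_u=e'\pi_u$ with the same codomain, then $e=e'$ by cancellation, and the codomain is part of the data. It is not surjective, since $\id_X$ is not of the form $e\pi_u$. Otherwise $\pi_u$ would be split monic and epic, hence an isomorphism, and by rigidity (Lemma~\ref{lem:rigidity}) an identity, which is false. Therefore $|\Out(X/u)|\le|\Out(X)|-1$. By induction, $d'$ is a composite of at most $|\Out(X/u)|-1\le|\Out(X)|-2$ nonidentity maps of the form $\pi_{u'}$. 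Appending $\pi_u$ writes $d$ as a composite of at most $|\Out(X)|-1$ such maps.

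The base case is covered by the same argument. If $|\Out(X)|=1$, the only epimorphism out of $X$ is $\id_X$, so the empty composite suffices.

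The main point needing care is the second step: producing a nontrivial $u$ with $du$ constant and a nonidentity $\pi_u$. Axioms~\ref{ax:Ddetect} and~\ref{ax:Dpull} together with Lemma~\ref{lem:pushoutidentity} settle this. It also matters that the rigid uniqueness of pushouts makes $\pi_u$ a well-defined morphism, so the count strictly decreases; the rest is bookkeeping.
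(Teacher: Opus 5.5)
Your proof is correct and follows the paper's argument: you use Axiom~\ref{ax:Ddetect} to find a point with nonterminal fibre $A$, collapse that fibre by $\pi_u$, and factor $d$ through the pushout. The only difference is bookkeeping. You run a strong induction using the injection $e\mapsto e\pi_u$ of $\Out(X/u)$ into $\Out(X)\setminus\{\id_X\}$, whereas the paper iterates the construction and shows that the cumulative composites $c_n\cdots c_1$ are distinct elements of $\Out(X)$; both rest on the same epi-cancellation and rigidity argument.
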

\begin{proof}
For a nonidentity epimorphism $d:X\to Y$, Axiom~\ref{ax:Ddetect}
supplies a point $j:\one\to Y$ whose pullback along $d$ is
not an isomorphism. Write $u:A\to X$ for the pullback strong
monomorphism. Its other projection is $q_A:A\to\one$, and
Lemma~\ref{lem:rigidity} implies $A\ne\one$. Form the pushout
$c=\pi_u:X\to X_1$. Since $du=jq_A$, its universal property gives
$e:X_1\to Y$ in the diagram
\[
 \xymatrix@C=3.5em@R=2.5em{
A\ar[r]^u\ar[d]_{q_A}&X\ar[d]^c\ar@/^1pc/[ddr]^d&\\
 \one\ar[r]_s\ar@/_1pc/[drr]_j&X_1\ar@{-->}[dr]^e&\\
 &&Y
 }
\]
Thus $ec=d$ and $es=j$. If two maps agree after $e$, they agree
after $ec=d$, so $e$ is epic. Moreover, $c$ is nonidentity by
Lemma~\ref{lem:pushoutidentity}.

Apply the same construction to $e:X_1\to Y$ whenever it is
nonidentity, and then to each subsequent epimorphism with target $Y$.
The cumulative epimorphisms $p_0=\id_X$ and $p_n=c_n\cdots c_1$
are distinct. Indeed, $p_m=p_n$ for $m>n$ would imply
$c_m\cdots c_{n+1}=\id$ by epic cancellation. Then $c_{n+1}$
would be split monic as well as epic, hence an isomorphism and
an identity, a contradiction. Finiteness of $\Out(X)$ forces
termination after at most $|\Out(X)|-1$ steps. The remaining
epimorphism must be an identity, since otherwise
Axiom~\ref{ax:Ddetect} would allow a further step. Thus the
composite constructed up to that point equals $d$.
\end{proof}

\subsection{Replacing the children at the root}\label{sec:component-pruning}
When we prune a subtree, every subtree in a different branch
remains in place. We next establish this property from the
axioms. It will allow us to prescribe a pruning of each child
at the root and combine these prescriptions into a pruning of
the whole object.

\begin{lemma}[Factorization of constant composites]\label{lem:constantfactor}
Let $u:A\to X$ be a strong monomorphism, and let $j:\one\to X/u$
be the point in the pushout defining $\pi_u$.
For every point $k\ne j$ of $X/u$, the pullback of $\pi_u$ along
$k$ is an isomorphism.
If $w:W\to X$ is strong monic with nonterminal domain and $\pi_u w$
is constant, then $\pi_u w=jq_W$ and there is a unique $t:W\to A$
such that $ut=w$.
\end{lemma}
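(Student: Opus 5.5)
The plan is to obtain the first assertion from Axiom~\ref{ax:pointpushout}, and then use it to force the constant value in the second assertion to be $j$.

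\emph{First assertion.} Consider the epimorphism $q_A:A\to\one$, which is epic by Lemma~\ref{lem:empty}. Since $\one$ has the single point $\id_\one$, the condition ``pullbacks along all but at most one point are isomorphisms'' holds for $q_A$ vacuously. The pushout of $q_A$ along the strong monomorphism $u$ is $\pi_u$. So Axiom~\ref{ax:pointpushout} shows that the pullbacks of $\pi_u$ along all but at most one point of $X/u$ are isomorphisms.

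I then split into two cases.
\begin{itemize}
\item If $A=\one$, then $\pi_u$ is an identity by Lemma~\ref{lem:pushoutidentity}, and every pullback of it is an isomorphism.
\item If $A\ne\one$, the pullback of $\pi_u$ along $j$ is the square defining $\pi_u$ itself, since Axiom~\ref{ax:Dpush} makes that square bicartesian. Its projection $q_A$ is not an isomorphism, because otherwise rigidity (Lemma~\ref{lem:rigidity}) would give $A=\one$. Hence $j$ is the one exceptional point, and every $k\ne j$ gives an isomorphism.
\end{itemize}

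\emph{Second assertion.} Suppose $\pi_u w=kq_W$ for some point $k$, and assume for contradiction that $k\ne j$. By the first part, the pullback of $\pi_u$ along $k$ has an isomorphism $P\to\one$ as its left leg, so by rigidity $P=\one$. Its top leg is therefore a point $m:\one\to X$. The pair $(w,q_W)$ is a cone over the cospan $(\pi_u,k)$, so $w$ factors through the pullback: $w=m\,q_W$. By Lemma~\ref{lem:strong-cancel}, $q_W$ is then a strong monomorphism. It is also epic by Lemma~\ref{lem:empty}, so it is an isomorphism, and rigidity gives $W=\one$. This contradicts the hypothesis that $W$ is nonterminal, so $k=j$ and $\pi_u w=jq_W$.

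Now $(w,q_W)$ is a cone over $(\pi_u,j)$. Since the defining square of $\pi_u$ is a pullback with top leg $u$, there is $t:W\to A$ with $ut=w$. This $t$ is unique because $u$ is monic.

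\emph{Main obstacle.} The only delicate step is in the first part: identifying $j$ as the exceptional point of Axiom~\ref{ax:pointpushout}. This needs the bicartesian property, so that the pullback along $j$ really is $q_A$, together with rigidity to see that $q_A$ is not an isomorphism. The degenerate case $A=\one$ has to be handled separately, as above.
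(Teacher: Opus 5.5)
Your proposal is correct and follows the paper's proof step by step. Axiom~\ref{ax:pointpushout} applied to $q_A$ bounds the exceptional points of $\pi_u$, and the bicartesian defining square identifies $j$ as the exceptional one when $A\ne\one$, with $A=\one$ handled by Lemma~\ref{lem:pushoutidentity}; the case $k\ne j$ is then excluded because $w=mq_W$ would make $q_W$ strong monic and epic, and $t$ comes from the pullback property of the defining square.
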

\begin{proof}
The terminal object has only one point, so all but at most one
of the pullbacks of $q_A$ along points are isomorphisms.
Axiom~\ref{ax:pointpushout} transfers this property to $\pi_u$.
If $A\ne\one$, the defining square for $\pi_u$ identifies its
pullback along $j$ with the nonisomorphism $q_A$. Hence the pullback
along every other point is an isomorphism. If $A=\one$,
Lemma~\ref{lem:pushoutidentity} makes $\pi_u$ an identity, so all
of its point pullbacks are isomorphisms.

Write $\pi_u w=kq_W$. If $k\ne j$, the pullback of $\pi_u$ along $k$ has domain
$\one$. Its universal property applied to $w$ and $q_W$ therefore
gives a point $t:\one\to X$ with $w=tq_W$. Since $w$ is strong
monic, Lemma~\ref{lem:strong-cancel} makes $q_W$ strong monic. It is also epic, so it is an
isomorphism, contradicting that $W$ is nonterminal. Thus $k=j$,
and the pullback square defining $\pi_u$ gives the unique factor $t$.
\end{proof}

\begin{lemma}[Locality of pushouts]\label{lem:pushoutlocality}
In a pushout of a strong monomorphism $u:A\to X$ along an epimorphism
$a:A\to A'$, write $c:X\to Y$ for the map induced from $X$ and $j:A'\to Y$
for the map induced from $A'$.
For every strong monomorphism $v:B\to X$ disjoint from $u$, the
composite $cv:B\to Y$ is a strong monomorphism.
\end{lemma}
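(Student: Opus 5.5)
The plan is to reduce to the special case $a=q_A$, where $c=\pi_u$, and handle that case with Lemma~\ref{lem:constantfactor} and Axiom~\ref{ax:Ddetect}. The general case then follows by induction along the decomposition of $a$ supplied by Theorem~\ref{thm:finitegeneration}.

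\emph{Special case.} Let $a=q_A$, so $c=\pi_u$ with point $j:\one\to X/u$. Take the normal factorization $\pi_u v=we$ of Lemma~\ref{prop:paths}\textup{(d)}, with $e:B\to E$ epic and $w$ strong monic. It suffices to show that $e$ is an identity. Suppose it is not. Axiom~\ref{ax:Ddetect} gives a point $k:\one\to E$ such that the pullback of $e$ along $k$ is not an isomorphism. By Axiom~\ref{ax:Dpull} this pullback exists, and its top arrow $t:T\to B$ is strong monic. Rigidity (Lemma~\ref{lem:rigidity}) gives $T\neq\one$. Then $vt:T\to X$ is strong monic with nonterminal domain, and $\pi_u vt=wkq_T$ is constant. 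Lemma~\ref{lem:constantfactor} therefore yields $vt=ut'$, so $vt$ factors through both $u$ and $v$. This contradicts disjointness. Hence $e=\id$ and $\pi_u v=w$ is strong monic.

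\emph{General case.} Induct on the length of a decomposition $a=\pi_{s_n}\cdots\pi_{s_1}$ given by Theorem~\ref{thm:finitegeneration}; the empty composite is trivial, since then $c$ is an identity. Write $a=a'\pi_s$ with $s:S\to A$. Pasting pushout squares shows the following. The pushout of $\pi_s$ along $u$ is $\pi_{us}:X\to X/us$, together with a strong monomorphism $u_1:A/s\to X/us$; that $u_1$ is strong monic comes from Axiom~\ref{ax:Dpush}, and the square is also a pullback. The pushout of $a'$ along $u_1$ then gives $c=c'\pi_{us}$. Since $v$ is disjoint from $u$, it is disjoint from $us$. The special case therefore makes $v_1:=\pi_{us}v$ strong monic, and the induction hypothesis applied to $a'$ and $u_1$ finishes the proof, provided $v_1$ is disjoint from $u_1$.

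\emph{Main obstacle: transporting disjointness.} Suppose a strong monomorphism $z$ factors as $z=u_1x=v_1y$. Pull $z$ back along the epimorphism $\pi_{us}$ using Axiom~\ref{ax:Dpull}, obtaining a strong monomorphism $z':P\to X$. Because the square for $u_1$ is a pullback, pasting shows that $z'$ factors through $u$. Because $\pi_{us}vy=zy'$ for the evident $y'$, the universal property of the pullback shows that $vy$ factors through $z'$. So $vy\inside z'\inside u$ and also $vy\inside v$. This contradicts the disjointness of $u$ and $v$, since by Lemma~\ref{prop:paths}\textup{(b)} disjointness means that no strong monomorphism lies below both. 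The delicate points I expect to check carefully are the pasting of pushouts, so that the composite really is the pushout of $a$, and the identification of the map $y'$ in this step.
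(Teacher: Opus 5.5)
Your proof is correct. The special case is exactly the paper's first step. The general case takes a genuinely different and considerably longer route.

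\textbf{How the paper handles the general case.} It does not decompose $a$ at all. Write $\pi_u u=tq_A$ for the point $t:\one\to X/u$, and note that $q_A=q_{A'}a$. Then $\pi_u$ and $tq_{A'}$ agree after composition with $u$ and $a$ respectively. So the universal property of the given pushout supplies $h:Y\to X/u$ with $hc=\pi_u$. Now $h(cv)=\pi_u v$ is strong monic by the special case, and Lemma~\ref{lem:strong-cancel} makes $cv$ strong monic. This two-line reduction needs neither Theorem~\ref{thm:finitegeneration}, nor the pasting of pushouts, nor any transport of disjointness.

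\textbf{Checking your induction.} It does go through as written, and there is no circularity, since Theorem~\ref{thm:finitegeneration} does not use this lemma.
\begin{itemize}
\item The pasted rectangle is a pushout of $u$ along $a'\pi_s=a$, and rigidity identifies it with the given one.
\item In the disjointness step, the map you call $y'$ is simply $\id_Z$, because $\pi_{us}vy=v_1y=z$.
\item The composite $vy$ is strong monic, because $y$ is: apply Lemma~\ref{lem:strong-cancel} to $v_1y=z$.
\end{itemize}

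\textbf{What your route adds.} It shows that disjointness from $u$ survives each elementary collapse: $\pi_{us}v$ remains disjoint from the residual strong monomorphism $u_1$. That is a genuine locality fact, but the lemma does not need it. The paper's comparison-map argument is the more economical way to pass from $\pi_u$ to an arbitrary pushout along an epimorphism.
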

\begin{proof}
First consider $e=\pi_u:X\to X/u$, and factor $ev=mp$ with
$p:B\to C$ epic and $m:C\to X/u$ strong monic.
Let $j:\one\to C$ be a point, and form its pullback along $p$:
\[
 \xymatrix@C=4em@R=2.5em{
K\ar[r]^k\ar[d]_{q_K}&B\ar[d]^p\ar[r]^v&X\ar[d]^e\\
 \one\ar[r]_j&C\ar[r]_m&X/u
 }
\]
Axiom~\ref{ax:Dpull} supplies this pullback. Its top map $k$ is
strong monic because $j$ is strong monic.
If $K$ were nonterminal, the strong monomorphism $vk$ would satisfy
$e(vk)=mj q_K$, a constant morphism. Lemma~\ref{lem:constantfactor}
would then factor $vk$ through $u$. That factor is strong monic by
Lemma~\ref{lem:strong-cancel},
contradicting disjointness of $u$ and $v$.
Thus every such $K$ is terminal, so every pullback of $p$ along a
point is an isomorphism. Axiom~\ref{ax:Ddetect} gives $p=\id_B$.
Consequently $ev$ is strong monic.

For the general pushout in the statement, write $eu=sq_A$.
Since $q_{A'}a=q_A$, the maps $e$ and $sq_{A'}$ agree after
composition with $u$ and $a$, respectively. The pushout property
therefore gives $h:Y\to X/u$ in the diagram
\[
\xymatrix@C=3.5em@R=2.5em{
A\ar[r]^u\ar[d]_a&X\ar[d]^c\ar@/^1pc/[ddr]^e&\\
A'\ar[r]_j\ar@/_1pc/[drr]_{s q_{A'}}&Y\ar@{-->}[dr]^h&\\
&&X/u
}
\]
In particular, $h(cv)=ev$ is strong monic by the case already
proved. Lemma~\ref{lem:strong-cancel} now implies that $cv$ is
strong monic.
\end{proof}

\begin{lemma}[Strong monomorphism pullbacks]\label{lem:strongpullbacks}
For an epimorphism $d:X\to Y$, let $d^*v$ denote the strong
monomorphism obtained by pulling back $v\in\Occ(Y)$. For all
$u\in\Occ(X)$ and $v\in\Occ(Y)$, we have
\[
u^d\inside v\quad\Longleftrightarrow\quad u\inside d^*v,
 \qquad (d^*v)^d=v.
\]
Pullback sends proper strong monomorphisms to proper strong monomorphisms.
\end{lemma}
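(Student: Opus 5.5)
The proof rests on the pullback square from Axiom~\ref{ax:Dpull}. For $v:B\to Y$ strong monic, write the pullback of $d$ along $v$ as $d^*v:B'\to X$ with left map $a:B'\to B$, an epimorphism. Because $a$ is epic and $v$ is strong monic, $d(d^*v)=va$ is the normal factorization of $d(d^*v)$. Lemma~\ref{prop:paths}(d) makes this factorization unique, so it is exactly the factorization \eqref{eq:residual} for $u=d^*v$. This gives $d|_{d^*v}=a$ and $(d^*v)^d=v$, the second assertion.

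For the equivalence, we prove each direction separately.

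\emph{($\Leftarrow$)} Suppose $u=(d^*v)w$ with $w$ strong monic. Then
\[
du=va\,w=v\,m\,e,
\]
where $aw=me$ is the normal factorization of $aw$. The map $vm$ is strong monic, since strong monomorphisms compose. So $du=(vm)e$ is a normal factorization, and uniqueness gives $u^d=vm\inside v$.

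\emph{($\Rightarrow$)} Suppose $u^d=vt$ with $t$ strong monic. Then
\[
du=u^d\,d|_u=v\,(t\,d|_u).
\]
This exhibits a cone $(u,\,t\,d|_u)$ over the cospan $(d,v)$. The pullback property yields a map $\ell$ with $(d^*v)\ell=u$. By Lemma~\ref{lem:strong-cancel}, $\ell$ is strong monic, so $u\inside d^*v$.

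For the last claim, let $v$ be proper and suppose $d^*v=\id_X$. Then the pullback square reads $d=va$. Here $v$ is the final factor of an epimorphism, hence epic. Being epic and strong monic, $v$ is an isomorphism, and Lemma~\ref{lem:rigidity} makes it an identity, a contradiction.

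The step needing the most care is the identification of $(d^*v)^d$ through uniqueness of normal factorizations. It depends on the pullback's left map being epic, which Axiom~\ref{ax:Dpull} guarantees. The remaining steps are direct diagram chases.
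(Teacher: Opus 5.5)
Your proof is correct and follows essentially the same route as the paper. Like the paper, you obtain $(d^*v)^d=v$ from uniqueness of normal factorizations, using the epic projection supplied by Axiom~\ref{ax:Dpull}. The order equivalence comes from the pullback universal property together with normal-factorization uniqueness and strong-monomorphism cancellation, and properness follows because $d=va$ forces $v$ to be epic, hence an identity.
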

\begin{proof}
Let $v:B\to Y$ be a strong monomorphism. By the pullback universal
property, $u$ factors through $d^*v$ if and only if $du$ factors
through $v$. If $du=vh$, factor $h$ as an epimorphism followed by a
strong monomorphism. Composing its strong factor with $v$ gives a
normal factorization of $du$, so uniqueness identifies it with $u^d$.
Thus $du$ factors through $v$ if and only if $u^d\inside v$.
The factors between strong monomorphisms are strong by
Lemma~\ref{lem:strong-cancel}. Thus this equivalence is precisely
the first assertion in the displayed formula. The projection from
the pullback to $\dom(v)$ is epic by Axiom~\ref{ax:Dpull}, so
the pullback square already gives a normal factorization whose
strong monomorphism is $v$. This proves $(d^*v)^d=v$.
Finally, if $d^*v=\id_X$, that square factors the epimorphism
$d$ through $v$. It follows that $v$ is epic as well as strong
monic, and hence is an identity. This proves the last assertion.
\end{proof}

\begin{lemma}[One-component pushouts]\label{lem:componentpushout}
Push out an indecomposable strong monomorphism $i:A\to X$ along any epimorphism
$a:A\to B$, writing $c:X\to Y$ and $j:B\to Y$ for the new arrows.
Then $Y\ne\one$, $j$ is an indecomposable strong monomorphism, and the component
decomposition of $Y$ consists exactly of $j$ and the strong monomorphisms
$ck$ for $k\in\Comp(X)\setminus\{i\}$. These strong monomorphisms are
distinct. The restriction of $c$ at $i$ is $a$, and its restriction
at every other component is an identity.
\end{lemma}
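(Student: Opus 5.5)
The plan is to read everything off the square $ci=ja$ together with the order-theoretic calculus of Lemma~\ref{lem:strongpullbacks}. By Axiom~\ref{ax:Dpush}, $j$ is a strong monomorphism, $c$ is epic, and the square is also a pullback.

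\textbf{Restrictions.} Since $a$ is epic and $j$ is strong monic, $ja$ is already a normal factorization of $ci$. By the uniqueness part of Lemma~\ref{prop:paths}(d), $c|_i=a$ and $i^c=j$. Next let $k\in\Comp(X)$ with $k\ne i$. Two distinct components are incomparable, since each is a cover of $\id_X$ in the tree branching order. By Lemma~\ref{prop:paths}(b) they are therefore disjoint. Lemma~\ref{lem:pushoutlocality} then shows that $ck$ is strong monic. Hence $c|_k$ is an identity and $k^c=ck$. So in all cases the residual of a component $k$ is $j$ if $k=i$ and $ck$ otherwise.

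\textbf{Properness, and $Y\ne\one$.} First, $j$ is proper. If $j=\id_Y$, the square would exhibit $i$ as the pullback of $c$ along an identity, so $i$ would be an isomorphism, hence an identity by rigidity, contradicting indecomposability. Now suppose $ck=\id_Y$ for some other component $k$. Then $c$ is split epic and $kc$ is idempotent. From $kck=k$ and monicity of $k$ we get $ck=\id$, and from $c(kc)=c$ and epicity of $c$ we get $kc=\id_X$. Hence $c$ is an isomorphism, so an identity, and $k=\id_X$, a contradiction. Finally, $Y\ne\one$: otherwise $j$ would be a strong monomorphism into $\one$, hence $\id_\one$ by Lemma~\ref{lem:empty}, contradicting the properness of $j$ just shown.

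\textbf{Recovering the components.} The key claim is $c^*(k^c)=k$ for every component $k$, and I expect this to be the main step. By Lemma~\ref{lem:strongpullbacks}, $k\inside c^*(k^c)$, and $c^*(k^c)$ is proper because $k^c$ is proper. Since $k$ is a cover of $\id_X$ and the elements above $k$ form a chain, the only element strictly above $k$ is $\id_X$. Hence $c^*(k^c)=k$.

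\textbf{Conclusion.} If $k^c\inside l^c$, the Galois equivalence of Lemma~\ref{lem:strongpullbacks} gives $k\inside c^*(l^c)=l$, so $k=l$. Thus the residuals $j$ and $ck$ for $k\ne i$ are pairwise incomparable, and in particular distinct. Now let $v$ be any proper strong monomorphism into $Y$. Then $c^*v$ is proper, so it lies below some component $k$ of $X$. Monotonicity of residuals then gives $v=(c^*v)^c\inside k^c$. Hence every proper strong monomorphism into $Y$ lies below one of the listed residuals. The maximal proper elements of $\Occ(Y)$, which are exactly the indecomposables by Lemma~\ref{prop:paths}(c), are therefore precisely $j$ and the $ck$ with $k\ne i$. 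In particular, $j$ is indecomposable.
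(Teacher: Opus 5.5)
Your overall route is essentially the paper's. You read off $c|_i=a$ and $i^c=j$ from the square, and you use Lemma~\ref{lem:pushoutlocality} for the other components. You then prove $c^*(k^c)=k$ by indecomposability and use the correspondence of Lemma~\ref{lem:strongpullbacks} to show that the residuals are exactly the components of $Y$. The following parts are sound: properness of $j$, $Y\ne\one$, the step $c^*(k^c)=k$ (once $k^c$ is known to be proper), and the final covering argument.

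\textbf{The gap.} Your proof that $ck\ne\id_Y$ for $k\in\Comp(X)\setminus\{i\}$ does not work. From $ck=\id_Y$ and $c(kc)=c$, epicity of $c$ does not give $kc=\id_X$. Epimorphisms cancel on the right, but here $c$ stands on the left, so cancelling it would require $c$ to be monic. Your other deduction, of $ck=\id$ from $kck=k$, only returns the hypothesis. The inference pattern itself fails even in the paper's categories: for $X\ne\one$ with a point $k$, the epimorphism $q_X$ satisfies $q_Xk=\id_\one$, yet $kq_X\ne\id_X$.

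The step is not optional. Properness of $k^c=ck$ is exactly what you use to make $c^*(k^c)$ proper, and hence to get $c^*(k^c)=k$. It is also needed for $ck$ to be a component at all.

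\textbf{The fix.} Use the bicartesian square once more, as the paper does. If $ck=\id_Y$, then $k$ has domain $Y$, and $kj:B\to X$ is a strong monomorphism with $c(kj)=j=j\,\id_B$. The pullback property factors $kj$ through $i$. In your own terms, $(kj)^c=j$, so Lemma~\ref{lem:strongpullbacks} gives $kj\inside c^*j=i$. Thus $kj$ lies below both $i$ and $k$, contradicting the disjointness of distinct components. With this replacement the rest of your argument goes through as written.
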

\begin{proof}
By Axiom~\ref{ax:Dpush}, the square
\[
 \xymatrix@C=4em@R=2.5em{
A\ar[r]^i\ar[d]_a&X\ar[d]^c\\
B\ar[r]_j&Y
 }
\]
is bicartesian, so $c^*j=i$. Consequently $j$ is proper, since
the pullback of an identity is an identity. This also gives
$Y\ne\one$, because the only strong monomorphism into $\one$
is its identity. Suppose that
$j\inside r\ne\id_Y$, then
$i\inside c^*r\ne\id_X$. Indecomposability gives $c^*r=i$, and
Lemma~\ref{lem:strongpullbacks} gives $r=i^c=j$.
Thus there is no proper strong monomorphism strictly between $j$
and $\id_Y$, which says that $j$ is a component of $Y$.

Let $k$ be another component of $X$. Distinct components are disjoint
by Lemma~\ref{prop:paths}, so Lemma~\ref{lem:pushoutlocality}
makes $l=ck$ a strong monomorphism. To see that $l$ is proper,
suppose that $ck=\id_Y$. Then $c(kj)=j$, and the pullback square
just displayed factors $kj$ through $i$. Since $k$ and $j$ are
strong monic, their composite is a common strong subobject of $i$
and $k$. This contradicts their disjointness.
Choose a component $r$ with $l\inside r$. Then
$k\inside c^*r\ne\id_X$, so $c^*r=k$ and $r=k^c=l$.
Hence $l$ is a component and $c^*l=k$.
Together with $c^*j=i$, these equations make all the displayed
components distinct.

Finally, take a component $r\ne j$ of $Y$. Its pullback is proper,
so choose a component $k$ of $X$ above $c^*r$. If $k=i$, apply the order implication in
Lemma~\ref{lem:strongpullbacks} to $c^*r\inside i$. Together
with $(c^*r)^c=r$ and $i^c=j$, it gives $r\inside j$,
contradicting that $r$ and $j$ are distinct components. Otherwise
$r\inside k^c=ck$, and both are components, so $r=ck$.
Normal-factorization uniqueness gives the stated restrictions.
\end{proof}

Replacing several components at once requires a universal property
that takes all the prescribed replacements into account. We use the
following formulation.
\begin{definition}
For the component decomposition of $X$, consider a family of
morphisms $d_i:X_i\to Y_i$, indexed by $i\in\Comp(X)$.
A \emph{simultaneous pushout} of this family along the components
is a commutative family of squares
\[
\xymatrix@C=4em@R=2.5em{
X_i \ar[r]^{i} \ar[d]_{d_i} & X \ar[d]^{d} \\
Y_i \ar[r]_{j_i} & Y
}
\]
such that, for every object $T$, every map $f:X\to T$ and every
family $g_i:Y_i\to T$ satisfying $fi=g_id_i$, there exists a unique
$h:Y\to T$ with $hd=f$ and $hj_i=g_i$ for all $i$.
The universal property is imposed on the entire family of squares.
\end{definition}
\begin{remark}
An individual square in a simultaneous pushout need not itself be a
pushout. Its universal property requires compatibility with the maps
at all components.
\end{remark}

\begin{theorem}[Simultaneous component pushouts]\label{thm:componentpushouts}
For every $X\ne\one$ and every tuple of component epimorphisms
$d_i:X_i\to Y_i$, the simultaneous pushout exists, its object $Y$ is nonterminal,
and its lower family lists every component of $Y$ exactly once.
It can be constructed by successive one-component pushouts in any
order. Conversely, every epimorphism $d:X\to Y$ with nonterminal target is the map
from $X$ induced by such a simultaneous pushout.
\end{theorem}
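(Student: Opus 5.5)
We build the simultaneous pushout by iterating Lemma~\ref{lem:componentpushout}, and we prove the converse by restricting a given epimorphism to the components of $X$.

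\textbf{Existence and order-independence.} Enumerate $\Comp(X)=\{i_1,\dots,i_n\}$ in an arbitrary order; the set is finite by Axiom~\ref{ax:finite}. Set $Y^{(0)}=X$. At stage $r$, the current object $Y^{(r-1)}$ has components $c_{r-1}\cdots c_1 k$ for $k\in\Comp(X)$ not yet treated, and $Y_{i_s}\to Y^{(r-1)}$ for $s<r$; this is the inductive invariant. We push out the component $c_{r-1}\cdots c_1 i_r$ along $d_{i_r}$ using Axiom~\ref{ax:Dpush}. Lemma~\ref{lem:componentpushout} shows that the result is nonterminal and that the invariant is preserved. It also shows that the restriction of the new map at every other component is an identity, so the already-inserted $Y_{i_s}$ stay components. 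After $n$ steps we put $d=c_n\cdots c_1$ and $j_i$ equal to the inherited components. Each square commutes because the restriction of $d$ at $i$ is $d_i$; this follows by composing the restrictions from Lemma~\ref{lem:componentpushout}, which are $d_i$ once and identities otherwise.

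\textbf{Universal property.} Given $f$ and $(g_i)$ with $fi=g_id_i$, we induce maps stage by stage. At stage $r$ we have $h_{r-1}:Y^{(r-1)}\to T$ with $h_{r-1}c_{r-1}\cdots c_1=f$. It satisfies
\[
h_{r-1}(c_{r-1}\cdots c_1 i_r)=fi_r=g_{i_r}d_{i_r},
\]
so the single pushout yields $h_r$. Uniqueness follows because $d$ is epic: $hd=f$ already determines $h$. The equations $hj_i=g_i$ then follow from $hj_id_i=hdi=fi=g_id_i$ and epicity of $d_i$. Since the universal property characterizes $Y$ uniquely (rigidity, Lemma~\ref{lem:rigidity}), any order of enumeration gives the same result. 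That every component of $Y$ is listed exactly once is the final form of the invariant.

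\textbf{Converse.} Let $d:X\to Y$ be epic with $Y\ne\one$; then $X\ne\one$ by Lemma~\ref{lem:empty}. For each $i\in\Comp(X)$ take the restriction $d_i=d|_i:X_i\to X_i'$, and let $Y'$ be the simultaneous pushout of the $d_i$ with structure map $d'$. Its universal property, applied to $f=d$ and $g_i=i^d$, gives $h:Y'\to Y$ with $hd'=d$; in particular $h$ is epic. We want $h=\id$, and by Axiom~\ref{ax:Ddetect} it suffices that $h$ has trivial point pullbacks. Each component of $Y'$ is some $j_i$, and $hj_i=i^d$ is strong monic. By Lemma~\ref{lem:strongpullbacks}, $d^*$ of the components of $Y$ are components of $X$, so $h$ maps the components of $Y'$ bijectively onto those of $Y$.

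Suppose the pullback of $h$ along a point $k$ is $w:W\to Y'$ with $W\ne\one$. If $w$ lies inside some component $j_i$, then $hw=i^d t$ with $t$ a strong monomorphism (Lemma~\ref{lem:strong-cancel}), which cannot be constant unless $W=\one$. Otherwise $w=\id_{Y'}$, so $h$ is constant. This contradicts $Y\ne\one$, because a constant epimorphism onto $Y$ forces $Y=\one$ (cancel $q$ as in Lemma~\ref{lem:empty}).

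The main obstacle is the first case of this final step. It requires showing that a strong monomorphism into $Y$ composed through a component cannot be constant with nonterminal domain. I would argue via the uniqueness of normal factorizations (Lemma~\ref{prop:paths}(d)): a constant map from $W\ne\one$ has epic part $q_W$, whereas $hw=i^d t$ has identity epic part.
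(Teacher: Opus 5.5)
Your existence part, universal property and order-independence follow the paper. Deriving uniqueness and the equations $hj_i=g_i$ from epicity of $d=c_n\cdots c_1$ and of the $d_i$ is a slightly tidier version of the paper's stage-by-stage argument.

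The converse takes a genuinely different route. The paper proceeds in three steps:
\begin{enumerate}
\item it shows that each collapse $\pi_u$ with $u$ proper is a simultaneous component pushout, by factoring $u$ into components and pasting one-component pushouts;
\item it proves that simultaneous component pushouts compose;
\item it uses Theorem~\ref{thm:finitegeneration} to write $d$ as a composite of such collapses.
\end{enumerate}
You instead compare $d$ with the simultaneous pushout $d':X\to Y'$ of its own restrictions $d|_i$. The induced epimorphism $h$ satisfies $hd'=d$ and $hj_i=i^d$, and you apply Axiom~\ref{ax:Ddetect} to $h$ directly. A point fibre $w:W\to Y'$ has two possibilities. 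Either $w=\id_{Y'}$, which makes the epimorphism $h$ constant and forces $Y=\one$. Or $w$ factors as $j_it$, so that $kq_W=hw=i^dt$ is strong monic and $q_W$ is both epic and strong monic.

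This argument is correct. The step you call the main obstacle is routine: Lemma~\ref{lem:strong-cancel} applied to $kq_W=i^dt$, as at the end of the proof of Lemma~\ref{lem:constantfactor}, gives $W=\one$. Uniqueness of normal factorizations, as you suggest, works equally well.

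What each route buys:
\begin{enumerate}
\item Yours is shorter and avoids Theorem~\ref{thm:finitegeneration}, hence the finiteness of $\Out(X)$.
\item Yours also exhibits the simultaneous pushout data directly as $(d|_i,i^d)$, which is exactly what Theorem~\ref{thm:epicclassification}(a) later reads off.
\item The paper's route gives two by-products: every collapse $\pi_u$ arises as such a pushout, and simultaneous component pushouts compose, with composite component bijections.
\end{enumerate}

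One sentence should be removed. Lemma~\ref{lem:strongpullbacks} does not by itself show that $d^*v$ is a component of $X$ when $v$ is a component of $Y$. You would still have to rule out a proper $w$ above $d^*v$ with $w^d=\id_Y$, and the lemma says nothing about that. The claimed bijection of components under $h$ is not used anywhere in your argument, so nothing else is affected.
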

\begin{proof}
Choose a listing of $\Comp(X)$, which is finite by
Axiom~\ref{ax:finite}, and apply Lemma~\ref{lem:componentpushout}
to the first component and its specified epimorphism. Apply the same
lemma to the images of the remaining components, in the chosen
order. Each step preserves the source objects of the components
still to be treated, and its restriction at each of them is an
identity. It also preserves nonterminality of the object being
constructed.

To verify the simultaneous universal property, take maps $f$ and
$g_i$ as in the definition. The first pushout gives a unique map
from its target to $T$. This map still has the prescribed values
on the components that remain to be treated, so it gives the
compatible maps required for the second pushout. Continuing in
this way gives a unique $h$ with all the required equations.
For an empty component family, the identity of $X$ has the
required universal property. Since the resulting diagram is
characterized by this universal property, rigidity identifies
the results obtained in different orders, including all their
structure maps.

Consider $\pi_u$ for a proper strong monomorphism $u$.
Factor $u$ into components by Lemma~\ref{prop:paths} and push
the terminal map of its source successively along the arrows in
that path, beginning at the source of $u$. At each step use
Axiom~\ref{ax:Dpush}. The composite of the resulting pushout squares
is a pushout of $q_{\dom(u)}$ along $u$, so its universal property
identifies it with the defining square for $\pi_u$.
The last square in this sequence pushes out the component whose
codomain is $X$. It is therefore a one-component pushout. Adding
identity restrictions at the other components makes it a simultaneous
component pushout with the same universal property.
For $u$ an identity, $\pi_u$ is the terminal map.

Simultaneous component pushouts also compose. Consider the following
family of diagrams, indexed by $i\in\Comp(X)$, and suppose that
each stage has the simultaneous universal property:
\[
\xymatrix@C=3.5em@R=2.5em{
X_i\ar[r]^i\ar[d]_{d_i}&X\ar[d]^d\\
Y_i\ar[r]_{j_i}\ar[d]_{e_i}&Y\ar[d]^e\\
Z_i\ar[r]_{k_i}&Z
}
\]
Given compatible maps $f:X\to T$ and $h_i:Z_i\to T$ for the
outer family, first apply the upper universal property to $f$
and $h_i e_i$. Its mediator, together with the maps $h_i$, then
satisfies the equations for the lower universal property. The
resulting unique map $Z\to T$ proves the universal property
for the outer family. Its restrictions are $e_i d_i$, and its
component bijection is the composite of the two component
bijections.

By Theorem~\ref{thm:finitegeneration}, every epimorphism is a finite
composite of maps of the form $\pi_u$. A composite with nonterminal target
cannot pass through $\one$, by Lemma~\ref{lem:empty}. Thus each
nonidentity factor is of the proper strong monomorphism case just described.
Composition proves the converse.
\end{proof}

\subsection{Describing epimorphisms by their restrictions}\label{sec:epi-restrictions}
There are now two ways an epimorphism can act on a nonterminal
object: it can replace the whole object by an empty leaf, or retain
the root and act separately on its children. In the second case,
the pruning chosen for each child determines the entire
epimorphism, by the simultaneous pushout description in
Theorem~\ref{thm:componentpushouts} and uniqueness of pushouts
in our rigid category. We shall use a separate notation for these
epimorphisms.
Let $\Icat$ consist of all objects of $\C$ and the epimorphisms
with nonterminal target together with $\id_\one$. This is a
subcategory: a composite with nonterminal target cannot pass
through $\one$, by Lemma~\ref{lem:empty}, and the indicated
identities and composites are again in the specified class.
Equivalently, it consists of the nonconstant epimorphisms and $\id_\one$.

The induced map $d:X\to Y$ of a simultaneous pushout of
epimorphisms is itself epic. Indeed, suppose that $hd=h'd$.
Then $hj_i d_i=h'j_i d_i$, so cancellation of each epimorphism
$d_i$ gives $hj_i=h'j_i$. The uniqueness clause of the
simultaneous universal property now gives $h=h'$.
For an empty component family, the pushout is an isomorphism,
hence the identity by Lemma~\ref{lem:rigidity}.

\begin{theorem}[Local classification of epimorphisms]\label{thm:epicclassification}
Under the standing assumptions, the following hold for every $X\ne\one$.
\begin{enumerate}
\item If $d:X\to Y$ is an epimorphism with $Y\ne\one$, the assignment
$\sigma_d(i)=i^d$ is a bijection
$\Comp(X)\to\Comp(Y)$.
Here $i^d$ is the strong monomorphism in the normal factorization
$di=i^d(d|_i)$ of~\eqref{eq:residual}: it records the position
in $Y$ of the child obtained by applying the epimorphism $d|_i$
to $X_i$.
\item The map
\begin{equation}\label{eq:epicclassification}
 \Theta_X:\Out(X)\longrightarrow
 \{*\}\sqcup\prod_{i\in\Comp(X)}\Out(X_i),\qquad
d\longmapsto
 \begin{cases}
 *,&\cod(d)=\one,\\
 (d|_i)_{i\in\Comp(X)},&\cod(d)\ne\one.
\end{cases}
\end{equation}
is well defined and bijective. The empty product is a singleton,
distinct from $*$.
\end{enumerate}
\end{theorem}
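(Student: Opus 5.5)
The plan is to deduce both parts from Theorem~\ref{thm:componentpushouts}, combined with rigidity and the uniqueness of normal factorizations (Lemma~\ref{prop:paths}(d)).

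For (a), let $d:X\to Y$ be epic with $Y\ne\one$. The converse clause of Theorem~\ref{thm:componentpushouts} presents $d$ as the map induced by a simultaneous component pushout of some family $e_i:X_i\to Y_i$. The lower family $(j_i)$ of that pushout lists every component of $Y$ exactly once. So $i\mapsto j_i$ is a bijection $\Comp(X)\to\Comp(Y)$. Since $di=j_ie_i$ with $e_i$ epic and $j_i$ strong monic, uniqueness of normal factorizations gives $d|_i=e_i$ and $i^d=j_i$. Hence $\sigma_d$ is exactly this bijection. The same identification also records that the family $(e_i)$ presenting $d$ is necessarily $(d|_i)_i$.

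For (b), well-definedness is clear: each $d|_i$ is an epimorphism with domain $X_i$, so it lies in $\Out(X_i)$.

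For surjectivity, the value $*$ is attained by $q_X$, which is epic by Lemma~\ref{lem:empty}. Given a tuple $(d_i)_{i\in\Comp(X)}$ with $d_i\in\Out(X_i)$, Theorem~\ref{thm:componentpushouts} supplies a simultaneous pushout with nonterminal object $Y$. Its induced map $d$ is epic by the remark preceding the statement. By the identification in (a), $d|_i=d_i$ for every $i$, so $\Theta_X(d)$ is the given tuple.

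For injectivity, note that $\Out(X)$ contains exactly one element with codomain $\one$, namely $q_X$. So the preimage of $*$ is a single element, and no epimorphism with nonterminal target is sent to $*$. Now suppose $d:X\to Y$ and $d':X\to Y'$ have nonterminal targets and the same restrictions $(d|_i)$. By (a), both are induced maps of simultaneous pushouts of the same family $(d|_i)$. The universal property then yields an isomorphism $h:Y\to Y'$ with $hd=d'$ and $hj_i=j'_i$. Rigidity (Lemma~\ref{lem:rigidity}) makes $h$ an identity, so $Y=Y'$ and $d=d'$ as elements of $\Out(X)$.

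The degenerate case $\Comp(X)=\emptyset$ with $X\ne\one$ needs separate attention, since the empty product is a singleton distinct from $*$. Here the simultaneous pushout of the empty family is $\id_X$. The converse clause of Theorem~\ref{thm:componentpushouts} then says that every epimorphism from $X$ with nonterminal target is $\id_X$. So $\Out(X)=\{q_X,\id_X\}$, and these two are distinct because $X\ne\one$. This matches $\{*\}\sqcup\{()\}$.

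The main obstacle I expect is the step identifying the family $(e_i)$ in the pushout presentation with the restrictions $d|_i$, including the bijection of components. That step needs care that the lower maps $j_i$ really are strong monomorphisms, which holds because Theorem~\ref{thm:componentpushouts} says they are components of $Y$. It also needs the simultaneous pushout to be unique on the nose, not merely up to isomorphism, which is where rigidity is used. Everything else is bookkeeping.
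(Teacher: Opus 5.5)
Your proposal is correct and follows essentially the same route as the paper: both parts are read off from Theorem~\ref{thm:componentpushouts}. Uniqueness of normal factorizations identifies $d|_i$ and $i^d$ with the pushout data, and rigidity makes the simultaneous pushout of a given tuple unique on the nose. Your separate treatment of $\Comp(X)=\emptyset$ is harmless but already covered by your general injectivity argument, since the simultaneous pushout of the empty family is $\id_X$.
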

\begin{proof}
In a simultaneous pushout supplied by
Theorem~\ref{thm:componentpushouts}, the equation $di=j_i d_i$
is already a normal factorization. Hence $i^d=j_i$ and
$d|_i=d_i$. The theorem supplies every epimorphism with
nonterminal target in this way, and its lower family consists
of all the components of the target, each appearing once.
This proves \textup{(a)}.

For \textup{(b)}, every tuple of component epimorphisms admits
a simultaneous pushout. Two pushouts of the same tuple are
uniquely isomorphic, so rigidity identifies their targets and
all their structure maps. Thus the tuple determines exactly
one epimorphism with nonterminal target. Terminality supplies
the unique remaining epimorphism $q_X$, which corresponds to
the element $*$.
\end{proof}

\subsection{Root labels and input places}\label{sec:roots}
We now recover the basic operation symbol at the root of an
object. Replacing each of its children by an empty leaf leaves
precisely that symbol, together with its input places. The
preceding results allow us to carry out these replacements in
the abstract category. To justify arguments which pass from an
object to its children, we first count the proper strong
monomorphisms into an object. Recall that $\Occ(X)$ is the set
of all strong monomorphisms into $X$, including its identity.
\begin{definition}\label{def:complexity-normal}
The \emph{complexity} of $X$ is the number of proper strong
monomorphisms into $X$, namely $c(X)=|\Occ(X)|-1$.
An object is said to be \emph{normal} if its only outgoing morphism
in $\Icat$ is its identity. Thus a nonterminal normal object has
only two outgoing epimorphisms: its identity and its terminal map.
These will be the objects representing basic operation symbols.
\end{definition}

Every proper strong monomorphism into $X$ factors uniquely through
one member $i:X_i\to X$ of $\Comp(X)$, by
Lemma~\ref{prop:paths}. The strong monomorphisms through this
$i$ correspond to all members of $\Occ(X_i)$, including its identity.
Counting this partition gives
\begin{equation}\label{eq:complexity}
c(X)=|\Comp(X)|+\sum_{i\in\Comp(X)}c(X_i).
\end{equation}
In particular, a proper strong monomorphism $A\to X$ has
$c(A)<c(X)$. Passing to a component therefore decreases
complexity, so induction on $c(X)$ justifies the recursive
constructions below.

Recall that $\Icat$ consists of the epimorphisms with
nonterminal target, together with $\id_\one$. For a morphism
$d:X\to Y$ in this class, with $X\ne\one$, the bijection
$\sigma_d$ from Theorem~\ref{thm:epicclassification} sends
a component $i$ of $X$ to its resulting component $i^d$ in $Y$.
The restriction $d|_i$ describes the pruning of its source,
so that $di=\sigma_d(i)(d|_i)$. The next lemma describes how
these maps behave under composition.
\begin{lemma}\label{lem:transport}
Let $d:X\to Y$ and $e:Y\to Z$ be composable morphisms in
$\Icat$, with $X\ne\one$. Then, for each $i\in\Comp(X)$,
\[
 \sigma_{ed}=\sigma_e\sigma_d,\qquad
 (ed)|_i=(e|_{\sigma_d(i)})(d|_i).
\]
\end{lemma}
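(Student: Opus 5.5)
The plan is to compute the normal factorization of $(ed)i$ in two stages and then apply the uniqueness of normal factorizations from Lemma~\ref{prop:paths}(d). First note the degenerate cases. Since $X\ne\one$ and $d\in\Icat$, the morphism $d$ is not $\id_\one$, so it is an epimorphism with nonterminal target, and $Y\ne\one$. By the same reasoning $Z\ne\one$, and $ed$ is an epimorphism with nonterminal target, so it lies in $\Icat$. Hence $\sigma_d$, $\sigma_e$ and $\sigma_{ed}$ are all defined by Theorem~\ref{thm:epicclassification}(a).

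Fix $i\in\Comp(X)$. By definition of restriction and residual, $di=\sigma_d(i)\,(d|_i)$ is the normal factorization of $di$. Put $i'=\sigma_d(i)$, which is a component of $Y$ by Theorem~\ref{thm:epicclassification}(a). Applying the same definition to $e$ and $i'$ gives the normal factorization $ei'=\sigma_e(i')\,(e|_{i'})$. Composing,
\[
(ed)i=e\,i'\,(d|_i)=\sigma_e(i')\,(e|_{i'})(d|_i).
\]
Here $(e|_{i'})(d|_i)$ is a composite of epimorphisms, so it is epic, and $\sigma_e(i')$ is strong monic. This is therefore a normal factorization of $(ed)i$.

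On the other hand, $(ed)i=\sigma_{ed}(i)\,((ed)|_i)$ is the defining normal factorization. Uniqueness in Lemma~\ref{prop:paths}(d) then gives $\sigma_{ed}(i)=\sigma_e(\sigma_d(i))$ and $(ed)|_i=(e|_{\sigma_d(i)})(d|_i)$. This holds for every $i$, and the intermediate objects agree automatically by rigidity.

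The only point requiring care is the bookkeeping about $\Icat$: the identities must be read with the residuals landing in components. Theorem~\ref{thm:epicclassification}(a) guarantees this once we know that none of $X$, $Y$, $Z$ is terminal, and Lemma~\ref{lem:empty} supplies that. No real obstacle is expected beyond this.
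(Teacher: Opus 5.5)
Your proposal is correct and matches the paper's proof: substitute the normal factorizations of $di$ and $e\,\sigma_d(i)$ to get $(ed)i=\sigma_e(\sigma_d(i))\,(e|_{\sigma_d(i)})(d|_i)$, then invoke uniqueness of normal factorization from Lemma~\ref{prop:paths}(d). Your check that $Y,Z\ne\one$ and $ed\in\Icat$, so that all three $\sigma$'s are defined, is left implicit in the paper; it follows directly from the definition of $\Icat$ together with $X\ne\one$, so citing Lemma~\ref{lem:empty} for it is unnecessary though harmless.
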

\begin{proof}
For $i:X_i\to X$, write $di=\sigma_d(i)(d|_i)$ and
$e\sigma_d(i)=\sigma_e(\sigma_d(i))(e|_{\sigma_d(i)})$.
Substitution gives
\[
edi=\sigma_e(\sigma_d(i))\,(e|_{\sigma_d(i)})(d|_i).
\]
The last two factors compose to an epimorphism, while the first
factor is an indecomposable strong monomorphism by
Theorem~\ref{thm:epicclassification}. Thus this is the normal
factorization of $edi$. Its uniqueness proves both equations.
\end{proof}

\begin{theorem}[Root normalization]\label{thm:root}
An object is normal if and only if all its component sources
are $\one$. Every object $X$ has a unique morphism
$\rho_X:X\to r(X)$ belonging to $\Icat$ and having a normal
target. For every morphism $d:X\to Y$ in $\Icat$,
\begin{equation}\label{eq:rootlaws}
r(Y)=r(X),\qquad \rho_Yd=\rho_X.
\end{equation}
If $X\ne\one$, the induced bijections of component sets satisfy
\begin{equation}\label{eq:roottransport}
 \sigma_{\rho_Y}\sigma_d=\sigma_{\rho_X}.
\end{equation}
In particular, $r(r(X))=r(X)$ and $\rho_{r(X)}=\id$.
\end{theorem}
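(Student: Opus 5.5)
Everything runs through the bijection $\Theta_X$ of Theorem~\ref{thm:epicclassification}. For $X\ne\one$, the morphisms of $\Icat$ out of $X$ are exactly the epimorphisms with nonterminal target. They correspond bijectively to tuples $(d_i)_{i\in\Comp(X)}$ with $d_i\in\Out(X_i)$, and each tuple is realized by the simultaneous pushout of Theorem~\ref{thm:componentpushouts}.

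\emph{Characterization of normal objects.} If $X=\one$, then $\Comp(\one)$ is empty and Lemma~\ref{lem:empty} makes $\id_\one$ the only outgoing epimorphism, so both conditions hold. For $X\ne\one$, $X$ is normal exactly when the product $\prod_i\Out(X_i)$ is a singleton. This happens exactly when every $\Out(X_i)$ is a singleton. Now $\Out(X_i)$ always contains $\id_{X_i}$ and $q_{X_i}$, and these coincide precisely when $X_i=\one$; Lemma~\ref{lem:empty} shows $\Out(\one)=\{\id_\one\}$. Hence $X$ is normal if and only if every component source is $\one$.

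\emph{Existence of $\rho_X$.} For $X=\one$ set $\rho_\one=\id_\one$. For $X\ne\one$, let $\rho_X$ be the element of $\Out(X)$ with nonterminal target corresponding under $\Theta_X$ to the tuple $(q_{X_i})_i$. By Theorem~\ref{thm:epicclassification}(a), the components of its target $r(X)$ are the residuals $\sigma_{\rho_X}(i)$. The restriction of $\rho_X$ at $i$ is $q_{X_i}$, so each such residual has source $\one$. By the characterization just proved, $r(X)$ is normal.

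\emph{Uniqueness and the root laws.} Let $d:X\to Y$ lie in $\Icat$ with $X\ne\one$. Then $Y\ne\one$ and $\sigma_d$ is a bijection. Lemma~\ref{lem:transport} gives $(\rho_Yd)|_i=(\rho_Y|_{\sigma_d(i)})(d|_i)$. The first factor is the terminal map $q$, so the restriction at every component is terminal. Thus $\rho_Yd$ lies in $\Icat$, has the same tuple $(q_{X_i})_i$ as $\rho_X$, and therefore equals $\rho_X$ because $\Theta_X$ is injective. This also gives $r(Y)=r(X)$, since equal morphisms have equal codomains. Lemma~\ref{lem:transport} then yields $\sigma_{\rho_Y}\sigma_d=\sigma_{\rho_X}$, which is \eqref{eq:roottransport}.

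For uniqueness, suppose $d:X\to N$ lies in $\Icat$ with $N$ normal. Normality forces $\rho_N=\id_N$, since it is the only outgoing morphism of $N$ in $\Icat$. The root law then gives $d=\rho_Nd=\rho_X$. The same normality argument gives $r(r(X))=r(X)$ and $\rho_{r(X)}=\id$. The case $X=\one$ is trivial, because the only $\Icat$-morphism out of $\one$ is $\id_\one$.

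\emph{Main obstacle.} The delicate step is the bookkeeping in the root law. One must check that $\rho_Yd$ really lies in $\Icat$, which holds because its target $r(Y)$ is nonterminal. One must also confirm that $\Theta_X$ compares restrictions component by component under the identification supplied by $\sigma_d$. Lemma~\ref{lem:transport} is exactly what makes both checks go through.
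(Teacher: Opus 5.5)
Your proof is correct and follows the paper's argument. Both run everything through $\Theta_X$, characterize normality by every $\Out(X_i)$ being a singleton, and define $\rho_X$ by the tuple $(q_{X_i})_i$. The only difference is the order of the last two steps. The paper proves uniqueness directly: any $\Icat$-map into a normal object has each restriction landing in a component with source $\one$, hence terminal. It then reads off $\rho_Yd=\rho_X$ from uniqueness. You instead compute $(\rho_Yd)|_i$ via Lemma~\ref{lem:transport} to get the root law first, then deduce uniqueness from $\rho_N=\id_N$; both orderings are sound.
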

\begin{proof}
For $X=\one$, all the assertions follow from
Lemma~\ref{lem:empty}. Suppose, then, that $X\ne\one$.
By Theorem~\ref{thm:epicclassification}, an outgoing
morphism in $\Icat$ is uniquely specified by a tuple with one
epimorphism of domain $X_i$ for each $i\in\Comp(X)$.
If all component sources are $\one$, each entry of the tuple
must be an identity. Hence the only outgoing morphism in
$\Icat$ is $\id_X$. Conversely, if a component source $X_i$
is nonterminal, its identity and its terminal map give two
different choices for that entry. Taking identities at the
other components gives two distinct outgoing morphisms, so
$X$ is not normal.

The tuple $(q_{X_i})_i$ determines a unique morphism $\rho_X$
in $\Icat$.
Its target has terminal component sources, because its
components are the targets of the chosen maps $q_{X_i}$.
It is therefore normal. Conversely, any morphism in $\Icat$
from $X$ to a normal object restricts at each component to
the unique map $X_i\to\one$. It has exactly the same tuple,
so Theorem~\ref{thm:epicclassification} proves uniqueness.
The composite $\rho_Yd$ is another morphism in $\Icat$ to a normal object,
so it equals $\rho_X$, including its target. Lemma~\ref{lem:transport}
then gives~\eqref{eq:roottransport}. Applying uniqueness to a normal
object proves the last assertions.
\end{proof}

\begin{lemma}\label{prop:thin}
For any objects $X,Y$, there is at most one epimorphism $X\to Y$.
In particular, every epimorphism $X\to X$ is an identity. A category
with at most one arrow between any two objects is said to be
\emph{thin}; thus $\Dcat$ is thin.
\end{lemma}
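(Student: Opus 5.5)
Induction on the complexity $c(X)$ from Definition~\ref{def:complexity-normal} shows that two epimorphisms $d,d':X\to Y$ coincide. The base case is immediate: if $X=\one$, Lemma~\ref{lem:empty} says the only epimorphism out of $\one$ is $\id_\one$.

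Suppose $X\ne\one$ and split according to whether $Y$ is terminal. If $Y=\one$, then $d=d'=q_X$ by terminality. Otherwise both $d$ and $d'$ lie in $\Icat$. By Theorem~\ref{thm:epicclassification}(b), each is determined by its tuple of restrictions $(d|_i)_{i\in\Comp(X)}$, respectively $(d'|_i)_i$. So it suffices to show $d|_i=d'|_i$ for every component $i$.

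Each $d|_i$ is an epimorphism $X_i\to\dom(\sigma_d(i))$, and $c(X_i)<c(X)$ by \eqref{eq:complexity}. The induction hypothesis therefore gives $d|_i=d'|_i$ as soon as the two targets agree, that is, as soon as $\dom(\sigma_d(i))=\dom(\sigma_{d'}(i))$.

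Matching the targets is the main obstacle: $\sigma_d$ and $\sigma_{d'}$ are a priori different bijections $\Comp(X)\to\Comp(Y)$. I use root normalization to identify them. Equation~\eqref{eq:roottransport} of Theorem~\ref{thm:root} gives $\sigma_{\rho_Y}\sigma_d=\sigma_{\rho_X}=\sigma_{\rho_Y}\sigma_{d'}$. Since $\sigma_{\rho_Y}$ is a bijection (Theorem~\ref{thm:epicclassification}(a), applied to $\rho_Y$ with nonterminal target $r(Y)$), it cancels, and $\sigma_d=\sigma_{d'}$. Hence for each $i$ the restrictions $d|_i$ and $d'|_i$ are epimorphisms with the same domain $X_i$ and the same codomain, the source of the component $\sigma_d(i)$ of $Y$. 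The induction hypothesis makes them equal, and Theorem~\ref{thm:epicclassification}(b) then gives $d=d'$.

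For the final assertion, the identity is an epimorphism $X\to X$, so any epimorphism $X\to X$ equals $\id_X$ by the uniqueness just proved. The statement that $\Dcat$ is thin is a restatement of uniqueness.
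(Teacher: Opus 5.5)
Your proof is correct and matches the paper's argument essentially step for step. Both use induction on $c(X)$, dispose of the cases $X=\one$ and $Y=\one$ directly, use \eqref{eq:roottransport} to force $\sigma_d=\sigma_{d'}=\sigma_{\rho_Y}^{-1}\sigma_{\rho_X}$, apply the induction hypothesis to the component restrictions, and conclude by injectivity of $\Theta_X$.
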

\begin{proof}
We argue by induction on $c(X)$. Consider epimorphisms
$d,d':X\to Y$. If $Y=\one$, terminality gives $d=d'$.
If $X=\one$, Lemma~\ref{lem:empty} makes both morphisms
identities. In the remaining case,
equation~\eqref{eq:roottransport} determines both component bijections
from their endpoints:
\[
 \sigma_d=\sigma_{d'}=
 \sigma_{\rho_Y}^{-1}\sigma_{\rho_X}.
\]
Consequently, for each $i\in\Comp(X)$, the restrictions of $d,d'$
have the same domain $X_i$ and the same codomain, namely the source
of $\sigma_d(i)=\sigma_{d'}(i)$. These domains have smaller complexity
by~\eqref{eq:complexity}, so the induction hypothesis identifies
the restrictions of $d$ and $d'$ at every component. Injectivity
of $\Theta_X$ in Theorem~\ref{thm:epicclassification} gives
$d=d'$. Taking $Y=X$ shows that every endo-epimorphism equals
the identity.
\end{proof}

The construction $X\mapsto r(X)$ removes all the expressions
inserted into the inputs of the operation at the root. Its
possible nonterminal values therefore describe the basic
operation symbols, each with all its inputs empty. We use
these objects as the labels in the recovered trees.
\begin{definition}\label{def:labels}
The \emph{root labels} are
\[
 \Phi=\{\phi\ne\one:r(\phi)=\phi\}.
\]
The \emph{input-place set} and \emph{arity} of a label are
\[
P_\phi=\Comp(\phi),\qquad \operatorname{ar}(\phi)=|P_\phi|.
\]
Each $p\in P_\phi$ is an arrow $\one\to\phi$.
\end{definition}

This gives the intrinsic description of basic operations stated
in the Introduction. More explicitly, $L$ is a root label if and
only if $L\ne\one$ and every proper strong monomorphism into $L$
has domain $\one$. Indeed, when $L$ is normal, its components
have terminal domains. Every proper strong monomorphism into
$L$ factors through a component, and the only strong
monomorphism into $\one$ is its identity. The converse follows
by applying the stated condition to the components of $L$ and
using Theorem~\ref{thm:root}. Moreover, every point of $L$ is
proper and strong, so the same argument makes it a component.
Consequently,
\[
P_L=\C(\one,L),\qquad \operatorname{ar}(L)=|\C(\one,L)|.
\]
A nullary label is a nonterminal object with no proper strong
monomorphisms. It represents a generating operation with no inputs.
The object $\one$ represents an empty input into which an
expression can be inserted. It will give the identity operation
of the free operad.

For nonterminal $X$, its root label $r(X)$ belongs to $\Phi$.
The epimorphism $\rho_X$ replaces all the components of $X$
by empty leaves. Its component bijection $\sigma_{\rho_X}$
therefore identifies the components of $X$ with the input places
of $r(X)$. We use this bijection to write
\begin{equation}\label{eq:ports}
i_p^X:X_p\longrightarrow X,\qquad
i_p^X=\sigma_{\rho_X}^{-1}(p).
\end{equation}
Every morphism $d:X\to Y$ in $\Icat$ preserves the root label
and these input places. Write $d_p:X_p\to Y_p$ for its
restriction at $i_p^X$, so that $d_p=d|_{i_p^X}$. Then
\begin{equation}\label{eq:local}
d i_p^X=i_p^Yd_p.
\end{equation}
Thus a nonterminal target of an epimorphism is obtained by
retaining the root label and choosing a pruning for the child
at each input place. We have also recovered how to identify
the input places at every occurrence of the same basic
operation. An order chosen on the inputs of that operation
can therefore be used wherever it occurs. We will use these
choices to recover the ordered expressions of the free operad.

\subsection{Cuts and the action of morphisms on positions}\label{sec:cuts}
Starting at the root, we reach any position by choosing a child,
then a child of that child, and so on. We record these choices
as an address. This will let us specify a pruning by the
positions at which it removes subtrees, and then describe
where a matching sends each position of its source.
\begin{definition}\label{def:addresses}
The \emph{addresses} of strong monomorphisms are defined recursively.
The identity of each object has the root address, which is the
empty word $\emptyword$. The object $\one$ has only this address.
For $X\ne\one$, an address below its $p$-component is $pa$, where
$a$ is an address in $X_p$. Let $\Pos(X)$ be the resulting finite
set of addresses, and let $X|_a$ be the domain of the strong
monomorphism at $a$.
Each letter records a specified input place of the corresponding
label.
\end{definition}

Unique component factorization identifies $\Pos(X)$ with $\Occ(X)$.
An address is a prefix of another precisely when its position is an
ancestor of the other position. If neither address is a prefix of
the other, the corresponding strong monomorphisms are disjoint.
A strong monomorphism at $a$ identifies the addresses of its source with those
of $X$ extending $a$.

To specify a pruning, it suffices to record the positions at which
whole subtrees are replaced by empty leaves. Once one such position
has been selected, selecting a descendant would have no further
effect. This motivates the following definition.
\begin{definition}\label{def:effective-cut}
An \emph{effective cut} in $X$ is a set $K\subseteq\Pos(X)$
whose members have nonterminal source and are pairwise incomparable
under the prefix order. Thus the selected strong monomorphisms
have pairwise disjoint positions. The empty set is also an effective cut.
If $K$ contains the root, then $K=\{\emptyword\}$.
\end{definition}

\begin{theorem}[Classification by cuts]\label{thm:cuts}
Epimorphisms out of $X$ are in bijection with effective cuts in $X$.
The empty cut gives $\id_X$; the root cut gives $q_X$ for $X\ne\one$.
For the epimorphism $\delta_K:X\to X_K$, each selected subtree is
replaced by $\one$, and the selected position is retained:
\begin{equation}\label{eq:pruning}
 \Pos(X_K)=\Pos(X)\setminus
 \bigcup_{a\in K}\{ab:b\ne\emptyword,\ ab\in\Pos(X)\}.
\end{equation}
Moreover,
\[
K=\{a\in\Pos(X_K):X|_a\ne\one,\ X_K|_a=\one\}.
\]
\end{theorem}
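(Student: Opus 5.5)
We prove the statement by induction on the complexity $c(X)$ of Definition~\ref{def:complexity-normal}, using the bijection $\Theta_X$ of Theorem~\ref{thm:epicclassification} as the recursive engine.

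\textbf{Effective cuts decompose like $\Theta_X$.} For $X=\one$, Lemma~\ref{lem:empty} gives a single epimorphism, the identity. There is also a single effective cut, the empty one, because the only address has terminal source. So the claims are trivial.

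For $X\ne\one$, an effective cut $K$ falls into one of two cases.
\begin{enumerate}
\item $K=\{\emptyword\}$, which is allowed because $X$ is nonterminal.
\item $K$ avoids the root. Then by Definition~\ref{def:addresses} it is a disjoint union $\bigcup_p pK_p$, where each $K_p$ is an effective cut of $X_p$. Prefix-incomparability and nonterminality of sources pass to the children, because the strong monomorphism at $pa$ restricts to the one at $a$ in $X_p$.
\end{enumerate}
This gives a bijection $\mathrm{Cut}(X)\cong\{*\}\sqcup\prod_p\mathrm{Cut}(X_p)$. It matches the target of $\Theta_X$ term by term, with the components indexed by input places through \eqref{eq:ports}.

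\textbf{Defining $\delta_K$.} We set $\delta_{\{\emptyword\}}=q_X$. For a cut avoiding the root, let $\delta_K$ be the unique epimorphism with nonterminal target whose restrictions are $d_p=\delta_{K_p}$. By induction these are bijective assignments, so composing the bijections yields the claimed bijection. The empty cut corresponds to the tuple of identities; since $\Theta_X(\id_X)$ is that tuple, it gives $\id_X$.

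\textbf{Positions.} We prove \eqref{eq:pruning} by the same induction, which requires describing $\Pos(X_K)$.
\begin{enumerate}
\item Root cut: $X_K=\one$, whose only address is $\emptyword$. This matches the right side of \eqref{eq:pruning}, which removes every nonempty word.
\item Cut avoiding the root: by \eqref{eq:local} and Theorem~\ref{thm:epicclassification}(a), $\sigma_{\delta_K}$ sends the $p$-component of $X$ to the $p$-component of $X_K$, with domain $(X_p)_{K_p}$. Hence $\Pos(X_K)=\{\emptyword\}\cup\bigcup_p p\Pos((X_p)_{K_p})$, and the induction hypothesis for each $X_p$ gives \eqref{eq:pruning}.
\end{enumerate}
The subtlety here is that addresses of $X_K$ are defined intrinsically through its own components. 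We therefore rely on the root-transport identity \eqref{eq:roottransport} to know that the letter $p$ means the same input place in $X$ and in $X_K$. This holds because $\rho_{X_K}\delta_K=\rho_X$.

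\textbf{Recovering $K$.} The last formula is also proved recursively. Consider an address $a\in\Pos(X_K)$ for the root cut: it is $\emptyword$, with $X\ne\one$ and $X_K=\one$. For a cut avoiding the root, the root of $X_K$ is nonterminal, so it is excluded. For $a=pb$ we have $X|_{pb}=X_p|_b$ and $X_K|_{pb}=(X_p)_{K_p}|_b$, so the induction hypothesis applies.

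This recovery formula also gives injectivity of $K\mapsto\delta_K$ directly, which cross-checks the bijection.

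The main obstacle I expect is the bookkeeping identifying addresses of $X_K$ with those of $X$. That is, we must confirm that the restriction of $\delta_K$ at the $p$-component is exactly $\delta_{K_p}$ and lands at the $p$-addressed component of $X_K$. This rests on Lemma~\ref{lem:transport} together with \eqref{eq:roottransport}.
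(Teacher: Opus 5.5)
Your proposal is correct and follows essentially the same route as the paper. Both argue by induction on $c(X)$, match the root cut with $*$ under $\Theta_X$ and root-avoiding cuts with tuples of component cuts, and then compute $\Pos(X_K)$ and recover $K$ recursively from the fact that morphisms in $\Icat$ preserve root labels and input places; your explicit use of \eqref{eq:roottransport} to identify $(X_K)_p$ with $(X_p)_{K_p}$ simply spells out a step the paper states more briefly.
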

\begin{proof}
We argue by induction on $c(X)$. The object $\one$ has just
the empty cut and the identity epimorphism. Suppose that
$X\ne\one$. Under the bijection $\Theta_X$, the element $*$
corresponds to the terminal map $q_X$, which removes the entire
tree and hence corresponds to the root cut.

Every other epimorphism has a restriction at each component.
By induction, each restriction determines an effective cut in
that component. Prefix its addresses by the corresponding input
place and take the union over the components. The resulting set
is an effective cut in $X$, because the components are disjoint.
Conversely, a cut which does not contain the root determines one
cut in each component. Induction and
Theorem~\ref{thm:epicclassification} then give a unique
epimorphism in $\Icat$. These two constructions are inverse.

Each selected position becomes an empty leaf, so all its proper
descendants disappear and the position itself remains. The
epimorphisms in $\Icat$ preserve root labels and input places,
so this description applies recursively in each component and
gives~\eqref{eq:pruning}. Among the retained positions, precisely
the selected ones have changed from nonterminal subtrees to
empty leaves. This proves the displayed recovery formula for $K$.
\end{proof}

\begin{lemma}\label{prop:cutcomposition}
For a cut $K$ in $X$ and a cut $L$ in $X_K$, identify $L$ with its
retained addresses in $X$. Then
\begin{equation}\label{eq:cutcomposition}
 \delta_L\delta_K=\delta_{\operatorname{outer}(K\cup L)},
\end{equation}
where $\operatorname{outer}$ retains those addresses which have
no shorter prefix in the selected set.
\end{lemma}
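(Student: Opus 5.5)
We prove \eqref{eq:cutcomposition} by induction on $c(X)$, following the recursive description of epimorphisms through $\Theta_X$ in Theorem~\ref{thm:epicclassification}. First note that both sides are epimorphisms with domain $X$. By Theorem~\ref{thm:cuts} an epimorphism out of $X$ is determined by its effective cut, so it suffices to check that $\operatorname{outer}(K\cup L)$ is an effective cut and that it is the cut of $\delta_L\delta_K$. Effectiveness is immediate once $L$ is identified with addresses of $X$. By \eqref{eq:pruning}, every address of $L$ is retained in $X_K$. The retained addresses of $X$ that carry a nonterminal subtree of $X_K$ still have nonterminal source in $X$, because pruning a tree to a non-$\one$ subtree cannot create a subtree at a position where $X$ had $\one$. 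Finally, the outer part of any set of addresses is a prefix antichain by construction.

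For the induction, the case $X=\one$ is trivial, since only empty cuts exist. If $X\neq\one$, we split into cases.

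\emph{Case 1.} The root lies in $K$. Then $\delta_K=q_X$, so $X_K=\one$ and $L=\emptyset$. Both sides are $q_X$.

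\emph{Case 2.} The root lies in $L$ but not in $K$. Then $\delta_L=q_{X_K}$, so the composite is $q_X$. Also $\operatorname{outer}(K\cup L)=\{\emptyword\}$, and the root cut gives $q_X$ by Theorem~\ref{thm:cuts}.

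\emph{Case 3.} Neither cut contains the root. Then both $\delta_K$ and $\delta_L$ lie in $\Icat$. For each input place $p$, write $K_p$ and $L_p$ for the cuts in the children obtained by stripping the leading letter $p$. The proof of Theorem~\ref{thm:cuts} shows that $(\delta_K)_p=\delta_{K_p}$, and that the $p$-component of $X_K$ is $(X_p)_{K_p}$, so $L_p$ is a cut there. By \eqref{eq:local} and Lemma~\ref{lem:transport}, which is applicable since we stay inside $\Icat$ where input places are preserved, the restriction of $\delta_L\delta_K$ at place $p$ is $\delta_{L_p}\delta_{K_p}$. The induction hypothesis on $X_p$, which has smaller complexity by \eqref{eq:complexity}, identifies this restriction with $\delta_{\operatorname{outer}(K_p\cup L_p)}$. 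Since $\operatorname{outer}$ commutes with prefixing by $p$ and with taking unions over disjoint components, the tuple of restrictions is exactly the tuple associated to $\operatorname{outer}(K\cup L)$. Injectivity of $\Theta_X$ then gives \eqref{eq:cutcomposition}.

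The main obstacle is the bookkeeping in Case~3. One must justify that the identification of addresses of $X_K$ with retained addresses of $X$ is compatible with the component decomposition. Concretely, the input place $p$ of $X_K$ given by $\sigma_{\delta_K}$ must correspond to the same letter $p$. This is exactly \eqref{eq:roottransport} together with the port definition \eqref{eq:ports}. I would state this compatibility explicitly before running the induction, so that the letter-by-letter identification of addresses is legitimate.
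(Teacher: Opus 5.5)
Your proof is correct and follows the paper's argument. It uses the same case split on whether the root lies in $K$ or in $L$, then induction on complexity through the component restrictions, and concludes by injectivity of $\Theta_X$ from Theorem~\ref{thm:epicclassification}. Your explicit checks that $\operatorname{outer}(K\cup L)$ is an effective cut, and that $\sigma_{\delta_K}$ respects input-place letters by \eqref{eq:roottransport}, fill in bookkeeping that the paper leaves implicit.
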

\begin{proof}
If the second cut contains the root, both sides of
\eqref{eq:cutcomposition} are the unique map $X\to\one$. If the
first cut contains the root, the intermediate object is $\one$ and
the second cut is empty, giving the same conclusion. Otherwise both
epimorphisms preserve the root label. We argue by induction on
complexity and apply the formula to their restrictions at each
component. Within a component, a later cut at its root absorbs
all earlier selected descendants, and the other selected positions
remain. Thus the surviving cuts are exactly those members of
$K\cup L$ which have no shorter selected prefix. By
Theorem~\ref{thm:epicclassification}, the component restrictions
determine the global epimorphism, proving the formula.
\end{proof}

In particular, performing disjoint cuts in either order gives the
same morphism. A later cut at an ancestor of an earlier cut absorbs
that earlier cut, as expressed by~\eqref{eq:cutcomposition}.

\phantomsection\label{not:position-maps}
We can now describe the action of these maps on all positions.
An epimorphism $d:X\to Y$ induces a surjection
$p_d:\Pos(X)\to\Pos(Y)$ by sending every position inside a cut
subtree to the root of that subtree, which survives as an empty
leaf, and retaining all other addresses. A strong monomorphism
$u:A\to X$ at $a$ induces the injection $p_u(b)=ab$.
When $d$ belongs to $\Icat$ and $X\ne\one$, this gives
\[
p_d(\emptyword)=\emptyword,\qquad p_d(pb)=p\,p_{d_p}(b).
\]
The composite-cut formula gives $p_{ed}=p_ep_d$ for epimorphisms.
To compose a subtree occurrence with a pruning, we must determine
what remains of the selected subtree and where it lies after
pruning. The next lemma describes this in terms of addresses.

\begin{lemma}\label{lem:restriction}
Let $d:X\to Y$ be an epimorphism, and let $u:A\to X$ be a
strong monomorphism with address $a$. Write
$du=u^d(d|_u)$ for the normal factorization from
\eqref{eq:residual}. Then $u^d$ has address $p_d(a)$, and
\begin{equation}\label{eq:posrestriction}
p_dp_u=p_{u^d}p_{d|_u}.
\end{equation}
\end{lemma}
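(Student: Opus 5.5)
We argue by induction on $c(X)$, following the recursive structure used in Theorem~\ref{thm:cuts}. Three cases cover all situations. Throughout, uniqueness of normal factorizations (Lemma~\ref{prop:paths}(d)) identifies $u^d$ and $d|_u$ once we exhibit any factorization of $du$ as an epimorphism followed by a strong monomorphism.

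\emph{Trivial cases.} If $u=\id_X$, then $a=\emptyword$. The normal factorization is $du=d\cdot\id_X$, so $u^d=\id_Y$ with address $\emptyword=p_d(\emptyword)$, and both sides of \eqref{eq:posrestriction} equal $p_d$.

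If $d=q_X$ with $X\ne\one$, then $du=q_A$, which is epic by Lemma~\ref{lem:empty}. Hence $u^d=\id_\one$, whose address is $\emptyword$. We also have $p_{q_X}(a)=\emptyword$, since the root cut sends every position to the root. Both sides of \eqref{eq:posrestriction} are the constant map to $\emptyword$.

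\emph{Inductive case: $d\in\Icat$ with $X\ne\one$ and $u$ proper.} By Lemma~\ref{prop:paths}(c), write $u=i_p^X u'$, where $u':A\to X_p$ has address $a'$, so that $a=pa'$. Equation~\eqref{eq:local} gives
\[
du=d\,i_p^X u'=i_p^Y d_p u'.
\]
Apply the induction hypothesis to $d_p:X_p\to Y_p$ and $u'$, using $c(X_p)<c(X)$ from \eqref{eq:complexity}. This gives $d_pu'=u'^{d_p}(d_p|_{u'})$, where $u'^{d_p}$ has address $p_{d_p}(a')$. Consequently
\[
du=\bigl(i_p^Y u'^{d_p}\bigr)(d_p|_{u'})
\]
is an epimorphism followed by a strong monomorphism, so it is the normal factorization. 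Therefore $d|_u=d_p|_{u'}$ and $u^d=i_p^Yu'^{d_p}$.

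The address of $u^d$ is $p\,p_{d_p}(a')$, which equals $p_d(pa')=p_d(a)$ by the recursive formula $p_d(pb)=p\,p_{d_p}(b)$ stated before the lemma.

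For \eqref{eq:posrestriction}, take $b\in\Pos(A)$. Then
\[
p_dp_u(b)=p_d(pa'b)=p\,p_{d_p}(a'b)=p\,p_{d_p}p_{u'}(b).
\]
By induction this equals $p\,p_{u'^{d_p}}p_{d_p|_{u'}}(b)$, which is $p_{u^d}p_{d|_u}(b)$ because $u^d$ has address $p\,p_{d_p}(a')$.

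\emph{Remaining cases.} If $u$ is the identity, the trivial case above applies. If $X=\one$, everything is an identity. Every epimorphism is either $q_X$ or in $\Icat$, so the three cases cover all situations.

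\emph{Main obstacle.} The delicate point is bookkeeping rather than substance. The recursive formula for $p_d$ must be consistent with the global description of $p_d$, which collapses each cut subtree to its root. This follows from Theorem~\ref{thm:cuts}, since the cuts of $d$ inside component $p$ are exactly the $p$-prefixed cuts of $d_p$. We also need the address of a composite strong monomorphism to be the concatenation of the addresses of its factors. This comes from Definition~\ref{def:addresses} via unique component factorization.
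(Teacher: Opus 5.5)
Your proof is correct and follows essentially the same route as the paper. The paper also disposes of the cases $u=\id_X$ and $d=q_X$ first. It then applies \eqref{eq:local} at the first component of $u$ and descends along the remaining component path, using uniqueness of normal factorization. Your induction on $c(X)$ is a cleaner formalization of that same recursive descent.
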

\begin{proof}
If $u=\id_X$, the restriction is $d$ and the residual map is
$\id_Y$, giving both assertions. If $d=q_X$, the equality
$q_Xu=q_A$ gives residual map $\id_\one$; both positional maps
are constant at the root of $\one$. In the remaining case, $d$
preserves the root and $u$ has a nonempty component path. Apply
\eqref{eq:local} at its first component, and continue with the
remaining component path. At each step, either the selected
subtree is removed, giving the terminal-map case already treated,
or its first input place is retained and the calculation passes
to the next component. The path is finite by
Lemma~\ref{prop:paths}. The resulting factorization has an epic
first map and a strong monic second map, so uniqueness identifies
it with~\eqref{eq:residual}. The same recursive calculation gives
its address as $p_d(a)$ and proves~\eqref{eq:posrestriction}.
\end{proof}

\begin{theorem}[Faithful positions and composition]\label{thm:positions}
The assignment sending $X$ to $\Pos(X)$ and a morphism $ud$
in normal form to $p_up_d$ is a faithful functor
$\Pos:\C\to\mathbf{FinSet}$. There is a canonical bijection
\begin{equation}\label{eq:normalhom}
 \C(X,Y)\cong\coprod_A\Dcat(X,A)\times\Ocat(A,Y).
\end{equation}
For composable morphisms $f=(d,u)$ and $g=(e,v)$ written in
normal form, their composite is
\begin{equation}\label{eq:composition}
gf=\bigl((e|_u)d,\ vu^e\bigr).
\end{equation}
\end{theorem}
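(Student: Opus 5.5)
I will prove the three assertions in the order: bijection~\eqref{eq:normalhom}, composition formula~\eqref{eq:composition}, and then functoriality and faithfulness of $\Pos$.

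\textbf{The bijection and the composition formula.} The bijection~\eqref{eq:normalhom} is immediate from Lemma~\ref{prop:paths}(d). Every morphism $f:X\to Y$ has a unique normal factorization $f=ud$ with $d\in\Dcat(X,A)$ and $u\in\Ocat(A,Y)$, and every such pair composes to a morphism. The bijection is therefore $f\mapsto(d,u)$, with inverse given by composition.

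For~\eqref{eq:composition}, take $f=ud:X\to Y$ and $g=ve:Y\to Z$, where $u:A\to Y$ and $e:Y\to B$. Apply the normal factorization~\eqref{eq:residual} of $e$ at $u$, namely $eu=u^e(e|_u)$. Substitution gives
\[
gf=veud=v\,u^e\,(e|_u)\,d.
\]
The factor $(e|_u)d$ is a composite of epimorphisms, hence epic. The factor $vu^e$ is a composite of strong monomorphisms, hence strong monic. By uniqueness of normal factorizations, this is the normal form of $gf$, which proves~\eqref{eq:composition}.

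\textbf{Functoriality.} We need $\Pos(gf)=\Pos(g)\Pos(f)$, that is,
\[
p_{vu^e}\,p_{(e|_u)d}=p_v\,p_e\,p_u\,p_d .
\]
I will verify this from three ingredients.
\begin{itemize}
\item For strong monomorphisms, $p_{vw}=p_vp_w$. This holds because addresses concatenate along the unique component paths of Lemma~\ref{prop:paths}(c): the address of $vw$ is the address of $v$ followed by that of $w$.
\item For epimorphisms, $p_{e'd}=p_{e'}p_d$. This was noted after Lemma~\ref{prop:cutcomposition} as a consequence of the outermost-union formula~\eqref{eq:cutcomposition}.
\item The interchange identity $p_ep_u=p_{u^e}p_{e|_u}$, which is Lemma~\ref{lem:restriction}.
\end{itemize}
Combining these, $p_vp_ep_up_d=p_vp_{u^e}p_{e|_u}p_d=p_{vu^e}\,p_{(e|_u)d}$, as required. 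Identities go to identities, since the identity is its own normal form with identity address and empty cut. The positional functions are genuine maps $\Pos(X)\to\Pos(Y)$, and the sets are finite by Axiom~\ref{ax:finite}.

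\textbf{Faithfulness.} This is the step needing the most care. Let $f=ud$ and $f'=u'd'$ be morphisms $X\to Y$ with $p_up_d=p_{u'}p_{d'}$. The argument has three steps.
\begin{enumerate}
\item Since $p_d$ sends the root to the root, which holds in both cases $d=q_X$ and $d\in\Icat$, the image of the root of $X$ is the address of $u$. Hence $u$ and $u'$ have the same address.
\item Unique component factorization identifies $\Pos(Y)$ with $\Occ(Y)$, so $u=u'$. In particular $d$ and $d'$ have the same codomain $A$.
\item Lemma~\ref{prop:thin} now gives $d=d'$.
\end{enumerate}
Therefore $f=f'$. Thinness of $\Dcat$ makes this easier than comparing cuts directly.

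The main obstacle I anticipate is making sure Lemma~\ref{lem:restriction} applies in the degenerate cases. When $e$ is the terminal map, the cut lies above the address of $u$, and $u^e$ is then the identity of $\one$. I will check these cases against the explicit case analysis already done in that lemma's proof, rather than redo it.
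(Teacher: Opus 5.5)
Your proposal is correct and follows the paper's proof essentially step for step. The bijection comes from Lemma~\ref{prop:paths}(d), and the composition formula from substituting $eu=u^e(e|_u)$. Functoriality uses Lemma~\ref{lem:restriction} together with the composite laws for prunings and occurrences. Faithfulness reads off the address of $u$ from the image of the source root and then applies thinness of $\Dcat$ (Lemma~\ref{prop:thin}) to recover $d$.
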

\begin{proof}
The bijection~\eqref{eq:normalhom} is Lemma~\ref{prop:paths}(d).
For the composition law, factor the middle composite $eu$ in
the normal form shown in the square
\[
\xymatrix@C=3em@R=2.5em{
X\ar[r]^d&A\ar[r]^u\ar[d]_{e|_u}&Y\ar[d]^e&\\
&B\ar[r]_{u^e}&C\ar[r]^v&Z.
}
\]
Substituting $eu=u^e(e|_u)$ into $gf=veud$ gives
\eqref{eq:composition}.
The map $(e|_u)d$ is epic and $vu^e$ is strong monic, so this is
its normal factorization. Lemma~\ref{lem:restriction} gives
$p_ep_u=p_{u^e}p_{e|_u}$; together with the corresponding laws
for composites of epimorphisms and strong monomorphisms, this proves
that the positional map of $gf$ is the composite of those of $g,f$.
Identity normal forms act as identity functions, so these
assignments define a functor.

To prove faithfulness, observe that a pruning sends the source
root to the root of its target. Hence, for a morphism $ud$, the
image of the source root is the address of $u$. This determines
the strong monomorphism $u$ and its domain. By
Lemma~\ref{prop:thin}, there is at most one epimorphism from
the given source to that domain. Thus the positional function
determines both factors and therefore the morphism.
\end{proof}
\subsection{From singleton filling to assembly}\label{sec:filling}
Axiom~\ref{ax:U} allows an object to be inserted at one prescribed
point. To recover arbitrary expressions, we need to insert several
objects at different points of the same object. We will show that the
one-point requirement already supplies this simultaneous construction.
The pullback squares in the following definition specify the entire
object inserted at each point. We seek a filling in which these
insertions account for every change to the original object. Its
defining requirement is that every other filling admits exactly one
map to it preserving the inserted objects and the map back to the
original object.
\begin{definition}\label{def:finitefilling}
A finite \emph{marked cocone of constant morphisms} consists of a
finite family of objects $(A_i)_{i\in I}$ and distinct points
$(j_i:\one\to B)_{i\in I}$, with legs $j_iq_{A_i}:A_i\to B$.
The point $j_i$ records where the object $A_i$ is to be inserted.
An object of $\Fill((A_i,j_i)_{i\in I})$ consists of an object
$C$, an epimorphism $d:C\to B$, and strong monomorphisms
$u_i:A_i\to C$ such that, for every $i\in I$, the square
\[
 \xymatrix@C=4em@R=2.5em{
A_i\ar[r]^{u_i}\ar[d]_{q_{A_i}}&C\ar[d]^d\\
 \one\ar[r]_{j_i}&B
 }
\]
is a pullback. Equivalently, each $(C,d,u_i)$ is an object of
$\Fill(A_i,j_i)$ from Axiom~\ref{ax:U}.
A morphism
$h:(C',d',(u'_i)_i)\to(C,d,(u_i)_i)$ is a morphism
$h:C'\to C$ in $\C$ satisfying $dh=d'$ and $hu'_i=u_i$ for
every $i$. We also call it a \emph{comparison of fillings}.
Composition and identities are those of $\C$.
A terminal object is called a \emph{universal filling} of the cocone.
\end{definition}

Thus a comparison of fillings makes the following diagram commute
for every marked point:
\[
\xymatrix@C=3.5em@R=2.5em{
A_i\ar[r]^{u'_i}\ar@/_0.5pc/[dr]_{u_i}
  &C'\ar[r]^{d'}\ar[d]^h&B\ar@{=}[d]\\
&C\ar[r]_d&B
}
\]
The comparison preserves both the inserted object and the map
which removes it again.

By Lemma~\ref{lem:rigidity}, every point $j:\one\to B$ is a
strong monomorphism. Lemma~\ref{lem:empty} also shows that $j$ is
determined by the composite $jq_A$, since $q_A$ is epic. Thus the
point in each marked leg is recovered from that leg and its domain.

\begin{lemma}[Exact point fibres]\label{lem:fibres}
Let $d:C\to B$ be an epimorphism and let $j:\one\to B$ have address $a$.
There is a unique strong monomorphism $w:W\to C$ whose address is $a$ and for which
\[
 \xymatrix@C=4em@R=2.5em{
W \ar[r]^{w} \ar[d]_{q_W} & C \ar[d]^{d} \\
  \one \ar[r]_{j} & B
 }
\]
is a pullback. If $a$ is one of the cut addresses of $d$,
then $W=C|_a$ is the whole subtree erased there. Otherwise
$W=\one$ is the unchanged empty leaf.
\end{lemma}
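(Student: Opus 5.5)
The plan is to build the pullback from Axiom~\ref{ax:Dpull}, read off its address from Lemma~\ref{lem:strongpullbacks}, and identify its domain by induction on $c(C)$ using the cut description of Theorem~\ref{thm:cuts}.

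\emph{Existence.} The point $j$ is a strong monomorphism by Lemma~\ref{lem:rigidity}, so Axiom~\ref{ax:Dpull} supplies a pullback of $d$ along $j$. Its left leg $W\to\one$ is necessarily $q_W$, and its top leg $w=d^*j$ is a strong monomorphism. By Lemma~\ref{lem:strongpullbacks}, $w^d=(d^*j)^d=j$. By Lemma~\ref{lem:restriction}, $w^d$ has address $p_d(\operatorname{addr}(w))$. Hence $p_d(\operatorname{addr}(w))=a$.

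\emph{Locating the address.} I would prove, by induction on $c(C)$, that $\operatorname{addr}(w)=a$ and that $W$ is as described. There are two base cases.
\begin{enumerate}
\item If $d=q_C$, then $B=\one$, $j=\id_\one$ and $a=\emptyword$. The pullback is $\id_C$, so $w$ has address $\emptyword$ and $W=C$. This is $C|_a$ with $a$ the root cut when $C\ne\one$, and the unchanged leaf $\one$ when $C=\one$.
\item If $d$ is an identity, the pullback is $j$ itself, and $a$ is not a cut address.
\end{enumerate}

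In the remaining case $d$ lies in $\Icat$ with $C\ne\one$, so $a=pa'$ for some input place $p$. Equation~\eqref{eq:local} gives $d\,i_p^C=i_p^B d_p$. The point $j$ factors as $i_p^B j'$ with $j'$ of address $a'$. I expect that pasting the bicartesian-type square for the component with the pullback of $d_p$ along $j'$ gives a pullback of $d$ along $j$. This needs that every $g:T\to C$ with $dg$ constant at $j$ factors through $i_p^C$. That holds by the equivalence $u^d\inside v \iff u\inside d^*v$ of Lemma~\ref{lem:strongpullbacks}, applied with $v=i_p^B$, together with $(i_p^C)^d=i_p^B$ and the fact that $d^*i_p^B$ is then a strong monomorphism above $i_p^C$. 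I would first check that $d^*i_p^B=i_p^C$ by using the correspondence of components under $\sigma_d$. Induction on the component $C_p$, which has smaller complexity by~\eqref{eq:complexity}, then gives address $a'$ in $C_p$, hence address $pa'=a$ in $C$. The cut addresses of $d$ are $p$ prefixed to those of $d_p$ (proof of Theorem~\ref{thm:cuts}), so the description of $W$ transfers: $W=C|_a$ if $a$ is a cut address, and $W=\one$ otherwise.

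\emph{Uniqueness.} Any strong monomorphism with this pullback property is determined up to the unique isomorphism of pullbacks. Rigidity (Lemma~\ref{lem:rigidity}) makes that isomorphism an identity, so $w$ is unique. The address condition in the statement then just records the computed address.

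The step I expect to be the main obstacle is the identity $d^*i_p^B=i_p^C$. It can be obtained from the bijection $\sigma_d$ of Theorem~\ref{thm:epicclassification} and the order equivalence of Lemma~\ref{lem:strongpullbacks}, as in the proof of Lemma~\ref{lem:componentpushout}. If it proves delicate, the fallback is to argue via positions: $p_d$ sends the address of $w$ to $a$, and $p_d^{-1}(a)$ consists of $a$ together with, when $a\in K$, its descendants. The minimal element of this fibre is $a$, and the pullback strong monomorphism must be the largest one mapping into $j$. That is exactly the one at address $a$, by Theorem~\ref{thm:positions} and the image formula for strong monomorphisms.
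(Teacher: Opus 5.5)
Your argument is correct, but it takes a different route from the paper. The paper does not use Axiom~\ref{ax:Dpull} here. It writes down the candidate $w$ directly as the occurrence at $a$, observing that $p_d^{-1}(a)$ is the whole subtree of $C$ at $a$. It then verifies the universal property with the faithful positional functor of Theorem~\ref{thm:positions}: for $h=ve$ with $dh=jq_T$, the address of $v$ is sent by $p_d$ to $a$, so it lies in that subtree, and $v$, hence $h$, factors through $w$. Uniqueness comes from rigidity, exactly as in your last step.

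You instead take existence from Axiom~\ref{ax:Dpull} and identify the fibre recursively. The step you flagged as the main obstacle goes through. By \eqref{eq:local} one has $(i_p^C)^d=i_p^B$. Lemma~\ref{lem:strongpullbacks} then gives $i_p^C\inside d^*i_p^B\ne\id_C$, where the inequality uses its properness clause. Indecomposability of $i_p^C$ forces $d^*i_p^B=i_p^C$, just as in Lemma~\ref{lem:componentpushout}. Pasting with the inductively known fibre of $d_p$ along $j'$, and transferring cut addresses as in the proof of Theorem~\ref{thm:cuts}, completes the induction.

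Your route stays inside the abstract lemmas and makes explicit that the component squares \eqref{eq:local} are pullbacks. The paper's route is shorter, because it reads the fibre off the positional map and needs no case split or induction.

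Your preliminary computation $p_d(\operatorname{addr}(w))=a$ plays no role in your induction. Combined with your fallback, though, it already proves the lemma in a few lines. The only strong monomorphism into $\one$ is $\id_\one$, so Lemma~\ref{lem:strongpullbacks} gives $u\inside d^*j$ exactly when $u^d=j$, that is, exactly when $p_d$ sends the address of $u$ to $a$. The largest such $u$ is the one at address $a$. This is essentially the paper's positional argument, phrased through the order equivalence.
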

\begin{proof}
By the description of prunings, every address of $B$ is retained
from $C$. The positions sent to $a$ by $p_d$ form the entire subtree
at $a$: this is the subtree removed by a cut at $a$, or the unchanged
empty leaf if there is no such cut. Let $w:W\to C$ be its occurrence.
Then $dw=jq_W$.

To verify the pullback property, suppose that $h:T\to C$ satisfies
$dh=jq_T$, and write $h=ve$ in normal form. The root of $T$ is
sent by $h$ to the address of $v$, and the displayed equality sends
this address to $a$. Thus the address of $v$ belongs to the subtree
selected by $w$. Its component path consequently extends the path
of $w$, so $v$, and hence $h$, factors through $w$. Monicity gives
uniqueness of this factorization. Its composite with $q_W$ is $q_T$
by terminality, which verifies the remaining pullback equation.
Finally, any other pullback is isomorphic to this one over $C$;
rigidity makes that isomorphism an identity.
\end{proof}

\begin{lemma}\label{lem:comparisons}
If $B\ne\one$, every comparison in a filling category is an epimorphism.
If $B=\one$ and one point is marked, every filling whose square
is a pullback has
$C=A$, $u=\id_A$, and $d=q_A$.
\end{lemma}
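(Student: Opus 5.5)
For the first assertion, let $h:(C',d',(u'_i)_i)\to(C,d,(u_i)_i)$ be a comparison with $B\ne\one$, and write $h=vp$ in normal form, with $p:C'\to D$ epic and $v:D\to C$ strong monic. I will show that $v$ is an identity. The equation $dh=d'$ gives $dvp=d'$. By Theorem~\ref{thm:positions}, a pruning sends the source root to the target root, so $p_{d'}$ sends the root of $C'$ to the root of $B$. On the other hand, $p_{dh}=p_dp_vp_p$ sends that root to $p_d(\text{address of }v)$.

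Since $B\ne\one$, the map $d$ is not the terminal map, so by Theorem~\ref{thm:cuts} its cut $K$ does not contain the root. By \eqref{eq:pruning}, $p_d$ sends a nonempty address either to itself, if it is retained, or to a proper nonempty prefix of it lying in $K$. In both cases the result is not the root. Hence the address of $v$ is the empty word, which means $v$ is an identity and $h=p$ is epic.

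An alternative is to use Lemma~\ref{lem:restriction} directly. The residual $v^d$ has address $p_d(a)$, and the normal factorization of $dv$ must be the identity, because $dvp=d'$ is epic. This forces $p_d(a)=\emptyword$, and therefore $a=\emptyword$ by the observation above. I expect this positional step to be the only delicate point: one must make sure that $p_d$ never sends a nonroot address to the root when $d$ is nonconstant. This follows from the recursive formula $p_d(pb)=p\,p_{d_p}(b)$ for $d$ in $\Icat$.

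For the second assertion, let $B=\one$. The only point is $j=\id_\one$, and $d=q_C$ by terminality. The pullback of $q_C$ along $\id_\one$ is $\id_C$, so the given pullback square makes $u:A\to C$ an isomorphism over $C$. By Lemma~\ref{lem:rigidity} this isomorphism is an identity. Hence $C=A$ and $u=\id_A$, and $d=q_C=q_A$.
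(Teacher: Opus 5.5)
Your proof is correct and follows essentially the same route as the paper. For $B\ne\one$ you take the normal factorization of the comparison and use that $d'$ sends the root of $C'$ to the root of $B$ while $d$ sends no proper address to the root, which forces the occurrence factor to be an identity; for $B=\one$ you use the pullback along $\id_\one$ together with rigidity. The only difference is cosmetic: you read off that $d$ never sends a proper address to the root from the cut description of $p_d$ in Theorem~\ref{thm:cuts}, whereas the paper says $d$ preserves the first component place, and these amount to the same fact.
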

\begin{proof}
Suppose first that $B\ne\one$. A comparison $h:C'\to C$
between fillings with epimorphisms $d':C'\to B$ and
$d:C\to B$ satisfies $dh=d'$. Write $h=ve$ in normal form.
Both $d,d'$ belong to $\Icat$ because their target $B$ is
nonterminal. If $v$ were proper, its address would have a first
component place, and $d$ would preserve that place. Hence $dh$
would send the root of $C'$ to a proper address of $B$. But $d'$
preserves the root, contradicting $dh=d'$. Thus $v$ is an
identity and $h=e$ is an epimorphism.
When $B=\one$, its unique point is $\id_\one$. The pullback of
this identity along $d$ makes $u$ an isomorphism. By rigidity it
is an identity, and terminality then gives $d=q_A$.
\end{proof}

\begin{lemma}[Structure of a single-point filling]\label{lem:singlecut}
Let $(C,d,u)$ be a universal filling of $A$ at a point
$j:\one\to B$. Then the only possible effective cut of $d:C\to B$ is at the
address of the marked point. If $A\ne\one$, that cut occurs and
its source subtree is $A$. If $A=\one$, the universal filling is
$(B,\id_B,j)$. Every other point of $B$ lifts uniquely to an unchanged
point of the filling object.
\end{lemma}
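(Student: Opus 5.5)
We combine the fibre description in Lemma~\ref{lem:fibres} with the cut classification of Theorem~\ref{thm:cuts} and the terminality of the universal filling. Let $a$ be the address of $j$. By Lemma~\ref{lem:fibres}, the pullback of $d$ along $j$ is the strong monomorphism at address $a$ in $C$. Since the filling square is a pullback, rigidity identifies it with $u$, so $u$ has address $a$ and $A=C|_a$.

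If $a$ is a cut address of $d$, Lemma~\ref{lem:fibres} says that $C|_a$ is the whole erased subtree, which therefore equals $A$ and is nonterminal. If $a$ is not a cut address, then $A=C|_a=\one$. Hence, when $A\ne\one$, the cut at $a$ must occur and its source subtree is $A$. When $A=\one$, there is no cut at $a$.

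To exclude every other cut, we use terminality. Suppose $K$ is the effective cut of $d$ and $b\in K$ with $b\ne a$. Since $a$ is a retained address of $B$, the address $b$ is incomparable with $a$: a cut strictly above $a$ would remove $a$, and a cut strictly below $a$ cannot exist because $a$ is an empty leaf of $B$.

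\begin{itemize}
\item Consider the filling $(B,\id_B,j)$ when $A=\one$. More generally, consider $(C',d',u')$, where $C'$ is obtained by performing on $C$ only the cut at $a$ (if present). Concretely, set $C'=C_{K'}$ with $K'=K\cap\{a\}$. Let $d'=\delta_{K\setminus K'}$ on $C'$, so that $d'\delta_{K'}$ agrees with $d$ via Lemma~\ref{prop:cutcomposition}, and let $u'$ be the strong monomorphism at $a$.
\item Then $C'$ and $C$ are distinct fillings. Terminality supplies a comparison $h:C'\to C$ with $dh=d'$ and $hu'=u$.
\item If $B\ne\one$, Lemma~\ref{lem:comparisons} makes $h$ an epimorphism. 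Its positional map is surjective.
\item But $C'$ has fewer positions than $C$, since $\Pos(C')=\Pos(B)\cup\Pos(A)$ shifted by $a$. This contradicts surjectivity of $h$ unless $K=K'$.
\item The case $B=\one$ is handled directly by Lemma~\ref{lem:comparisons}: there, $C=A$ and $d=q_A$ is the root cut at $a=\emptyword$.
\end{itemize}

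The main obstacle is checking that $(C',d',u')$ really is a filling. We must show that its square is a pullback, that is, that the fibre of $d'$ over $j$ is exactly the subtree at $a$. This follows from Lemma~\ref{lem:fibres}, because $a$ is either the cut address of $d'$ or an unchanged leaf. We must also confirm that $d'$ exists as an epimorphism $C'\to B$: the cut $K\setminus K'$ remains effective in $C'$ because its members are incomparable with $a$.

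With $K\subseteq\{a\}$ established, the case $A=\one$ gives $K=\emptyset$, so $d=\id_B$ and $C=B$ by Lemma~\ref{prop:thin}. By Lemma~\ref{lem:fibres}, $u$ is then the point $j$ itself.

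Finally, let $k\ne j$ be another point of $B$, at address $b\ne a$. Since $b$ is not a cut address, Lemma~\ref{lem:fibres} gives a pullback with terminal fibre. This yields a point $k'$ of $C$ at address $b$ with $dk'=k$, and the lift is unchanged. Uniqueness holds because any lift $k''$ satisfies $dk''=k$. Hence $k''$ factors through that pullback, whose domain is $\one$, and so $k''=k'$.
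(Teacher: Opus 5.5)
Your overall plan is the paper's: compare the universal filling with the filling obtained by performing every cut of $d$ except the one at $a$, and then use terminality. The execution has two problems.

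\textbf{The construction is reversed as written.} $C_{K'}$ with $K'=K\cap\{a\}$ performs \emph{only} the marked cut. So when $A\ne\one$ the subtree of $C_{K'}$ at $a$ is $\one$, and there is no $u':A\to C_{K'}$ at all. Your remark about $(B,\id_B,j)$ and your formula $\Pos(C')=\Pos(B)\cup a\Pos(A)$ show what you actually need. Take $C'=C_{K\setminus K'}$ and $\delta=\delta_{K\setminus K'}:C\to C'$, and let $d'$ be the remaining cut at $a$. Then $d=d'\delta$ by Lemma~\ref{prop:cutcomposition}, and $u'=\delta u$. The effectiveness check is then that $a$ survives in $C'$ with its subtree $A$ unchanged, not that $K\setminus K'$ remains effective.

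\textbf{The final contradiction fails; this is the genuine gap.} A nonidentity pruning need not reduce the number of positions. Suppose every subtree $C|_b$ with $b\in K\setminus K'$ is a nullary label. Such an object is nonterminal but has no proper strong monomorphisms, so $\Pos(C|_b)=\{\emptyword\}$ and cutting at $b$ erases no address. Then $\Pos(C')=\Pos(C)$, and surjectivity of $p_h$ yields no contradiction.

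Replace the count by the paper's argument:
\begin{enumerate}
\item We have $d(h\delta)=d'\delta=d$ and $(h\delta)u=hu'=u$. So $h\delta$ is an endomorphism of the terminal filling, hence $h\delta=\id_C$.
\item Then $(\delta h)\delta=\delta$, and epicity of $\delta$ gives $\delta h=\id_{C'}$.
\item So $\delta$ is an isomorphism, hence an identity by Lemma~\ref{lem:rigidity}.
\item Theorem~\ref{thm:cuts} then forces $K\setminus K'=\emptyset$.
\end{enumerate}
Alternatively, note that $h\delta$ is an endo-epimorphism and apply Lemma~\ref{prop:thin}. Or count nonterminal positions (labeled vertices): these strictly decrease under a nonidentity pruning and cannot increase under the pruning $h$. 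With this fix, Lemma~\ref{lem:comparisons} is needed only for the case $B=\one$.

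The other parts of your proof are fine: identifying $u$ with the fibre at $a$, showing other cut addresses are incomparable with $a$, the case $A=\one$, and the unique lifting of the other points.
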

\begin{proof}
The case $B=\one$ follows from Lemma~\ref{lem:comparisons}.
Suppose therefore that $B\ne\one$. Lemma~\ref{lem:fibres}
identifies the complete marked fibre with $A$. Starting with a
universal filling $(C,d,u)$, erase all the cut subtrees
of $d$ except this fibre. This gives an epimorphism $e:C\to C_0$ and a
factorization $d=d_0e$, with a retained strong monomorphism $u_0:A\to C_0$
satisfying $eu=u_0$. If $A=\one$, so that the marked fibre is already terminal,
erase all cuts.
The cuts erased by $e$ are incomparable with the marked position, so
Lemma~\ref{lem:fibres} makes $(C_0,d_0,u_0)$ another filling with the required pullback square.

Terminality supplies a comparison $h:C_0\to C$ with
$dh=d_0$ and $hu_0=u$. Thus $he$ is a comparison from the
terminal filling to itself, and consequently $he=\id_C$.
Composing this equation with $e$ gives $(eh)e=e$; epicity of $e$
then gives $eh=\id_{C_0}$. By Lemma~\ref{lem:rigidity}, $e$ is
an identity.
Every effective cut performed by $e$ would make it nonidentity.
Consequently all effective cuts of $d$ are accounted for by the
marked fibre. If this fibre is terminal, $d$ is an identity and
the filling is $(B,\id_B,j)$. At every other point of $B$, the
empty leaf is unchanged, so it has exactly one point above it.
\end{proof}

\begin{theorem}[Finite filling from singleton filling]\label{thm:finite}
Every finite marked cocone of constant morphisms has a unique universal filling. It replaces exactly
the marked empty leaves by their assigned objects. It may be constructed
by successive singleton fillings in any order, and the result, including
its structure arrows, is independent of that order.
\end{theorem}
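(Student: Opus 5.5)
We argue by induction on $|I|$. We first record what any filling looks like, then construct a universal one by iterating Axiom~\ref{ax:U}, and finally deduce uniqueness and order-independence from rigidity. For $I=\emptyset$, the filling category has objects $(C,d)$ with $d:C\to B$ epic. Then $(B,\id_B)$ is terminal, since $d$ itself is the unique comparison to it. If $B=\one$, at most one point is marked, and Lemma~\ref{lem:comparisons} settles that case. So assume $B\ne\one$ and $I\neq\emptyset$.

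\textbf{Shape of a filling.} For any filling $(C,d,(u_i))$, Lemma~\ref{lem:fibres} shows that the fibre of $d$ over the address $a_i$ of $j_i$ is exactly $A_i$, sitting at address $a_i$. Let $K_0$ be the set of marked addresses $a_i$ with $A_i\ne\one$. Then $K_0$ is an effective cut of $C$, because the marked points are distinct empty leaves of $B$ and hence pairwise incomparable. Every other cut of $d$ lies away from the marked positions.

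\textbf{Construction.} Fix $i_0\in I$. Let $(C_1,d_1,u_{i_0})$ be the universal filling of $A_{i_0}$ at $j_{i_0}$. By Lemma~\ref{lem:singlecut}, every other marked point $j_i$ lifts uniquely to an unchanged point $j_i'$ of $C_1$, with $d_1j_i'=j_i$. Apply the induction hypothesis to the cocone $(A_i,j_i')_{i\ne i_0}$ over $C_1$. This gives a universal filling $(C,e,(u_i')_{i\ne i_0})$. Put $d=d_1e$ and $u_i=u_i'$ for $i\ne i_0$. For $u_{i_0}$, we need the point $u_{i_0}$ of $C_1$ to lift along $e$. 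Its position is disjoint from every cut of $e$, since those cuts sit at the addresses of the $j_i'$. So we lift it by the residual/locality description, Lemmas~\ref{lem:restriction} and~\ref{lem:pushoutlocality}, using that $e|_{u_{i_0}}$ is an identity. The pullback squares over $B$ follow by pasting with the fibre description of Lemma~\ref{lem:fibres}: the fibres of $d$ are exactly the fibres of $e$ together with the fibre of $d_1$ at $a_{i_0}$. Concretely, $C$ is $B$ with exactly the marked empty leaves replaced by the $A_i$, which is the second claim of the theorem.

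\textbf{Universality, the main obstacle.} Let $(C',d',(u_i''))$ be any filling. First, $(C',d',u''_{i_0})$ is a filling at the single point $j_{i_0}$, so Axiom~\ref{ax:U} gives a comparison $h_1:C'\to C_1$. By Lemma~\ref{lem:comparisons} it is epic. Next, we must show that $(C',h_1,(u_i'')_{i\ne i_0})$ is a filling of the lifted cocone over $C_1$. The equalities $h_1u_i''=j_i'q_{A_i}$ hold because $d_1h_1u_i''=j_iq_{A_i}$ and $j_i$ has the unique lift $j_i'$. The delicate part is the pullback property. I would verify it positionally: by Lemma~\ref{lem:fibres} the fibre of $h_1$ over $j_i'$ has the same address as the fibre of $d'$ over $j_i$, and the latter is $A_i$. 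Then the induction hypothesis gives a unique $h:C'\to C$ with $eh=h_1$ and $hu_i''=u_i'$. It remains to check $hu''_{i_0}=u_{i_0}$. Both sides are strong monomorphisms, so it suffices, by faithfulness of $\Pos$ (Theorem~\ref{thm:positions}), to compare their root addresses, and both are $a_{i_0}$.

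For uniqueness of $h$, take any comparison $h'$. It is epic by Lemma~\ref{lem:comparisons}, and $\Dcat$ is thin by Lemma~\ref{prop:thin}, so $h'=h$. By the same argument, comparisons between any two fillings are unique, so terminal objects are unique up to unique isomorphism. Rigidity (Lemma~\ref{lem:rigidity}) then makes the universal filling unique on the nose, including its structure arrows. Since each choice of $i_0$ and each ordering of the successive singleton fillings yields a terminal object, all the results coincide.
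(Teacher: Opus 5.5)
Your proposal is correct and follows essentially the paper's argument: fill one marked point by Axiom~\ref{ax:U}, lift the remaining points through Lemma~\ref{lem:singlecut}, turn a competing filling into a filling of the lifted cocone so that comparisons can be obtained stage by stage, and finish with rigidity. The local differences are that you check the relevant pullback squares positionally via Lemma~\ref{lem:fibres} rather than by pullback cancellation, and that you get uniqueness of comparisons directly from Lemma~\ref{lem:comparisons} and thinness of epimorphisms (Lemma~\ref{prop:thin}), which is a valid shortcut; you should add that $hu''_{i_0}$ is strong monic by Lemma~\ref{lem:strong-cancel} applied to $e(hu''_{i_0})=h_1u''_{i_0}$.
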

\begin{proof}
For the empty family, $(B,\id_B)$ is terminal: the comparison from
any epimorphism $d':C'\to B$ has to be $d'$ itself. If $B=\one$,
there is at most one marked point, and the assertion follows from
Lemma~\ref{lem:comparisons}.

Assume $B\ne\one$ and induct on the size of the family. Fill
the first point, obtaining a universal filling $(C_1,d_1,u_1)$
with $d_1:C_1\to B$. By Lemma~\ref{lem:singlecut}, each
remaining point has a unique empty lift in $C_1$. Fill those points
successively. At each stage Lemma~\ref{lem:singlecut} leaves every previously
inserted subtree unchanged. Lemma~\ref{lem:fibres} therefore
identifies the fibre over each original marked point with its
assigned object. Consequently the resulting strong monomorphisms
and composite epimorphism give pullback squares for all the original
points.

Consider another filling $(D,d',(u'_i)_i)$ of the entire marked
family. The universal property for the first point gives a unique
$h_1:D\to C_1$ satisfying $d_1h_1=d'$ and preserving the
first prescribed strong monomorphism. It is an epimorphism by
Lemma~\ref{lem:comparisons}. For each remaining point $j_i$,
write $\widetilde j_i:\one\to C_1$ for its unchanged lift.
Thus $d_1\widetilde j_i=j_i$, and we have the diagram
\[
 \xymatrix@C=4em@R=2.5em{
A_i\ar[r]^{u'_i}\ar[d]_{q_{A_i}}&D\ar[d]^{h_1}\\
 \one\ar[r]_{\widetilde j_i}\ar[d]_{\id_\one}
     &C_1\ar[d]^{d_1}\\
 \one\ar[r]_{j_i}&B
 }
\]
The lower square is a pullback by Lemma~\ref{lem:singlecut},
and the outer rectangle is a pullback by the definition of the
given filling of the marked family.
The upper square is consequently a pullback: a compatible pair
for that square is a compatible pair for the rectangle, whose
unique mediator satisfies the middle equation by the lower
pullback property.
Thus $(D,h_1,u_i')$ satisfies the pullback requirement for each
remaining point $\widetilde j_i$. Its comparison to the next
singleton filling is therefore supplied by Axiom~\ref{ax:U}.
Repeating this argument constructs a comparison to the final object.

We must also check that a comparison obtained at a later step
preserves the previously prescribed maps $u'_j:A_j\to D$. Each later
cut is disjoint from the already inserted subtree with source $A_j$.
Its inverse image under that pruning is therefore the same entire
subtree, with identity restriction. The corresponding square, with
left map $\id_{A_j}$, is a pullback: a map whose composite lands
in that retained subtree has embedding factor in the same subtree,
as in the proof of Lemma~\ref{lem:fibres}. The comparison equation
at the preceding stage and this pullback property force the later
comparison to preserve the specified map from $A_j$. Conversely, any comparison to the final
filling composes to a comparison at each preceding stage. The
uniqueness at each singleton filling therefore proves uniqueness
of the final comparison. Thus the resulting filling is terminal,
and its construction changes precisely the marked points.

Any two terminal fillings are uniquely isomorphic as fillings.
Their underlying isomorphism in $\C$ is an identity, which identifies
the objects and all structure arrows. This proves uniqueness and
independence of the chosen order of construction.
\end{proof}

We now apply the filling construction to a root label $\phi$.
Recall that its input places $P_\phi$ are its points
$\one\to\phi$, and that $X_p$ denotes the child of $X$ at
the input place $p$ of its root label $r(X)$, as in
\eqref{eq:ports}.
\begin{theorem}[Assembly]\label{thm:assembly}
For every root label $\phi\in\Phi$ and every family
$(A_p)_{p\in P_\phi}$ of objects, there exists exactly one
object $X$ with $r(X)=\phi$ and $X_p=A_p$ for every
$p\in P_\phi$.
\end{theorem}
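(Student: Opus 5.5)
For existence we apply Theorem~\ref{thm:finite} to the marked cocone over $B=\phi$ with points $p\in P_\phi$ (distinct by definition, since $P_\phi=\Comp(\phi)$ consists of distinct arrows $\one\to\phi$) and assigned objects $A_p$. This yields a universal filling $(X,d,(u_p)_p)$ with $d:X\to\phi$ epic and each square a pullback. Since $\phi\ne\one$, the map $d$ lies in $\Icat$. Theorem~\ref{thm:root} then gives $r(X)=r(\phi)=\phi$ and $\rho_\phi d=\rho_X$. Because $\phi$ is normal, $\rho_\phi=\id$, so $d=\rho_X$. It remains to identify the children. The equation $\rho_X i^X_p=i^\phi_p(\rho_X)_p$ of~\eqref{eq:local}, with $i^\phi_p=p$ and $(\rho_X)_p=q_{X_p}$, shows that $i^X_p$ lies over the point $p$. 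By Lemma~\ref{lem:fibres}, the pullback of $\rho_X$ along $p$ is the strong monomorphism at the address of $p$, namely the address $p$ itself, which is $i^X_p$. Uniqueness of pullbacks together with rigidity then identifies $u_p$ with $i^X_p$, so $X_p=A_p$. Alternatively, one can read off from Theorem~\ref{thm:finite} that the filling replaces exactly the marked empty leaves, so the component at $p$ has source $A_p$.

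For uniqueness, let $X,X'$ both satisfy $r=\phi$ and have children $A_p$. Then $\rho_X:X\to\phi$, together with the strong monomorphisms $i^X_p$, is a filling of the same cocone. The previous paragraph gives the pullback squares: Lemma~\ref{lem:fibres} applies since $p$ is a cut address of $\rho_X$ when $A_p\ne\one$, and otherwise the fibre is $\one=A_p$. The same holds for $X'$. The universal filling $(Y,d,u)$ from Theorem~\ref{thm:finite} therefore receives comparisons $h:X\to Y$ and $h':X'\to Y$. By Lemma~\ref{lem:comparisons}, $h$ is an epimorphism, and it commutes over $\phi$. Consider its restriction at each component, using the bijection $\sigma_h$ and $\rho_Yh=\rho_X$ from~\eqref{eq:roottransport}. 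The comparison equation $hi^X_p=u_p=i^Y_p$ is already a normal factorization, so the restriction $h_p$ is $\id_{A_p}$. By injectivity of $\Theta_X$ in Theorem~\ref{thm:epicclassification}, together with the fact that the identity tuple determines $\id_X$, or by Lemma~\ref{prop:thin}, $h$ is an identity; in particular $X=Y$. Similarly $X'=Y$.

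The main obstacle is the careful identification $u_p=i^X_p$: the component at input place $p$ must be exactly the pullback of $\rho_X$ along the point $p$. I would handle it through Lemma~\ref{lem:fibres} as sketched above, taking care with the case $A_p=\one$. In that case no cut occurs at $p$, and the fibre is the unchanged leaf, whose occurrence is again the component at $p$ because $\sigma_{\rho_X}$ sends it to $p$. A cleaner alternative for uniqueness would be induction via Theorem~\ref{thm:epicclassification}, but $X$ and $X'$ need not be related by a morphism a priori, which is why routing both through the universal filling is the natural choice.
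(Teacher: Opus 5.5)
Your proposal is correct and follows the paper's proof. Existence comes from the universal filling of Theorem~\ref{thm:finite} at the component points of $\phi$. Uniqueness comes from showing that the comparison between two such objects has identity restrictions at every component, and is therefore the identity by injectivity of $\Theta_X$. The paper compares $X'$ directly with the constructed filling instead of routing both objects through $Y$, but this is the same argument.

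One small correction: your parenthetical alternative ``or by Lemma~\ref{prop:thin}'' does not by itself give $h=\id$. That lemma only applies once you know $Y=X$, so it is the $\Theta_X$ argument that does the work.
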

\begin{proof}
Mark the component points $p:\one\to\phi$
and assign $A_p$ to $p$. Theorem~\ref{thm:finite} supplies an
epimorphism $d:X\to\phi$ and, for every $p\in P_\phi$, a
pullback square
\[
 \xymatrix@C=4em@R=2.5em{
A_p\ar[r]^{u_p}\ar[d]_{q_{A_p}}&X\ar[d]^d\\
 \one\ar[r]_p&\phi
 }
\]
Since $\phi$ is nonterminal, $d\in\Icat$, and
Theorem~\ref{thm:root} gives $r(X)=\phi$. By the component bijection
of Theorem~\ref{thm:epicclassification}, the pullback of $p$ is
the component indexed by $p$. The displayed squares therefore give
$X_p=A_p$.
For uniqueness, let $X'$ be another object with this root and
these components. The epimorphism $\rho_{X'}:X'\to\phi$
replaces its children by empty leaves. Together with the
component maps, it gives another object of the same filling
category, since its squares at the points of $\phi$ are
pullbacks. By Lemma~\ref{lem:comparisons}, the terminal
comparison $h:X'\to X$ is an epimorphism. Each of its
restrictions $h_p:A_p\to A_p$ is therefore an identity by
Lemma~\ref{prop:thin}. Thus $h$ has the same tuple of
restrictions as $\id_{X'}$. Theorem~\ref{thm:epicclassification}
gives $h=\id_{X'}$, and hence $X'=X$. This also covers the
empty family for a nullary label.

\end{proof}

\phantomsection\label{not:assembly-operation}
The theorem allows us to use the generating label $\phi$ as an
operation on objects. Write $\phi((A_p)_{p\in P_\phi})$ for the
unique object assembled from its prescribed components.
In particular,
\[
 \phi((\one)_{p\in P_\phi})=\phi,\qquad
X=r(X)((X_p)_{p\in P_{r(X)}})\quad(X\ne\one).
\]
The second equality expresses every nonterminal object in terms of
its root label and immediate components. Since the components have
smaller complexity, repeated application recovers a finite expression
built from the generating labels.

\subsection{Representation}\label{sec:representation}
We can now identify both the objects and the morphisms of $\C$.
Assembly recovers the expressions, while the factorization of
morphisms recovers the pruning and occurrence in each match.
Recall that $\Phi$ is the set of root labels from
Definition~\ref{def:labels}. For a label $\phi$, its input-place
set $P_\phi=\Comp(\phi)$ consists of the component maps into
$\phi$, each of which has domain $\one$. These are the data
from which we will construct the expression trees.
\begin{theorem}[Representation]\label{thm:representation}
Every small category satisfying Axioms~\ref{ax:N}--\ref{ax:Ddetect}
is isomorphic to the prefix--suffix matching category of its derived
finite-arity signature
\[
L=\Phi,\qquad \operatorname{ar}(\phi)=|\Comp(\phi)|.
\]
The isomorphism identifies all morphisms with prefix--suffix matchings,
strong monomorphisms with specified occurrences of entire subtrees,
and epimorphisms with pruning maps. Conversely, every such expression
tree category satisfies the nine axioms.
\end{theorem}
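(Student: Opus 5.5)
We will construct an explicit functor $\Psi:\C\to\C_L$, with $L=\Phi$ and $\operatorname{ar}(\phi)=|\Comp(\phi)|$, and show it is bijective on objects and on hom-sets. For the concrete indexing of Definition~\ref{def:ranked-set} we fix, once and for all, a bijection $P_\phi\cong\{1,\dots,\operatorname{ar}(\phi)\}$ for each label; by the remark in that definition, any other choice gives an isomorphic category.

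On objects, $\Psi$ is defined by recursion on complexity $c(X)$. We set $\Psi(\one)=\one$, and for $X\ne\one$ we set
\[
\Psi(X)=r(X)\bigl((\Psi(X_p))_{p\in P_{r(X)}}\bigr).
\]
This is well founded by~\eqref{eq:complexity}. Bijectivity on objects will follow by induction in both directions from Theorem~\ref{thm:assembly}. For injectivity, if $\Psi(X)=\Psi(X')$, then the root labels agree and the children agree; induction gives $X_p=X'_p$, and the uniqueness clause of assembly gives $X=X'$. For surjectivity, a tree $\lambda((T_p)_p)$ is the image of the assembled object $\lambda((A_p)_p)$, where $\Psi(A_p)=T_p$ by induction. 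Along the way we check, by the same induction, that addresses correspond. The set $\Pos(X)$ of Definition~\ref{def:addresses} maps bijectively onto the positions of $\Psi(X)$, and the subtree at address $a$ satisfies $\Psi(X|_a)=\Psi(X)|_a$.

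Under this identification, strong monomorphisms into $X$ match subtree embeddings into $\Psi(X)$, since both are indexed by addresses via unique component factorization (Lemma~\ref{prop:paths}(c)). Epimorphisms out of $X$ match prunings of $\Psi(X)$ via Theorem~\ref{thm:cuts}: effective cuts correspond on both sides, and formula~\eqref{eq:pruning} together with the recovery formula for $K$ shows that the target of $\delta_K$ is sent to the pruned tree. This needs one more induction, which uses~\eqref{eq:local} to see that the restriction at each input place is the corresponding child pruning. Both $\Dcat$ and $\Dcat_0$ are thin (Lemma~\ref{prop:thin}), so this gives a bijection of epimorphisms. Combining the two bijections with the normal-form decomposition~\eqref{eq:normalhom} and with~\eqref{eq:modelhom} yields hom-set bijections
\[
\C(X,Y)\cong\C_L(\Psi X,\Psi Y).
\]

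It remains to show that $\Psi$ preserves composition, and we expect this to be the main obstacle. Here we use positions rather than reproving the composition formula. By Theorem~\ref{thm:positions}, $\Pos:\C\to\mathbf{FinSet}$ is a faithful functor, and in $\C_L$ a matching is determined by its positional function. The positional map $p_up_d$ of a normal form in $\C$ is, by construction, the same function under the address identification as that of the corresponding matching. Indeed, $p_u$ prefixes the address and $p_d$ collapses each cut subtree to its root in both settings. Since positional functions compose on both sides, the image of $gf$ and the composite of the images have the same positional function, and hence they are equal. Identities are handled the same way.

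Finally, Lemma~\ref{prop:modelclasses} shows that under this isomorphism strong monomorphisms correspond to occurrences and epimorphisms to prunings. The converse statement, that every expression tree category satisfies the nine axioms, is Theorem~\ref{thm:realization}.
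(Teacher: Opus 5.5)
Your proof is correct. Its overall structure is the paper's:
\begin{itemize}
\item objects are matched by recursion through $r(X)$ and the children $X_p$, with Theorem~\ref{thm:assembly} supplying the inverse;
\item strong monomorphisms are matched with subtree embeddings through component paths;
\item epimorphisms are matched with prunings recursively (you phrase this via the cuts of Theorem~\ref{thm:cuts}, which the paper derives from the local classification of Theorem~\ref{thm:epicclassification} that its proof cites directly);
\item hom-sets are matched through unique normal factorizations.
\end{itemize}

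The genuine difference is the composition step. The paper shows that intrinsic restrictions of epimorphisms correspond to restrictions of prunings, using \eqref{eq:local} iterated along component paths. The intrinsic composition formula \eqref{eq:composition} is then carried term by term to the defining composition of $\C_L$.

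You instead compare positional functions. Prunings and occurrences act on addresses by the same collapsing and prefixing rules on both sides. Functoriality of $\Pos$ on $\C$ (Theorem~\ref{thm:positions}) and the fact that a matching in $\C_L$ is determined by its action on positions then force $\Psi(gf)=\Psi(g)\Psi(f)$.

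Each route buys something different. Yours is shorter and never tracks residual occurrences explicitly. However, it leans on work already done in Theorem~\ref{thm:positions}, which was itself proved via Lemma~\ref{lem:restriction} and \eqref{eq:composition}, and on faithfulness in the concrete model. The paper's route is more structural: it exhibits directly that the intrinsic restriction $e|_u$ and residual $u^e$ are the concrete restricted pruning and the embedding at $p_e(a)$. As a by-product, your argument records explicitly that the isomorphism is compatible with the position functors on both sides.
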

\begin{proof}
Let $L=\Phi$ with the arity function in the statement. We first
use the component maps themselves as the input places of each label.
Choosing a bijection with $\{1,\ldots,\operatorname{ar}(\phi)\}$
will then give the standard presentation of $\C_L$.

For a nonterminal object $X$, recall that $r(X)$ is the root label
obtained by clearing its immediate components, and $X_p$ is the
component at the input place $p$ of this label. Define a correspondence
on objects recursively by sending $\one$ to the
empty leaf, and by sending $X\ne\one$ to the tree with root
label $r(X)$ and with the tree already associated to $X_p$ at each place $p$.
The recursion terminates because $c(X_p)<c(X)$ by
\eqref{eq:complexity}. Conversely, given a finite tree over $L$,
reconstruct its children first and apply Theorem~\ref{thm:assembly}
to their root label and the resulting indexed family. This gives a
unique object. Induction on the tree, using the uniqueness part of
that theorem at each root, shows that these two constructions
are inverse functions on objects.

Lemma~\ref{prop:paths} identifies strong monomorphisms with
specified component paths. Under this correspondence, these are
exactly the subtree embeddings. Theorem~\ref{thm:root} shows that
an epimorphism with nonterminal target preserves the root label and
its input places. The local classification of
Theorem~\ref{thm:epicclassification} then identifies such a map,
recursively, with an arbitrary tuple of child prunings; the remaining
case is total pruning to $\one$. Thus we obtain bijections
on the two arrow classes, with their sources and targets preserved.
By the unique normal factorization of Lemma~\ref{prop:paths},
these two bijections combine to give a bijection on every hom-set.

Finally, restrictions of epimorphisms at components are carried to
restrictions of prunings by~\eqref{eq:local}; iterating along a
component path gives the same statement for every subtree embedding.
The composition formula~\eqref{eq:composition} therefore agrees
with the defining composition in $\C_L$. Identities correspond to
identity normal forms. This correspondence is consequently an isomorphism
of categories. The converse assertion is Theorem~\ref{thm:realization}.
\end{proof}

The theorem recovers the generating operation symbols from the
category itself. They are the nonterminal objects whose proper
strong monomorphisms all have terminal domain. Each of these maps
is one of the input places of its label, so its arity is the number
of such maps. Thus the category determines a set equipped with
an arity function, usually called a \emph{ranked set} or
\emph{ranked signature}. It also retains the actual input-place
sets. To identify individual expressions with operations in a
free non-symmetric operad, we next choose an ordering of these
places and show that the choices extend consistently to all objects.

\subsection{Ordering and recovery of the operad}\label{sec:ordering}
The same generating operation may occur at several vertices of a
tree, and it may occur in many different objects. We want an order
of its arguments that is used consistently at all these occurrences.
This is the sense in which the ordering below will be uniform.
We will show that such choices also order all positions of every
tree, so that a matching preserves their order even when it sends
several positions to the same empty leaf. We first describe the
action on positions in categorical terms.

Recall that $\Occ(X)$ is the set of strong monomorphisms with
codomain $X$. Their identification with addresses gives the set
$\Pos(X)$ of positions from Definition~\ref{def:addresses}.
\phantomsection\label{not:occurrence-images}
For a morphism $f:X\to Y$ and a strong monomorphism $u:A\to X$,
let $f_*u$ be the strong monomorphism in the unique factorization
of $fu$ as an epimorphism followed by a strong monomorphism.
Under $\Occ(X)=\Pos(X)$, the function $f_*$ is the positional
map of Theorem~\ref{thm:positions}. In particular, $(gf)_*=g_*f_*$.
If $f$ is a strong monomorphism, then $f_*u=fu$.
The direct image $f_*u$ records the position occupied by the part
of $A$ retained by $f$.
An \emph{order embedding} is an injective map that preserves and
reflects the order. Its image is an \emph{interval} if every element
lying between two elements of the image also belongs to the image.

\begin{theorem}[Uniform ordering]\label{thm:uniform-ordering}
Let $\C$ be a small category satisfying the nine axioms. Choose a
linear order on each finite input-place set $P_\phi$ of its derived
signature. These choices determine linear orders $\le_X$ on
$\Occ(X)$, for all objects $X$, with the following properties.
\begin{enumerate}
\item The order $\le_X$ extends the factorization order:
if $u=vw$ for strong monomorphisms $u,v,w$, then $u\le_X v$.
\item For every morphism $f:X\to Y$, the map
$f_*:(\Occ(X),\le_X)\to(\Occ(Y),\le_Y)$ is monotone.
\item For every strong monomorphism $m:A\to X$, the map $m_*$
is an order embedding whose image is an interval.
\end{enumerate}
At every vertex labeled $\phi$, the induced order of its immediate
children is the chosen order on $P_\phi$. Thus the choices are uniform
across all occurrences of each label. Moreover, the positional functor
lifts to a faithful functor
\[
 \Pos_{\le}:\C\longrightarrow\mathbf{FinOrd},
\]
where $\mathbf{FinOrd}$ is the category of finite linearly ordered
sets and monotone maps.
\end{theorem}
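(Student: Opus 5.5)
We work throughout through the identification $\Occ(X)=\Pos(X)$ of Definition~\ref{def:addresses} and the representation of Theorem~\ref{thm:representation}. Each letter of an address is an input place $p\in P_\phi$ of the label $\phi=r(X|_b)$ at the vertex reached by the preceding prefix $b$. We order addresses by the \emph{preorder} (depth-first) rule. Set $a\le_X a'$ if $a'$ is a prefix of $a$, or if at the first position where $a$ and $a'$ differ (say $a=bpc$, $a'=bp'c'$ with $p\ne p'$) we have $p<p'$ in the chosen order on $P_{r(X|_b)}$. This is a linear order; it is the lexicographic order with "ancestor before descendant", reversed in the ancestor direction so that (a) holds in the stated form. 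Concretely, $u=vw$ means the address of $v$ is a prefix of that of $u$, and then $u\le_X v$. The input places occurring after a common prefix $b$ all lie in the same set $P_{r(X|_b)}$, so the comparison is well defined. Restricting to the immediate children gives exactly the chosen order on $P_\phi$. This settles (a), linearity, and the uniformity claim.

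For (c), let $m$ have address $a$. Then $m_*$ is $b\mapsto ab$, and the labels along $b$ in $A$ are those along $ab$ in $X$, because $X|_{ab}=A|_b$. Hence the first point of difference and the relevant order on input places are unchanged, and $m_*$ preserves and reflects $\le$. The image is the set of addresses extending $a$. To see that it is an interval, take $c$ with $ab\le c\le ab'$. If $c$ did not extend $a$, then either $c$ is a proper prefix of $a$, which makes it greater than every extension of $a$, or $c$ first differs from $a$ at some letter. In the latter case that letter lies on one side of the corresponding letter of $a$, so $c$ lies strictly before all extensions of $a$ or strictly after all of them. Either way this contradicts the assumption that $c$ lies between two extensions of $a$.

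For (b), we use the normal factorization $f=ud$. By (c), $u_*$ is monotone, so it suffices to treat an epimorphism $d$, whose positional map $p_d$ is described after Lemma~\ref{prop:cutcomposition}. This is the step I expect to be the main obstacle, since $p_d$ collapses each cut subtree onto its root and we must check that this collapse never reverses strict comparisons. Cut addresses $a\in K$ are retained, and $p_d$ sends every extension $ab$ to $a$ and fixes all other addresses.

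Take $x\le y$. If neither lies strictly below a cut, both are fixed. Otherwise we reduce, using Lemma~\ref{prop:cutcomposition}, to a single cut $K=\{a\}$ by composing single-cut maps; this is available by Theorem~\ref{thm:finitegeneration} together with the formula $p_{ed}=p_ep_d$. For a single cut at $a$, the interval property from (c) shows that the set $I$ of extensions of $a$ is an interval with maximum $a$. The map $p_d$ collapses $I$ onto $a$ and is the identity elsewhere. Collapsing an interval to one of its own elements is monotone: if $x\in I$ and $y\notin I$ with $x\le y$, then $y$ exceeds every element of $I$, in particular $a$. The case $y\in I$, $x\notin I$ is symmetric.

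We must also check that the order on $X_K$ agrees with the order on $X$ restricted to the retained addresses. This holds because a pruning in $\Icat$ preserves root labels and input places along retained paths, by \eqref{eq:local}.

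Finally, monotonicity composes because $(gf)_*=g_*f_*$, so $\Pos_\le$ is a functor to $\mathbf{FinOrd}$. It is faithful because $\Pos$ is faithful by Theorem~\ref{thm:positions}.
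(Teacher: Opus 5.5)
Your proof is correct and follows essentially the paper's route. The order you define is exactly the paper's postorder: descendants precede ancestors, and siblings follow the chosen order on $P_\phi$, so calling it a ``preorder rule'' is a misnomer. The only cosmetic differences are that you give this order by a first-difference comparison rather than recursively, and that you reduce a multi-cut pruning to single cuts via Lemma~\ref{prop:cutcomposition} instead of collapsing all the disjoint cut intervals onto their final positions at once, as the paper does.
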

\begin{proof}
Let the positions be the intrinsic addresses of
Definition~\ref{def:addresses}. We define their order recursively,
placing every vertex after all the positions in its children.
More explicitly, $\Pos(\one)$ has its unique
order. For $X\ne\one$, write the places of $r(X)$ as
$p_1<\cdots<p_n$. List the positions in $p_1\Pos(X_{p_1})$ in
their recursively defined order, then those in
$p_2\Pos(X_{p_2})$, and so on, and finally the root position.
For $n=0$ this is again a singleton. The construction terminates
because the complexity of each child is smaller than that of its
parent. The resulting order is usually called \emph{postorder}:
the children are read in order, each with all of its descendants,
before their parent is read.

Every descendant precedes its proper ancestors, so this order extends
factorization, proving \textup{(a)}. At any address $a$, the positions
extending $a$ form a contiguous block ending at $a$. Removing the
prefix $a$ identifies their order with the recursively defined order
on $\Pos(X|_a)$. A strong monomorphism acts by adjoining precisely
such a prefix. This proves \textup{(c)}, as well as uniformity of the
orders on immediate children.

By Theorem~\ref{thm:cuts}, an epimorphism is determined by an
effective cut, that is, a family of pairwise disjoint nonterminal
subtrees to be replaced by empty leaves. Its positional map
collapses each selected subtree to its boundary
position. The cut positions are pairwise incomparable, so their
subtrees occupy disjoint intervals. The target's postorder is exactly
the order obtained by replacing each such interval by its final
position; retained labels have the same ordered input places.
To see that the resulting map is monotone, take two positions in
the source order. If they belong to the same collapsed interval,
their images are equal. Otherwise the disjoint intervals and
retained positions occur in the same order in the target, so their
images remain in that order. Total pruning is the same construction
with the entire ordered set as one interval.

Every morphism is an epimorphism followed by a strong monomorphism,
and its positional map is the corresponding composite. The preceding
two paragraphs prove \textup{(b)}. Functoriality and faithfulness follow
from Theorem~\ref{thm:positions}, since the underlying positional maps
have not changed.
\end{proof}

\begin{theorem}[Ordered-tree representation]\label{cor:ordered-representation}
A small category satisfies the nine axioms if and only if it is
isomorphic to a prefix--suffix matching category of finite ordered
rooted labeled trees over a finite-arity signature, with an empty
leaf allowed at each input place. Here a vertex with label $\phi$
has exactly $\operatorname{ar}(\phi)$ children in a specified linear
order, and matchings preserve labels and corresponding child places.
Every choice of orders on the derived sets $P_\phi$ gives such a
representation, in which the orders of
Theorem~\ref{thm:uniform-ordering} are the postorders of the
represented trees.
\end{theorem}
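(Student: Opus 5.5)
The plan is to derive both directions from the two representation results already proved, with the ordered data handled by Theorem~\ref{thm:uniform-ordering}. For the ``if'' direction, let $\C$ be isomorphic to a prefix--suffix matching category of finite ordered rooted labeled trees over a signature $(L,\operatorname{ar})$. Such a category is exactly $\C_L$ of Section~\ref{sec:model}, once the children of a $\lambda$-vertex are indexed by $P_\lambda=\{1,\ldots,\operatorname{ar}(\lambda)\}$ in their specified order. Theorem~\ref{thm:realization} shows that $\C_L$ satisfies Axioms~\ref{ax:N}--\ref{ax:Ddetect}. The axioms are stated purely in terms of composition, so they transfer along an isomorphism of categories. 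The one point to note is that the definition of the trees in Definition~\ref{def:ranked-set} already uses ordered input places, so no further translation is needed.

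For the ``only if'' direction, I will start from Theorem~\ref{thm:representation}. It gives an isomorphism $\C\cong\C_\Phi$ in which each label $\phi$ uses the set $P_\phi=\Comp(\phi)$ of its points as its input places. Now fix a linear order on each $P_\phi$. This is equivalent to a bijection $P_\phi\cong\{1,\ldots,\operatorname{ar}(\phi)\}$. By the remark at the end of Definition~\ref{def:ranked-set}, reindexing the places by such bijections gives an isomorphic expression tree category, and this isomorphism is the identity on underlying positional data. Composing the two isomorphisms represents $\C$ by ordered trees in which the children of each $\phi$-vertex appear in the chosen order of $P_\phi$. Matchings preserve labels and corresponding input places, because the isomorphism of Theorem~\ref{thm:representation} sends epimorphisms to prunings and strong monomorphisms to subtree embeddings, and by \eqref{eq:local} these preserve input places.

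The remaining claim is that, under this representation, the orders $\le_X$ of Theorem~\ref{thm:uniform-ordering} become the postorders of the represented trees. To check this, I will use that the isomorphism identifies $\Occ(X)$ with $\Pos(X)$ through intrinsic addresses (Definition~\ref{def:addresses}). These addresses are exactly the addresses in the represented tree, read through the chosen bijections of input places. The order $\le_X$ was defined recursively in the proof of Theorem~\ref{thm:uniform-ordering}: first the children's blocks in the chosen order of $P_{r(X)}$, then the root. This is verbatim the recursive definition of postorder on the corresponding tree, so induction on the complexity $c(X)$ finishes the argument.

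The main obstacle is not conceptual but one of bookkeeping. I must make sure that ``specified linear order of children'' in the statement coincides with the ordering induced by the bijection $P_\phi\cong\{1,\ldots,n\}$ uniformly at every occurrence of $\phi$. This uniformity is supplied by \eqref{eq:ports}, which identifies the components of any $X$ with root $\phi$ with the places of $\phi$ through $\sigma_{\rho_X}$. By \eqref{eq:roottransport}, this identification is compatible with all morphisms in $\Icat$, so the transported order does not depend on the occurrence.
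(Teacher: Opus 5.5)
Your proposal is correct and follows the paper's own argument. The converse direction identifies an ordered-tree category with $\C_L$ and applies Theorem~\ref{thm:realization}. The forward direction numbers each chosen ordered set $P_\phi$, uniformly at every occurrence of $\phi$, to turn the representation of Theorem~\ref{thm:representation} into ordered trees, and reads off the postorder from the recursive definition of $\le_X$ in the proof of Theorem~\ref{thm:uniform-ordering}.
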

\begin{proof}
For each label $\phi$, number the elements of its chosen ordered
set $P_\phi$ increasingly from $1$ to $\operatorname{ar}(\phi)$. At every vertex, this replaces its indexed
children by an ordered list, using the same numbering at every
occurrence of $\phi$. The two constructions in the proof of
Theorem~\ref{thm:representation} are then mutually inverse on these
ordered trees. Their description of morphisms preserves the child
places, and hence their order. The recursive definition in the
preceding proof gives the asserted correspondence of postorders.
Conversely, an ordered-tree category is the concrete category
$\C_L$ with its standard ordered index sets, so
Theorem~\ref{thm:realization} supplies the axioms.
\end{proof}

We can now describe what it means to insert one expression into
another using only the category and the chosen argument orders.
Doing this at all the inputs will recover the composition of
operations in the operad discussed in the Introduction. The inputs
of a composite expression are its remaining
empty leaves. These are the points of the corresponding object,
ordered by restricting the order of Theorem~\ref{thm:uniform-ordering}.
In particular, $\one$ has one input, represented by its identity
morphism. A nullary generating label has no points and therefore
has no inputs. The distinction agrees with substitution: filling
the input of $\one$ by $A$ gives $A$, while a nullary operation
accepts no arguments.

We will also show that assigning to each generator an operation of
the same arity in any other operad determines exactly one interpretation of all expressions
that respects insertion and the identity. Such an interpretation,
preserving the arity of each operation, is an \emph{operad map}.
This is the meaning of freeness described earlier.

\begin{theorem}[Recovery of the free operad]\label{thm:operad-recovery}
Let $\C$ satisfy the nine axioms, and choose a linear order on each
derived input-place set $P_\phi$. There is a free non-symmetric
operad $\mathcal O_\C$ on the derived ranked set $\Phi$ with the
following description.
\begin{enumerate}
\item Its operations of arity $n$ are the objects $X$ with exactly
$n$ points. Its identity operation is $\one$.
\item If the points of $X$ are $j_1<\cdots<j_n$, the composite
$\gamma(X;A_1,\ldots,A_n)$ is the universal filling of $A_i$ at
$j_i$ for $1\le i\le n$. Its input list is obtained by concatenating
the ordered input lists of $A_1,\ldots,A_n$.
\item The category $\C$ is isomorphic to the prefix--suffix
matching category of the operations of $\mathcal O_\C$.
\end{enumerate}
The isomorphism class of this operad is independent of the chosen
orders. Moreover, if $\C$ is the expression tree category of a
free non-symmetric operad, that operad is recovered up to isomorphism.
\end{theorem}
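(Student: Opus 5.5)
The plan is to transport everything along the isomorphism of Theorem~\ref{cor:ordered-representation}. Fix the chosen orders and let $\Psi:\C\to\C_L$ be the resulting isomorphism onto the ordered-tree category over $L=\Phi$. On the concrete side I take the standard free non-symmetric operad $\mathcal T_L$: its operations of arity $n$ are the finite ordered trees with $n$ empty leaves, and its composite $\gamma(T;S_1,\ldots,S_n)$ grafts $S_i$ into the $i$-th empty leaf, reading leaves left to right. Then I define $\mathcal O_\C$ by pulling this structure back along the object bijection of $\Psi$, and verify that the resulting operations and composition are exactly those described in (a) and (b).

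For (a), the points of $X$ are the morphisms $\one\to X$. By Lemma~\ref{lem:fibres} and Theorem~\ref{thm:representation}, these correspond to the empty leaves of $\Psi X$, and $\Psi\one$ is the single empty leaf, which is the operadic identity. For the ordering, I restrict the postorder of Theorem~\ref{thm:uniform-ordering} to the leaves. Since postorder visits children in the chosen order $p_1<\cdots<p_n$, its restriction to leaves is the usual left-to-right leaf order used by grafting. For (b), Theorem~\ref{thm:finite} already says that the universal filling replaces exactly the marked empty leaves by their assigned objects, and under $\Psi$ this is precisely grafting. The input list of the filled object consists of the leaves of each $A_i$ lying inside the block of the leaf $j_i$. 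Property~(c) of Theorem~\ref{thm:uniform-ordering} makes each such block an interval ordered like $\Pos(A_i)$, and these intervals appear in the order of the $j_i$, which gives the concatenation. The operad axioms and the identity laws then hold because they hold in $\mathcal T_L$. Part (c) of the present statement is just Theorem~\ref{cor:ordered-representation}, since the operations of $\mathcal O_\C$ are, by construction, the trees over $L$.

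Freeness is the step I expect to need the most care, since the introduction promises a proof of it here. Given an operad $\mathcal P$ and arity-preserving assignments $\phi\mapsto\bar\phi\in\mathcal P(\operatorname{ar}\phi)$, I define the interpretation by recursion on complexity: $\bar\one=\id$, and $\overline{X}=\gamma_{\mathcal P}(\overline{r(X)};(\overline{X_p})_p)$ with the places of $r(X)$ in the chosen order, which is well defined by Theorem~\ref{thm:assembly}. Uniqueness follows because any operad map must satisfy this recursion, as $X=\gamma(r(X);(X_p)_p)$ by (b). Compatibility with composition is proved by induction on the complexity of the outer argument $X$ in $\gamma(X;A_1,\ldots,A_n)$. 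When $X=\one$ this is the unit law, and otherwise I split the $A_i$ into blocks according to the children of $X$ and use the associativity of $\mathcal P$. The main obstacle is the bookkeeping showing that the universal filling of $X$ restricts, at each child $X_p$, to the universal filling of $X_p$ by the corresponding block. I would obtain this from the order-independence in Theorem~\ref{thm:finite} together with the local classification of Theorem~\ref{thm:epicclassification}.

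Finally, I address independence of the chosen orders. Two choices of orders differ by a permutation of each $P_\phi$, and the free operads on the same ranked set $\Phi$ are then isomorphic by the freeness just proved, namely by the map induced on generators. Being free on the same ranked set, they are isomorphic. If $\C=\C_{L'}$ for some signature $L'$, the generators of $\C$ recovered in Theorem~\ref{thm:representation} are the one-vertex trees. This gives a ranked bijection $\Phi\cong L'$, so $\mathcal O_\C$ is isomorphic to the original free operad.
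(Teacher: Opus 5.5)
Your proposal is correct and follows essentially the same route as the paper. The paper likewise transports along the ordered-tree representation, identifies points with left-to-right ordered empty leaves and universal filling with grafting, and proves freeness by the recursion $\widehat h(X)=h(r(X))\bigl(\widehat h(X_{p_1}),\ldots,\widehat h(X_{p_k})\bigr)$, with uniqueness coming from the root decomposition of Theorem~\ref{thm:assembly}; order-independence and recovery then follow from freeness and the identification of derived labels with one-vertex trees. The differences are minor: you inherit the operad laws from the standard grafting operad where the paper checks them directly, and the ``bookkeeping'' step you flag is immediate in the tree picture you have already transported to, so you do not need the detour through Theorems~\ref{thm:finite} and~\ref{thm:epicclassification}.
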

\begin{proof}
Use the ordered-tree representation of
Theorem~\ref{cor:ordered-representation}. Points correspond to
empty leaves, and their order is their left-to-right order in the
represented tree. By Theorem~\ref{thm:finite}, the proposed
composite is obtained by grafting the tree of $A_i$ into the
$i$th empty leaf of $X$. Every empty leaf of the result belongs
to exactly one of the inserted trees. The order on these leaves
first follows the order of insertion places and then the order
within each inserted tree. Thus, if $A_i$ has $n_i$ points, the
composite has $n_1+\cdots+n_n$ points, in the order asserted.
When $X$ has no points, this is the empty filling, so
$\gamma(X;())=X$ by Theorem~\ref{thm:finite}.

Filling all the points of $X$ by $\one$ leaves $X$ unchanged,
and filling the unique point of $\one$ by $A$ gives $A$. These
are the two identity laws. For associativity, consider objects
$B_{i,1},\ldots,B_{i,n_i}$ to be inserted into the points of
$A_i$. One can first make these insertions within each $A_i$
and then insert the results into $X$. Alternatively, one can
insert the $A_i$ into $X$ first and then fill their points in
the resulting tree. Both constructions retain exactly the same
labeled vertices, with the same input places, and place each
$B_{i,j}$ at the same composite address. They consequently give
the same ordered tree. The representation is injective on
objects, so they give the same object of $\C$. This proves the
associative law for simultaneous operadic composition, including
the cases in which an inserted object has no inputs.

We next show that every assignment of values to the generators
extends uniquely to the expressions, as described above.
Let $Q$ be any non-symmetric operad, and assign to each
$\phi\in\Phi$ an operation $h(\phi)$ of $Q$
with arity $|P_\phi|$. We seek an operad map $\widehat h$ making
the following diagram commute, where the horizontal map includes
the generators among the operations:
\[
\xymatrix@C=4em@R=2.5em{
\Phi\ar[r]\ar@/_0.8pc/[dr]_h&\mathcal O_\C\ar@{-->}[d]^{\widehat h}\\
&Q
}
\]
Here both operads are viewed as ranked sets, so each arrow preserves
arity; the map $\widehat h$ must also preserve composition and
the identity. Define it recursively as follows. Send $\one$ to the identity
operation of $Q$. For $X\ne\one$, list the input places of
$r(X)$ as $p_1<\cdots<p_k$ and put
\[
 \widehat h(X)=
 h(r(X))\bigl(\widehat h(X_{p_1}),\ldots,
                         \widehat h(X_{p_k})\bigr).
\]
The right-hand side is composition in $Q$, and the recursion
terminates by complexity descent. Induction shows that
$\widehat h(X)$ has arity equal to the number of points of $X$,
since operadic composition adds the arities of the inserted
operations. Induction on the outer tree,
using the associative law in $Q$, shows that evaluating a grafted
tree is the same as composing the evaluations of its constituent
trees. Hence $\widehat h$ preserves operadic composition and the
identity. For a generating label, every child is $\one$, so the
identity laws in $Q$ give $\widehat h(\phi)=h(\phi)$.
Every object has its unique root decomposition from
Theorem~\ref{thm:assembly}; any operad map extending $h$ must
therefore satisfy the displayed recursion. This proves the
required uniqueness, and hence freeness. The identification
of matching categories in \textup{(c)} is now precisely the
ordered-tree representation.

For two choices of orders, the two resulting operads have the
same generating operations $\Phi$ with the same arities. The
extension property just proved supplies a
unique operad map in each direction fixing every generator.
Their composites fix the generators and are therefore identities.
Thus the two operads are isomorphic. Finally, if $\C$ was obtained
from a free operad on a ranked set $L$, the single-generator trees
are exactly the derived labels: a tree with more than one labeled
vertex has a proper nonterminal subtree, whereas a
single-generator tree has only empty proper subtrees. Its points
are its prescribed argument places. Hence the recovered ranked
set is isomorphic to $L$. Extending this bijection and its inverse
to the two operads, as in the preceding argument, recovers the
original operad up to isomorphism.
\end{proof}

\begin{remark}[Existence and choice of order]\label{rem:order-choice}
The axioms determine the isomorphism class of the free operad, while
the chosen input orders specify how each individual object is read
as an operation of that operad. In general this second correspondence
depends on the choices.
For example, take the signature with one binary label $\phi$.
Reflecting every tree and exchanging the two child places in every
address defines an automorphism of its matching category. It fixes
$\one$ and the tree $\phi(\one,\one)$ consisting of one
labeled root and two empty children, but exchanges its two points.
Such a tree, with one labeled vertex and all inputs empty, is
often called a \emph{corolla}. Its two points therefore admit no
linear order invariant under every automorphism of the category.
Rigidity concerns isomorphism arrows inside the category, whereas
the reflection here is an automorphism of the category itself.
The construction in Theorem~\ref{thm:operad-recovery} makes one
choice per label and uses it at every occurrence of that label.
In this sense the category remembers a free non-symmetric operad
before its input places have been oriented.
\end{remark}

\section{Examples}\label{sec:examples}
The representation theorem allows us to pass between an abstract
category satisfying the axioms and a concrete set of generating
operations with prescribed arities. We first consider two simple
choices of generators. We then give a syntax of bracketed expressions
whose associated trees are recovered by the theorem. Finally, we
show that pointedness singles out the operads generated by unary
operations and leads to the prefix--suffix matching of words.

\subsection{Some choices of signature}\label{sec:signature-examples}
\begin{example}[A binary label]
Let the signature consist of a single label $m$ of arity two, with
input places numbered $1$ and $2$. Its objects are given recursively by
$1$ and the expressions $m(A,B)$. Thus every labeled vertex has two
children, while an empty leaf records a place at which another tree
may be inserted. For example, the two components of $m(1,1)$ give
distinct points
\[
\xymatrix@C=3em{
1\ar@/^/[r]^{j_1}\ar@/_/[r]_{j_2}&m(1,1)
}
\]
Filling the first point by $A$ and the second by $B$ gives $m(A,B)$.
When $A=B$, the two occurrences still select different input places
and hence give distinct morphisms. This category satisfies all nine axioms by
Theorem~\ref{thm:realization}. Its terminal object is not initial,
since $m(1,1)$ has two points.
\end{example}

\begin{example}[Nullary labels]
Suppose that every label in a set $L$ has arity zero. The objects are
then $1$ and the single-vertex trees $a$, for $a\in L$. Each such
$a$ has its identity and the pruning $q_a:a\to1$. These, together
with $\id_1$, are all the morphisms: a nonempty overlap would have
to retain the same nullary label, and an empty overlap would have
to select an empty leaf of the target. The object $a$ has no empty
leaf. In particular, there is no morphism $1\to a$, and there is
no morphism $a\to b$ for distinct labels $a,b$. This is a poset
regarded as a category, with greatest element $1$ and otherwise
incomparable elements indexed by $L$.

For the empty signature, the only object is $1$, and its identity
is the only morphism. It is the simplest example satisfying all
nine axioms.
\end{example}

\subsection{Bracketed expressions}\label{sec:bracketed}
The representation theorem also applies when the objects are
initially given as expressions. The following syntax makes each
argument place explicit by a pair of brackets. Its categorical
description then recovers the expression tree of each string,
together with the pruning and occurrence described by each morphism.

\begin{definition}[Bracketed expressions]\label{def:bracketed}
Fix a set $\Sigma$ of symbols, disjoint from the two bracket symbols
\texttt{[} and \texttt{]}. Let $\mathcal B_\Sigma$ be the set of finite
words in these symbols whose brackets are properly balanced, including the empty
word $\varepsilon$. Write $\Sigma^*$ for the set of finite words
formed from symbols of $\Sigma$ alone, also including the empty word.
Reading the outermost matched bracket pairs from left to right gives
every nonempty expression a unique decomposition
\begin{equation}\label{eq:bracket-decomposition}
E=w_0[E_1]w_1\cdots[E_n]w_n,
 \qquad w_i\in\Sigma^*,\quad E_i\in\mathcal B_\Sigma,
\end{equation}
where the displayed brackets are its outermost matched pairs.
Its \emph{root pattern} is
\[
s(E)=w_0[]w_1\cdots[]w_n.
\]
Write $L_\Sigma$ for the set of all nonempty root patterns. A pattern
with $n$ bracket pairs has arity $n$, its places numbered from left
to right. When $n=0$, the expression is a nonempty bracket-free word
and is itself a nullary pattern.
\end{definition}

The complete root patterns serve as the generating operation
symbols. For example, \texttt{[]+[]} is binary, while
\texttt{a+b} is a single nullary pattern, since its symbols lie
outside any bracket pair. The expression
\texttt{[]} is a unary pattern with empty content and is distinct
from the empty expression $\varepsilon$.
The left-to-right orders of the bracket pairs provide the choices
in Theorem~\ref{thm:uniform-ordering}. The resulting ordered trees
are the expression trees of this syntax, and the root patterns
freely generate their operad by Theorem~\ref{thm:operad-recovery}.

An \emph{occurrence} of $A$ in $E$ selects either $E$ itself, with
$A=E$, or the entire content of a specified matched bracket pair.
Occurrences compose by nesting their selected positions. A
\emph{deletion} replaces the contents of selected matched pairs by
$\varepsilon$, retaining the enclosing brackets; total deletion of
the entire expression to $\varepsilon$ is also allowed. Doing nothing
is the identity. Nested deletions have the same result as deleting
their outermost selected pairs, and deleting already empty contents
has no effect. These descriptions are identified, so there is one
deletion arrow for each source and obtainable target.

\phantomsection\label{not:bracket-category}
Let $\mathcal E_\Sigma$ have these expressions as objects. An arrow
$E\to F$ consists of a deletion $d:E\to T$ and a specified
occurrence $u:T\to F$. Equivalently, it chooses a position in $F$ whose
content is a deletion outcome of $E$. Composition restricts the next
deletion to the selected occurrence, and then composes the residual
occurrences. If the selected content is erased by that deletion, its
residual occurrence is the empty content at the surviving cut boundary.
Identities select the whole expression without deleting anything.
The following theorem also verifies the category laws.

\begin{theorem}[Representation of bracketed expressions]\label{prop:bracketed}
The category $\mathcal E_\Sigma$ satisfies the nine axioms, with
terminal object $\varepsilon$, epimorphisms the deletions, and strong
monomorphisms the specified occurrences. Its derived labels are
precisely $L_\Sigma$, with the arities above. Under their natural
identification, the representation of Theorem~\ref{thm:representation}
is the isomorphism
\[
 \tau:\mathcal E_\Sigma\longrightarrow\C_{L_\Sigma},
 \qquad
 \tau(\varepsilon)=1,\qquad
 \tau(E)=s(E)\bigl(\tau(E_1),\ldots,\tau(E_n)\bigr).
\]
Thus $\tau(E)$ is the expression tree of $E$. The points of $E$ are
its occurrences of empty bracket contents, together with the identity
when $E=\varepsilon$.
\end{theorem}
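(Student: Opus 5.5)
The plan is to construct $\tau$ directly as a bijection on objects and on hom-sets, check that it preserves composition, and only then deduce the nine axioms and the remaining identifications by transport along $\tau$. The direct definition is the one displayed. It is well defined on objects because the decomposition~\eqref{eq:bracket-decomposition} is unique and each $E_i$ is strictly shorter than $E$. It is a bijection onto the trees of $\C_{L_\Sigma}$: the inverse sends $1$ to $\varepsilon$ and sends $\lambda(T_1,\ldots,T_n)$, with $\lambda=w_0[]w_1\cdots[]w_n$, to $w_0[\tau^{-1}T_1]w_1\cdots[\tau^{-1}T_n]w_n$. Its root pattern is $\lambda$ because the inserted contents are balanced, so the displayed brackets remain the outermost pairs. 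Both composites are identities by induction on length and on tree height. Under $\tau$, matched bracket pairs of $E$ correspond bijectively to non-root positions of $\tau(E)$, recursively through the outermost pairs. The whole expression corresponds to the root, and the content of a pair corresponds to the subtree at that position.

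Next I will transport the arrows. A specified occurrence in $F$, whether the whole of $F$ or the content of a chosen pair, is sent to the subtree embedding at the corresponding address. This gives a bijection onto $\Ocat_0(\tau A,\tau F)$, and composition by nesting becomes concatenation of addresses. A deletion of $E$ to $T$ empties the contents of a set of pairs, or deletes everything. Taking the outermost non-empty selected contents gives an effective cut in $\tau(E)$, and the resulting word is $\tau^{-1}$ of the pruned tree. The identifications stated in the definition (nested deletions, and deletions of already empty contents) are exactly what makes deletions $E\to T$ correspond to $\Dcat_0(\tau E,\tau T)$, at most one arrow each. Hence hom-sets match by~\eqref{eq:modelhom}. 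For composition, restricting a deletion to a selected occurrence and taking the residual occurrence is, address by address, the restriction $e_a$ and the embedding $u_{p_e(a)}$ used to compose in $\C_L$. This includes the case where the selected content is erased, whose residual is the empty content at the surviving cut boundary, that is, the position $p_e(a)$. So $\tau$ preserves composition and identities. In particular $\mathcal E_\Sigma$ is a category, since it is isomorphic to one.

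Everything else follows by transport. Theorem~\ref{thm:realization} gives the nine axioms, and Lemma~\ref{prop:modelclasses} identifies the epimorphisms with the deletions and the strong monomorphisms with the occurrences. The terminal object is $\tau^{-1}(1)=\varepsilon$. Points correspond to empty leaves, that is, to empty bracket contents, together with $\id_\varepsilon$ when $E=\varepsilon$. The derived labels are the nonterminal objects whose proper strong monomorphisms all have domain $\varepsilon$. By the last paragraph of the proof of Theorem~\ref{thm:operad-recovery}, these are the single-vertex trees $\lambda(1,\ldots,1)$. Their preimages are exactly the patterns $w_0[]\cdots[]w_n\in L_\Sigma$, with arity the number of bracket pairs, ordered left to right. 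Under this identification, the recursive object map of Theorem~\ref{thm:representation}, which sends $X$ to $r(X)$ with children $X_p$, agrees with $\tau$. The reason is that $\rho_E$ deletes the contents of the outermost pairs, giving $s(E)$, and the components are those contents.

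I expect the main obstacle to be the deletion bookkeeping. I have to show that the identifications among deletion descriptions give exactly one arrow per obtainable target, so that the correspondence with $\Dcat_0$ is a bijection and not merely a surjection. I also have to check the composition in the erased-content case. My plan is to make a deletion canonical by its outermost set of non-empty selected pairs and to prove, by induction on $E$ through~\eqref{eq:bracket-decomposition}, that this canonical set is determined by the target word. The induction first checks whether the root pattern survives and then recurses into each outermost pair.
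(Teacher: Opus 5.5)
Your proposal is correct and follows essentially the same route as the paper's proof. Like the paper, you define $\tau$ by the unique outermost decomposition and match occurrences with subtree embeddings and deletion outcomes with prefixes. You check that restriction, including the case where an ancestor of the selected content is erased, agrees with the composition of $\C_{L_\Sigma}$. You then transport the axioms and arrow classes via Theorem~\ref{thm:realization} and Lemma~\ref{prop:modelclasses}, and identify the root normalization with clearing the outermost bracket contents.

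Your extra step of canonicalizing deletions is harmless but not needed. The definition of $\mathcal E_\Sigma$ already stipulates one deletion per source and obtainable target, and Theorem~\ref{thm:cuts} recovers the cut from the target.
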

\begin{proof}
The unique outermost decomposition defines $\tau$ recursively. It is
bijective on objects: given a tree, recursively write each child as a
bracketed expression and insert it into the corresponding empty pair
of its root pattern. This reverses~\eqref{eq:bracket-decomposition}.
Each matched pair specifies a child position at some depth, so
occurrences correspond bijectively to specified rooted subtrees.
Deleting its contents replaces exactly that subtree by the empty leaf;
total deletion is the root cut. Consequently deletion outcomes
correspond exactly to tree prefixes.

The bijections on deletions and on occurrences give a bijection on
arrows by their pair descriptions. If a second deletion is restricted
to the selected occurrence, its deleted bracket contents are exactly
the cut subtrees occurring within that selected subtree. If the
second deletion erases an ancestor of the occurrence, both
constructions give total pruning to the same retained empty boundary.
Thus restriction agrees in all cases, so the composition formula in
Subsection~\ref{sec:model} agrees with the stated composition on
expressions. The identity pairs also correspond. The category laws follow,
and $\tau$ is an isomorphism. Theorem~\ref{thm:realization} supplies
the axioms, while Lemma~\ref{prop:modelclasses} identifies the
two arrow classes. Terminality and the description of points follow
as well.

Clearing all outermost bracket contents gives the root normalization
$E\to s(E)$ for $E\ne\varepsilon$. Its components have sources
$E_1,\ldots,E_n$. Thus the intrinsic root label and indexed components
used in Theorem~\ref{thm:representation} are exactly those in the
recursion for $\tau$, proving the final identification.
\end{proof}

\begin{example}[An expression and its matchings]\label{ex:bracketed-matchings}
Take distinct symbols \texttt{a}, \texttt{b}, \texttt{c}, \texttt{+},
and \texttt{x} in $\Sigma$. For
$E=\texttt{[[a]+[b]]x[c]}$, the component expressions are
\texttt{[a]+[b]} and \texttt{c}. Its representation has root label
\texttt{[]x[]}; the first child has label \texttt{[]+[]} and children
\texttt{a}, \texttt{b}, while the second child is \texttt{c}:
\[
 \xymatrix@C=3em@R=2.3em{
  & \texttt{[]x[]} \ar@{-}[dl]_{1} \ar@{-}[dr]^{2} & \\
  \texttt{[]+[]} \ar@{-}[d]_{1} \ar@{-}[dr]^{2} & & \texttt{c} \\
  \texttt{a} & \texttt{b} &
 }
\]
The edges display the numbered child places. If \texttt{+} and
\texttt{x} are read as addition and multiplication, this is the usual
expression tree of $(a+b)\mathbin{\texttt{x}}c$; these interpretations
impose no equations on the expressions.

Deleting \texttt{b} gives \texttt{[[a]+[]]x[c]}. In its tree the
vertex \texttt{b} is replaced by an empty leaf $1$, with its child
place retained. A prefix--suffix matching is
\[
 \xymatrix@C=2em{
  \texttt{[a]+[b]} \ar[r]^{d} &
  \texttt{[a]+[]} \ar[r]^{u} &
  \texttt{[[a]+[]]x[c]}
 }
\]
where $d$ deletes \texttt{b} and $u$ selects the first outermost
bracket content. Its overlap \texttt{[a]+[]} is a prefix of the source
and a suffix at the specified position of the target.
If $e$ next deletes \texttt{a} in that target, restriction gives
\[
 \xymatrix@C=4em@R=2.5em{
  \texttt{[a]+[]} \ar[r]^{u} \ar[d]_{d'} &
  \texttt{[[a]+[]]x[c]} \ar[d]^{e} \\
  \texttt{[]+[]} \ar[r]_{u'} &
  \texttt{[[]+[]]x[c]}
 }
\]
Hence $e(ud)=u'(d'd)$: the composite overlap is \texttt{[]+[]}.
Finally, \texttt{[a]+[a]} has two distinct occurrences of
\texttt{a}. They give different strong monomorphisms even though
the selected expressions agree, just as equal subtrees at different
positions give different tree morphisms.
\end{example}
\subsection{Pointed categories and words}\label{sec:words}
A category is \emph{pointed} if it has an object that is both initial
and terminal, called a \emph{zero object}. In the presence of
Axiom~\ref{ax:N}, this means precisely that $\one$ is initial.
There is then a unique point $\one\to X$ of every object, and the
constant morphism $X\to\one\to Y$ is the zero morphism $0_{X,Y}$.
We first describe the resulting categories concretely, and then
give their intrinsic characterization.

\begin{definition}[Word categories]\label{def:wordcategory}
Let $L$ be a set, called an \emph{alphabet}, and let $L^*$ be its set
of finite words, including the empty word $\varepsilon$.
For a word $u$, write $|u|$ for its length and
$\operatorname{pre}_r(u)$ and $\operatorname{suf}_r(u)$ for its
prefix and suffix of length $r$. Both are $\varepsilon$ when $r=0$.
The \emph{word category} $\mathcal W_L$ has objects $L^*$ and
\begin{equation}\label{eq:wordhom}
 \mathcal W_L(u,v)=
 \{f^r_{u,v}:0\le r\le\min(|u|,|v|),\quad
          \operatorname{pre}_r(u)=\operatorname{suf}_r(v)\}.
\end{equation}
The arrow $f^r_{u,v}$ matches the first $r$ letters of $u$ with
the last $r$ letters of $v$. Their common word is its
\emph{overlap}, so the superscript $r$ records its length. The identity
of $u$ is $f^{|u|}_{u,u}$, and composition is
\begin{equation}\label{eq:wordcomposition}
f^s_{v,w}f^r_{u,v}
   =f^{\max(0,r+s-|v|)}_{u,w}.
\end{equation}
\end{definition}

\begin{lemma}[Words as unary trees]\label{prop:wordmodel}
These data define a category. Giving every letter of $L$ arity one
identifies $\mathcal W_L$ with its tree category $\C_L$, by
\[
 \varepsilon\longleftrightarrow\one,
 \qquad
a_1\cdots a_n\longleftrightarrow
a_1\bigl(a_2(\cdots a_n(\one)\cdots)\bigr).
\]
In particular, $\mathcal W_L$ satisfies all nine axioms and is
pointed, with zero object $\varepsilon$ and
$0_{u,v}=f^0_{u,v}$.
\end{lemma}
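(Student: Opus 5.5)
Rather than checking the category laws for $\mathcal W_L$ directly, I will build an explicit bijection $\Psi:\mathcal W_L\to\C_L$, for the unary signature, on objects and on hom-sets. I will then show that it carries formula~\eqref{eq:wordcomposition} to the composition of $\C_L$ and identities to identities. Because $\C_L$ is already known to be a category, transport of structure will then make $\mathcal W_L$ a category isomorphic to $\C_L$. Theorem~\ref{thm:realization} then supplies the nine axioms.

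On objects, $\Psi$ is the displayed correspondence, which is clearly bijective. Every label is unary, so a tree is a chain $a_1(a_2(\cdots a_n(\one)))$. Its positions are the addresses $1^k$ for $0\le k\le n$, and the subtree at $1^k$ is the suffix $a_{k+1}\cdots a_n$ of length $n-k$. An effective cut in a chain is a single address, because addresses are totally ordered. Hence a pruning of $u$ keeps exactly a prefix $\operatorname{pre}_r(u)$, and $\Dcat_0(u,T)$ is nonempty exactly when $T=\operatorname{pre}_r(u)$ for some $r$. Similarly, $\Ocat_0(T,v)$ consists of the single embedding at address $1^{|v|-|T|}$ when $T=\operatorname{suf}_{|T|}(v)$, and is empty otherwise. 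By~\eqref{eq:modelhom}, the matchings $u\to v$ therefore correspond bijectively to the $r$ with $\operatorname{pre}_r(u)=\operatorname{suf}_r(v)$. I set $\Psi(f^r_{u,v})$ equal to the matching that prunes to the length-$r$ prefix and embeds at depth $|v|-r$. Under this correspondence the identity $f^{|u|}_{u,u}$ goes to the identity pair.

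The main step is composition, and I will use the positional description of $\C_L$. Identify $\Pos(u)$ with depths $\{0,\dots,|u|\}$. For $\Psi(f^r_{u,v})$, the pruning sends depth $k$ to $\min(k,r)$, and the embedding then adds $|v|-r$. Its positional map is therefore $k\mapsto |v|-r+\min(k,r)$. Composing this with the map for $f^s_{v,w}$ sends the root $0$ to
\[
|w|-s+\min(|v|-r,s).
\]
A matching is determined by the image of the source root, as noted in Subsection~\ref{sec:model}. That image is $|w|-t$, where $t$ is the overlap length, so $t=s-\min(|v|-r,s)=\max(0,r+s-|v|)$. This agrees with~\eqref{eq:wordcomposition}. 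The only care needed is to check that this value of $t$ satisfies the prefix--suffix condition, but this is automatic because the composite exists in $\C_L$ and the root image determines it.

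Pointedness follows next. Each nonempty word has exactly one empty leaf, at depth $|v|$, and $\varepsilon$ has only its identity point. Hence $\one=\varepsilon$ has exactly one arrow to every object, namely $f^0_{\varepsilon,v}$, so it is initial as well as terminal. The zero morphism $0_{u,v}=j q_u$ has overlap $\one$, which means $r=0$, so it equals $f^0_{u,v}$. I expect no serious obstacle. The only delicate point is the bookkeeping of depths in the composition formula, and the root-image argument handles it.
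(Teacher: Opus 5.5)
Your proposal is correct and follows essentially the same route as the paper: identify the hom-sets through the pruning--occurrence factorization of unary chains, check that \eqref{eq:wordcomposition} agrees with composition in $\C_L$ so that the category laws transfer, invoke Theorem~\ref{thm:realization}, and read off pointedness from the unique arrows into and out of $\varepsilon$. The only difference is that you verify composition through positional maps and the fact that the image of the source root determines a matching, whereas the paper restricts the second pruning directly. Your route has the advantage of giving the prefix--suffix condition for the composite overlap for free. One small correction: an effective cut in a chain is at most one address, since the empty cut gives the identity with $r=|u|$.
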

\begin{proof}
Let every letter of $L$ have arity one. A nonterminal tree then
has a labeled root with a single child. Repeating this description
until reaching the empty leaf gives exactly the displayed chain.
By the cut classification in Theorem~\ref{thm:cuts}, a pruning of
this tree retains an initial sequence of its labels, hence a prefix
of the word. A whole subtree at a specified position is a suffix.
Consequently a normal factorization has the form
\[
 \xymatrix@C=4em{
u\ar[r]^{p_{u,t}}&t\ar[r]^{i_{t,v}}&v
 }
\]
where $t$ is a prefix of $u$ and a suffix of $v$, $p_{u,t}$ is the
unique pruning to that prefix, and $i_{t,v}$ is its suffix inclusion.
For $r=|t|$, the existence condition is exactly
$\operatorname{pre}_r(u)=\operatorname{suf}_r(v)$.
Uniqueness of both maps therefore gives~\eqref{eq:wordhom}.

Put $m=|v|$. The suffix selected by the first arrow starts at depth
$m-r$ in $v$. The second arrow retains its prefix of length $s$.
If $s\le m-r$, this pruning collapses the selected subtree to the
empty leaf. Otherwise it retains the portion between depths
$m-r$ and $s$, of length $r+s-m$. This portion is a prefix of $u$
and a suffix of the prefix of $v$ selected by the second arrow,
hence a suffix of $w$. The tree composition is therefore exactly
\eqref{eq:wordcomposition}. Identities also agree, so the category
laws follow from those of $\C_L$.

Theorem~\ref{thm:realization} supplies the nine axioms.
Formula~\eqref{eq:wordhom} gives exactly one arrow from or to
$\varepsilon$. Its factorization through $\varepsilon$ is the
arrow with empty overlap, as claimed.
\end{proof}

\begin{theorem}[Characterization of word categories]\label{thm:wordcharacterization}
For a small category $\C$, the following are equivalent:
\begin{enumerate}
\item $\C$ satisfies Axioms~\ref{ax:N}--\ref{ax:Ddetect} and is pointed;
\item $\C$ satisfies those axioms and every derived label has arity one;
\item $\C$ is isomorphic to $\mathcal W_L$ for some set $L$.
\end{enumerate}
These conditions are also equivalent, among categories satisfying
the nine axioms, to every operation of the recovered operad having
arity one. The alphabet can be recovered as the derived label set
$\Phi$, or equivalently as the set of objects of complexity one.
Under this representation, complexity is word length.
\end{theorem}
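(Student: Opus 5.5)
I would prove the cycle (c)$\Rightarrow$(a)$\Rightarrow$(b)$\Rightarrow$(c). Then I would read off the remaining assertions from the representation.

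For (c)$\Rightarrow$(a), I would invoke Lemma~\ref{prop:wordmodel}. It already states that $\mathcal W_L$ satisfies the nine axioms and is pointed, and both properties are invariant under isomorphism of categories.

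For (a)$\Rightarrow$(b), pointedness means that $\one$ is initial, so every object has exactly one point. By the discussion after Definition~\ref{def:labels}, every derived label $\phi$ satisfies $P_\phi=\C(\one,\phi)$. Hence $\operatorname{ar}(\phi)=|\C(\one,\phi)|=1$.

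For (b)$\Rightarrow$(c), Theorem~\ref{thm:representation} gives an isomorphism $\C\cong\C_\Phi$. Here every label has arity one, and Lemma~\ref{prop:wordmodel} identifies $\C_\Phi$ with $\mathcal W_\Phi$. This direction needs no choice of orders, since unary labels have a single input place.

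For the operad formulation, I would argue both ways through Theorem~\ref{thm:operad-recovery}. There the operations of arity $n$ are the objects with exactly $n$ points. If every operation has arity one, then every object has exactly one point. Then $\one$ is initial, which is condition (a). Conversely, under (c) every tree in the unary signature has exactly one empty leaf, hence exactly one point.

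The phrase ``every operation of the recovered operad has arity one'' has to be read with some care. It must mean that every element of $\mathcal O_\C$ has arity one, not only that the generators do. With this reading, the generator version is exactly (b), and the element version is (a). Either way, the equivalence follows from the two directions just given.

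For the final assertions, I would compute complexity in $\mathcal W_L$. Strong monomorphisms into $u$ are suffix inclusions, one for each suffix length $0,\dots,|u|$, so $c(u)=|u|$. The derived labels are the single-vertex trees, namely the one-letter words, which are exactly the objects of complexity one. Under the isomorphism, the alphabet $L$ therefore corresponds to $\Phi$, and also to the set of objects of complexity one.

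The only real subtlety is this complexity count. I would derive it from \eqref{eq:complexity}: a nonterminal object in the unary case has exactly one component, so $c(X)=1+c(X_p)$, and induction on length gives $c(u)=|u|$. Apart from that count, the whole proof consists of citing earlier results.
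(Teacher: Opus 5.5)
Your proposal is correct and follows the paper's proof essentially step by step. It uses the same cycle through Lemma~\ref{prop:wordmodel} and Theorem~\ref{thm:representation}, the same point count $P_\phi=\C(\one,\phi)$ for (a)$\Rightarrow$(b), and the same suffix count giving $c(u)=|u|$; the only variation is that for the operad converse you reach (a) directly via initiality of $\one$ rather than (b) via the generating labels, which is equally valid.
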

\begin{proof}
Suppose \textup{(a)} holds. Since $\one$ is initial, every object has exactly
one point. A label $\phi$ of arity $n$ has exactly $n$ points:
its component sources are terminal, and every point selects one
of these components. These are precisely the input places of
$\phi$, by the definition of the derived signature.
Consequently $n=1$, proving \textup{(b)}.

Suppose now that \textup{(b)} holds.
Theorem~\ref{thm:representation} identifies $\C$ with the
expression tree category of its derived unary signature, and
Lemma~\ref{prop:wordmodel} identifies this category with
$\mathcal W_\Phi$. This proves \textup{(c)}. Conversely, the
same lemma shows that each $\mathcal W_L$ is pointed and
satisfies the nine axioms, so \textup{(c)} implies \textup{(a)}.

A nonempty word of length $n$ has precisely $n$ proper suffix occurrences,
including the empty suffix. Since these are exactly its nonidentity
strong monomorphisms, its complexity is $n$; the empty word has
complexity zero. The words of complexity
one are therefore exactly the single letters. This recovers the
alphabet intrinsically and proves the remaining claims. By
Theorem~\ref{thm:operad-recovery}, every operation of the recovered
operad has arity one: each word has exactly one point, including
the empty word that represents the identity operation. Conversely,
if every operation has arity one, so does every generating label,
and condition \textup{(b)} holds.
\end{proof}

\begin{remark}[Finite pointed sets]
The category of finite pointed sets illustrates the role of the
axiom on intersecting strong subobjects. Its inclusions in the square
\[
\xymatrix@C=3.5em@R=2.5em{
\{*\}\ar[r]\ar[d]&\{*,a\}\ar[d]\\
\{*,b\}\ar[r]&\{*,a,b\}
}
\]
are strong monomorphisms, since they are injective functions preserving
the distinguished element. Neither of the two diagonals required by
Axiom~\ref{ax:O} exists: its composite with the indicated inclusion
would have to retain an element absent from that inclusion's image.
Thus Axiom~\ref{ax:O} distinguishes these pointed sets from the
word categories characterized by the theorem.
\end{remark}

\begin{lemma}[Arrow classes and filling]\label{prop:wordarrows}
In $\mathcal W_L$, an arrow $f^r_{u,v}$ with overlap of length
$r$ is epic precisely when
$r=|v|$, and monic precisely when $r=|u|$.
Every monomorphism is strong. Thus epimorphisms retain prefixes,
and monomorphisms include suffixes.
Filling the unique point of a word $b$ by a word $a$ gives the
concatenated word $ba$.
\end{lemma}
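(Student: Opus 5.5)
Since $\mathcal W_L$ is identified with $\C_L$ for the unary signature by Lemma~\ref{prop:wordmodel}, the plan is to transport everything through that identification and use Lemma~\ref{prop:modelclasses}, which identifies epimorphisms with prunings and strong monomorphisms with subtree embeddings. The normal factorization of $f^r_{u,v}$ is $i_{t,v}p_{u,t}$ with $t=\operatorname{pre}_r(u)$. This arrow is a pruning exactly when the embedding factor is an identity, that is, when $t=v$, equivalently $r=|v|$. It is a subtree embedding exactly when the pruning factor is an identity, that is, when $t=u$, equivalently $r=|u|$. Hence the epimorphisms are the arrows with $r=|v|$, which retain prefixes, and the strong monomorphisms are the arrows with $r=|u|$, which include suffixes.

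It remains to show that every monomorphism is strong, i.e.\ that a monic $f^r_{u,v}$ has $r=|u|$. Suppose $r<|u|$. Then the pruning factor $p_{u,t}$ is not an identity, so it suffices to show that $p_{u,t}$ is not monic. I would use the two arrows $f^{|u|}_{u,u}=\id_u$ and $0_{u,u}=f^0_{u,u}$ from $u$ to $u$. They are distinct because $|u|>r\ge 0$. By \eqref{eq:wordcomposition}, composing each with $p_{u,t}=f^{r}_{u,t}$ gives overlap $\max(0,|u|+r-|u|)=r$ and $\max(0,0+r-|u|)=0$ respectively. Those overlaps differ unless $r=0$, so this choice of test arrows fails.

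A better choice is to take two distinct arrows $g,h:w\to u$ whose overlaps are both at most $|u|-r$. The composite $f^r_{u,v}g$ with $g=f^s_{w,u}$ has overlap $\max(0,s+r-|u|)=0$ when $s\le |u|-r$, and the same holds for $h$. Take $w=u$, $g=0_{u,u}$ (overlap $0$), and $h=f^{s}_{u,u}$ with $s=|u|-r\ge1$, provided $\operatorname{pre}_s(u)=\operatorname{suf}_s(u)$. This border condition may fail, so $w$ should be chosen differently. Take instead $w=\operatorname{suf}_1(u)$, a one-letter word, which exists since $|u|>r\ge0$. The arrows $f^0_{w,u}$ and $f^1_{w,u}$ both exist (the latter is the suffix inclusion) and are distinct. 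Since $1\le|u|-r$, both composites with $f^r_{u,v}$ have overlap $0$, so they coincide. Thus $f^r_{u,v}$ is not monic. Consequently monic arrows have $r=|u|$, and these are strong.

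For filling, Theorem~\ref{thm:finite} says the universal filling replaces the marked empty leaf of $b$ by $a$. In the unary tree $b_1(\cdots b_n(\one))$ the unique empty leaf sits at the bottom, so grafting gives $b_1(\cdots b_n(a_1(\cdots)))$, which is the word $ba$. Concretely, the structure maps are the pruning $f^{|b|}_{ba,b}$ and the suffix inclusion $f^{|a|}_{a,ba}$. The only delicate step is the non-monicity argument, and choosing a one-letter test object avoids any border condition on $u$.
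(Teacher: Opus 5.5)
Your proposal is correct and follows the paper's proof. The arrow classes come from Lemma~\ref{prop:modelclasses} through the unary-tree identification. Non-monicity for $r<|u|$ is witnessed by a suffix occurrence and a zero map that become equal after composing with $f^r_{u,v}$: you take the one-letter suffix, the paper takes the suffix of length $|u|-r$, and both are erased by the pruning. For filling you cite the general grafting description of universal fillings, whereas the paper checks directly in word terms that $(ba,p,i)$ is the only pullback factorization and that its only comparison endomorphism is the identity; either route works.
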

\begin{proof}
The epimorphism and strong-monomorphism statements follow from
Lemma~\ref{prop:modelclasses} and the identification of
prefix prunings and suffix occurrences in
Lemma~\ref{prop:wordmodel}.
To prove the assertion about all monomorphisms, suppose that $r<|u|$.
Let $z$ be the suffix of $u$ of positive length $|u|-r$, and let
$h:z\to u$ be its occurrence. Formula~\eqref{eq:wordcomposition}
gives $f^r_{u,v}h=0_{z,v}=f^r_{u,v}0_{z,u}$, although
$h\ne0_{z,u}$. Hence $f^r_{u,v}$ is not monic.

For filling, let $i:a\to ba$ be the suffix occurrence and
$p:ba\to b$ the prefix pruning. They form the square
\[
\xymatrix@C=4em@R=2.5em{
a \ar[r]^{i} \ar[d]_{q_a} & ba \ar[d]^{p} \\
 \varepsilon \ar[r]_{j_b} & b
}
\]
where $q_a:a\to\varepsilon$ is the total pruning and
$j_b:\varepsilon\to b$ selects the empty suffix of $b$.
The fibre of $p$ over this point
is exactly its removed suffix $a$, so the square is a pullback.
Let $(c,d,u)$ be another pullback factorization of $j_bq_a$ with
$u:a\to c$ strong monic and $d:c\to b$ epic, as in
Axiom~\ref{ax:U}. By the arrow classes already proved, $d$ retains
the prefix $b$ of $c$, and its fibre at $j_b$ is the suffix following
that prefix. The given pullback identifies this suffix with $a$.
Hence $c=ba$, $d=p$, and $u=i$.
A comparison endomorphism $h:ba\to ba$ must satisfy $hi=i$ and
$ph=p$.
If $a\ne\varepsilon$, the first equality and
\eqref{eq:wordcomposition} force its overlap to have length $|ba|$,
so $h=\id_{ba}$. If $a=\varepsilon$, then $p=\id_b$, and the
second equality gives the same conclusion. Thus this is the terminal filling.
The order $ba$ records reading the surrounding chain $b$ first,
then the chain $a$ inserted at its empty leaf.
\end{proof}

\begin{example}[One letter and two-letter overlaps]\label{ex:wordexamples}
For a one-letter alphabet, there is one word of each length
$n\ge0$. Every permitted length is an overlap, so
\[
 |\mathcal W_L(n,m)|=\min(n,m)+1.
\]
For an alphabet with distinct letters $a,b$, the arrows
$ab\to ba$ have overlap $\varepsilon$ or $a$, while those
$ba\to ab$ have overlap $\varepsilon$ or $b$.
The composites of the two nonzero arrows are zero, because the
composition formula gives $\max(0,1+1-2)=0$.
An empty alphabet gives the one-object category.
\end{example}

\begin{remark}[Comparison with colored chains]
Our word categories occur within the construction of Szczesny
\cite[Section~3.2.1 and Example~7]{SzczesnyPreLie}, which uses
colored partially ordered sets and morphisms identifying suitable
parts of them. The agreement includes both objects and morphisms,
as can be seen by restricting that construction to chains.
Associate to $a_1\cdots a_n$ the chain with $n$ elements, colored
in this order from greatest to least, and associate the empty word
with the empty poset. A morphism in that construction identifies
the complement of an order ideal in the source with an order ideal
in the target, preserving order and colors. Here an order ideal
means a subset containing every element below each of its elements.
For chains, the two selected parts are exactly a prefix of the
source word and a suffix of the target word. Their order-preserving
identification is unique when the colors agree. Composition retains
the intersection of the two selected parts of the middle chain,
whose length is $\max(0,r+s-|v|)$. Consequently the full subcategory
on colored chains and the empty poset is equivalent to
$\mathcal W_L$.

Szczesny's colored-ladder example uses a finite set of colors; the
same construction applies to an arbitrary alphabet. Our word
$a_1\cdots a_n$ corresponds to his ladder $L(a_n,\ldots,a_1)$,
since that notation reads the colors from leaf to root.
\end{remark}

\section{A noetherian form with exact join decomposition}\label{sec:cosubquotients}
We now turn to the algebraic behavior of expression tree categories.
A matching has a quotient, given by its pruning, and a subobject,
given by the subtree that it selects. We want to know how closely
these two constructions behave like quotients and subgroups in the
category of groups. For groups, direct and inverse images of
subgroups describe the kernel and image of a homomorphism and
provide the basis for the isomorphism theorems. Noetherian forms
give a way of expressing these properties for other categories.
The data that take the place of subgroups are called clusters.

In our construction, a cluster over a tree consists of a pruning
and a choice of an empty leaf in the resulting tree; we also allow
the choice to be empty. This marking remembers which part of the
original tree has been collapsed to the chosen leaf. We shall show
that these data admit direct and inverse images with the properties
needed for a noetherian form. The construction uses the empty tree
as an additional object, and it simplifies for word categories,
where the empty word is already a zero object.

The general results of Janelidze and van Niekerk
\cite[Theorems 159 and 163]{JanelidzeVanNiekerk} give conditions
under which subobjects and quotients determine a noetherian form.
We apply these results to trees and identify the resulting form
with the marked prunings just described. We also prove a converse,
which allows expression tree categories with an empty tree adjoined
to be recognized among categories carrying such a form.

\subsection{The empty tree and the clusters}\label{sec:tree-form-construction}
Two subtrees in different branches of a tree have no position in
common. To represent their intersection by an object, we introduce
the tree with no root. The same object will also allow us to
describe an empty marking.

\phantomsection\label{not:augmented-tree-category}
Let $\mathcal T$ be an operadic expression tree category, as in
Theorem~\ref{thm:representation}. Write $\widehat{\mathcal T}$
for the category obtained by
adjoining a new object $0$, with a unique morphism $0\to X$ for
every object $X$ and no morphism from an object of $\mathcal T$
to $0$.
Thus $0$ is strict initial: every morphism with codomain $0$ is
an isomorphism. It represents the tree with no root. The object
$1$ continues to represent the tree consisting of one empty position.

\phantomsection\label{not:augmented-tree-classes}
Let $\mathcal E$ consist of the original prunings together with
$\id_0$, and let $\mathcal M$ consist of the original occurrences
together with all morphisms $0\to X$. An occurrence identifies
its domain with the subtree that it selects, so we shall also
call it a subtree inclusion. We refer to an
$\mathcal M$-subobject as a subtree, allowing the empty subtree.
Both classes are closed under composition.

\begin{lemma}\label{lem:augmented-factorization}
The pair $(\mathcal E,\mathcal M)$ is a proper factorization system
on $\widehat{\mathcal T}$.
\end{lemma}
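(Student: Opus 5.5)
We check the defining properties of a proper factorization system directly: both classes contain the isomorphisms and are closed under composition with them, every morphism factors as a member of $\mathcal E$ followed by a member of $\mathcal M$, every member of $\mathcal E$ is epic and every member of $\mathcal M$ is monic in $\widehat{\mathcal T}$, and commutative squares from $\mathcal E$ to $\mathcal M$ have unique diagonals. Throughout we use that $\widehat{\mathcal T}$ has no arrows into $0$ other than $\id_0$. Consequently every morphism of $\widehat{\mathcal T}$ is either an arrow of $\mathcal T$ or an arrow $0\to X$.

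First we settle isomorphisms and factorizations. By Lemma~\ref{lem:rigidity}, $\mathcal T$ has only identity isomorphisms. Since nothing nontrivial maps into $0$, the same holds in $\widehat{\mathcal T}$. Identities lie in both classes, so closure under isomorphisms is automatic, and composition closure was noted just before the lemma. For factorization, an arrow of $\mathcal T$ has its normal factorization by Lemma~\ref{prop:paths}(d). An arrow $0\to X$ factors as $\id_0$ followed by $0\to X$.

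Next we check cancellability. A pruning $d:X\to Y$ with $X$ in $\mathcal T$ remains epic in $\widehat{\mathcal T}$, because two maps out of $Y$ land in $\mathcal T$, where $d$ was already epic. The identity $\id_0$ is trivially epic. An occurrence $m:A\to X$ stays monic, since maps into $A$ come either from $\mathcal T$, where $m$ was monic, or from $0$, where they are unique. An arrow $0\to X$ is monic because $0$ is initial.

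The main step is the unique diagonal, which we split into cases by the top-left object $P$ of a square $e:P\to Q$ in $\mathcal E$, $m:A\to B$ in $\mathcal M$, $a:P\to A$, $b:Q\to B$ with $ma=be$.
\begin{enumerate}
\item If $P=0$, then $e=\id_0$ and $Q=0$, so the diagonal is $a$ itself.
\item If $P\ne 0$, then $Q\ne 0$, and $A\ne 0$ because $a$ lands there; hence $B\ne 0$ as well. So $m$ is an original occurrence and the whole square lives in $\mathcal T$.
\end{enumerate}
In the second case, Lemma~\ref{prop:modelclasses} (through Theorem~\ref{thm:representation}) says $m$ is strong monic in $\mathcal T$ and $e$ is epic there, which gives a diagonal. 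Uniqueness follows from $m$ being monic.

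The only potential obstacle is bookkeeping: making sure that adjoining $0$ neither destroys epicity of prunings nor creates new squares needing diagonals. Both are handled by strictness of $0$, that is, by the absence of arrows into $0$.
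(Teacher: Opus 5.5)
Your proof is correct and follows the same route as the paper's. Old matchings keep their pruning--occurrence factorization and $0\to X$ factors as $\id_0$ followed by itself; adjoining $0$ preserves epicity and monicity; and the diagonal property splits into the case where the left map is $\id_0$ and the case where strictness forces the whole square into $\mathcal T$.
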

\begin{proof}
Every matching in $\mathcal T$ retains its pruning--occurrence
factorization. The new morphism $0\to X$ factors as $\id_0$
followed by itself. Prunings remain epic, since adjoining $0$
introduces no new morphisms with a nonempty domain. Occurrences
remain monic: the only new possible test morphisms come from
$0$, and any two such morphisms are equal. Finally, $0\to X$
is monic because there is at most one morphism from any object
to $0$.

Consider a commutative square with its left map in $\mathcal E$
and its right map in $\mathcal M$. If the left map is $\id_0$,
the top map gives the unique diagonal.
Otherwise its domain belongs to $\mathcal T$. A right map with
domain $0$ would then require a map from an object of $\mathcal T$
to $0$, contradicting the construction. All four objects and
maps belong to the original category, where the required diagonal
exists uniquely. We have proved the unique diagonal property,
as well as factorization and the epic and monic conditions.
These are precisely the conditions for a proper factorization system.
\end{proof}

\begin{remark}
The distinguished classes in Lemma~\ref{lem:augmented-factorization}
need not consist of all epimorphisms and all strong monomorphisms
of the enlarged category. For example, when the original category
is pointed, the new map $0\to1$ is epic and is not in
$\mathcal E$. The construction below uses the specified classes.
\end{remark}

\phantomsection\label{not:tree-subobjects-quotients}
For an object $X$, let $\mathsf S(X)$ be the partially ordered
set of subobjects represented by morphisms in $\mathcal M$.
Thus a member of $\mathsf S(X)$ is a specified subtree of
$X$, which may be empty. The inequality $[m]\le[n]$ means
that $m$ factors through $n$. Similarly, let $\mathsf Q(X)$
be the partially ordered set of quotients represented by morphisms
in $\mathcal E$. The inequality $[q]\le[r]$ means that $r$
factors through $q$, so that further pruning increases a quotient
in this order. Subobjects and quotients are taken up to
isomorphism fixing $X$.

\begin{definition}\label{def:tree-clusters}
A \emph{cluster over $X$} is a cospan
\[
 \xymatrix@C=3.5em{
 X\ar[r]^q&Q&U\ar[l]_u
 }
\]
where $q\in\mathcal E$, $u\in\mathcal M$, and $U$ is $0$ or
$1$. Cospans are identified by isomorphisms that fix $X$.
For clusters $[q,u]$ over $X$ and $[r,v]$ over $Y$, and a
morphism $f:X\to Y$, write $[q,u]\le_f[r,v]$ when there are
morphisms $h$ and $k$ making the following diagram commute
\[
 \xymatrix@C=3.5em@R=2.4em{
 X\ar[r]^q\ar[d]_f&Q\ar[d]^h&U\ar[l]_u\ar[d]^k\\
 Y\ar[r]_r&R&V\ar[l]^v
 }
\]
Denote this collection of clusters and comparisons by $\mathsf G$.
\end{definition}

Thus a cluster is a pruned tree with either an empty marking or
a marked empty leaf. It remembers both how the quotient was
obtained and which leaf was selected. In particular, selecting
different leaves of the same quotient gives different clusters.
Whenever a comparison exists, its maps are unique. Indeed, $h$
is determined by the epic map $q$, and $k$ is determined by
the monic map $v$. Comparisons compose by pasting their diagrams.
Comparisons over $\id_X$ define a partial order,
since mutual comparisons give inverse isomorphisms of cospans.
Consequently the clusters and comparisons form a category with
a faithful functor to $\widehat{\mathcal T}$, obtained by
forgetting the cluster. A functor of this kind is called a
\emph{form}. Its partially ordered set of clusters over $X$
is its \emph{fibre}, denoted $\mathsf G(X)$.

\subsection{Subtrees, prunings, and image operations}\label{sec:tree-form-images}
To compare clusters over different objects, we need to transport
subobjects and quotients along matchings. Pullbacks provide inverse
images of subobjects, while pushouts provide direct images of
quotients. Both constructions have a direct interpretation in
terms of the trees, which we describe before applying the general
noetherian-form theorem.

\phantomsection\label{not:labeled-vertices}
Write $V_+(X)$ for the labeled vertices of $X$, excluding the
empty leaves. A subset of $V_+(X)$ is \emph{descendant-closed}
if, together with any vertex, it contains every labeled descendant
of that vertex.

\begin{lemma}\label{lem:tree-lattice-calculus}
The following properties hold in $\widehat{\mathcal T}$.
\begin{enumerate}
\item The posets $\mathsf S(X)$ and $\mathsf Q(X)$ are finite
bounded lattices. Quotients of a nonempty tree $X$ correspond to
descendant-closed subsets of $V_+(X)$, ordered by inclusion.
\item Pullbacks of maps in $\mathcal M$ along arbitrary
morphisms exist. Pulling a map in $\mathcal M$ back along a
map in $\mathcal E$ gives a map in $\mathcal E$ on the other
side of the square.
\item Pushouts of maps in $\mathcal E$ along arbitrary
morphisms exist.
\end{enumerate}
\end{lemma}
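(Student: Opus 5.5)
We work throughout in the concrete model, using Theorem~\ref{thm:representation} to identify $\mathcal T$ with $\C_L$ and $\widehat{\mathcal T}$ with $\C_L$ plus the strict initial object $0$.

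For (a), the subobject poset $\mathsf S(X)$ consists of the empty subtree together with the positions $\Pos(X)$, ordered by reversed prefix order, since $u\le v$ exactly when $u$ factors through $v$. By Lemma~\ref{prop:paths}(b) this is a finite tree branching order with top $\id_X$. Adjoining the bottom element $[0\to X]$ turns it into a lattice. The join of two positions is their longest common prefix, which exists because the ancestors of any position form a chain ending at the root. The meet of two positions is the deeper one if they are comparable, and $0$ otherwise. The meet of $0$ with anything is $0$.

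For $\mathsf Q(X)$ with $X$ nonempty, Theorem~\ref{thm:cuts} identifies quotients with effective cuts $K$. We send $K$ to the set of labeled vertices lying at or below some address in $K$. This set is descendant-closed. Conversely, a descendant-closed set $D\subseteq V_+(X)$ is recovered as the set of prefix-minimal elements of $D$, which form an effective cut because their subtrees are nonterminal. By Lemma~\ref{prop:cutcomposition}, $\delta_K$ factors through $\delta_{K'}$ exactly when the vertex set of $K'$ is contained in that of $K$. This gives an isomorphism with descendant-closed subsets ordered by inclusion. These form a finite sublattice of the power set, closed under union and intersection, bounded by $\emptyset$ (the identity) and $V_+(X)$ (the map $q_X$). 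For $X=0$ both posets are singletons.

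For (b), let $m:A\to Y$ be in $\mathcal M$ and $f:X\to Y$ arbitrary, with normal factorization $f=ud$. We paste two pullbacks.
\begin{itemize}
\item The pullback of $m$ along $u$ is an intersection of subtrees. If $m$ and $u$ are comparable, the pullback is the deeper inclusion together with the factor supplied by Axiom~\ref{ax:O}. If they are disjoint, the pullback is $0$. In that case any cone from a tree would give a common strong subobject, which contradicts disjointness, so $0$ with its unique maps has the universal property.
\item The pullback along the pruning $d$ of a nonempty inclusion is given by Axiom~\ref{ax:Dpull}, whose left leg is epic and hence a pruning. The pullback of $0\to\cdot$ along any map is $0$, since $0$ is strict initial and no tree maps to $0$.
\end{itemize}
We then check that these pullbacks remain pullbacks in $\widehat{\mathcal T}$. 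The only new cones come from $0$, and they factor uniquely through anything. The $\mathcal E$-on-the-other-side claim is exactly the Axiom~\ref{ax:Dpull} case, together with the trivial case $\id_0$ when $m$ has domain $0$.

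For (c), let $e:X\to Q$ be in $\mathcal E$ and $g:X\to Y$ arbitrary. If $X=0$, then $e=\id_0$ and the pushout is $Y$. Otherwise we factor $g=ud$. Pushouts of prunings along prunings exist because $\mathsf Q(X)$ is a lattice: the pushout is the quotient given by the union of the descendant-closed sets. We verify its universal property via the thinness of $\Dcat$ (Lemma~\ref{prop:thin}), together with the fact that any cocone with legs $h$ and $k$ has $hd$ factoring through the normal factorization of the joined cut. Pushouts of a pruning along an occurrence $u$ are given by Axiom~\ref{ax:Dpush}. Pasting the two squares gives the pushout along $g$.

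The main obstacle is checking that the Axiom~\ref{ax:Dpush} pushout and the cut-join pushout keep their universal property in $\widehat{\mathcal T}$. This follows because no map leaves a tree into $0$, so every cocone lies in $\mathcal T$. A second obstacle is verifying the join-of-cuts pushout against arbitrary matchings rather than only against prunings. For this we factor the cocone maps and use the uniqueness of normal factorizations.
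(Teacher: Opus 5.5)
Your proposal is correct and follows essentially the same route as the paper: subtrees ordered by position with intersections, least common ancestors and the empty subtree as bottom, quotients identified with descendant-closed erased sets, and pullbacks and pushouts obtained by factoring the matching and pasting the occurrence and pruning cases (the pushout of two prunings erasing the union of their erased sets), with strictness of $0$ ensuring that no new cones or cocones interfere. The differences are cosmetic: you cite Axioms~\ref{ax:Dpull} and~\ref{ax:Dpush} as verified in Theorem~\ref{thm:realization}, where the paper re-argues those squares directly in tree terms, and in the comparable case of intersecting two inclusions the factor comes from comparability itself rather than from Axiom~\ref{ax:O}.
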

\begin{proof}
Two subtrees at comparable positions meet in the smaller subtree.
Subtrees at incomparable positions meet in $0$. Their join is the
subtree rooted at their least common ancestor. The empty subtree
is the bottom element and the whole tree is the top element.
These descriptions include all subobjects. There are finitely many
positions, so $\mathsf S(X)$ is finite.

A pruning is determined by its set of erased labeled vertices.
This set is descendant-closed. Conversely, a descendant-closed
set $D$ determines a pruning: replace each of its vertices nearest
the root by an empty leaf, thereby also erasing all descendants
of those vertices. Further pruning enlarges $D$. Hence
$\mathsf Q(X)$ is the lattice of these sets, with intersection
as meet and union as join. Its bottom is the identity quotient;
its top erases all labeled vertices and has codomain $1$.
Both lattices at $0$ have one element.

The intersection construction just given is the pullback of two
subtree inclusions: a matching that factors through both must
have its image in their intersection. In the disjoint case, a
nonempty image cannot lie in both, so the domain of such a
matching must be $0$.

Consider next a pruning $e:X\to Y$ and a subtree $B\to Y$.
The root position of $B$ is a position retained by the pruning,
possibly as an empty leaf created by a cut. Take the entire
subtree $A$ of $X$ at that position. The restriction $A\to B$
is a pruning, and gives a commutative square
\[
\xymatrix@C=3.5em@R=2.4em{
A\ar[r]\ar[d]&X\ar[d]^e\\
B\ar[r]&Y.
}
\]
A matching into $X$ whose composite with $e$
factors through $B$ has its image in $A$: pruning changes the
subtree at a cut to a leaf, and changes no ancestor of that cut.
It therefore factors uniquely through $A\to X$. This proves
the pullback property. The inverse image of the empty subtree
is empty. To pull back along a general matching, factor it as
a pruning followed by an inclusion and paste the two pullback
squares. This proves (b).

The pushout of two prunings of $X$ erases the union of their
erased sets. To check its universal property, observe that a
matching out of $X$ factors through the pruning with erased set
$D$ exactly when its own erased set contains $D$. Indeed, in
that case the residual pruning and the same final subtree
inclusion give the factorization; necessity follows from the
composition rule. Thus a common composite factors uniquely
through the union pruning. Uniqueness of the induced maps from
the two quotients follows because pruning maps are epic.

The pushout of a pruning along a subtree inclusion performs that
pruning at the specified position in the ambient tree. This is
the bicartesian square already used in the tree construction.
To push out along a general matching, first factor that matching
as a pruning followed by an inclusion, and then paste the two
pushouts. For a pushout involving $0$, the map in $\mathcal E$
must have domain $0$ and hence must be $\id_0$. Its pushout
is an identity, so this case is covered as well. This proves (c).
\end{proof}

\phantomsection\label{not:subobject-quotient-images}
Let $f:X\to Y$ be a morphism. For a subtree represented by
$m:A\to X$, the inclusion part of $fm$ represents its
\emph{direct image}, denoted $f_*^{\mathsf S}[m]$. For a
subtree of $Y$, its pullback along $f$ represents its
\emph{inverse image}, denoted $f_{\mathsf S}^*$ applied to
that subtree. Thus the superscript or subscript $\mathsf S$
indicates that the image operation acts on subobjects.

We use $\mathsf Q$ in the corresponding notation for quotients.
The direct image $f_*^{\mathsf Q}[q]$ of a quotient
$q:X\to Q$ is obtained by pushing $q$ out along $f$.
The inverse image $f_{\mathsf Q}^*[r]$ of a quotient
$r:Y\to R$ is the quotient part of the composite $rf$.
These two constructions have the following diagrams:
\[
 \xymatrix@C=3em@R=2.3em{
 X\ar[r]^q\ar[d]_f&Q\ar[d]\\
 Y\ar[r]_{q'}&P
 }
 \qquad
 \xymatrix@C=3em@R=2.3em{
 X\ar[r]^e\ar[d]_f&I\ar[d]^m\\
 Y\ar[r]_r&R
 }
\]
The first is a pushout and gives $f_*^{\mathsf Q}[q]=[q']$;
the second has $rf=me$ and gives $f_{\mathsf Q}^*[r]=[e]$.
Each direct image is left adjoint to the corresponding inverse
image. For example, a subobject $A$ of $X$ satisfies
$f_*^{\mathsf S}A\le B$ exactly when the composite of its
inclusion with $f$ factors through $B\to Y$. By the pullback
property, this is equivalent to $A\le f_{\mathsf S}^*B$.
The analogous assertion for quotients follows from the pushout
property. The uniqueness in these universal constructions also
makes direct and inverse images functorial. The subobject and
quotient forms, with these operations and the given factorization
system, constitute
what \cite[Definition 95 and Theorem 100]{JanelidzeVanNiekerk} calls an
\emph{orean factorization}.

\phantomsection\label{not:matching-factor-parts}
Write $I_f$ for the subobject of $Y$ and $K_f$ for the quotient
of $X$ given by the two parts of the factorization of $f$.
Thus $I_f\in\mathsf S(Y)$ and $K_f\in\mathsf Q(X)$.
For a matching between nonempty trees, $K_f$ records what is
pruned from its source, while $I_f$ records the entire subtree
selected in its target.

\phantomsection\label{not:subtree-collapse}
Let
$\alpha_X:\mathsf S(X)\to\mathsf Q(X)$ send a subtree to
the quotient that collapses it to an empty leaf; set
$\alpha_X(0)=\bot$. For a subtree $A\ne0$, this is the
pushout displayed below, which is also a pullback
\[
 \xymatrix@C=3.5em@R=2.3em{
 A\ar[r]^{m_A}\ar[d]&X\ar[d]^{c_A}\\
 1\ar[r]&Q_A
 }
\]
The map $\alpha_X$ records how much of $X$ is removed when
we collapse a selected subtree. Equivalently, it is the quotient
direct image along $m_A$ of the largest quotient of $A$. This
largest quotient is $A\to1$ for a nonempty tree and $\id_0$ for
the new empty tree. This describes $\alpha$ entirely in terms
of the factorization system and its image operations:
\[
 \alpha_X(A)=(m_A)_*^{\mathsf Q}
                 (\top_{\mathsf Q(A)}).
\]

\phantomsection\label{not:subtree-saturation}
For a quotient $R$ represented by $q:X\to Q$, and a subtree
$A$ of $X$, put
\[
 \sigma_R(A)=q_{\mathsf S}^*q_*^{\mathsf S}(A).
\]
This is the least subtree containing $A$ that is an inverse
image under $q$. We call $A$ \emph{saturated by $R$} if
$\sigma_R(A)=A$. In tree terms, saturation enlarges $A$ to
the entire subtree at a cut above its root, if there is such a
cut. Otherwise it leaves $A$ unchanged. The empty subtree
remains empty.

\begin{lemma}\label{lem:tree-form-identities}
For every morphism $f:X\to Y$, quotient $D$ of $X$, quotient
$E$ of $Y$, and subtree $B$ of $Y$, one has
\[
 \begin{split}
 f_{\mathsf Q}^*f_*^{\mathsf Q}D&=D\vee K_f,\\
 f_*^{\mathsf Q}f_{\mathsf Q}^*E&=E\wedge\alpha_Y(I_f),\\
 f_*^{\mathsf S}f_{\mathsf S}^*B&=B\wedge I_f.
 \end{split}
\]
Moreover, $\sigma_{\alpha_X(A)}(A)=A$ for every subtree $A$,
and $\alpha_X$ preserves bottom elements and binary meets.
\end{lemma}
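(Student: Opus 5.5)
We will verify all five assertions concretely, using the tree descriptions from Lemma~\ref{lem:tree-lattice-calculus}. A quotient of a nonempty tree is a descendant-closed set of erased labeled vertices, and a subtree is a position or $0$. Every image operation is expressed in these terms. Throughout, factor $f=me$ with $e:X\to I$ in $\mathcal E$ and $m:I\to Y$ in $\mathcal M$. Inverse images compose contravariantly and direct images covariantly, so each identity can be checked separately for $e$ and $m$ and then pasted. The cases involving $0$ are trivial, since both fibres at $0$ are singletons and $0\to Y$ has $I_f=0$ and $\alpha_Y(0)=\bot$. We therefore assume that all trees are nonempty.

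\emph{First identity.} Write $D$ for an erased set. For a pruning $e$ with erased set $K_e$, the pushout $e_*^{\mathsf Q}D$ erases $D\cup K_e$, viewed in $I$. Pulling back along $e$ is then composition, which gives $D\cup K_e$, that is, $D\vee K_f$. For an inclusion $m$ at address $a$, the pushout prunes $D$ at $a$ inside $Y$. The quotient part of the composite restricts back to $D$, and $K_m=\bot$. Pasting the two cases gives $D\vee K_f$.

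\emph{Second identity.} Write $E$ for an erased set in $Y$. The inverse image $m_{\mathsf Q}^*E$ is the restriction of $E$ to the subtree at $a$, unless a cut of $E$ lies at or above $a$; in that case it is total. Pushing back out along $m$ yields the pruning whose erased set is $E\cap V_+(Y|_a)$, or the whole of $V_+(Y|_a)$ in the total case. In the total case this is $V_+(Y|_a)$, which equals $E\cap V_+(Y|_a)$ because the cut above $a$ erases all of $Y|_a$. Since $\alpha_Y(I_f)$ erases exactly $V_+(Y|_a)$, the meet, which is intersection, gives $E\wedge\alpha_Y(I_f)$. For the pruning part, $e_*^{\mathsf Q}e_{\mathsf Q}^*=\id$ because $e$ is epic: the composite $re$ has quotient part $re$, and the pushout of $re$ along $e$ is $r$.

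\emph{Third identity.} For $m$, pullback followed by composition gives $B\wedge I_m$, since this is the intersection of subtrees (Lemma~\ref{lem:tree-lattice-calculus}(b)). For $e$, the pullback square of Lemma~\ref{lem:tree-lattice-calculus} has a pruning on its left side, so $e_*^{\mathsf S}e_{\mathsf S}^*B=B$.

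\emph{Saturation and $\alpha_X$.} The collapse $c_A$ has exactly one cut, at the root of $A$. The pullback of the resulting leaf is $A$ itself, because the square defining $\alpha_X(A)$ is bicartesian. Hence $\sigma_{\alpha_X(A)}(A)=A$, and the same holds trivially when $A=0$. Next, $\alpha_X(0)=\bot$ by definition. For binary meets, comparable subtrees $A\le A'$ give erased sets $V_+(A)\subseteq V_+(A')$, whose intersection is $V_+(A)$. Incomparable subtrees have disjoint vertex sets, so the intersection is empty, which is $\alpha_X(0)$.

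The main obstacle is the second identity. The pushout must be computed correctly in the degenerate case where a cut of $E$ lies strictly above the selected subtree, and one must check that the pasting of image operations across the factorization is legitimate; this requires functoriality of images, as recalled just before the lemma.
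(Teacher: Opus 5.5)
Your proposal is correct and follows essentially the paper's route: you verify everything concretely in erased-set coordinates, and you obtain the subobject identity by factoring $f$ into a pruning and an occurrence. The only difference is cosmetic. The paper writes closed formulas $f_*^{\mathsf Q}D=\phi_f(D\setminus D_f)$ and $f_{\mathsf Q}^*E=D_f\cup\phi_f^{-1}(E)$ for a general matching, whereas you check the quotient identities separately on the two factors and paste them using functoriality. Your use of epicity to get $e_*^{\mathsf Q}e_{\mathsf Q}^*=\id$ in that step is equally valid.
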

\begin{proof}
Recall that $V_+(X)$ is the set of labeled vertices, and that
quotients are represented by descendant-closed sets of erased
vertices. Let $D_f\subseteq V_+(X)$ be the vertices erased
by the pruning part of $f$.
The surviving labeled vertices are identified with those of its
matched subtree by a bijection
\[
 \phi_f:V_+(X)\setminus D_f\longrightarrow V_+(I_f)
 \subseteq V_+(Y).
\]
In erased-set coordinates, the two quotient operations are
\[
 f_*^{\mathsf Q}D=\phi_f(D\setminus D_f),\qquad
 f_{\mathsf Q}^*E=D_f\cup\phi_f^{-1}(E).
\]
Here the inverse image on the right uses only the vertices of
$E$ lying in $I_f$. For direct image, erased vertices of $D$
that were already erased by $f$ impose no further cut. Each
remaining erased vertex specifies the corresponding cut inside
$I_f$. Performing these cuts in the matched subtree is exactly
the pushout construction described above. For inverse
image, the composite first erases $D_f$ and then those surviving
vertices whose images are erased by $E$. These observations
prove the two formulas. Applying them successively gives
$D\cup D_f$ and $E\cap V_+(I_f)$, respectively, as required.
For a map from $0$, the quotient source lattice is a singleton
and its direct image is the identity quotient; the same
identities follow immediately.

Pullback stability in Lemma~\ref{lem:tree-lattice-calculus}
shows that direct image after inverse image along a pruning is
the identity. Along an inclusion it is intersection with the
included subtree. Factor $f$ to obtain the third displayed
identity.

The collapse square has fibre $A$ over its marked empty leaf,
so $A$ is saturated by its own collapse. The erased set of
$\alpha_X(A)$ is precisely the set $V_+(A)$ of its labeled
vertices. The intersection description of subtrees gives
\[
 V_+(A\wedge B)=V_+(A)\cap V_+(B),\qquad V_+(0)=\varnothing.
\]
These equalities prove preservation of binary meets and bottom.
\end{proof}

\subsection{The noetherian property}\label{sec:noetherian-property}
We have described how subtrees and prunings behave under matchings.
We now state the properties required of the clusters that combine
these two kinds of data. For a form $\mathsf F$, write
$\mathsf F(X)$ for its fibre of clusters over $X$.
\phantomsection\label{not:cluster-images}
The form is \emph{orean} when each fibre is a bounded lattice
and every morphism $f:X\to Y$ has functorial direct and inverse
images characterized by
\[
 f_*L\le N\quad\Longleftrightarrow\quad L\le_f N
 \quad\Longleftrightarrow\quad L\le f^*N.
\]
Here $L$ is a cluster over $X$ and $N$ is a cluster over
$Y$, so $f_*$ takes clusters over $X$ to clusters over $Y$,
while $f^*$ acts in the opposite direction.
The direct image is thus the least target cluster receiving a
comparison from $L$ over $f$. The inverse image is the greatest
source cluster admitting a comparison to $N$ over $f$.
\phantomsection\label{not:form-kernel-image}
Define the \emph{kernel} and \emph{image} of $f$ by
\[
 \operatorname{Ker}f=f^*(\bot),\qquad
 \operatorname{Im}f=f_*(\top).
\]
The kernel is the largest source cluster whose direct image is
the bottom cluster. The image is the least target cluster whose
inverse image is the whole source cluster. For groups, these are
the usual kernel and image of a homomorphism.
A cluster over $X$ is \emph{normal} if it is the kernel of
a morphism with domain $X$, and \emph{conormal} if it is the
image of a morphism with codomain $X$. These names recall the
subgroup example: kernels are normal subgroups, and every
subgroup is the image of its inclusion.

A \emph{universal quotient} of a cluster $L$ over $X$ is a
morphism $q:X\to Q$ through which every $f:X\to Y$ with
$L\le\operatorname{Ker}f$ factors uniquely, and which itself
satisfies $L\le\operatorname{Ker}q$. Dually, a
\emph{universal embedding} of a cluster $N$ over $Y$ is a
morphism $m:A\to Y$ through which every $f:X\to Y$ with
$\operatorname{Im}f\le N$ factors uniquely, and which itself
satisfies $\operatorname{Im}m\le N$.
In the group example, these maps are the quotient homomorphism
by a normal subgroup and the inclusion of a subgroup, respectively.
Thus the universal maps express, within the category, the
subobject or quotient information carried by a cluster.
An orean form is \emph{noetherian} when the following conditions
hold \cite[Definition 48]{JanelidzeVanNiekerk}.
\begin{enumerate}
\item For every $f:X\to Y$ and clusters $L$ over $X$ and
$N$ over $Y$, one has
\[
 f^*f_*L=L\vee\operatorname{Ker}f,\qquad
 f_*f^*N=N\wedge\operatorname{Im}f.
\]
\item Every morphism factors as a universal quotient of its
kernel followed by a universal embedding of its image.
\item Binary joins of normal clusters are normal, and binary
meets of conormal clusters are conormal.
\end{enumerate}
The first condition describes what is retained when a cluster is
transported in one direction and then back again. On the source,
one must add the kernel; on the target, one must intersect with
the image. The second condition realizes these two parts of a
morphism by universal maps. These properties are the basis for
the isomorphism theorems in this setting, just as the corresponding
properties of subgroups are the basis for the isomorphism
theorems for groups.

Kernels and images give two ways of obtaining clusters from
morphisms. The additional property that we shall establish says
that an arbitrary cluster is determined by the largest cluster
of each of these two kinds that it contains. It also requires
this description to be compatible with direct and inverse images.
\phantomsection\label{not:normal-conormal-parts}
Precisely, the form has \emph{exact join decomposition} if every cluster
$L$ has a largest normal cluster $n(L)\le L$ and a largest
conormal cluster $c(L)\le L$, with
\[
 L=c(L)\vee n(L),\qquad
 f^*n(N)=n(f^*N),\qquad
 f_*n(\top)=n(\operatorname{Im}f).
\]
The first equality recovers a cluster from its two parts. The
second says that taking the largest normal part commutes with
inverse image. The third describes the normal part of an image
in terms of the largest normal cluster of the source. These are
the additional compatibility properties required by exact join
decomposition; the formulation is equivalent to the one in
\cite[Theorem 140]{JanelidzeVanNiekerk}.

Semi-abelian categories provide another axiomatic setting for
homomorphism theorems, with the category of groups as a principal
example. The relation with noetherian forms becomes especially
close when a zero object, finite products and finite coproducts
are present. The following theorem is the dual formulation of
\cite[Theorem 177, equivalence of \textup{(ii)} and \textup{(vi)}]{JanelidzeVanNiekerk}.

\begin{theorem}[Janelidze--van Niekerk]\label{lem:ejd-semiabelian-opposite}
Let $\mathcal C$ be a pointed category with finite products and
finite coproducts. Then $\mathcal C$ admits a noetherian form
with exact join decomposition if and only if its opposite category
$\mathcal C^{\mathrm{op}}$ is semi-abelian.
\end{theorem}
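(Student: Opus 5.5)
The plan is to deduce the statement from \cite[Theorem 177]{JanelidzeVanNiekerk} by passing to the opposite category. I will not reprove any of the semi-abelian theory. The hypotheses are self-dual. A category is pointed exactly when its opposite is, and finite products in $\mathcal C$ are finite coproducts in $\mathcal C^{\mathrm{op}}$ and conversely. Hence \cite[Theorem 177]{JanelidzeVanNiekerk} may be applied to $\mathcal D=\mathcal C^{\mathrm{op}}$. The equivalence of (ii) and (vi) there then says that $\mathcal C^{\mathrm{op}}$ is semi-abelian if and only if condition (vi) holds for $\mathcal C^{\mathrm{op}}$. It remains to show that (vi) for $\mathcal C^{\mathrm{op}}$ says exactly that $\mathcal C$ admits a noetherian form with exact join decomposition, as defined above.

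For this I will use the duality of forms. Given a form $\mathsf F$ over $\mathcal C$, define a form $\mathsf F^{\mathrm{op}}$ over $\mathcal C^{\mathrm{op}}$ with the same fibres, but with the order of each fibre reversed. A comparison $L\le_f N$ in $\mathsf F$ over $f:X\to Y$ becomes a comparison $N\le_{f^{\mathrm{op}}}L$ over $f^{\mathrm{op}}:Y\to X$. The adjunctions $f_*\dashv f^*$ then reappear in $\mathsf F^{\mathrm{op}}$ with the roles of $f_*$ and $f^*$ interchanged. Bottom and top are swapped, and so are meets and joins. I will check the following consequences in turn.
\begin{enumerate}
\item The kernel $f^*(\bot)$ of $f$ in $\mathsf F$ becomes the image of $f^{\mathrm{op}}$ in $\mathsf F^{\mathrm{op}}$, and dually. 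Hence normal clusters become conormal and conversely.
\item Universal quotients of clusters become universal embeddings, and conversely.
\item The two identities of noetherian condition (a) are exchanged, and so are the two halves of condition (c).
\end{enumerate}
Consequently $\mathsf F$ is noetherian if and only if $\mathsf F^{\mathrm{op}}$ is noetherian. Since $\mathsf F\mapsto\mathsf F^{\mathrm{op}}$ is an involution, noetherian forms on $\mathcal C$ correspond bijectively to noetherian forms on $\mathcal C^{\mathrm{op}}$.

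The main obstacle is to identify what exact join decomposition for $\mathsf F$ becomes under this duality, and to match it with condition (vi) as it is stated in \cite{JanelidzeVanNiekerk}. Under dualization the largest normal part $n(L)\le L$ becomes the least conormal cluster above $L$, and the equation $L=c(L)\vee n(L)$ becomes a meet decomposition. This is not literally the same condition. My first step is therefore to use the equivalent formulation of \cite[Theorem 140]{JanelidzeVanNiekerk}, which was already cited when exact join decomposition was defined above. I will rewrite (vi) for $\mathcal C^{\mathrm{op}}$ through the dictionary $f_*\leftrightarrow f^*$, $\bot\leftrightarrow\top$, $\wedge\leftrightarrow\vee$, kernel $\leftrightarrow$ image. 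I will then check each clause separately: the existence of the two extremal parts, the decomposition equation, the compatibility $f^*n(N)=n(f^*N)$, and the compatibility $f_*n(\top)=n(\operatorname{Im}f)$.

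If (vi) turns out to be phrased on the side dual to ours, I will instead apply the equivalence of Theorem 140 on $\mathcal C$ to move between the join form and the meet form. This is the step at which I expect bookkeeping errors, so each clause will be checked individually. Once the clauses are matched, the two implications of the statement follow at once from \cite[Theorem 177]{JanelidzeVanNiekerk}.
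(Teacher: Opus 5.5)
Your route is the paper's: the paper gives no argument beyond noting that the statement is the dual formulation of \cite[Theorem 177, (ii)$\Leftrightarrow$(vi)]{JanelidzeVanNiekerk}, and applying that theorem to $\mathcal C^{\mathrm{op}}$ while transporting noetherian forms along the self-dual correspondence $\mathsf F\leftrightarrow\mathsf F^{\mathrm{op}}$ is exactly what that remark means. Drop your fallback, though: \cite[Theorem 140]{JanelidzeVanNiekerk} only reformulates exact join decomposition and cannot trade it for its meet-decomposition dual, which is a genuinely different condition (for the subgroup form of $\mathbf{Grp}$, taking cores does not commute with inverse images along subgroup inclusions, yet the dual form on $\mathbf{Grp}^{\mathrm{op}}$ has exact join decomposition, since every cluster there is normal and normal closures give the conormal parts); your deduction therefore rests on (vi) dualizing to the join version, which is exactly what the paper asserts.
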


For expression tree categories, we shall use the following
consequence of the characterization in
\cite[Theorems 159 and 163]{JanelidzeVanNiekerk}. For a proper
factorization system with the lattice and image operations above,
let $\alpha_X(A)$ be the direct image along a representative
$m_A:A\to X$ of the largest quotient of $A$.
Recall that $\sigma_R(A)$ first takes the image of $A$ under
the quotient $R$ and then takes its inverse image.
Suppose that the quotient image identities of
Lemma~\ref{lem:tree-form-identities} hold. Suppose also that
\[
 e_*^{\mathsf S}e_{\mathsf S}^*A=A
 \quad(e\in\mathcal E),\qquad
 \sigma_{\alpha(A)}(A)=A,
\]
and that $\alpha$ preserves binary meets and bottom. Then the
pairs
\begin{equation}\label{eq:tree-pairs}
 (A,R)\in\mathsf S(X)\times\mathsf Q(X),\qquad
 \sigma_R(A)=A,\quad\alpha_X(A)\le R
\end{equation}
form a noetherian form with exact join decomposition. Their order
is componentwise; comparison over $f$ means
$f_*^{\mathsf S}A\le B$ and $f_*^{\mathsf Q}R\le S$.
For trees, the condition $\alpha_X(A)\le R$ says that the
pruning $R$ erases all labeled vertices of the subtree $A$.
If $A$ is nonempty, its image is therefore an empty leaf.
The condition $\sigma_R(A)=A$ says that $A$ is the entire
subtree collapsed to that leaf. Thus these pairs describe the
same information that we recorded by marking a leaf of a pruned
tree. The possibility $A=0$ corresponds to the empty marking.

For completeness, the notation used in the cited characterization
can be matched with ours as follows. Its expression $A\mathsf wR$
takes the image of the subobject $A$ in the quotient $R$ and
then takes its inverse image. This is exactly $\sigma_R(A)$.
The map from subobjects to quotients in that characterization is
our map $\alpha$, which collapses a subtree. With these
identifications, the quotient conditions of Theorem 163 are
the two quotient image identities above. Its condition involving
the subobject image operations reduces to
$\sigma_{\alpha(A)}(A)=A$, and its remaining condition is
preservation of binary meets and bottom by $\alpha$.

\begin{theorem}\label{thm:tree-cosub}
The form $\mathsf G$ of Definition~\ref{def:tree-clusters} on
$\widehat{\mathcal T}$ is noetherian and has exact join
decomposition. Its universal quotients are the maps of
$\mathcal E$, and its universal embeddings are the maps of
$\mathcal M$.
\end{theorem}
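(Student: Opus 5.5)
The plan is to apply the consequence of \cite[Theorems 159 and 163]{JanelidzeVanNiekerk} stated just before the theorem, and then identify the resulting form of pairs~\eqref{eq:tree-pairs} with $\mathsf G$. First I will check the hypotheses. Lemma~\ref{lem:augmented-factorization} gives the proper factorization system on $\widehat{\mathcal T}$. Lemma~\ref{lem:tree-lattice-calculus} supplies the finite bounded lattices $\mathsf S(X)$, $\mathsf Q(X)$ and the pullbacks and pushouts needed for inverse and direct images; this is the orean factorization recorded after that lemma. Lemma~\ref{lem:tree-form-identities} gives the two quotient image identities, the identity $\sigma_{\alpha_X(A)}(A)=A$, and preservation of binary meets and bottom by $\alpha$. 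The remaining hypothesis is $e_*^{\mathsf S}e_{\mathsf S}^*B=B$ for $e\in\mathcal E$. This is the third identity of that lemma with $I_e=\top$, since a pruning selects its whole codomain. For $e=\id_0$ it is trivial. The cited result then makes the pairs $(A,R)$ with $\sigma_R(A)=A$ and $\alpha_X(A)\le R$ into a noetherian form with exact join decomposition.

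Second, I will build an isomorphism of forms over $\widehat{\mathcal T}$ between these pairs and $\mathsf G$. Given a cluster $[q,u]$ with $q:X\to Q$:
\begin{enumerate}
\item if $U=0$, send it to $(0,[q])$;
\item if $U=1$, so that $u$ is an empty leaf of $Q$, send it to $([q_{\mathsf S}^*u],[q])$.
\end{enumerate}
In case (b), Lemma~\ref{lem:fibres} shows that the pullback is exactly the fibre of $q$ over $u$, hence saturated. That fibre is either $1$, when the leaf is unchanged, or a cut subtree, and in both cases its vertices are erased by $q$, so $\alpha_X(A)\le R$. Conversely, given a pair $(A,R)$ with $A\ne0$, the conditions say that $q$ sends $A$ to an empty leaf and that $A$ is the whole fibre over it. The leaf $q_*^{\mathsf S}A$ therefore recovers $u$, and the two constructions are inverse. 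This uses $q_*^{\mathsf S}q_{\mathsf S}^*=\mathrm{id}$ together with saturation.

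I expect the main obstacle to be matching the comparisons, namely showing that $[q,u]\le_f[r,v]$ holds exactly when $f_*^{\mathsf S}A\le B$ and $f_*^{\mathsf Q}[q]\le[r]$. The existence of $h$ with $hq=rf$ is equivalent to $[r]$ being at least the pushout $f_*^{\mathsf Q}[q]$; $h$ is then unique because $q$ is epic. For the leaf part, the existence of $k$ with $hu=vk$ says that $hu$ factors through $v$. I will translate this using the pullback property of the fibre squares: $f$ maps the fibre $A$ of $q$ over $u$ into the fibre $B$ of $r$ over $v$ exactly when $rfm_A$ is constant at $v$, and this in turn holds exactly when $hu=v$. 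I will treat separately the degenerate cases where $U=0$, where $V=0$ (which forces $U=0$), and where the objects are $0$. Once the two forms are isomorphic, noetherianity and exact join decomposition transfer to $\mathsf G$.

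Finally, I will identify the universal maps. Universal quotients occur in the factorization of a morphism $f$ as a universal quotient of $\operatorname{Ker}f$ followed by a universal embedding of $\operatorname{Im}f$. I will compute $\operatorname{Ker}f=f^*(\bot)$ as the pair $(0,K_f)$ and $\operatorname{Im}f$ as the pair $(0,\alpha(I_f)\ldots)$, or more directly check that a map $e\in\mathcal E$ is universal for $\operatorname{Ker}e$ and $m\in\mathcal M$ for $\operatorname{Im}m$. Since the factorization in the noetherian axioms is unique up to isomorphism and the category is rigid, this identifies every universal quotient with a map in $\mathcal E$ and every universal embedding with a map in $\mathcal M$, up to isomorphism.
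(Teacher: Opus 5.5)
Your proposal follows the paper's proof. You verify the hypotheses of the cited consequence of Theorems~159 and~163 via Lemmas~\ref{lem:tree-lattice-calculus} and~\ref{lem:tree-form-identities}, using $I_e=\top$ for $e\in\mathcal E$, and then identify the pairs $(A,R)$ with the cospans $[q,u]$ and their comparisons. The paper produces the leaf map $k$ uniformly as the $(\mathcal E,\mathcal M)$-diagonal of the square formed by $p$, $t$, $hu$ and $v$; your case analysis on $U,V\in\{0,1\}$, cancelling the epic $q_A$ when $U=V=1$, reaches the same conclusion.

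Two small corrections. The paper also verifies the compatibility $\alpha\beta(R)\le R$ required in Theorem~159, which holds because $q_{\mathsf S}^*\bot=0$ by strictness of $0$. Also, $\operatorname{Im}f$ is $(I_f,\alpha_Y(I_f))$, not $(0,\ldots)$, although your direct check that $\mathcal E$- and $\mathcal M$-maps are universal for their own kernels and images does not depend on this.
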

\begin{proof}
Lemma~\ref{lem:tree-lattice-calculus} supplies the bounded
subobject and quotient lattices and the image operations from
pullbacks, pushouts and factorizations. They give the two forms required by
\cite[Theorem 100]{JanelidzeVanNiekerk}. In these two forms,
every subobject is
the image of its inclusion, and every quotient is the kernel
of its quotient map. Their universal embeddings and universal
quotients are precisely the maps of $\mathcal M$ and
$\mathcal E$, respectively: the defining image or kernel
inequality says exactly that the given morphism factors through
that inclusion or quotient, and uniqueness follows from
monicity or epicity. The construction in Theorem 159 retains
these two classes of universal maps. Its additional image
identities were proved in
Lemma~\ref{lem:tree-form-identities}. In particular,
$I_e=Y$ for $e:X\to Y$ in $\mathcal E$, so the third
identity there gives $e_*^{\mathsf S}e_{\mathsf S}^*A=A$.
For $R=[q:X\to Q]$, strictness of $0$ also gives
\[
 \alpha_X\bigl(q_{\mathsf S}^*\bot_{\mathsf S(Q)}\bigr)
 =\alpha_X(0)=\bot_{\mathsf Q(X)}\le R.
\]
In Theorem 159, the inverse image of the bottom subobject under
$q$ is denoted by $\beta(R)$. The displayed calculation verifies
the compatibility $\alpha\beta(R)\le R$ required there.
It remains to identify the pairs in \eqref{eq:tree-pairs} with the cospans used
to define $\mathsf G$, including their comparisons.

Given $[q,u]$, let $A=q_{\mathsf S}^*[u]$ and $R=[q]$.
The pullback square has the form
\[
 \xymatrix@C=3.5em@R=2.3em{
 A\ar[r]^{m_A}\ar[d]_p&X\ar[d]^q\\
 U\ar[r]_u&Q
 }
\]
The left map belongs to $\mathcal E$. If $U=0$, then
$A=0$. If $U=1$, the pruning $q$ collapses the entire
subtree $A$ to its marked leaf. Thus $\alpha_X(A)\le R$.
In both cases pullback stability gives
$q_*^{\mathsf S}A=[u]$, whence $\sigma_R(A)=A$.

Conversely, suppose $(A,R)$ satisfies \eqref{eq:tree-pairs}, and choose a
representative $q:X\to Q$ of $R$. If $A=0$, mark the
empty subtree of $Q$. Otherwise $\alpha_X(A)\le R$ says
that $q$ erases every labeled vertex of $A$. Its image in
$Q$ is therefore an empty leaf $u:1\to Q$. The saturation
condition says that the inverse image of this leaf is exactly
$A$. These two constructions are inverse, so they identify
the clusters.

Finally, consider clusters $[q,u]$ over $X$ and $[r,v]$ over
$Y$, with corresponding pairs $(A,R)$ and $(B,S)$, and let
$f:X\to Y$ be a morphism. A cospan comparison over $f$ gives
a quotient comparison
$f_*^{\mathsf Q}R\le S$. Its marked square, pulled back to
$X$ and $Y$, gives $f_*^{\mathsf S}A\le B$. Conversely,
the quotient comparison determines $h:Q\to R'$ with
$hq=rf$, where $R'$ is the codomain of $r$. The subobject
comparison gives a morphism $t:A\to V$ such that $rfm_A=vt$.
Since $qm_A=up$, the following square commutes:
\[
\xymatrix@C=3.5em@R=2.4em{
A\ar[r]^t\ar[d]_p&V\ar[d]^v\\
U\ar[r]_{hu}\ar@{-->}[ur]^k&R'.
}
\]
The map $p$ belongs to $\mathcal E$ and the marking $v$
belongs to $\mathcal M$, so the unique diagonal $k$ exists.
It is exactly the remaining map needed for a comparison of
the marked prunings. Hence the pair form
of the characterization is the cospan form $\mathsf G$.
The cited theorem proves its noetherian property and exact join
decomposition, and identifies the universal quotients and
embeddings with the two distinguished classes.
\end{proof}

The images in this form can be computed directly on marked
prunings. Let $f:X\to Y$ be a morphism. To take the direct
image of a cluster $[q,u]$ over $X$, push $q$ out along $f$
and factor the composite $hu$ of the induced map with the
marking. To take the inverse image of a cluster $[r,v]$ over
$Y$, factor $rf$ and pull its marking $v$ back along the
inclusion part. The diagrams are
\[
 \xymatrix@C=3em@R=2.3em{
 X\ar[r]^q\ar[d]_f&Q\ar[d]^h&U\ar[l]_u\ar[d]^e\\
 Y\ar[r]_{q'}&P&W\ar[l]^w
 }
 \qquad
 \xymatrix@C=3em@R=2.3em{
 X\ar[r]^{e'}\ar[d]_f&I\ar[d]^m&W'\ar[l]_{w'}\ar[d]\\
 Y\ar[r]_r&R&V\ar[l]^v
 }
\]
They give $f_*[q,u]=[q',w]$ and $f^*[r,v]=[e',w']$.
In the first diagram, a map out of $0$ or $1$ is an inclusion,
so $W$ is again $0$ or $1$. In the second, the intersection
of a subtree with an empty leaf is that leaf or the empty
subtree, so $W'$ is also permitted. The universal properties
of the pushout and pullback, together with the diagonal
property of the factorization system, give respectively the
least and greatest comparisons defining these image operations.

\begin{lemma}[The two parts of a cluster]\label{prop:cosub-join-parts}
Recall that a cluster is represented in \eqref{eq:tree-pairs}
by its quotient $R$ and the inverse-image subtree $A$ of its
marking. In these coordinates, the normal and conormal clusters
are, respectively,
\[
 \mathfrak n_X(R)=(0,R),\qquad
 \mathfrak c_X(A)=(A,\alpha_X(A)).
\]
For a cluster $(A,R)$, its largest normal and conormal parts
are $\mathfrak n_X(R)$ and $\mathfrak c_X(A)$, and
\[
 (A,R)=\mathfrak c_X(A)\vee\mathfrak n_X(R).
\]
For $f:X\to Y$, its kernel is $(0,K_f)$ and its image is
$(I_f,\alpha_Y(I_f))$.
\end{lemma}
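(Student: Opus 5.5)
The computation will be done in the pair coordinates \eqref{eq:tree-pairs}, where clusters over $X$ are pairs $(A,R)$ with $\sigma_R(A)=A$ and $\alpha_X(A)\le R$, ordered componentwise. We rely on the identification in the proof of Theorem~\ref{thm:tree-cosub}, which says that comparisons over $f$ are exactly pairs of inequalities $f_*^{\mathsf S}A\le B$ and $f_*^{\mathsf Q}R\le S$. With that identification, the direct image of $(A,R)$ is $(\sigma_{f_*^{\mathsf Q}R}(f_*^{\mathsf S}A),f_*^{\mathsf Q}R)$ and the inverse image of $(B,S)$ is $(f_{\mathsf S}^*B,f_{\mathsf Q}^*S)$. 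For the inverse image I would check that this pair is itself admissible: use the cospan description of $f^*$ following Theorem~\ref{thm:tree-cosub}, where the marking is pulled back, so the subtree coordinate is the pullback of the marked subtree along $f$. The bottom cluster is $(0,\bot)$ and the top is $(X,\top)$; for $X\ne0$ the top corresponds to $q_X$ with its unique leaf marked.

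First step: kernel and image. We get $\operatorname{Ker}f=f^*(0,\bot)=(f_{\mathsf S}^*0,f_{\mathsf Q}^*\bot)$. Here $f_{\mathsf S}^*0=0$ by strictness of $0$, and $f_{\mathsf Q}^*\bot$ is the quotient part of $f$, namely $K_f$. Next, $\operatorname{Im}f=f_*(X,\top)$. Its quotient coordinate is the pushout of $X\to1$ along $f$, which collapses $I_f$, giving $\alpha_Y(I_f)$. Its subtree coordinate is $f_*^{\mathsf S}X=I_f$, which is already saturated by $\alpha_Y(I_f)$ by Lemma~\ref{lem:tree-form-identities}. The case $X=0$ gives $(0,\bot)$ and needs only a one-line check.

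Second step: the normal and conormal clusters. Every kernel has the form $(0,K_f)$. Conversely, $(0,R)$ is the kernel of a representative $q$ of $R$, since $K_q=R$. So the normal clusters are exactly $\mathfrak n_X(R)$, and they are admissible because $\sigma_R(0)=0$ and $\alpha_X(0)=\bot$. Similarly every image has the form $(I,\alpha(I))$, and $(A,\alpha_X(A))$ is the image of $m_A$. So the conormal clusters are exactly $\mathfrak c_X(A)$.

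Third step: the maximal parts and the join. Let $(A,R)$ be admissible. A normal cluster $(0,R')$ lies below it iff $R'\le R$, so the largest normal part is $(0,R)$. A conormal cluster $(A',\alpha(A'))$ lies below it iff $A'\le A$ and $\alpha(A')\le R$. Because $\alpha$ is monotone (it preserves meets) and $\alpha(A)\le R$ holds, the largest conormal part is $(A,\alpha(A))$. For the join, $(A,R)$ is an upper bound of both parts. Any upper bound $(B,S)$ dominates both parts componentwise, so $A\le B$ and $R\le S$, giving $(A,R)\le(B,S)$. This argument works only because the order on clusters is componentwise, and I expect this to be the main point to justify. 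The order on $\mathsf G(X)$ is given by comparisons over $\id_X$, which become the two inequalities $A\le B$ and $R\le S$. The subtlety is that joins in the admissible sublattice need not be componentwise in general. The least-upper-bound argument avoids this, because $(A,R)$ is itself admissible, and so is least among all admissible upper bounds.
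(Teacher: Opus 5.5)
Your proposal is correct and follows essentially the same route as the paper. You compute the kernel as the inverse image of the bottom cluster $(0,\bot)$ and the image as the direct image of the top cluster $(X,\top)$. You then identify the normal and conormal clusters as $(0,R)$ and $(A,\alpha_X(A))$ via $K_q=R$ and $I_{m_A}=A$, get maximality from monotonicity of $\alpha$, and get the join formula from the componentwise order on admissible pairs. Your pair-coordinate formulas for $f_*$ and $f^*$ only repackage the paper's pushout/pullback description of cluster images and are harmless.
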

\begin{proof}
Let $f:X\to Y$. The bottom cluster over $Y$ is
$[\id_Y,0\to Y]$. Its inverse image along $f$ gives the
pruning $K_f$ with empty
marking, because inverse images of $0$ are empty. Thus its
kernel is $(0,K_f)$. Every quotient occurs as $K_q$ for
its own representative $q$, so these are all normal clusters.
The top cluster over $X$ is the whole collapse with its marked point;
for $X=0$ it is its unique cluster. Pushing it forward along
$f$ collapses the matched subtree $I_f$ and marks the
resulting leaf. Hence its image is
$(I_f,\alpha_Y(I_f))$. Every subtree occurs as the image
subtree of its inclusion, so these are all conormal clusters.

The inequalities $0\le A$ and $\alpha_X(A)\le R$ show that
both proposed parts lie below $(A,R)$. Any common upper bound
$(B,S)$ has $A\le B$ and $R\le S$, so it lies above
$(A,R)$. This proves the join formula. A normal cluster below
$(A,R)$ has its quotient at most $R$; a conormal cluster
below it has its subtree at most $A$. Monotonicity of $\alpha$
therefore gives the two maximality assertions.
\end{proof}

\begin{example}
Let $X=\lambda(1,1)$ be a binary corolla. Marking either leaf
in the identity quotient gives a cluster. Their join collapses
the entire corolla and marks the resulting empty leaf. Indeed,
a common upper bound must have an inverse-image subtree
containing both leaves, hence the whole corolla; its quotient
must collapse that subtree. This example shows concretely how
a join can require additional pruning.
\end{example}

\subsection{Recovering expression tree categories from the form}
\label{sec:tree-form-characterization}
We now consider the converse question. Suppose we start with a
category carrying a noetherian form with exact join decomposition,
without knowing that its objects represent trees. Which additional
properties allow the trees and their matchings to be recovered?
The form supplies quotient and embedding maps, namely its
universal quotients and universal embeddings. These give a proper
factorization system $(\mathcal E,\mathcal M)$, on which we
will impose the additional conditions. A useful part of the
argument is that these conditions recover the morphism classes
used in our nine axioms: on the full subcategory of nonempty
objects, the quotient maps become all epimorphisms, and the
embedding maps become all strong monomorphisms.

The following consequences of exact join decomposition will be
used in the characterization. We retain the notation
$\mathsf S(X)$ for the lattice of subobjects represented by
maps in $\mathcal M$, and $\mathsf Q(X)$ for the lattice of
quotients represented by maps in $\mathcal E$. The map $\alpha_X$
sends a subobject to the quotient induced by its largest quotient,
as in the collapse construction above. These constructions now
use the factorization system of the given form.

\begin{lemma}\label{lem:ejd-background}
Let $\mathsf F$ be a noetherian form with exact join decomposition
over a category $\mathcal C$, with underlying factorization system
$(\mathcal E,\mathcal M)$.
\begin{enumerate}
\item The posets of $\mathcal M$-subobjects and
$\mathcal E$-quotients are bounded lattices. Pullbacks along
morphisms in $\mathcal M$ and pushouts along morphisms in
$\mathcal E$ exist.
\item The pushout of $u\in\mathcal M$ along $a\in\mathcal E$
has the form
\[
\xymatrix@C=3.5em@R=2.4em{
A\ar[r]^u\ar[d]_a&X\ar[d]^c\\
B\ar[r]_v&Y,
}
\]
where $v\in\mathcal M$ and $c\in\mathcal E$. This square is
also a pullback.
\item The map
\[
\alpha_X:\mathsf S(X)\longrightarrow\mathsf Q(X),\qquad
\alpha_X(A)=(m_A)_*^{\mathsf Q}\top_{\mathsf Q(A)},
\]
preserves binary meets and bottom elements.
\item If $0$ is an initial object, every morphism $0\to X$
belongs to $\mathcal M$, and every morphism in $\mathcal E$
with domain $0$ is an isomorphism.
\end{enumerate}
\end{lemma}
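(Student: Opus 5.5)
This lemma collects background facts from \cite{JanelidzeVanNiekerk}. The plan is to derive each item from the structure theory there, in the same order as the statement.

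For \textup{(a)}, I will use the fact that a noetherian form is in particular orean. By \cite[Theorem 100]{JanelidzeVanNiekerk}, its universal quotients and universal embeddings form a proper factorization system. The normal clusters over $X$ are then in bijection with $\mathcal E$-quotients, and the conormal clusters with $\mathcal M$-subobjects: the universal quotient of a kernel determines it, and conversely, and dually for embeddings.

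\begin{itemize}
\item Closure of normal clusters under binary joins, and of conormal clusters under binary meets, gives joins of quotients and meets of subobjects.
\item The bounds $\bot=\operatorname{Ker}\id_X$ and $\top=\operatorname{Im}\id_X$ give top and bottom elements.
\item Exact join decomposition supplies the remaining operations: the meet of normal clusters is the normal part $n(L\wedge L')$, and the join of conormal clusters is $c(L\vee L')$. Both use the largest-part operations $n$ and $c$.
\item The pullback of $m\in\mathcal M$ along $f$ is realized by the universal embedding of $f^*\operatorname{Im}m$. The pushout of $e\in\mathcal E$ is realized by the universal quotient of $f_*\operatorname{Ker}e$.
\end{itemize}

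The universal properties of these pullbacks and pushouts follow from the adjunctions $f_*\dashv f^*$ together with the noetherian identities $f^*f_*L=L\vee\operatorname{Ker}f$ and $f_*f^*N=N\wedge\operatorname{Im}f$.

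For \textup{(b)}, I take the pushout of $u$ along $a$ from \textup{(a)}. The issue is that this is a pushout along $a$, not along $u$, so it needs a separate argument. I would construct it as the universal quotient $c$ of $u_*\operatorname{Ker}a$. Then $v$ is the induced map, and I show $v\in\mathcal M$ by identifying $\operatorname{Im}(cu)$ with a conormal cluster.

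\begin{itemize}
\item Conormality is available because exact join decomposition gives $f_*n(\top)=n(\operatorname{Im}f)$.
\item The noetherian identities then show that $cu$ has kernel exactly $\operatorname{Ker}a$. Hence $cu$ factors as $a$ followed by a universal embedding, which is $v$.
\item The pullback property comes from computing $c^*\operatorname{Im}v=\operatorname{Im}u\vee\operatorname{Ker}c$. By the decomposition $L=c(L)\vee n(L)$, the conormal part of this cluster is $\operatorname{Im}u$, which identifies the pullback with $u$.
\end{itemize}

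This identification of the pullback in \textup{(b)} is the step I expect to be the main obstacle. It is where the compatibility $f^*n(N)=n(f^*N)$ must be used to separate the normal and conormal parts of the relevant cluster. I would attempt it with \cite[Theorem 163]{JanelidzeVanNiekerk} in mind, since that theorem encodes exactly these conditions.

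For \textup{(c)}, I view $\alpha_X(A)$ as the normal part of the conormal cluster $\operatorname{Im}m_A$, namely $n(\operatorname{Im} m_A)=(m_A)_*n(\top)$. Preservation of bottom is then immediate. Preservation of binary meets follows from two facts: $f^*n=nf^*$, and the meet of $\mathcal M$-subobjects is computed by pullback. This amounts to the correspondence with \cite[Theorem 163]{JanelidzeVanNiekerk} quoted before Theorem~\ref{thm:tree-cosub}.

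For \textup{(d)}, I factor $0\to X$ as $e$ followed by $m$, with $e:0\to I$ in $\mathcal E$. Initiality gives a map $I\to 0$ after composing, so $e$ is split monic. It is also epic, since it is in $\mathcal E$, and is therefore an isomorphism. Both assertions follow: every map in $\mathcal E$ out of $0$ is an isomorphism, and hence every $0\to X$ lies in $\mathcal M$.
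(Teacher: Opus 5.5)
\textbf{Part (d) has a genuine gap.} Initiality of $0$ gives morphisms \emph{out of} $0$, not into it. So there is in general no map $I\to 0$, and $e:0\to I$ need not be split monic.

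Your argument never uses the form, and it cannot succeed without it. Take the category with two objects and one nonidentity arrow $t:0\to1$. Let $\mathcal E$ be all morphisms and $\mathcal M$ the identities. This is a proper factorization system, $0$ is initial, and $t\in\mathcal E$ is not invertible.

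The paper gets (d) from (c). Every monomorphism into $0$ is invertible: the map out of $0$ is a right inverse by initiality, hence a left inverse by monicity. So $\mathsf S(0)$ has a single element, which is both top and bottom. Because it is represented by $\id_0$, $\alpha_0$ sends it to $\top_{\mathsf Q(0)}$. Because it is the bottom, (c) sends it to $\bot_{\mathsf Q(0)}$. Hence $\mathsf Q(0)$ is trivial, so every $\mathcal E$-map out of $0$ is an isomorphism. Only then does your factorization of $0\to X$ put it in $\mathcal M$.

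\textbf{Part (b) takes a different route.} The paper dualizes the characterization in \cite[Theorem 167]{JanelidzeVanNiekerk}. Your direct route can be made to work, but its two key steps are not justified as written.

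First, $\operatorname{Ker}(cu)=u^*\operatorname{Ker}c$, and for the pushout $c$ the cluster $\operatorname{Ker}c$ is a priori only the least normal cluster above $u_*\operatorname{Ker}a$. The noetherian identity $u^*u_*\operatorname{Ker}a=\operatorname{Ker}a\vee\operatorname{Ker}u$ therefore gives only $\operatorname{Ker}(cu)\ge\operatorname{Ker}a$. Conormality of $\operatorname{Im}(cu)$ is automatic and does not help, and $f_*n(\top)=n(\operatorname{Im}f)$ concerns only the largest normal cluster. What closes the gap is the other compatibility, applied along $u$. With $K=\operatorname{Ker}a$ and $\operatorname{Ker}u=\bot$, it gives $u^*n(u_*K)=n(u^*u_*K)=n(K)=K$. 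Now let $c$ be the universal quotient of the normal cluster $n(u_*K)$.
\begin{enumerate}
\item Its kernel gives $\operatorname{Ker}(cu)=K$, so $cu=va$ with $v\in\mathcal M$.
\item If $fu=ga$, then $K\le\operatorname{Ker}(ga)=u^*\operatorname{Ker}f$. Hence $n(u_*K)\le u_*K\le\operatorname{Ker}f$, and $c$ is the pushout.
\item Finally, $K\le u^*\operatorname{Ker}c$ forces $u_*K=\operatorname{Ker}c$, so $u_*K$ is normal.
\end{enumerate}

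Second, $L=c(L)\vee n(L)$ does not imply that the conormal part of $\operatorname{Im}u\vee\operatorname{Ker}c$ is $\operatorname{Im}u$. A join of a conormal and a normal cluster can have strictly larger conormal part; in the tree form, joining a subtree's image with the collapse of a larger subtree does this. Your computation $c^*\operatorname{Im}v=\operatorname{Im}u\vee\operatorname{Ker}c$ finishes the job once you note $\operatorname{Ker}c=u_*K\le u_*\top=\operatorname{Im}u$. Then $c^*\operatorname{Im}v=\operatorname{Im}u$. So any $x$ with $cx=vy$ has $\operatorname{Im}x\le\operatorname{Im}u$ and factors through $u$. The remaining equation follows by cancelling the monomorphism $v$.

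With these repairs your (b) becomes a self-contained derivation rather than a citation. Your (a) and (c) are in line with the paper's appeals to \cite[Theorems 100 and 163]{JanelidzeVanNiekerk}.
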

\begin{proof}
The lattice, pullback and pushout assertions in (a) are supplied
by \cite[Theorem 100]{JanelidzeVanNiekerk}. For (b), we use
the dual of the characterization in
\cite[Theorem 167]{JanelidzeVanNiekerk}. The dual of the first
assertion of (B1) gives $v\in\mathcal M$, and the dual of
(B2$'$) makes the pushout a pullback. Membership of $c$ in
$\mathcal E$ follows from stability of that class under pushout.
The preservation assertions in (c) are
\cite[Theorem 163(ii)]{JanelidzeVanNiekerk}.

To prove (d), every monomorphism into $0$ is an isomorphism:
its right inverse exists by initiality and is also a left inverse
by monicity. Thus $\mathsf S(0)$ has one element. Its image under
$\alpha_0$ is the largest $\mathcal E$-quotient of $0$, since
the identity represents this unique subobject. By (c), this image
is also the smallest quotient, represented by $\id_0$.
Consequently every $\mathcal E$-map out of $0$ is an isomorphism.
Factoring $0\to X$ then shows that it belongs to $\mathcal M$.
\end{proof}

\phantomsection\label{not:nonempty-subcategory}
For the rest of this subsection, suppose that $\mathcal C$ has a
strict initial object $0$ and a terminal object $1$, with
$0\not\cong1$. Call an object \emph{nonempty} if it is not
isomorphic to $0$, and let $\mathcal C^+$ be the full subcategory
of nonempty objects. Strictness means that no morphism from a
nonempty object has an initial object as codomain. In particular,
the factorization system restricts to $\mathcal C^+$.
We write $q_A:A\to1$ for the unique morphism to the terminal
object, as in the original axioms.

\begin{lemma}\label{lem:ejd-nonempty-classes}
Suppose that $q_A:A\to1$ belongs to $\mathcal E$ for every
nonempty object $A$. Then
\[
\mathcal E|_{\mathcal C^+}=\operatorname{Epi}(\mathcal C^+),
\qquad
\mathcal M|_{\mathcal C^+}
=\operatorname{StrongMono}(\mathcal C^+).
\]
\end{lemma}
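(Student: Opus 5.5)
The plan is to prove the two inclusions for $\mathcal E$ first, and then derive the statement about $\mathcal M$ from it by the standard lifting arguments. Throughout I use that $(\mathcal E,\mathcal M)$ is a proper factorization system on $\mathcal C$. By strictness of $0$, it restricts to $\mathcal C^+$: both factors of a morphism between nonempty objects have nonempty middle object, since that object receives a map from a nonempty object.

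The easy inclusion is $\mathcal E|_{\mathcal C^+}\subseteq\operatorname{Epi}(\mathcal C^+)$. Maps of $\mathcal E$ are epic in $\mathcal C$, and they stay epic in the full subcategory $\mathcal C^+$, because it has fewer test maps.

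For the converse, let $e:X\to Y$ be an epimorphism in $\mathcal C^+$ and factor it as $e=me'$ with $e'\in\mathcal E$ and $m:A\to Y$ in $\mathcal M$. Then $A$ is nonempty, and $m$ is epic in $\mathcal C^+$, since it is a final factor of an epimorphism. The key step is to show that $m$ is an isomorphism, using the collapse construction.
\begin{itemize}
\item By hypothesis $q_A\in\mathcal E$. Lemma~\ref{lem:ejd-background}(b) then gives a pushout square of $m$ along $q_A$, with maps $c:Y\to Q$ in $\mathcal E$ and $v:\one\to Q$ in $\mathcal M$, and this square is also a pullback.
\item The objects $Q$ and $\one$ are nonempty, so $c$ and $vq_Y$ are parallel maps in $\mathcal C^+$. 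They satisfy $cm=vq_A=vq_Ym$.
\item Since $m$ is epic in $\mathcal C^+$, this gives $c=vq_Y$.
\item The pair $(\id_Y,q_Y)$ is therefore compatible with the pullback square, so it induces $s:Y\to A$ with $ms=\id_Y$.
\item Thus $m$ is split epic and monic, hence an isomorphism, and $e\in\mathcal E$ up to composition with an isomorphism.
\end{itemize}
I expect this to be the main obstacle. It is the only place where the hypothesis $q_A\in\mathcal E$ and the bicartesian pushout of Lemma~\ref{lem:ejd-background}(b) are really needed, and care is required so that epicity is only tested against maps between nonempty objects.

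For the strong monomorphisms, first take $m\in\mathcal M$ in $\mathcal C^+$. It is monic. Given a commutative square in $\mathcal C^+$ with an epimorphism on the left and $m$ on the right, the left map lies in $\mathcal E$ by the previous step. The factorization system therefore supplies a unique diagonal, and this diagonal lies in $\mathcal C^+$ because the subcategory is full. Hence $m$ is strong in $\mathcal C^+$.

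Conversely, let $s$ be a strong monomorphism in $\mathcal C^+$ and factor it as $s=ue$ with $e\in\mathcal E$ and $u\in\mathcal M$. I then argue exactly as in the last paragraph of the proof of Lemma~\ref{prop:modelclasses}. Lifting in the square with sides $\id$, $e$, $u$ and $s$ yields $\ell$ with $\ell e=\id$. Then $e\ell e=e$, and epicity of $e$ gives $e\ell=\id$, so $e$ is an isomorphism. Hence $s\in\mathcal M$, closed under isomorphisms, which completes both equalities.
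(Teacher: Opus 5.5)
Your proof is correct and follows the paper's argument: you factor an epimorphism of $\mathcal C^+$ as a map of $\mathcal E$ followed by one of $\mathcal M$, show the $\mathcal M$-factor is invertible using the bicartesian collapse square along $q_A$ from Lemma~\ref{lem:ejd-background}(b), and then obtain the strong-monomorphism equality by orthogonality. The differences are minor. The paper deduces that the lower map $\one\to Q$ is epic (as a pushout of $m$) and split monic, hence invertible, and then pulls this invertibility back to $m$; you instead use epicity of $m$ to get $c=vq_Y$ and split $m$ directly through the pullback property. You also spell out both strong-monomorphism inclusions, which the paper leaves to the orthogonality characterization of the right class.
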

\begin{proof}
Let $m:A\to X$ belong to $\mathcal M$, with $A,X$ nonempty,
and suppose that $m$ is epic in $\mathcal C^+$. Form the pushout
\[
\xymatrix@C=3.5em@R=2.4em{
A\ar[r]^m\ar[d]_{q_A}&X\ar[d]^c\\
1\ar[r]_j&Y.
}
\]
By Lemma~\ref{lem:ejd-background}, this square is bicartesian.
The object $Y$ is nonempty, since it receives a morphism from
$1$. Hence the square is also a pushout and a pullback in
$\mathcal C^+$. The map $j$ is epic there, as a pushout of
$m$. It is split monic, because its domain is terminal.
It is therefore an isomorphism. Its pullback $m$ is an
isomorphism as well.

Now factor an epimorphism $f$ in $\mathcal C^+$ as $f=me$,
where $e\in\mathcal E$ and $m\in\mathcal M$. The map $m$
is epic, so the preceding argument makes it an isomorphism.
Thus $f\in\mathcal E$. Conversely, every map of $\mathcal E$
is epic in $\mathcal C$, and remains epic in the full subcategory
$\mathcal C^+$. We have therefore identified the left class.
The right class consists of the monomorphisms with the unique
diagonal property against the left class. Since the latter is
now the class of all epimorphisms of $\mathcal C^+$, this is
exactly the definition of a strong monomorphism.
\end{proof}

We next show that the stability condition in
Axiom~\ref{ax:pointpushout} follows from the form once
pullback stability and detection by points are imposed. This
is the step which shortens the additional list of axioms.
Recall that the fibre of a morphism over a point is its pullback
object along that point. The fibre is terminal exactly when its
projection to $1$ is an isomorphism.

\begin{lemma}\label{lem:ejd-single-fibres}
Suppose that $q_A:A\to1$ belongs to $\mathcal E$ for every
nonempty object $A$,
that pullback of an $\mathcal E$-map along an $\mathcal M$-map
has its other projection in $\mathcal E$, and that an
$\mathcal E$-map between nonempty objects is an identity whenever
its pullbacks along all points are isomorphisms. Then the class
of $\mathcal E$-maps between nonempty objects whose pullbacks
along all but at most one point are isomorphisms is stable under
pushout along $\mathcal M$-maps between nonempty objects.
\end{lemma}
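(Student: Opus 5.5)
Take a pushout square in $\mathcal C$ of an $\mathcal M$-map $u:A\to X$ along an $\mathcal E$-map $a:A\to B$, between nonempty objects, with $c:X\to Y$ and $v:B\to Y$ the new arrows. By Lemma~\ref{lem:ejd-background}(b) the square is bicartesian, $v\in\mathcal M$ and $c\in\mathcal E$; $Y$ is nonempty since it receives a map from $B$. Assume every point pullback of $a$ is an isomorphism except possibly along one point $j_0:\one\to B$. We must show the same for $c$: at most one point $k$ of $Y$ has a non-isomorphic pullback. The plan is to sort the points $k$ of $Y$ into those that factor through $v$ and those that do not, and to treat each class by pasting pullback squares.

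\emph{Points through $v$.} If $k=vj$, paste the pullback of $a$ along $j$ with the bicartesian square. Pasting pullbacks shows that the pullback of $c$ along $k$ is the pullback of $a$ along $j$, followed by $u$. So its fibre object is the fibre of $a$ over $j$, which is terminal unless $j=j_0$. Since $v$ is monic, $j$ is determined by $k$, so at most the single point $vj_0$ can be bad in this class.

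\emph{Points not through $v$.} This is the main obstacle: such points must have terminal fibre, with no further exceptional point. Let $w:W\to X$ be the pullback of $k$ along $c$. The hypothesis on pullbacks of $\mathcal E$-maps along $\mathcal M$-maps (points lie in $\mathcal M$ by Lemma~\ref{lem:ejd-background}(d)'s factorization argument, being split monic and strong, hence in $\mathcal M$ by Lemma~\ref{lem:ejd-nonempty-classes}) makes $q_W$ an $\mathcal E$-map, with $w\in\mathcal M$. I would compute the meet of $w$ and $u$ in $\mathsf S(X)$ and compare $c_*$ via the identity $c_*^{\mathsf S}c_{\mathsf S}^*=(-)\wedge I_c$, available from the noetherian condition (a) restricted to subobject clusters. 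The aim is to show that $c_*^{\mathsf S}[w]=[k]$ and $c_*^{\mathsf S}[u]=[v]$ have meet $0$, since $k$ does not factor through $v$ and a point has only the subobjects $0$ and itself. Pulling back then gives $[w]\wedge[u]=0$.

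\emph{Finishing off $w$.} Next I would show that $c$ restricted along $w$ is an isomorphism. The route is to use the description of $c$ as the pushout of $a$: it should equal $\alpha$-type data concentrated on $u$, so its kernel $K_c=u_*^{\mathsf Q}K_a$ (quotient direct image, Lemma~\ref{lem:ejd-background}) is ``supported'' on $A$. Then the restriction of $c$ to a subobject disjoint from $u$ is computed by the exact join identities, in the spirit of Lemma~\ref{lem:pushoutlocality}. Concretely, I would factor $cw=m\,p$ and test $p$ against points. Each point pullback of $p$ along a point of its codomain either lies in $w$'s fibre structure over $k$ or would produce a nonterminal subobject of $W$ factoring through $u$, contradicting disjointness. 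The pullback-detection hypothesis then forces $p$ to be an identity. Hence $cw$ is monic, while $cw=k\,q_W$ is constant, so $q_W$ is monic and in $\mathcal E$, hence an isomorphism, and the fibre is terminal. If the meet computation resists, the fallback is to pull the whole bicartesian square back along $k$ and use that pushouts along $\mathcal M$ are pullbacks to deduce directly that the pullback of $k$ along $c$ misses $A$.
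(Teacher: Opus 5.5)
Your first case (points $k=vj$) is correct, and so is the disjointness $[w]\wedge[u]=0$. It is cleanest to write that step as $c^*[k]\wedge c^*[v]=c^*([k]\wedge[v])=c^*0=0$, using that the square is a pullback and that $0$ is strict.

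\textbf{The gap.} It lies in the step you call finishing off $w$, which is where all the content of the lemma sits. Since $cw=kq_W$ with $q_W\in\mathcal E$ and $k\in\mathcal M$, your factorization $cw=mp$ is just $p=q_W$ and $m=k$. The codomain of $p$ is $1$, whose only point is $\id_1$, and the pullback of $p$ along it is $q_W$ itself. So testing $p$ against points and applying detection merely restates the goal that $q_W$ is an isomorphism.

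The justification you offer, that a nonterminal fibre would yield a subobject factoring through $u$, is the second assertion of Lemma~\ref{lem:constantfactor}. Lemma~\ref{lem:pushoutlocality}, which you want to imitate, obtains it from that lemma, and its proof invokes Axiom~\ref{ax:pointpushout}. That is exactly the property being established here, so the argument is circular.

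The fallback does not help either. Pulling the pushout back along $k$ requires pullback-stability of pushouts, which is not a hypothesis. At best it reproves that $W$ misses $A$, which you already know, and disjointness from $A$ does not by itself make $W$ terminal.

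\textbf{The missing ingredient} is Lemma~\ref{lem:ejd-background}(c): $\alpha_X$ preserves binary meets and the bottom element. Note that $W$ is nonempty, since $q_W\in\mathcal E$ and $0\not\cong1$, so $\alpha_X(W)$ is the collapse $c_W$ of $W$.
\begin{itemize}
\item From $cw=kq_W$, the pushout property of $c_W$ factors $c$ through $c_W$, so $\alpha_X(W)\le[c]$.
\item Let $s$ be the marked point of the collapse $c_A$ of $A$. From $c_Au=sq_A=sq_Ba$, the pushout property of your square factors $c_A$ through $c$, so $[c]\le\alpha_X(A)$.
\item Hence $\alpha_X(W)=\alpha_X(W)\wedge\alpha_X(A)=\alpha_X(W\wedge A)=\alpha_X(0)=\bot$, so $c_W$ is an isomorphism.
\item The bicartesian collapse square of Lemma~\ref{lem:ejd-background}(b) then makes $q_W$ an isomorphism.
\end{itemize}

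The paper uses exactly this computation, but only for $c=c_A$. It first uses the detection hypothesis to show that every non-identity map of the class is a collapse, and then concludes by pushout pasting. With the repair above, your two-case argument handles a general pushout of $a$ directly, and it does not need the detection hypothesis at all.
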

\begin{proof}
Every point belongs to $\mathcal M$, since it is a split
monomorphism. An $\mathcal E$-map with domain $1$ is an
isomorphism: its composite with the terminal map is $\id_1$,
and epicity makes its left inverse also a right inverse.
Pullback stability and Lemma~\ref{lem:ejd-background}(d) imply
that every point fibre of an $\mathcal E$-map between nonempty
objects is nonempty.

Let $m_A:A\to X$ belong to $\mathcal M$, with $A$ nonempty,
and form its collapse square
\[
\xymatrix@C=3.5em@R=2.4em{
A\ar[r]^{m_A}\ar[d]_{q_A}&X\ar[d]^{c_A}\\
1\ar[r]_j&Q_A.
}
\]
The square is bicartesian, and $[c_A]=\alpha_X(A)$ because
$q_A$ is the largest $\mathcal E$-quotient of $A$.
For a point $k:1\to Q_A$ different from $j$, let
$m_K:K\to X$ be its pullback along $c_A$.

The intersection of two distinct point subobjects is $0$.
Indeed, a nonempty intersection $H$ would give
$j q_H=k q_H$, and $q_H$ is epic. Pulling back this intersection
along $c_A$ therefore gives $A\wedge K=0$; the inverse image
of $0$ is $0$ by strictness. The equation
$c_A m_K=kq_K$ implies $\alpha_X(K)\le\alpha_X(A)$, by the
pushout property defining the collapse of $K$. Hence
\[
\alpha_X(K)
=\alpha_X(K)\wedge\alpha_X(A)
=\alpha_X(K\wedge A)
=\alpha_X(0)
=\bot_{\mathsf Q(X)}.
\]
The collapse of $K$ is thus an isomorphism. Its bicartesian
square makes $q_K$ an isomorphism, so $K\cong1$. We have
proved that $c_A$ has at most one nonterminal point fibre.

Conversely, let $d:X\to Y$ belong to $\mathcal E$ and have
at most one nonterminal point fibre. If every point fibre is
terminal, the detection assumption makes $d$ an identity.
Otherwise let $A$ be its fibre over the exceptional point
$p:1\to Y$. Collapse $A$ as above. The pushout property gives
a factorization $d=r c_A$ with $rj=p$, as displayed in
\[
\xymatrix@C=3.5em@R=2.4em{
A\ar[r]^{m_A}\ar[d]_{q_A}&X\ar[d]^{c_A}
  \ar@/^1.5pc/[dd]^d\\
1\ar[r]_j\ar@/_1.5pc/[dr]_p&Q_A\ar[d]^r\\
&Y.
}
\]
Factor $r=me$, with $e\in\mathcal E$ and $m\in\mathcal M$.
Then $d=m(ec_A)$ and $ec_A\in\mathcal E$. Since
$d\in\mathcal E$, uniqueness of factorization makes $m$ an
isomorphism. Hence $r\in\mathcal E$.
For any point $k:1\to Y$, let $L$ be its
fibre along $r$, and let $K$ be its fibre along $d$. Pullback
pasting gives
\[
\xymatrix@C=3.5em@R=2.4em{
K\ar[r]\ar[d]_g&X\ar[d]^{c_A}\\
L\ar[r]\ar[d]&Q_A\ar[d]^r\\
1\ar[r]_k&Y,
}
\]
where $g\in\mathcal E$ by pullback stability. If $k\ne p$,
then $K\cong1$, so $g$ is an isomorphism and $L\cong1$.
If $k=p$, identify $K$ with $A$. The equation $rj=p$ gives
a point $t:1\to L$ with $g=tq_A$. Since $g,q_A$ belong to
$\mathcal E$ and $t$ belongs to $\mathcal M$, uniqueness of
factorization makes $t$ an isomorphism. Thus $L\cong1$ in
this case as well. Detection applied to $r$ gives $r=\id$.
Every nonidentity map under consideration is therefore a collapse
$c_A$.

Finally, pushout pasting shows that pushing $c_A$ out along
$X\to Z$ in $\mathcal M$ gives the collapse of the composite
$A\to X\to Z$. The first part of the proof applies to that
collapse. Pushouts of identities are identities, which completes
the proof.
\end{proof}

\begin{theorem}[Characterization with an empty tree]
\label{thm:ejd-tree-characterization}
Let $\mathsf F$ be a noetherian form with exact join decomposition
over a small category $\mathcal C$, and let
$(\mathcal E,\mathcal M)$ be its underlying factorization system.
Suppose that $\mathcal C$ has a strict initial object $0$ and a
terminal object $1$, with $0\not\cong1$.
Then $\mathcal C$, together with its two morphism classes, is
isomorphic to an operadic expression tree category with a strict
initial empty tree adjoined, if and only if the following conditions
hold. Under the isomorphism, $\mathcal E$ consists of prunings
and the identity of the empty tree, and $\mathcal M$ consists
of occurrences and the morphisms from the empty tree.
\begin{enumerate}
\item Pullback of an $\mathcal E$-map along an $\mathcal M$-map
has its other projection in $\mathcal E$.
\item For nonempty $A,B$ and every point $p:1\to B$, the
category of pullback squares
\[
\xymatrix@C=3.5em@R=2.4em{
A\ar[r]^u\ar[d]_{q_A}&C\ar[d]^d\\
1\ar[r]_p&B,
}
\]
with $u\in\mathcal M$ and $d\in\mathcal E$, has a terminal
object. Its comparison maps fix $A$ and $B$, as in
Axiom~\ref{ax:U}.
\item For every object $X$, there are finitely many
$\mathcal M$-maps with codomain $X$ and finitely many
$\mathcal E$-maps with domain $X$, counting maps with all
possible other endpoints.
\item For $A,B\in\mathsf S(X)$, if $A\wedge B\ne0$, then
$A\le B$ or $B\le A$.
\item If the pullback of $d\in\mathcal E$ along every point
of its codomain is an isomorphism, then $d$ is an identity.
\end{enumerate}
Under this isomorphism, $\mathsf F$ is isomorphic to the form of
marked prunings in Definition~\ref{def:tree-clusters}.
\end{theorem}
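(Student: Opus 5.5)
The plan is to verify the nine axioms on $\mathcal C^+$ and then invoke Theorem~\ref{thm:representation}. For the easy direction, assume $\mathcal C$ is isomorphic to $\widehat{\mathcal T}$ with the stated classes. Then Theorem~\ref{thm:tree-cosub} supplies the noetherian form. Condition (a) is the pullback computation of Lemma~\ref{lem:tree-lattice-calculus}(b) (also the case involving $0$), and (b) is Axiom~\ref{ax:U} for $\mathcal T$. Condition (c) is Axiom~\ref{ax:finite} with one extra map $0\to X$, (d) is the subtree intersection description, and (e) is Axiom~\ref{ax:Ddetect}. The final claim about forms follows from Theorem~\ref{thm:tree-cosub}. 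The only point to check is that any noetherian form with exact join decomposition on $\widehat{\mathcal T}$ having these universal classes is isomorphic to $\mathsf G$. I would argue this by the pair description \eqref{eq:tree-pairs} from \cite[Theorems 159 and 163]{JanelidzeVanNiekerk}, since clusters are determined by the two lattices and $\alpha$, all of which depend only on $(\mathcal E,\mathcal M)$.

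For the converse, I would restrict to $\mathcal C^+$, where the factorization system restricts by strictness. The first step is to show $q_A\in\mathcal E$ for nonempty $A$. Factor $q_A=me$ with $m:M\to 1$ in $\mathcal M$. Then $M$ is nonempty, and $\mathsf S(1)$ should be $\{0,1\}$ by (d). I expect this step needs care, since a subobject $M\to1$ with $M\ne0$ must be shown to be an isomorphism. I would try pulling $m$ back along itself and using that $m$ is monic with domain mapping to $1$: the map $M\to 1\to M$ obtained from any point is not available in general. Instead, I would use Lemma~\ref{lem:ejd-background}(b) with the collapse of $M$ along $q_M$, which forces $\alpha_1(M)$ to be computed on $1$, whose quotient lattice is trivial because every $\mathcal E$-map out of $1$ is split mono and hence iso. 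Then $\alpha_1(M)=\bot$, so the bicartesian collapse square makes $q_M$ an isomorphism.

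With $q_A\in\mathcal E$, Lemma~\ref{lem:ejd-nonempty-classes} identifies $\mathcal E$ and $\mathcal M$ on $\mathcal C^+$ with epimorphisms and strong monomorphisms. Then Axioms~\ref{ax:N} and \ref{ax:F} are immediate. Axiom~\ref{ax:Dpull} comes from Lemma~\ref{lem:ejd-background}(a) together with (a), noting that the pullback lies in $\mathcal C^+$ because fibres of $\mathcal E$-maps are nonempty (the pulled-back $\mathcal E$-map out of $0$ would be an iso to a nonempty object). Axiom~\ref{ax:Dpush} is Lemma~\ref{lem:ejd-background}(b), and Axiom~\ref{ax:pointpushout} is Lemma~\ref{lem:ejd-single-fibres}. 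Axiom~\ref{ax:U} is (b), with $A=0$ excluded harmlessly since $\mathcal C^+$ omits it. Axioms~\ref{ax:finite} and \ref{ax:Ddetect} are (c) and (e). For Axiom~\ref{ax:O}, a commutative square of strong monomorphisms with nonempty source gives $X\wedge Y\ne0$ in $\mathsf S(Z)$, so (d) makes them comparable, and cancellation yields the diagonal equations.

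Theorem~\ref{thm:representation} then gives $\mathcal C^+\cong\mathcal T$, matching the classes. The remaining step is to extend this to $\mathcal C$. Since $0$ is strict initial and not isomorphic to $1$, the maps into $0$ are only $\id_0$, and there is exactly one map $0\to X$. By Lemma~\ref{lem:ejd-background}(d), $\mathcal E$ contains only $\id_0$ at $0$ and $\mathcal M$ contains all maps $0\to X$. Hence $\mathcal C\cong\widehat{\mathcal T}$ with the stated classes, and the form statement follows as in the first direction. I expect the main obstacle to be the first step, $q_A\in\mathcal E$, together with the identification of $\mathsf F$ with $\mathsf G$ through the cited pair characterization.
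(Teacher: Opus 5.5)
There is a genuine gap in the first step of your converse, the proof that $q_A\in\mathcal E$. You push $m:M\to1$ out along $q_M$ using Lemma~\ref{lem:ejd-background}(b). That lemma only pushes $\mathcal M$-maps out along $\mathcal E$-maps, and $q_M\in\mathcal E$ is an instance of what you are trying to prove, so the step is circular.

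If you instead use the actual definition $\alpha_1(M)=m_*^{\mathsf Q}(\top_{\mathsf Q(M)})$, the left side of the collapse square is the largest $\mathcal E$-quotient $t:M\to T$. Then $\alpha_1(M)=\bot$ only tells you that $t$ is invertible, i.e.\ that $\mathsf Q(M)$ is trivial; it says nothing about $m$ or $q_M$. Condition (d) does not force $\mathsf S(1)=\{0,1\}$ either.

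In fact no argument that avoids condition (b) can work. Take the chain $0<M<1$ as a thin category, with $\mathcal E$ the identities, $\mathcal M$ all maps, and $\mathsf F$ the form of $\mathcal M$-subobjects. This is a noetherian form with exact join decomposition: every kernel is $\bot$ and every subobject is an image. Here $0$ is strict initial, and (a), (c), (d), (e) all hold. Yet $q_M\notin\mathcal E$, and $M\to1$ is a noninvertible $\mathcal M$-map with nonempty domain. Only (b) fails: for $A=M$, $B=1$, $p=\id_1$ the filling category is empty, because $d\in\mathcal E$ forces $d=\id_1$ and the pullback would then make $M\to1$ invertible.

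The missing idea is to use (b) directly, as the paper does. Apply it with $B=1$ and $p=\id_1$. The filling category then has a terminal object, in particular some object $(C,d,u)$. Since the square is a pullback along $\id_1$, the map $u$ is an isomorphism, so $q_A=du\in\mathcal E$. With that replacement, the rest of your converse follows the paper's route: Lemma~\ref{lem:ejd-nonempty-classes}, the verification of the nine axioms on $\mathcal C^+$, and Theorem~\ref{thm:representation}.

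There is also a smaller omission. Strictness only says that maps into $0$ are invertible. To know that $0$ is the only initial object, so that you obtain an isomorphism with $\widehat{\mathcal T}$ rather than an equivalence, first use (e) to show that every isomorphism is an identity: an isomorphism lies in $\mathcal E$ and has invertible point pullbacks.

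Your identification of $\mathsf F$ with the marked-pruning form is in the spirit of the paper. The paper cites Theorems~158 and~100 of Janelidze--van Niekerk: such a form is determined by its normal and conormal subforms, and these are determined by $(\mathcal E,\mathcal M)$.
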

\begin{proof}
First suppose that the category and its two classes arise from
an expression tree category with an empty tree adjoined.
Condition (a) is
Lemma~\ref{lem:tree-lattice-calculus}(b). The universal fillings
in (b) are those of the nonempty tree category. Their middle
objects cannot be initial, so adjoining $0$ adds no further
fillings or comparison maps. Finiteness in (c) follows from
finiteness of the trees; adjoining $0$ adds one inclusion into
each nonempty object and only the identity quotient at $0$.
Two nonempty subtrees which intersect are nested, giving (d).
Condition (e) is the point-detection axiom on the nonempty category;
the only $\mathcal E$-map with initial codomain is $\id_0$.
The existence of the stated form is Theorem~\ref{thm:tree-cosub}.

Conversely, assume (a)--(e). An isomorphism belongs to
$\mathcal E$ and all its pullbacks are isomorphisms. Condition
(e) therefore makes every isomorphism an identity. In particular,
$0$ is the only initial object. We work in $\mathcal C^+$.

Apply (b) with $B=1$ and $p=\id_1$. The pullback square has
an isomorphism as its upper map, and its right map belongs to
$\mathcal E$. Thus $q_A\in\mathcal E$ for every nonempty
$A$. Lemma~\ref{lem:ejd-nonempty-classes} now identifies the
restricted classes with the epimorphisms and strong monomorphisms
of $\mathcal C^+$.

We verify the nine axioms on $\mathcal C^+$. Its terminal object
is $1$, which gives Axiom~\ref{ax:N}. The restricted factorization
system, together with the preceding identification of its classes,
gives Axiom~\ref{ax:F}. Condition (a) supplies
Axiom~\ref{ax:Dpull}. The pullback
object in that axiom is nonempty: its projection in $\mathcal E$
has nonempty codomain, while an $\mathcal E$-map from an
initial object is an isomorphism by
Lemma~\ref{lem:ejd-background}(d).
The bicartesian pushouts of Lemma~\ref{lem:ejd-background}(b)
remain in $\mathcal C^+$, since their objects receive maps
from nonempty objects. They give Axiom~\ref{ax:Dpush}.
Lemma~\ref{lem:ejd-single-fibres} gives
Axiom~\ref{ax:pointpushout}.

Condition (b) is Axiom~\ref{ax:U}, and (c) implies
Axiom~\ref{ax:finite}. To obtain Axiom~\ref{ax:O}, consider
a commutative square of strong monomorphisms in $\mathcal C^+$.
Its two subobjects in the common codomain have a nonempty
subobject in common. Their intersection is therefore nonempty,
and (d) makes them comparable. The corresponding factor is a
diagonal of the square; its other equation follows by monicity.
Finally, (e) gives Axiom~\ref{ax:Ddetect}.

Theorem~\ref{thm:representation} consequently identifies
$\mathcal C^+$ with the tree category of its derived signature,
and identifies the restricted classes with prunings and subtree
occurrences. Strictness and initiality of $0$ extend this
isomorphism to the categories with the empty tree adjoined.
Lemma~\ref{lem:ejd-background}(d) shows that the additional
$\mathcal M$-maps are exactly the maps $0\to X$, and that
the only additional $\mathcal E$-map is $\id_0$.

After this identification, $\mathsf F$ and the marked-pruning
form have the same underlying proper factorization system.
Theorem~158 of \cite{JanelidzeVanNiekerk} states that a noetherian
form with exact join decomposition is determined, up to isomorphism,
by its conormal and normal subforms. By Theorem~100 of the same
paper, these two subforms are determined by the proper factorization
system. Hence $\mathsf F$ is isomorphic to the marked-pruning form.
\end{proof}

\begin{remark}
The theorem retains the counterparts of
Axioms~\ref{ax:Dpull}, \ref{ax:U}, \ref{ax:finite},
\ref{ax:O} and~\ref{ax:Ddetect}. Exact join decomposition
supplies the bicartesian pushout axiom, and its compatibility
with intersections gives the remaining pushout stability axiom.
The assertion in (d) expresses the geometry of a single rooted
tree: two subtrees with a common nonempty part lie one inside
the other. The empty intersection is permitted, which allows
distinct branches.
\end{remark}

\begin{remark}\label{rem:ejd-intrinsic-classes}
The proof identifies the morphism classes of the form on
$\mathcal C^+$ without separately requiring them to be all
epimorphisms and strong monomorphisms. Universal filling puts the
nonempty terminal maps in $\mathcal E$, and the bicartesian
collapse square then gives the identification in
Lemma~\ref{lem:ejd-nonempty-classes}. This argument already
works before finiteness and the nesting of intersecting subobjects
are used.

On the enlarged category, the specified factorization system
remains part of the characterization. For example, adjoining
$0$ to the expression tree category containing only
$1$ gives $0\to1$, with identities as $\mathcal E$ and all
maps as $\mathcal M$. The nonidentity map is epic, but is
not strong monic. These classes nevertheless form a proper
factorization system, and Theorem~\ref{thm:tree-cosub} applies.
The same distinction accounts for the augmented word examples.
\end{remark}

\subsection{The pointed example}\label{sec:word-noetherian-form}
For a pointed expression tree category, the reconstruction in
Theorem~\ref{thm:wordcharacterization} gives unary generating
operations. Every expression can therefore be read as a word.
The unique empty leaf gives a preferred marking in every pruned
word, and the empty word is already a zero object. We can use
these markings to construct a noetherian form directly on the
word category.

\begin{theorem}\label{thm:word-noetherian}
On a word category, consider the cospans
\[
\xymatrix@C=3.5em{
X\ar[r]^q&Q&1\ar[l]_u,
}
\]
where $q$ is a pruning and $u$ is the unique point of $Q$.
With the comparisons of Definition~\ref{def:tree-clusters},
these cospans form a noetherian form with exact join decomposition.
Its universal quotients are the prunings, and its universal
embeddings are the occurrences.
Its fibre over a word of length $n$ is the chain
$\{0,1,\ldots,n\}$ with its usual order, where a cluster
$d$ records the deletion of the suffix of length $d$.
\end{theorem}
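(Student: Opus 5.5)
The plan is to avoid redoing the abstract machinery. I would transport the result of Theorem~\ref{thm:tree-cosub} from $\widehat{\mathcal W_L}$ down to $\mathcal W_L$. Alternatively, I would verify directly the hypotheses of the Janelidze--van Niekerk characterization (Theorems 159 and 163) for the factorization system on $\mathcal W_L$ given by prunings and occurrences. I prefer the direct route, because $\mathcal W_L$ is pointed and every word has exactly one point. Hence the zero object $\varepsilon$ can play the role of both $0$ and $1$.

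The first step is to record the lattices. For a word $X$ of length $n$, $\mathsf S(X)$ consists of the suffixes of lengths $0,\ldots,n$. Here the empty suffix $\varepsilon\to X$ is the bottom, and it is the unique point. Similarly, $\mathsf Q(X)$ consists of the prunings, which delete a suffix of length $d\in\{0,\ldots,n\}$. Both are chains of length $n+1$ (Lemma~\ref{prop:wordarrows}). Pullbacks of occurrences and pushouts of prunings exist, by Lemma~\ref{lem:tree-lattice-calculus} restricted to unary trees. The key point is that two suffixes always intersect in the shorter one, and this remains true for $\varepsilon$, so no new empty object is needed. The map $\alpha_X$ sends the suffix of length $k$ to the pruning deleting exactly that suffix. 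It is the identity of $\{0,\ldots,n\}$, so it trivially preserves meets and the bottom ($\alpha(\varepsilon)=\id_X$). Saturation $\sigma_R(A)$ enlarges a suffix of length $k$ to length $\max(k,d)$ when $R$ deletes $d$ letters. Hence $\sigma_{\alpha(A)}(A)=A$. The three image identities of Lemma~\ref{lem:tree-form-identities} are computed in erased-set coordinates exactly as there, now with erased sets being terminal segments of the word. Thus the cited theorem yields a noetherian form with exact join decomposition on the pairs $(A,R)$ with $\sigma_R(A)=A$ and $\alpha(A)\le R$, with universal quotients the prunings and universal embeddings the occurrences. The universality argument is the same monic/epic factorization argument used in the proof of Theorem~\ref{thm:tree-cosub}.

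The second step is to identify the pairs with the stated cospans and compute the fibre. The condition $\alpha(A)\le R$ means that the length $k$ of $A$ is at most $d$. Saturation then forces $k=\max(k,d)$, so $k=d$. Thus the pairs are indexed by a single $d\in\{0,\ldots,n\}$, namely $A$ is the suffix deleted by $R$. This matches the cospan $X\to Q\leftarrow 1$ with $u$ the unique point of $Q$, because the pullback of that point along $q$ is exactly the deleted suffix (Lemma~\ref{lem:fibres}). The componentwise order is therefore the usual order on $d$. The comparison over $f$ is matched with cospan comparisons by the same diagonal-fill argument as in the proof of Theorem~\ref{thm:tree-cosub}. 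Here the marking $u$ is always a point, and the diagonal in the square $p,u$ exists by the factorization system.

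The main obstacle is the pointed degeneracy. Since $0=1=\varepsilon$, the empty marking and the marked leaf coincide, and I must check that the cited theorem does not need a strict initial object distinct from the terminal one. In particular, the condition $\alpha\beta(R)\le R$ becomes $\alpha(\varepsilon)\le R$, which holds because $\alpha(\varepsilon)$ is the bottom quotient. I also expect care to be needed that the inverse image of the bottom subobject is again the point, not something empty. I would verify these bottom-element conditions of Theorem~159 explicitly and at the start, before invoking it.
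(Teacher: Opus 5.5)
Your argument is sound in substance, but it takes a different route from the paper. The paper does not use the Janelidze--van Niekerk characterization for this theorem; it verifies everything by hand on the chains. For $f$ deleting $k$ letters and retaining $r$:
\begin{enumerate}
\item it shows that $f_*d=\max(d-k,0)$ and $f^*e=k+\min(e,r)$ are the least and greatest comparisons over $f$, via the equivalence $\max(d-k,0)\le e\Leftrightarrow d\le k+\min(e,r)$;
\item it reads off $\operatorname{Ker}f=k$ and $\operatorname{Im}f=r$, and checks $f^*f_*d=\max(d,k)$ and $f_*f^*e=\min(e,r)$;
\item it observes that each cluster $d$ is at once the kernel of the pruning deleting $d$ letters and the image of the inclusion of the suffix of length $d$.
\end{enumerate}
Thus every cluster is both normal and conormal. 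The closure conditions on joins and meets are then automatic, $n(d)=c(d)=d$, and exact join decomposition reduces to trivial identities.

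Your route instead transfers the pair machinery behind Theorem~\ref{thm:tree-cosub} to the unaugmented factorization system. Your observation that $\alpha(A)\le R$ and $\sigma_R(A)=A$ together force the suffix length to equal the deleted length is a clean way to obtain the fibre $\{0,\ldots,n\}$. Your approach buys uniformity with the tree case. The paper's computation avoids depending on the fine print of the external theorem in the degenerate situation $0=1=\varepsilon$, which you raise but leave unresolved. It also produces the image operations explicitly.

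One step in your handling of that degenerate situation is miscomputed. The bottom subobject of $Q$ is its unique point. So $\beta(R)=q^*_{\mathsf S}(\bot_{\mathsf S(Q)})$ is the fibre of $q$ over that point, namely the deleted suffix of length $d$, not $\varepsilon$. This is exactly the fact you yourself use, via Lemma~\ref{lem:fibres}, in your second step. Hence $\alpha\beta(R)=R$, so the required inequality $\alpha\beta(R)\le R$ still holds, but not for the reason you give.

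The same computation shows that kernels here are not of the form $(\varepsilon,R)$, as they are in Lemma~\ref{prop:cosub-join-parts}. Such a pair fails saturation whenever $R$ deletes at least one letter. Instead, the kernel of $f$ is the full pair consisting of the suffix of length $k$ and the pruning deleting it. This coincidence of normal and conormal clusters is precisely what the paper exploits to make exact join decomposition immediate.
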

\begin{proof}
There is exactly one marking in each quotient, so a cluster is
specified by the number $d$ of letters deleted from the end of
the word. The marking imposes no additional condition on a
quotient comparison, since every morphism preserves the unique
point. Thus comparisons over an identity give the usual order
on these numbers. In particular, each fibre is a bounded lattice,
whose meet and join are minimum and maximum.

Let $f:X\to Y$ retain a prefix of $X$ of length
$r$ and identify it with a suffix of $Y$. Write $n=|X|$ and
$k=n-r$. Thus $k$ is the number of letters deleted by $f$.
On the chains whose elements count deleted suffix lengths, the
direct and inverse images are
\[
 f_*d=\max(d-k,0),\qquad f^*e=k+\min(e,r).
\]
The first formula counts the letters of the additionally deleted
suffix that survive the pruning of $f$. The second counts
those already deleted by $f$, together with the letters in its
retained prefix that are deleted by the target quotient. They
give the least and greatest comparisons over $f$, respectively.
Indeed, $\max(d-k,0)\le e$ is equivalent to
$d\le k+\min(e,r)$ for $0\le d\le n$ and
$0\le e\le |Y|$. The quotient pushout and factorization
constructions give these same operations, so they are functorial.
We have therefore obtained an orean form.

The kernel of $f$ is $f^*(0)=k$ and its image is
$f_*(n)=r$. Direct calculation gives
\[
 f^*f_*d=\max(d,k),\qquad f_*f^*e=\min(e,r).
\]
Every cluster $d$ is the kernel of the pruning deleting $d$
letters and the image of the inclusion of the suffix of length
$d$. The pruning is its universal quotient because a matching
factors through it precisely when it deletes at least those
$d$ letters. The suffix inclusion is its universal embedding
because a matching factors through it precisely when its matched
suffix has length at most $d$. The first factorization is unique
because the pruning is epic; the second is unique because the
inclusion is monic. Applied to $k$ and $r$, these statements
identify the ordinary pruning--occurrence factorization of $f$
with the universal factorization required by the noetherian axioms.

Every cluster is both normal and conormal. Consequently binary
joins of normal clusters and binary meets of conormal clusters
have the required types. This completes the verification that
the form is noetherian. Its largest normal and conormal parts,
denoted earlier by $n(d)$ and $c(d)$, therefore satisfy
$n(d)=c(d)=d$ for every cluster $d$. The join decomposition and its two compatibility
identities reduce to equalities between identical expressions.
The form therefore has exact join decomposition.
\end{proof}

For the augmented word category, the original quotients acquire
also the option of an empty marking. Thus the fibre has two
clusters for each deleted length: an empty marking and the
unique point marking. Theorem~\ref{thm:tree-cosub} applies to
this augmented category as well. The unaugmented construction
in Theorem~\ref{thm:word-noetherian} explains why the extra
empty tree is unnecessary in the pointed example.

\section{Concluding remarks}\label{concl:section}

\subsection{Recognition up to equivalence}\label{concl:equivalence-axioms}

The representation theorem recognizes expression tree categories up to
isomorphism. We first explain how to obtain a recognition up to
equivalence instead. This permits several isomorphic copies of the
same tree, while preserving the distinction between its
different positions.

For a fixed object $X$, let $\operatorname{Sub}_{\mathrm{s}}(X)$
be the set of strong monomorphisms into $X$, considered up to
isomorphism over $X$. Thus $m:A\to X$ and $m':A'\to X$ represent
the same element when there is an isomorphism $\alpha:A\to A'$
with $m=m'\alpha$. Similarly, let
$\operatorname{Quot}_{\mathrm{e}}(X)$ be the set of epimorphisms
out of $X$, considered up to isomorphism under $X$: the maps
$e:X\to B$ and $e':X\to B'$ represent the same element when
$e'=\beta e$ for an isomorphism $\beta:B\to B'$.
The two identifications are illustrated by
\[
\xymatrix@C=3.8em@R=2.3em{
A\ar[rr]^{\alpha}_{\sim}\ar[dr]_m&&A'\ar[dl]^{m'}\\
&X&
}
\qquad\qquad
\xymatrix@C=3.8em@R=2.3em{
&X\ar[dl]_e\ar[dr]^{e'}&\\
B\ar[rr]_{\beta}^{\sim}&&B'.
}
\]
The object $X$ remains fixed in these diagrams. In particular,
two equal subtrees at different positions still determine
different strong subobjects.

Retain Axioms~\ref{ax:N}--\ref{ax:U} and~\ref{ax:O}, and make
the following two replacements.
\begin{enumerate}
\item Replace Axiom~\ref{ax:finite} by the requirement that
$\operatorname{Sub}_{\mathrm{s}}(X)$ and
$\operatorname{Quot}_{\mathrm{e}}(X)$ are finite for every $X$.
\item Replace Axiom~\ref{ax:Ddetect} by the following requirement.
If an epimorphism $d:X\to Y$ has isomorphic pullbacks along
all points of $Y$, then $d$ is the unique isomorphism from
$X$ to $Y$.
\end{enumerate}
Here an isomorphic pullback means that the projection to the
domain of the point is an isomorphism, as in the original
axiom. The second replacement can equivalently be stated as
two requirements: such an epimorphism is invertible, and every
automorphism in the category is an identity. Indeed, every
isomorphism satisfies the pullback hypothesis, and two
isomorphisms with the same endpoints differ by an automorphism.

\begin{theorem}\label{concl:equivalence}
For an essentially small, locally small category $\C$, the
following conditions are equivalent.
\begin{enumerate}
\item The category satisfies the modified axioms above.
\item It is equivalent to a small category satisfying the
original nine axioms.
\item It is equivalent to the expression tree category
$\mathcal C_L$ associated to a ranked set $L$.
\end{enumerate}
\end{theorem}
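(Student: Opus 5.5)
The plan is to prove (b)$\Leftrightarrow$(c) from earlier results, and then obtain (a)$\Leftrightarrow$(b) by passing to a skeleton.

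\emph{(b)$\Leftrightarrow$(c).} Theorem~\ref{thm:representation} shows that a small category satisfying the nine axioms is isomorphic to $\C_L$ for its derived signature. Conversely, Theorem~\ref{thm:realization} shows that each $\C_L$ satisfies the nine axioms. Since $\C_L$ is small, this gives (c)$\Rightarrow$(b).

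\emph{(a)$\Rightarrow$(b).} Given $\C$ satisfying the modified axioms, choose a skeleton $\C_0$. This is a small full subcategory, one object per isomorphism class, and the inclusion is an equivalence; essential smallness and local smallness make this legitimate. The argument then runs in four steps.

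1. The modified identity axiom forces every automorphism in $\C$ to be an identity. Hence in $\C_0$ every isomorphism is an identity.
2. All the notions in Axioms~\ref{ax:N}--\ref{ax:U} and~\ref{ax:O} are invariant under equivalence: terminal objects, epimorphisms, strong monomorphisms, pullbacks, pushouts, points, and terminal objects of the categories $\Fill(A,j)$. An equivalence preserves and reflects limits, colimits, epis and strong monos. Pullbacks and pushouts that exist in $\C$ can be transported into $\C_0$ by replacing vertices with their skeletal representatives. So these axioms pass to $\C_0$.
3. In $\C_0$, two strong monomorphisms into $X$ that are isomorphic over $X$ have isomorphic, hence equal, domains. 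The comparison isomorphism is then an automorphism, hence an identity, so the two maps are equal. Therefore the strong monomorphisms into $X$ inject into $\operatorname{Sub}_{\mathrm s}(X)$, and dually the epimorphisms out of $X$ inject into $\operatorname{Quot}_{\mathrm e}(X)$. Axiom~\ref{ax:finite} follows.
4. For Axiom~\ref{ax:Ddetect}, an epimorphism in $\C_0$ with isomorphic point pullbacks is, by the modified axiom, an isomorphism $X\to Y$. Skeletality gives $X=Y$, and rigidity makes the map the identity.

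\emph{(b)$\Rightarrow$(a).} Suppose $\C\simeq\C_0$ with $\C_0$ small and satisfying the nine axioms. Axioms~\ref{ax:N}--\ref{ax:U} and~\ref{ax:O} transfer along the equivalence in the same way as in step 2. The finiteness of $\operatorname{Sub}_{\mathrm s}$ and $\operatorname{Quot}_{\mathrm e}$ is invariant under equivalence, since these sets correspond bijectively, and in $\C_0$ they are bounded by the finite sets of Axiom~\ref{ax:finite}. For the modified identity axiom:

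1. Rigidity in $\C_0$ (Lemma~\ref{lem:rigidity}) shows that automorphisms of $\C_0$ are identities. A full and faithful functor preserves automorphism groups, so automorphisms of $\C$ are identities too.
2. Let $d$ be an epimorphism in $\C$ whose point pullbacks are isomorphisms. Its image in $\C_0$ is then an epimorphism with the same property, hence an identity by Axiom~\ref{ax:Ddetect}. So $d$ is an isomorphism.
3. Since automorphisms are trivial, $d$ is the unique isomorphism $X\to Y$.

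The main obstacle is the careful transport of the axioms that assert existence of specific squares, namely Axioms~\ref{ax:Dpull}, \ref{ax:Dpush} and~\ref{ax:U}. In the skeleton the required vertex objects must be replaced by their representatives. The fix is to compose with the unique isomorphisms, which are unique because automorphisms are trivial. These isomorphisms preserve the relevant classes, and the universal properties are unaffected. I would handle this with one general lemma: each axiom is a property invariant under replacing objects by isomorphic ones within an equivalence.
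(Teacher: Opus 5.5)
Your proposal is correct and follows essentially the paper's argument. You pass to a skeleton, use the modified detection axiom to make all isomorphisms there identities (so strong monomorphisms and epimorphisms inject into $\operatorname{Sub}_{\mathrm{s}}$ and $\operatorname{Quot}_{\mathrm{e}}$), transport the remaining axioms along the equivalence, and invoke Theorem~\ref{thm:representation}, with the converse obtained by transporting the axioms back using rigidity; the only difference is that you spell out (b)$\Leftrightarrow$(c) via Theorems~\ref{thm:representation} and~\ref{thm:realization}, which the paper leaves implicit.
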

\begin{proof}
Suppose first that the modified axioms hold. Choose a skeleton
$\C_0$ of $\C$, that is, a full subcategory containing one
object from each isomorphism class. Essential smallness permits
this subcategory to be small. The modified detection axiom
implies that every isomorphism in $\C_0$ is an identity.
Consequently, two strong monomorphisms into an object of
$\C_0$ represent the same strong subobject only when they
are equal. The corresponding assertion holds for epimorphic
quotients. The modified finiteness axiom therefore gives the
original finiteness axiom in $\C_0$, and the modified detection
axiom gives the original detection of identities.

The remaining axioms pass to $\C_0$. An equivalence preserves
epimorphisms, strong monomorphisms, and terminal objects.
It also preserves the universal properties of pullbacks and
pushouts. The bijections of points preserve the condition
that at most one point is exceptional. For
Axiom~\ref{ax:U}, transporting the prescribed objects and maps
along isomorphisms gives equivalent categories of pullback
factorizations, so their terminal objects are preserved.
The existence of either diagonal in Axiom~\ref{ax:O} is
preserved by fullness and faithfulness. Hence $\C_0$ satisfies
all nine original axioms. Theorem~\ref{thm:representation}
now gives an isomorphism $\C_0\cong\mathcal C_L$.

Conversely, every category satisfying the original axioms has
only identity isomorphisms. It therefore satisfies their
modified versions. The same preservation arguments, together
with the induced bijections of strong subobject and
epimorphic quotient sets, show that the modified versions
pass across an equivalence. This proves all three implications.
\end{proof}

In this formulation, components are maximal proper strong
subobject classes. Thus their definition uses noninvertible
maps rather than merely nonidentity maps. The labels are
recovered as the isomorphism classes of the nonterminal
objects whose components have terminal domains.
The arity of such a label is its number of components.
Theorem~\ref{thm:uniform-ordering} then supplies compatible
orders on positions after an order has been chosen on the
inputs of each recovered label. Equivalence preserves the
existence of these choices; it does not select a preferred
one.

After adjoining a strict initial object, the noetherian form
of Section~\ref{sec:cosubquotients} also passes across an
equivalence. Its clusters are represented by cospans considered
up to isomorphism. An equivalence therefore identifies their
fibre lattices and preserves the universal constructions
defining direct and inverse images. The noetherian identities
and exact join decomposition are preserved with them.

\begin{remark}\label{concl:rigidity-remark}
Detection of isomorphisms alone would permit extra symmetries.
For example, fix a nontrivial finite group $G$ and form a
category with objects $a,1$, with $\operatorname{Aut}(a)=G$,
a unique map $a\to1$, and no map $1\to a$. All morphisms
are epic and the strong monomorphisms are precisely the
isomorphisms. This category satisfies
Axioms~\ref{ax:N}--\ref{ax:U} and~\ref{ax:O}, the modified
finiteness condition, and detection of isomorphisms by point
pullbacks. Its nontrivial automorphisms prevent it from being
equivalent to an expression tree category of the present
paper. The uniqueness clause excludes this extra information.
Conversely, counting actual arrows would exclude even a
category with infinitely many objects and exactly one
morphism between every ordered pair, although that category
is equivalent to the category with one object and one
morphism. These two examples explain the separate roles of
the replacements.
\end{remark}

\subsection{Pullback factorizations of arbitrary matchings}
\label{concl:pullback-complements}

Axiom~\ref{ax:U} describes how to insert an expression at an empty
leaf. The same question can be asked at a place already occupied
by an expression. Suppose that $e:A\to D$ is a pruning and that
$m:D\to B$ selects a subtree of $B$. We would like to replace this
copy of $D$ by $A$, so that applying $e$ there recovers $B$. The
following extension of the axiom gives a universal description of
this replacement.

For an epimorphism $e:A\to D$ and a strong monomorphism
$m:D\to B$, let $\Fill(e,m)$ be the category whose objects are
pullback squares
\begin{equation}\label{concl:eq:mixed-complement}
\vcenter{\xymatrix@C=4em@R=2.5em{
A\ar[r]^u\ar[d]_e&C\ar[d]^c\\
D\ar[r]_m&B
}}
\end{equation}
with $c$ epic. The morphism $u$ is automatically a strong
monomorphism, being a pullback of $m$. A comparison from
$(C',c',u')$ to $(C,c,u)$ is a morphism $h:C'\to C$ satisfying
$hu'=u$ and $ch=c'$. Thus the comparison preserves both the
inserted expression and the morphism which removes it again.

\begin{theorem}\label{concl:thm:mixed-complements}
In a category satisfying the nine axioms, $\Fill(e,m)$ has a
terminal object for every epimorphism $e$ and strong monomorphism
$m$ as above. The square of this terminal object is also a
pushout. In the representation by trees, its object $C$ is
obtained by replacing the subtree selected by $m$ with $A$.
\end{theorem}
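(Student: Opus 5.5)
By Theorem~\ref{thm:representation} it suffices to work in the concrete category $\C_L$, with epimorphisms the prunings and strong monomorphisms the subtree embeddings (Lemma~\ref{prop:modelclasses}). Let $m:D\to B$ have address $b$ and let $e:A\to D$ be a pruning. The candidate is the tree $C$ obtained from $B$ by replacing $B|_b=D$ with $A$. It comes with the embedding $u:A\to C$ at address $b$ and the pruning $c:C\to B$, whose effective cut is the cut of $e$ with every address prefixed by $b$. Then $cu=me$ holds by the restriction description of composition. I will then verify three things in order: the square is a pullback, it is terminal in $\Fill(e,m)$, and it is a pushout.

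\emph{Pullback.} Argue as in the proof of Theorem~\ref{thm:realization} for Axiom~\ref{ax:Dpull}. The pullback of $m$ along $c$ is the subtree of $C$ at the address $b$, together with the restricted pruning. That subtree is exactly $A$, and the restriction of $c$ there is $e$. So the square is the square constructed in that proof.

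\emph{Pushout.} The square is precisely the pushout of $u$ along $e$ constructed for Axiom~\ref{ax:Dpush}: replacing the subtree at $u$ by its pruning $D$ recovers $B$. Since pushouts are unique by rigidity, this square is that pushout.

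\emph{Terminality.} This is the step I expect to be the main obstacle. Let $(C',c',u')$ be another object of $\Fill(e,m)$.
\begin{itemize}
\item[(i)] By Lemma~\ref{lem:restriction}, $c'$ sends the address of $u'$ to the address $b$ of $m$. Applying $p_{c'}$ to a proper ancestor of that address gives an ancestor of $b$, because prunings map the root to the root. Hence $u'$ also has address $b$, using that the fibre of $c'$ over the subtree $D$ is exactly the pullback, which is $A$.
\item[(ii)] Let $K'$ be the effective cut of $c'$. Inside $A$ it induces the cut of $e$; I will use the pullback property to force the entire subtree at $b$ to be $A$. Every other cut of $c'$ lies outside $b$, since its fibre is already accounted for. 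Define $h:C'\to C$ as the pruning with cut $K'$ minus the cuts below $b$.
\item[(iii)] By Lemma~\ref{prop:cutcomposition}, $ch=c'$, and $hu'=u$ because $h$ does not cut at or below $b$.
\end{itemize}
The key worry is the case where $c'$ has a cut strictly above $b$, which would erase $D$. This is excluded because $c'$ is a pruning whose target contains $D$ at $b$, so $b$ is retained.

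\emph{Uniqueness.} Copy the argument of Theorem~\ref{thm:realization}. Any comparison $h$ with $ch=c'$ must be a pruning whenever $B\ne\one$, since a proper embedding factor would move the root. Prunings between fixed endpoints are unique by Lemma~\ref{prop:thin}. If $B=\one$, then $m=\id_\one$ and $D=\one$. Here the pullback forces $u'$ to be an isomorphism, hence an identity, exactly as in Lemma~\ref{lem:comparisons}.

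Finally, for $D=\one$ the statement should reduce to Axiom~\ref{ax:U}, which serves as a consistency check on the construction.
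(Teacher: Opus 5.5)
Your proposal is correct and follows essentially the same route as the paper. You pass to trees, build $C$ by replacing $B|_b$ with $A$, and recognize the square as the bicartesian square from Axiom~\ref{ax:Dpush}. You then obtain the comparison by applying the cuts of $c'$ outside the fibre $C'|_b=A$, and get uniqueness because a comparison over a nonterminal $B$ must be a pruning.

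One cosmetic point: the ancestor argument in your step (i) is superfluous. The pullback of $m$ along $c'$ built in the proof of Axiom~\ref{ax:Dpull}, together with rigidity, already identifies $u'$ with the occurrence of $C'|_b$. Likewise, $ch=c'$ is automatic by Lemma~\ref{prop:thin}, without appeal to cut composition.
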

\begin{proof}
By Theorem~\ref{thm:representation}, we may work in the represented
tree category. Let $a$ be the address selected by $m$, so that
$B|_a=D$. Form $C$ by replacing $B|_a$ with $A$, and let $u$
select this copy of $A$. Applying the pruning $e$ there gives
$c:C\to B$. The description of pushouts of occurrences along
prunings shows that~\eqref{concl:eq:mixed-complement} is a pushout;
it is also a pullback by Axiom~\ref{ax:Dpush}.

Consider another object $(C',c',u')$ of $\Fill(e,m)$. Suppose
first that $B\ne1$. Every address of $B$ is retained by the
pruning $c'$. Its pullback over the subtree at $a$ is therefore
the subtree $C'|_a$, with the restricted pruning to $D$.
The prescribed pullback identifies this subtree with $A$, its
occurrence with $u'$, and its restricted pruning with $e$.
Apply the cuts of $c'$ outside this copy of $A$, leaving $A$
itself unchanged. The resulting tree is $C$, and the resulting
pruning $h:C'\to C$ satisfies $hu'=u$ and $ch=c'$.

Any comparison with these properties must be a pruning. Indeed,
$c'$ sends the root to the root of the nonterminal tree $B$.
The only position sent to that root by $c$ is the root of $C$.
Consequently the occurrence in the normal factorization of a
comparison selects the whole of $C$. Uniqueness now follows
because there is at most one pruning between two given trees.
If $B=1$, then $m$ is an identity, and the pullback forces
$u$ to be an identity as well. The object $(A,e,\id_A)$ is
then the required terminal object.
\end{proof}

Taking $e=q_A$ and $m=j$ recovers Axiom~\ref{ax:U}, since
$q_A$ is epic by Lemma~\ref{lem:empty}.

This result concerns comparisons which fix $A$. The literature
also considers a stronger property which allows the expression
over $D$ to vary. A square~\eqref{concl:eq:mixed-complement} is
called a \emph{final pullback complement} of the composable pair
$(e,m)$ if it is a pullback and has the following property.
For every pullback with upper-left object $A'$ and lower-left
object $C'$ in the diagram
\[
\xymatrix@C=3.7em@R=3em{
A'\ar[r]^f\ar[d]_{u'}\ar@/^2pc/[rr]^{e'}&
 A\ar[r]^e\ar[d]^u&D\ar[d]^m\\
C'\ar@{-->}[r]_h\ar@/_2pc/[rr]_{c'}&C\ar[r]_c&B,
}
\]
where the outer rectangle is that pullback and $ef=e'$, there
is a unique $h$ making the diagram commute. Here $c'$ and $f$
are arbitrary morphisms. In particular, the definition places
no epic requirement on $c'$. This is the property used in
\cite[Definition~1]{CorradiniEtAl}. It implies terminality in
$\Fill(e,m)$ when the resulting map $c$ is epic.

\begin{theorem}\label{concl:thm:nonconstant-final-complements}
If $D\ne1$, the terminal object constructed in
Theorem~\ref{concl:thm:mixed-complements} is a final pullback
complement. Consequently, the epi--strong-mono factorization
of every nonconstant matching has such a complement.
\end{theorem}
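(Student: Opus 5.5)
We work in the represented tree category, via Theorem~\ref{thm:representation}. Let $a$ be the address selected by $m$, so $B|_a=D$. The terminal object of Theorem~\ref{concl:thm:mixed-complements} is $C=B[a\leftarrow A]$, with $u$ selecting $A$ at $a$ and $c$ applying the cuts of $e$ inside that copy of $A$.

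Consider data $f:A'\to A$, $u':A'\to C'$, $c':C'\to B$ such that the outer rectangle is a pullback and $ef=e'$. We must produce a unique $h:C'\to C$ with $hu'=uf$ and $ch=c'$. The plan is to factor $c'=v g$ in normal form, with $g:C'\to G$ a pruning and $v:G\to B$ an occurrence at some address $b$. We then compare the positions of $B$ hit by $c'$ with the position $a$.

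The key use of $D\ne1$ is the following. Since $D$ is not terminal, the pruning $e$ preserves the root of $A$, and $D$ occupies genuine labeled positions of $B$ from $a$ downward. We split into cases according to whether $a$ lies in the image $\Pos$-set of $v$, that is, whether $b$ is a prefix of $a$ and the position $a$ survives in $G$ as a labeled position rather than a cut leaf.

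In the first case, $a$ lies in the image and survives as a labeled position. The pullback condition, computed as in Lemma~\ref{lem:fibres} and the proof of Axiom~\ref{ax:Dpull}, identifies $A'$ with the whole subtree of $C'$ at the address $a'$ with $b a''=a$, where $a'$ is the preimage address. It also identifies $u'$ with that occurrence, and $e'$ with the restricted pruning. We then define $h$ by three steps: keep the pruning of $c'$ outside $A'$, map the copy of $A'$ at $a'$ to $A$ via $f$, and finally embed at $b$. Concretely, $h$ has pruning part equal to $c'$'s cuts away from $a'$ combined with the cuts of $f$ inside $A'$. Its occurrence part selects the subtree of $C$ at address $b$, which is legitimate because $C$ and $B$ agree off the subtree at $a$ and $b$ is a prefix of $a$.

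The main obstacle is this first case. We must check that $f$, which is an arbitrary matching rather than a pruning, is forced to be a pruning. The argument is that $ef=e'$ with $e'$ sending the root of $A'$ to the root of $D$ (since $D\ne1$ and the rectangle is a pullback over the root of $D$). This forces $f$ to send the root to the root of $A$, so its occurrence part is the identity. With that in hand, the equations $hu'=uf$ and $ch=c'$ can be verified on positions using the faithful functor $\Pos$ of Theorem~\ref{thm:positions}.

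In the second case, $a$ is not among the surviving labeled positions in the image of $c'$. Then the pullback of $m$ along $c'$ would be empty or terminal. This contradicts the requirement that the rectangle be a pullback with upper-left object $A'$, a nonempty tree mapping onto the nonterminal $D$. Hence this case is excluded, or at worst reduces to the first.

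Uniqueness of $h$ again goes through positions. The values of $\Pos(h)$ are forced: outside $a'$ by $ch=c'$, because $c$ is injective on positions outside the subtree at $a$; and inside $a'$ by $hu'=uf$. Faithfulness then gives uniqueness.

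For the final sentence of the theorem, take a nonconstant matching $f=me$. Nonconstant means its overlap $D$ is not $1$, so the first part applies.
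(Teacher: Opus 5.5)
\textbf{There is a genuine gap in your second case.} You claim that when $b$ is not a prefix of $a$ the pullback of $m$ along $c'$ is empty or terminal, and that this contradicts $A'$ ``mapping onto'' $D$. If $b$ and $a$ are incomparable, there is indeed no commutative square at all, so that subcase is vacuous. But suppose $b$ is a proper descendant of $a$. Then $c'$ factors through $m$, say $c'=mt$. Since $m$ is monic, the pullback is $C'$ itself, with $u'$ invertible (an identity, by rigidity) and $e'=t$.

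Nothing in the definition of a final pullback complement makes $e'$ epic. The map $c'$ is arbitrary, and $e'$ is only the pullback projection; here it selects a proper subtree of $D$. Such data genuinely occur:
\begin{itemize}
\item In a word category, take $m=\id_x$, $A=xz$, $e$ the pruning to $x$, and $c'=j:\one\to x$. Then $A'=\one$ and $e'=j$, and the point $f$ of $xz$ satisfies $ef=e'$.
\item With a binary label $\lambda$, put $s=\lambda(\one,\one)$, $D=\lambda(s,\one)$ and $m=\id_D$. Let $c'$ be the occurrence of $s$ at the first place, and let $e$ prune the second child of $A=\lambda(s,s)$. Then $A'=s$ is nonterminal, and $f$ is the occurrence of $s$ at the first place of $A$.
\end{itemize}
In these instances $f$ is not a pruning. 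So your argument forcing $f$ to be epic does not apply, and neither does your description of $h$ as a pruning followed by the occurrence at $b$.

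The missing case is easy once it is recognized. Since $u'$ is invertible, $hu'=uf$ forces $h=uf(u')^{-1}$, and then $ch=cuf(u')^{-1}=mef(u')^{-1}=me'(u')^{-1}=c'u'(u')^{-1}=c'$. This is exactly how the paper handles the case where $b$ lies at or below $a$.

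The correct case split is therefore ``$b$ at or below $a$'' versus ``$b$ a proper ancestor of $a$''. Comparability of $a$ and $b$ comes from the commutative square $me'=c'u'$, since subtrees at incomparable addresses have no common occurrence. In the proper-ancestor case, your argument agrees with the paper's. This includes using $D\ne\one$ to make $f$ a pruning, building $h$ by substituting $f$ for $e'$ in the pruning part of $c'$, and proving uniqueness via positions.
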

\begin{proof}
Retain the tree description and address $a$ from the preceding
proof. Given the diagram above, write the normal factorization
of $c'$ as a pruning followed by an occurrence at an address
$b$ of $B$. The addresses $a$ and $b$ are comparable. Indeed,
the equation $me'=c'u'$ gives a morphism whose image occurs
in both selected subtrees, whereas subtrees at incomparable
addresses have no common occurrence.

Suppose first that $b$ lies at or below $a$. Then $c'$ factors
through $m$. Since $m$ is monic, its pullback along $c'$ has
$u'$ invertible. There is exactly one possible comparison,
namely $h=uf(u')^{-1}$, and the defining equations show that
it has the required properties.

It remains to consider the case where $b$ is a proper ancestor
of $a$. The pullback fibre $A'$ is now the subtree of $C'$ at
the address corresponding to $a$, and $e':A'\to D$ is the
restriction of the pruning part of $c'$. In particular, $e'$
is epic. The equation $ef=e'$ forces $f$ to be epic as well:
$e'$ sends the root to the root of $D$, and, since $D\ne1$,
the only position sent there by $e$ is the root of $A$.
Thus the occurrence in the normal factorization of $f$
selects all of $A$.

In the pruning part of $c'$, replace its restriction $e'$ at
this subtree by $f$, retaining its other cuts. This gives a
pruning to the subtree of $C$ at $b$. Follow it by that
subtree's occurrence in $C$. The resulting map $h$ satisfies
$ch=c'$ and $hu'=uf$. For uniqueness, $c$ changes positions
only at or below $a$, so its only preimage of the proper
ancestor $b$ is $b$ itself. Every possible comparison must
therefore select the subtree at $b$, and its pruning part is
unique. This proves finality.

Finally, the overlap in the normal factorization of a
matching is $1$ precisely when that matching is constant.
This gives the last assertion.
\end{proof}

\begin{remark}\label{concl:rem:constant-not-final}
The terminality in Axiom~\ref{ax:U} can be strictly weaker
than finality. In the word category with one letter $x$, the
terminal completion of $x\to1\to x$ is
\[
\xymatrix@C=4em@R=2.5em{
x\ar[r]^u\ar[d]_{q_x}&xx\ar[d]^c\\
1\ar[r]_j&x,
}
\]
where $u$ includes the suffix $x$ and $c$ prunes to the
prefix $x$. The pullback of $j$ along $\id_x$ has fibre $1$.
Together with the map $j:1\to x$ of that fibre into the
prescribed fibre, standard finality would supply a section
$x\to xx$ of $c$. There is no such matching: a section would
have to send the root to the root, requiring $xx$ to be a
prefix of $x$. Thus the displayed square is not a final
pullback complement.
\end{remark}

The relation with dependent products explains why the stronger
property occurs in other familiar categories. Suppose that
pullback along a monomorphism $m:D\to B$ has a right adjoint
\[
m^*:\C/B\longrightarrow\C/D,
\qquad \Pi_m:\C/D\longrightarrow\C/B.
\]
Such a morphism is called \emph{exponentiable}. The adjunction
means that a map from a fibre over $D$ to $e:A\to D$ determines
exactly one map, over $B$, to $\Pi_m(e)$. This is already the
comparison property in the definition of a final pullback
complement. Monicity ensures that the prescribed fibre is
recovered exactly. Indeed, composition with $m$ gives a left
adjoint $\Sigma_m$ to $m^*$, and $m^*\Sigma_m\cong\id$.
For every object $z$ of $\C/D$, the two adjunctions give
\[
\begin{aligned}
(\C/D)(z,m^*\Pi_m e)
&\cong(\C/B)(\Sigma_m z,\Pi_m e)\\
&\cong(\C/D)(m^*\Sigma_m z,e)
\cong(\C/D)(z,e).
\end{aligned}
\]
Hence the counit $m^*\Pi_m(e)\to e$ is invertible, and the
object $\Pi_m(e)$ gives a final pullback complement. The
relationship between exponentiable monomorphisms and these
universal completions was established in
\cite{DyckhoffTholen}. For a general composable pair, whenever
the indicated right adjoint exists at $e$, invertibility of
the counit is precisely the additional condition for a final
pullback complement \cite[Lemma~11]{CorradiniEtAl}.
These completions allow a rewriting rule to remove or replace
a chosen part while determining the surrounding object by a
universal property, as in \cite{CorradiniEtAl}.
Hosseini, Tholen, and Yeganeh develop related completions and
their behavior under composition of diagrams, with applications
to computing pullbacks in categories of spans
\cite{HosseiniTholenYeganeh}. Axiom~\ref{ax:U} therefore has
a close connection with an established construction, with
the prescribed epic and strong monic maps specifying the
comparisons appropriate to expression trees.

In an elementary topos every morphism is exponentiable;
explicit constructions of the dependent product are given
in \cite{CaramelloZanfa}. To obtain an object of $\Fill(e,m)$,
its map to $B$ must also be epic. For example, this follows
when $e$ has a section, because applying $\Pi_m$ to the section
gives a section of $\Pi_m(e)\to B$. This explains its use for
pointed objects: the distinguished point of $A$ splits
$q_A$. For a general epimorphism $e$, the epimorphism requirement
on the resulting map to $B$ remains a separate condition.

\subsection{Other categorical descriptions of operadic trees}
\label{concl:operadic-literature}

The trees in this paper belong to a familiar description of
operations by expressions. Their use in universal algebra,
rewriting, and recognition of expressions is developed in
\cite{BurrisSankappanavar,BaaderNipkow,TATA}. In operad theory,
the same ordered trees describe the operations of a free
non-symmetric operad, with composition given by insertion
at empty leaves \cite[Section~2.3]{Leinster}. An operad records
how operations can be composed through their inputs. It can
also impose equations between the resulting expressions;
an algebra for the operad realizes the operations as actual
functions respecting those equations. This gives a common
language for algebraic structures and for compositions in
topology and higher category theory
\cite[Introduction and Section~2.2]{Leinster}.
Our objects are these free operations themselves. Passing
between two such objects by prefix--suffix matching gives
additional morphisms which record how part of one expression
can be found in another.

Kock's description by polynomial functors is particularly
close at the level of trees \cite{KockTrees}. For a ranked
set $L$, choose an input set $P_\lambda$ of size
$\operatorname{ar}(\lambda)$ for each label, and consider
\[
P(Z)=\coprod_{\lambda\in L}Z^{P_\lambda}.
\]
An element of $P(Z)$ specifies a label and one element of
$Z$ for each of its inputs. Kock's $P$-trees for this
one-colour polynomial give our trees: edges record positions,
vertices carry labels, and each vertex has its inputs
identified with $P_\lambda$. The tree $1$ is a single edge
without a vertex. A nullary label is instead a vertex with
an output edge and no input edges. The distinction is thus
preserved. Entire-subtree embeddings in that description
give our occurrences. A prefix also embeds into its original
tree, but our pruning has the reverse direction. Consequently,
the data of a matching can be displayed as
\[
\xymatrix@C=6em{
X&P\ar[l]_{\text{prefix}}
\ar[r]^{\text{subtree}}&Y.
}
\]
The convention which sends an erased position to the boundary
of a cut is an additional part of our composition rule.

Huet's zipper gives a related description of a selected
subtree and its surrounding context \cite{Huet}. The
context remembers the successive ancestors, the chosen
child at each step, and the remaining branches. In the
fixed-arity case, these data describe exactly the position
information carried by an occurrence. Huet uses them to
navigate and edit trees efficiently. A prefix--suffix
matching additionally specifies a pruning of its source.
Thus an occurrence admits a zipper description, whereas
the full matching also carries the information removed
before that occurrence is selected.

The cut constructions of Kremnizer and Szczesny, and
Szczesny's incidence categories, are closer at the level of
morphisms \cite{KremnizerSzczesny,SzczesnyIncidence,SzczesnyPreLie}.
Their morphisms identify a root-containing part of a source
forest with a collection of branches in a target forest.
This permits cuts and extensions to be counted in the
construction of algebras. For connected trees and a
nonempty matched part, the identification has the same
prefix--suffix pattern as ours. Their ordinary rooted
forests omit empty input places, and their tree isomorphisms
can rearrange branches. The coloured version permits
repeated vertex colours which must be preserved, as our
labels are preserved. On forgetting empty leaves and input
places, a matching in our category gives such cut data;
all constant matchings then have empty matched part. For
example, the two points of a binary corolla become the same
empty matching. Retaining empty leaves is therefore
essential to the positional information in our category.
For unary labels, the comparison reduces to coloured chains,
as discussed in Section~\ref{sec:words}.

Moerdijk and Weiss use a different category of trees to
study operads \cite{MoerdijkWeiss}. A tree there determines
a free operad with one colour for each edge and one
generating operation for each vertex. A colour specifies
the type of an input or output, so only matching types
can be composed. Morphisms between the trees are maps
between these operads; in particular, a vertex operation
can be sent to a composite operation. The tree shapes
overlap with ours, but the morphisms express a different
relationship between them. For instance, the one-vertex
unary tree has three endomorphisms in their category,
corresponding to the three order-preserving maps of a
two-element chain. A one-vertex unary tree in our category
has two: its identity and its constant matching.
Weiss also characterizes the dendroidal tree category by
relations which express how a finite collection of inputs
combines into an output \cite[Theorem~5.6]{WeissBroadPosets}.
That result recognizes trees together with the operad maps
just described. Theorem~\ref{thm:representation} instead
recognizes the pruning and occurrence structure intrinsic
to prefix--suffix matching.

\subsection{Permutations of the inputs}\label{concl:symmetry}
We can also change which matchings are allowed by permitting
a matching to permute the children at each vertex that it retains.
The permutation is then part of the morphism. In particular, two
isomorphic branches at different positions can be interchanged by an
automorphism of their parent tree. We give the corresponding modification
of the axioms and explain why the noetherian form survives this change.

Let $\C$ be an essentially small, locally small category. Write
$\operatorname{Sub}_{\mathrm{s}}(X)$ and
$\operatorname{Quot}_{\mathrm{e}}(X)$ for strong subobjects and
epimorphic quotients, with the comparison isomorphisms required to
commute with their maps into or out of $X$. A \emph{component} of $X$
is a maximal proper strong subobject; denote their set by $\Comp(X)$.
A \emph{corolla} is a nonterminal object whose component domains are
terminal. These definitions permit components with isomorphic domains
to remain distinct subobjects.

Retain Axioms~\ref{ax:N}--\ref{ax:U} and~\ref{ax:O}. Replace
Axioms~\ref{ax:finite} and~\ref{ax:Ddetect}, respectively, by the
following two conditions, and impose the third condition as well.
\begin{enumerate}
\item For every $X$, the sets $\operatorname{Sub}_{\mathrm{s}}(X)$
and $\operatorname{Quot}_{\mathrm{e}}(X)$ are finite.
\item An epimorphism is an isomorphism whenever its pullback along
every point of its codomain is an isomorphism.
\item For every corolla $\phi$, the action on components induces
an isomorphism
\begin{equation}\label{concl:corolla-symmetry}
 \operatorname{Aut}_{\C}(\phi)
 \xrightarrow{\ \sim\ }\operatorname{Sym}(\Comp(\phi)),
 \qquad g\longmapsto\bigl([u]\longmapsto[gu]\bigr).
\end{equation}
\end{enumerate}
The last condition says both that every permutation is realized and
that its realization is unique. For a nullary or unary corolla it
requires the automorphism group to be trivial. All these conditions
are invariant under equivalence of categories: an equivalence
identifies the subobject and quotient sets and their universal
constructions, and conjugates the component action in
\eqref{concl:corolla-symmetry} by the induced bijection of components.

For a ranked set $L$, define $\mathcal T_L^{\mathrm{sym}}$ using the
same indexed trees as before. An isomorphism of these trees preserves
labels and empty leaves and may bijectively rearrange the children
at every labeled vertex. A morphism $X\to Y$ is a triple
\begin{equation}\label{concl:symmetric-matching}
 (K,p,\theta),\qquad
 \theta:X/K\xrightarrow{\sim}Y|_p,
\end{equation}
where $K$ is an effective cut and $p$ is a position of $Y$.
Composition restricts the second cut to the subtree selected by the
first matching and transports it through $\theta$. Permutations
compose at retained vertices; permutations inside an erased subtree
disappear with that subtree. If the selected subtree is entirely
erased, the composite selects the empty leaf at the boundary of the
cut. Thus the construction retains the positional information of
our original composition rule.

\begin{theorem}\label{concl:symmetric-representation}
An essentially small, locally small category satisfies the modified
axioms above if and only if it is equivalent to
$\mathcal T_L^{\mathrm{sym}}$ for a ranked set $L$. The labels are
recovered as the isomorphism classes of corollas, and their arities
are the numbers of components. Under the equivalence, epimorphisms
are prunings followed by tree isomorphisms, and strong monomorphisms
are entire-subtree occurrences preceded by tree isomorphisms.
\end{theorem}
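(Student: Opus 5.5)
The plan follows the route of Theorem~\ref{concl:equivalence}: first verify that $\mathcal T_L^{\mathrm{sym}}$ satisfies the modified axioms, then reconstruct $L$ and an equivalence from an arbitrary category satisfying them. The main departure from the rigid case is that isomorphisms are no longer identities. Every step of Section~\ref{sec:axioms} that invoked rigidity (Lemma~\ref{lem:rigidity}, uniqueness of normal factorizations, thinness of $\Dcat$ in Lemma~\ref{prop:thin}) must therefore be rewritten ``up to unique isomorphism over or under $X$''. The new data recording the failure of thinness will be the automorphisms of corollas, controlled by \eqref{concl:corolla-symmetry}.

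\emph{Realization.} For $\mathcal T_L^{\mathrm{sym}}$, I would follow the proof of Lemma~\ref{prop:modelclasses} and Theorem~\ref{thm:realization}, after first checking associativity of the composition of triples $(K,p,\theta)$. This uses a positional functor into sets, where a morphism acts by collapsing cut subtrees, applying the bijection of positions induced by $\theta$, and prefixing $p$. Associativity then follows from associativity of composition of functions, once one checks that a triple is determined by its positional map together with the induced bijections of children at retained vertices. Epimorphisms and strong monomorphisms are identified as in Lemma~\ref{prop:modelclasses}, except that isomorphism factors are absorbed. The pullback, pushout, filling and diagonal axioms are verified exactly as before, up to a unique isomorphism. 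Finiteness of $\operatorname{Sub}_{\mathrm{s}}$ and $\operatorname{Quot}_{\mathrm{e}}$ holds because these sets are indexed by positions and effective cuts. Condition (b) holds because a pruning with all fibres trivial has the empty cut. Condition (c) holds because the automorphisms of a corolla are exactly the permutations of its leaves.

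\emph{Reconstruction.} Working in a skeleton $\C_0$, I would rerun Lemmas~\ref{lem:empty}--\ref{prop:paths} with subobject classes in place of arrows. This yields a tree branching order on $\operatorname{Sub}_{\mathrm{s}}(X)$, components, and the finite-decomposition and simultaneous-pushout theorems; in each case the pushout is unique up to unique isomorphism under $X$. Theorem~\ref{thm:epicclassification} then becomes a bijection between $\operatorname{Quot}_{\mathrm{e}}(X)$ and $\{*\}\sqcup\prod_i\operatorname{Quot}_{\mathrm{e}}(X_i)$. Root normalization gives a quotient $X\to r(X)$ onto a corolla, unique as a quotient but not as an arrow. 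Choosing, for each corolla in the skeleton, a bijection of $\Comp(\phi)$ with $\{1,\ldots,\operatorname{ar}\phi\}$, and choosing representatives compatibly by recursion on complexity, assigns a tree $\tau(X)$ to every object of $\C_0$. Assembly (Theorem~\ref{thm:assembly}) gives essential surjectivity, with uniqueness up to isomorphism.

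\emph{Main obstacle: full faithfulness.} This is the step I expect to be hardest. I would prove by induction on complexity that $\operatorname{Aut}(X)\cong\operatorname{Aut}(\tau X)$ and, more generally, that $\C(X,Y)\cong\mathcal T^{\mathrm{sym}}(\tau X,\tau Y)$. A morphism factors as a quotient, an isomorphism, and a subobject. Here $\operatorname{Sub}_{\mathrm{s}}$ and $\operatorname{Quot}_{\mathrm{e}}$ correspond to positions and cuts, so everything reduces to isomorphisms $X\to X'$. Such an isomorphism induces an isomorphism of root corollas, which by \eqref{concl:corolla-symmetry} is exactly a permutation $\pi$ of places, together with isomorphisms $X_p\to X'_{\pi p}$ of children. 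Conversely, by the simultaneous universal property of the component pushout, these data determine a unique isomorphism. This is precisely the recursive description of tree isomorphisms, and here the uniqueness half of \eqref{concl:corolla-symmetry} is what rules out extra symmetries of the kind in Remark~\ref{concl:rigidity-remark}. Finally, I would check compatibility with composition via the analogue of \eqref{eq:composition}, restricting and then transporting the restriction through $\theta$. Recovery of the labels as isomorphism classes of corollas, and the description of the two classes under the equivalence, then follow from the construction.
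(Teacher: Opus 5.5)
Your overall plan matches the paper's. You rebuild positions, components, elementary collapses, root corollas and cuts at the level of strong-subobject and epimorphic-quotient classes. You reduce each hom-set to $\coprod_K\coprod_p\operatorname{Iso}(X/K,Y_p)$, and then compute isomorphisms recursively using \eqref{concl:corolla-symmetry}.

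The gap is in the step you correctly single out as the main obstacle. You assert that a permutation $\pi$ of places and isomorphisms $a_i:X_i\to X'_{\pi(i)}$ determine a unique isomorphism $X\to X'$ ``by the simultaneous universal property of the component pushout''. That universal property only produces maps \emph{out of} the pushout object, and only once a map out of $X$ is already given. Pushing the $a_i$ out along the components of $X$ just returns an object $Y\cong X$ whose children are the $X'_{\pi(i)}$. Showing that $Y\cong X'$, by a unique isomorphism compatible with the component inclusions and with a root-corolla isomorphism, is precisely the uniqueness half of Theorem~\ref{thm:assembly}. In the rigid setting that uniqueness came from thinness of epimorphisms (Lemma~\ref{prop:thin}). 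You rightly note that thinness fails here, but you do not say what replaces it. Your appeal to assembly ``with uniqueness up to isomorphism'' is also too weak: the count \eqref{concl:recursive-isomorphisms} needs the comparing isomorphism to be unique, not merely to exist. Uniqueness of $h$ has the same problem, since no pushout property forces an automorphism of $X$ that fixes every component inclusion and induces the identity on $r(X)$ to be the identity.

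The missing idea is to use the universal property of complete fillings, which maps \emph{into} $X'$, together with the modified detection condition (b).
\begin{enumerate}
\item By \eqref{concl:corolla-symmetry}, take the unique $\beta:r(X)\to r(X')$ inducing $\pi$ on components.
\item Then $X$, with the epimorphism $\beta\rho_X$ and the maps $i\,a_i^{-1}$, is a complete filling of $r(X')$ by the children of $X'$.
\item The key lemma is that $(X',\rho_{X'},(k)_{k\in\Comp(X')})$ is itself a terminal filling. Its comparison to the terminal filling supplied by the adapted Theorem~\ref{thm:finite} is epic. It restricts to isomorphisms at all components, so its pullbacks along all points are invertible, and (b) makes it invertible.
\item Terminality then gives existence and uniqueness of the comparison $h:X\to X'$, and $h$ is invertible by the same argument.
\end{enumerate}
This is how the paper obtains \eqref{concl:recursive-isomorphisms}. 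With this replacement, the rest of your plan goes through as in the paper.
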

\begin{proof}
We first describe the reconstruction, retaining comparison
isomorphisms at the steps where the original proof gave equalities.
Terminal maps are epic: apply Axiom~\ref{ax:U} with
$j=\id_1$, so that the upper map of its pullback square is
invertible and its right map is epic. Points are strong
monic because they are split monic. It follows also that a
strong monomorphism into $1$, or an epimorphism out of $1$,
is an isomorphism.

For each $X$, its finite strong-subobject order is a tree branching
order: it has greatest element $[\id_X]$, and the subobjects above
any fixed subobject form a chain by Axiom~\ref{ax:O}.
Postcomposition by a strong monomorphism $A\to X$ identifies
the subobjects of $A$ with those below its class. Thus each position
has a unique finite path of component inclusions to the root.

Push out $q_A$ along a strong monomorphism $u:A\to X$, obtaining
the elementary collapse $X\to X/u$. Its fibre at the new point is
$A$, and Axiom~\ref{ax:pointpushout} makes every other point fibre
terminal. Consequently, a strong monomorphism with nonterminal
domain which becomes constant under this collapse factors through
$u$. This is the argument of Lemma~\ref{lem:constantfactor}, with
terminal domains understood up to isomorphism. Factoring the image
of a disjoint subobject, and applying point detection to its epic
factor, then shows that this subobject remains strong monic.

Every noninvertible epimorphism has a nonterminal point fibre.
Collapsing that fibre factors the epimorphism through a
noninvertible elementary collapse. Repetition strictly increases
the quotient class of the original domain, so finiteness makes it
terminate. Every epimorphism is therefore a sequence of elementary
collapses followed by an isomorphism.

The component calculation in
Lemma~\ref{lem:componentpushout} applies to subobject classes:
inverse image preserves properness, and direct image after inverse
image is the identity. A pushout at one component consequently
replaces that component and preserves the other components. Applying
this successively shows that an epimorphism with nonterminal target
induces a bijection of components and has epic component
restrictions. Collapsing every component gives a corolla $r(X)$ and
an epimorphism $\rho_X:X\to r(X)$. The pushout property identifies
root corollas along epimorphisms with nonterminal targets.

Induction on the finite strong-subobject tree now classifies
quotients as effective cuts. A quotient with nonterminal target
is obtained by its component restrictions; a quotient with
terminal target collapses the whole tree. Collapses at disjoint
positions commute by their pushout property, and a collapse at
an ancestor absorbs collapses below it. Thus the successive
collapses may be recorded by pairwise incomparable source
positions with nonterminal domains. These positions are
recovered from the quotient as the maximal source positions
with nonterminal domains whose images have terminal domains.
This proves uniqueness as well as existence of the cut.

We next explain the role of filling. A terminal singleton filling
has terminal fibres at its unmarked points: collapsing any additional
nonterminal fibres gives an epic comparison $e:C\to C'$ to
another filling. Its comparison $h:C'\to C$ back to the
terminal filling satisfies $he=\id_C$ by terminality.
Epicity of $e$ then gives $eh=\id_{C'}$. The cut description
excludes a noninvertible collapse with this property. The unmarked
points therefore lift uniquely, so singleton filling can be iterated
at finitely many distinct points. The proof of
Theorem~\ref{thm:finite} gives a terminal simultaneous filling.
Its comparisons are epic: a proper strong image would lie in a
component and would remain proper after the epic map to the
nonterminal base.

In particular, prescribe an object at every component point of a
corolla. Every complete filling is isomorphic to the terminal one.
Indeed, its comparison has isomorphic restrictions at all components,
so every point pullback is an isomorphism and the modified detection
axiom applies. For a nullary corolla this argument uses detection
with an empty set of points. Thus complete fillings are uniquely
isomorphic when the corolla and the component inclusions are fixed.

Suppose $X$ and $Y$ have isomorphic root corollas. Condition
\eqref{concl:corolla-symmetry} assigns a unique root-corolla
isomorphism to each component bijection. The universal property of
complete fillings then gives
\begin{equation}\label{concl:recursive-isomorphisms}
 \operatorname{Iso}_{\C}(X,Y)\cong
 \coprod_{\sigma:\Comp(X)\xrightarrow{\sim}\Comp(Y)}
 \prod_{i\in\Comp(X)}\operatorname{Iso}_{\C}(X_i,Y_{\sigma(i)}).
\end{equation}
For clarity, prescribed restrictions $a_i:X_i\to Y_{\sigma(i)}$
extend to the unique comparison $h$ in
\[
\xymatrix@C=3.2em@R=2.4em{
X_i\ar[r]^{u_i}\ar[d]_{a_i}&X\ar[r]^{\rho_X}\ar[d]^h
 &r(X)\ar[d]^\beta\\
Y_{\sigma(i)}\ar[r]_{v_{\sigma(i)}}&Y\ar[r]_{\rho_Y}&r(Y).
}
\]
Both rows specify complete fillings over the same corolla after
transport along $\beta$, so $h$ is an isomorphism. If the root
corollas differ, there are no isomorphisms. Recursion now recovers
exactly the isomorphisms of labeled trees with arbitrary child
permutations.

Finally, normal factorization, with representatives chosen for cuts
and subobjects, gives
\begin{equation}\label{concl:symmetric-hom}
 \C(X,Y)\cong
 \coprod_{K\text{ a cut of }X}\ \coprod_{p\in
 \operatorname{Sub}_{\mathrm{s}}(Y)}
 \operatorname{Iso}_{\C}(X/K,Y_p).
\end{equation}
These are exactly the triples~\eqref{concl:symmetric-matching}.
Direct image on positions preserves composition by uniqueness of
strong images. A matching's positional function determines its
target position, its cut, and its surviving tree isomorphism, so
\eqref{concl:symmetric-hom} respects composition. Complete fillings
construct every finite tree, which proves essential surjectivity.

Conversely, the triples define a category because their positional
functions compose as just described and distinguish morphisms.
Prunings are epic, being surjective on positions. A proper
occurrence is not epic, since its collapse and the corresponding
constant map agree on that subtree and differ at the ambient root.
Factorization therefore identifies the epimorphisms as prunings
followed by isomorphisms. Occurrences are strong monic: in a lifting
square, surjectivity of the epic map on positions forces the other
map's image into the specified subtree, where it restricts uniquely.
These observations identify the two intrinsic arrow classes.
Restricting a pruning gives the required
pullback, and performing it inside an ambient subtree gives the
bicartesian pushout. The surviving isomorphisms restrict in the
pullback and extend uniquely in the pushout. Filling a marked empty
leaf has the required terminal property: a comparison erases the
other cut fibres, and its surviving isomorphism is fixed on the
inserted tree and on its context. The remaining axioms follow from
finite cuts and positions, nesting of overlapping subtrees, detection
of a nonempty cut by its point fibres, and the full permutation group
of a corolla.
\end{proof}

\begin{theorem}\label{concl:symmetric-form}
Adjoining a strict initial object $0$ to
$\mathcal T_L^{\mathrm{sym}}$ gives a category with a noetherian
form having exact join decomposition. Its clusters are the marked
prunings of Definition~\ref{def:tree-clusters}, allowing an empty
marking. Its universal quotients are prunings followed by
isomorphisms, together with $\id_0$; its universal embeddings are
occurrences preceded by isomorphisms, together with all maps
$0\to X$.
\end{theorem}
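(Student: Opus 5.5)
The plan is to rerun the proof of Theorem~\ref{thm:tree-cosub}, working up to isomorphism, and feed the same data into the characterization of \cite[Theorems 159 and 163]{JanelidzeVanNiekerk}. Let $\mathcal E$ be the class of prunings followed by isomorphisms, together with $\id_0$. Let $\mathcal M$ be the class of occurrences preceded by isomorphisms, together with all maps $0\to X$.

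\textbf{Step 1 (factorization system).} First check that $(\mathcal E,\mathcal M)$ is a proper factorization system on the augmented category, as in Lemma~\ref{lem:augmented-factorization}. Theorem~\ref{concl:symmetric-representation} identifies these maps on the nonempty part with the epimorphisms and strong monomorphisms, and the argument for the new maps out of $0$ is unchanged. Unique diagonals now hold only up to the usual isomorphism ambiguity, which is exactly what a factorization system permits.

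\textbf{Step 2 (lattices and images).} Next, describe $\mathsf S(X)$ and $\mathsf Q(X)$. Subobjects up to isomorphism over $X$ are positions of $X$ together with the empty subtree. Quotients up to isomorphism under $X$ are effective cuts, equivalently descendant-closed subsets of $V_+(X)$. These are exactly the lattices of Lemma~\ref{lem:tree-lattice-calculus}(a): an automorphism-twisted representative does not change the class, because the comparison isomorphism is absorbed.

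I then check that pullbacks of $\mathcal M$-maps and pushouts of $\mathcal E$-maps exist. The strategy is to factor an arbitrary matching $(K,p,\theta)$ as a pruning, then the isomorphism $\theta$, then an occurrence. Pullbacks and pushouts along isomorphisms are trivial, so this reduces to the nonsymmetric constructions. To establish the image identities of Lemma~\ref{lem:tree-form-identities}, I transport through the bijection $\phi_f$ on surviving labeled vertices, which is now induced by $\theta$ but still a bijection $V_+(X)\setminus D_f\to V_+(I_f)$. The formulas $f_*^{\mathsf Q}D=\phi_f(D\setminus D_f)$ and $f_{\mathsf Q}^*E=D_f\cup\phi_f^{-1}(E)$ remain valid, since the vertex sets do not see the order of children.

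The remaining requirements are $e_*^{\mathsf S}e_{\mathsf S}^*A=A$, $\sigma_{\alpha(A)}(A)=A$, and preservation of meets and bottom by $\alpha$. All three are statements inside a single object about positions, where isomorphisms act only by relabeling. With these verified, Theorems 159 and 163 of \cite{JanelidzeVanNiekerk} yield the noetherian form of pairs \eqref{eq:tree-pairs} with exact join decomposition. Its universal quotients and embeddings are the two classes, exactly as in the proof of Theorem~\ref{thm:tree-cosub}.

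\textbf{Step 3 (identifying clusters).} Finally, identify pairs with marked prunings, repeating the cospan argument from Theorem~\ref{thm:tree-cosub}, with the diagonal now supplied by Step~1. Clusters are cospans up to isomorphism fixing $X$. In this case an automorphism of $Q$ may move the marked leaf, so the cospan $[q,u]$ identifies $(q,u)$ with $(\beta q,\beta u)$. The correspondence with $(A,R)$ must still be a bijection. It is, because $A=q_{\mathsf S}^*[u]$ is computed in $X$ and is invariant under such $\beta$.

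\textbf{Main obstacle.} I expect the main obstacle to be this invariance, and more generally ensuring that nothing in the calculus of Step~2 depended on rigidity: in particular, that direct images are well defined on classes, and that $\phi_f$ behaves functorially under composition when permutations inside erased subtrees disappear. I would first handle this by working throughout with positional functions. These determine morphisms modulo the surviving isomorphisms, and all lattice operations depend only on them.
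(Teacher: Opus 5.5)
Your proposal is correct and follows essentially the same route as the paper's proof: the proper factorization system, the subobject and quotient lattices with their pullbacks and pushouts, the quotient image formulas transported through the bijection $\phi_f$ given by the surviving isomorphism, the remaining identities fed into \cite[Theorems 159 and 163]{JanelidzeVanNiekerk}, and finally the identification of the pairs \eqref{eq:tree-pairs} with marked prunings. One small correction to Step~1: diagonals in a factorization system must be strictly unique, and here they are, because the $\mathcal M$-maps are monic; only the factorizations themselves are unique up to unique isomorphism.
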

\begin{proof}
These two classes form a proper factorization system. For old
objects this is the system in
Theorem~\ref{concl:symmetric-representation}; maps from $0$ factor
through $\id_0$, and strictness reduces every remaining lifting
square to an old square or one with an identity left map.

Over a fixed tree, subobjects for the stated embedding class
are its positions together with $0$, and quotients for the
stated quotient class are descendant-closed sets of erased
labeled vertices. Isomorphisms in these definitions commute
with the map to or from the fixed tree. Thus symmetric branches
still determine distinct subobjects. Intersections and least common ancestors give
the subobject lattice; intersection and union of erased sets give
the quotient lattice. Pullbacks restrict matchings to inverse-image
subtrees. Pushouts of two prunings erase the union of their erased
sets, while pushing a pruning along an occurrence performs it in
that occurrence. Their universal properties retain the isomorphism
on every surviving part, so these constructions supply the
pullbacks and pushouts required in the proof of
Theorem~\ref{thm:tree-cosub}.

For an old matching $f:X\to Y$, let $D_f$ be its erased labeled
vertices and $I_f$ its matched subtree. The surviving isomorphism
gives a bijection
$\phi_f:V_+(X)\setminus D_f\to V_+(I_f)$, including all the
specified permutations. In the quotient coordinates of
Lemma~\ref{lem:tree-form-identities},
\[
 f_*^{\mathsf Q}D=\phi_f(D\setminus D_f),\qquad
 f_{\mathsf Q}^*H=D_f\cup\phi_f^{-1}(H).
\]
Applying them successively gives $D\cup D_f$ and
$H\cap V_+(I_f)$. On subobjects, direct image after inverse image
gives intersection with $I_f$. The collapse of a subtree $A$ erases
exactly $V_+(A)$ and has $A$ as its marked fibre. Hence
$\sigma_{\alpha_X(A)}(A)=A$, and $\alpha$ preserves binary meets
and bottom. For maps from $0$ the source lattices are singletons,
and the same identities hold directly.

These are precisely the identities used in
Theorem~\ref{thm:tree-cosub}. The pairs in
\eqref{eq:tree-pairs} therefore give a noetherian form with exact
join decomposition. The condition $\alpha_X(A)\le R$ says that
$R$ erases $A$, while $\sigma_R(A)=A$ says that $A$ is the entire
fibre over its image leaf. Thus the pairs identify this form with
marked prunings, including $A=0$ for an empty marking, and identify
its universal quotients and embeddings with the stated classes.
\end{proof}

\subsection{Examples of the first six axioms}\label{concl:first-six-examples}

The first six axioms describe features shared by expression tree
categories and several familiar categories. The examples below explain
why the last three axioms have a separate role. In particular, the
first six axioms allow both arbitrary functions between sets and
categories having at most one morphism between any two objects.
For these examples we consider the six structural axioms without a
smallness restriction; a small skeleton is used when the finiteness
axiom is discussed.

We recall the terminology needed for the examples. An
\emph{elementary topos} is a category with finite limits, exponential
objects, and a subobject classifier: its subobjects are described by
characteristic morphisms to an object $\Omega$
\cite{MacLaneMoerdijk}. An object $X$ is \emph{inhabited} if
$X\to1$ is epic. A topos is \emph{two-valued} if the only subobjects
of $1$ are $0$ and $1$, and \emph{nondegenerate} if these are
distinct. In a nondegenerate two-valued topos, the inhabited objects
are precisely the noninitial objects. A \emph{pointed object} is an
object $X$ with a specified morphism $x:1\to X$; morphisms between
pointed objects preserve the specified points.

\begin{theorem}\label{concl:topos-examples}
The following categories satisfy Axioms~\ref{ax:N}--\ref{ax:U}:
\begin{enumerate}
\item the category of pointed objects of an elementary topos;
\item the full subcategory of inhabited objects of a nondegenerate
two-valued elementary topos.
\end{enumerate}
\end{theorem}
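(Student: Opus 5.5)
The plan is to verify Axioms~\ref{ax:N}--\ref{ax:U} directly, case by case, using three standard facts about an elementary topos $\mathcal S$. First, every monomorphism is regular, hence strong, and every morphism has an (epi, mono) image factorization. Second, epimorphisms are stable under pullback. Third, pushouts along monomorphisms exist, preserve monomorphisms, and give squares that are also pullbacks, and these pushouts are stable under pullback (the van Kampen property of adhesive categories). For Axiom~\ref{ax:U} I will use dependent products $\Pi_j$ along points $j$, exactly as in the discussion after Remark~\ref{concl:rem:constant-not-final}.

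For (a), let $\mathcal S_*=1/\mathcal S$. The object $(1,\id_1)$ is a zero object, which gives Axiom~\ref{ax:N}. The forgetful functor creates limits and connected colimits, in particular pushouts and equalizers. Since epis are detected by cokernel pairs, they are the underlying epis. Monos are the underlying monos, and these are regular in $\mathcal S_*$, hence strong. In the other direction, every strong mono of $\mathcal S_*$ is monic, hence underlying monic. So the image factorization in $\mathcal S$, with the evident induced base point, gives Axiom~\ref{ax:F}. Axioms~\ref{ax:Dpull} and \ref{ax:Dpush} then follow from the three topos facts above, since all the relevant limits and pushouts are created. The key observation for the remaining axioms is that a morphism $1\to(Y,y)$ in $\mathcal S_*$ must equal $y$, so every object has exactly one point. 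Axiom~\ref{ax:pointpushout} is therefore vacuous: every epimorphism has all but at most one point pullback an isomorphism. For Axiom~\ref{ax:U}, take $(A,a)$, $B$ with its unique point $j=b$, and form $\Pi_j(q_A)\to B$ in $\mathcal S$. The counit $j^*\Pi_j(q_A)\to A$ is invertible since $j$ is monic, so we get a pullback square. The point $a$ splits $q_A$, so $\Pi_j(a)$ is a section of $\Pi_j(q_A)\to B$; this makes the map epic and supplies the base point of $\Pi_j(q_A)$. Terminality among fillings is the adjunction $j^*\dashv\Pi_j$: comparisons $C'\to\Pi_j(q_A)$ over $B$ correspond to maps $j^*C'\cong A\to A$ over $1$, and the requirement that the comparison fix $A$ picks out the identity. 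One must also check that this comparison preserves base points.

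For (b), the category $\mathcal S_{\mathrm{inh}}$ is full in $\mathcal S$ and contains $1$. First I show that its epis are exactly those of $\mathcal S$. Suppose $f$ is not epic in $\mathcal S$, and let $m:I\to Y$ be its proper image. The two coprojections $Y\to Y+_IY$ are then distinct, and $Y+_IY$ is inhabited. Strong monos are the monos between inhabited objects. Pullback objects in Axiom~\ref{ax:Dpull} are inhabited because $P\to B\to1$ is a composite of epis. Pushout objects in Axiom~\ref{ax:Dpush} receive a map from an inhabited object and so are inhabited. Both axioms then reduce to the topos facts.

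For Axiom~\ref{ax:pointpushout}, take the bicartesian square $c v=\ldots$ of Axiom~\ref{ax:Dpush} and a point $k:1\to Y$. Pulling $v$ back along $k$ gives a subterminal, which by two-valuedness is $0$ or $1$. If it is $0$, van Kampen stability shows that the fibre of $c$ over $k$ is a pushout of a square whose $a$-side is $0\to0$, so the fibre is an isomorphism. If it is $1$, then $k=vk'$, and the fibre of $c$ over $k$ is identified with the fibre of $a$ over $k'$. Since $v$ is monic, exceptional points correspond injectively, and the axiom follows.

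For Axiom~\ref{ax:U}, again use $\Pi_j(q_A)\to B$. The pullback and terminality arguments are the same as in (a). The only new issue is that $\Pi_j(q_A)\to B$ must be epic without a global section of $A$. I would argue this in the internal language: $\Pi_j(q_A)=\{(b,s): s\in A^{[b=j]}\}$. Since $A$ is inhabited, $\exists a\in A$ holds internally, and the constant function at $a$ shows that every $b$ has a preimage. I expect this epi check, together with the careful van Kampen bookkeeping in Axiom~\ref{ax:pointpushout}, to be the main obstacles.
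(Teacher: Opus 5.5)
Your proposal is correct and follows the paper's proof essentially step for step. It uses the same three topos facts and the same identification of the intrinsic epimorphisms and strong monomorphisms with the ambient ones (you detect epimorphisms by cokernel pairs where the paper uses characteristic maps into $\Omega$), the same unique-point observation in (a), the same $0$-or-$1$ split of the pullback of $v$ along $k$ in (b), and the same $\Pi_j$ for Axiom~\ref{ax:U}, where your internal constant-function argument is the paper's map $A\times B\to C$ over $B$ in another guise. The two checks you leave open are routine: a strong monomorphism of $\mathcal S_{\mathrm{inh}}$ is an ambient monomorphism because lifting against its (inhabited) image factorization splits the epic factor, and a competing filling has basepoint $c'=u'a$, so $hc'=hu'a=ua=\Pi_j(a)\,j$.
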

\begin{proof}
We use three standard properties of toposes. Every morphism factors
as an epimorphism followed by a monomorphism; epimorphisms are
coequalizers and are stable under pullback; and pushouts along
monomorphisms are pullbacks, preserve monomorphisms, and are stable
under pullback \cite{MacLaneMoerdijk,Lack}. We first explain why
these properties apply to the intrinsic morphism classes in the
two categories in the statement.

Their epimorphisms are precisely the underlying epimorphisms. Indeed,
if a morphism has a proper image in $Y$, the characteristic morphism
of that image and the constant true morphism $Y\to\Omega$ are
distinct and agree after the given morphism. The object $\Omega$ is
inhabited, and in the pointed case both morphisms preserve the point
when $\Omega$ is pointed by true. Monicity is also reflected. In
(b), a witness to failure of monicity has a noninitial, hence
inhabited, domain. In (a), a witness with domain $Z$ can be given a
disjoint basepoint, producing a pointed witness with domain $1+Z$.
Ambient image factorizations consequently restrict to these
categories, and their strong monomorphisms are the underlying
monomorphisms. This proves Axioms~\ref{ax:N} and~\ref{ax:F}.

The pullbacks in Axiom~\ref{ax:Dpull} have the required type.
In (b), their objects remain inhabited because they cover an
inhabited object; in (a), they carry the induced point. The
pushouts in Axiom~\ref{ax:Dpush} likewise remain in the indicated
category and have their ambient universal properties. The standard
properties just recalled prove both axioms.

In (a), every object has exactly one categorical point. Thus every
epimorphism belongs to the class in Axiom~\ref{ax:pointpushout},
and the axiom follows from preservation of epimorphisms under
pushout. For (b), consider a pushout as in
Axiom~\ref{ax:Dpush}, and a point $k:1\to Y$ of its bottom-right
object. Pulling back the bottom monomorphism $B\to Y$ gives a
subobject of $1$. If it is $1$, then $k$ factors uniquely through
$B$, and its fibre is the corresponding fibre of $A\to B$.
If it is $0$, pullback stability of the pushout shows that the
fibre of $X\to Y$ over $k$ is terminal. Two-valuedness exhausts
the possibilities. Hence at most one exceptional point before the
pushout gives at most one exceptional point afterwards.

For Axiom~\ref{ax:U}, let $j:1\to B$ be the given point.
In a topos the pullback functor from objects over $B$ to objects
over $1$ has a right adjoint, denoted by $\Pi_j$. This adjoint
constructs an object over $B$ from a prescribed fibre: morphisms
to $\Pi_j(A)$ over $B$ correspond exactly to morphisms from the
fibre over $j$ to $A$. Write $\Pi_j(A)=(d:C\to B)$.
Since $j$ is monic, the adjunction identifies the fibre of $d$
with $A$, giving the pullback square
\[
\xymatrix@C=3.5em@R=2.3em{
A\ar[r]^u\ar[d]_{q_A}&C\ar[d]^d\\
1\ar[r]_j&B.
}
\]
The morphism $d$ is epic. In (b), the identity of $A$, regarded
as a morphism from the fibre of $A\times B\to B$, corresponds
to a morphism $A\times B\to C$ over $B$. Its composite with
$d$ is the epic projection $A\times B\to B$. In (a), the
chosen point of $A$ gives a section of $d$ by the same adjunction;
give $C$ the point obtained by composing that point with $u$.
For any competing pullback factorization, the comparison to $C$
must restrict to the identity on its specified fibre $A$.
The adjunction gives exactly one such comparison. In (a) it
preserves the point as well. This proves terminality and completes
the proof.
\end{proof}

A similar argument applies when complements replace the right
adjoint used above. Recall that a \emph{regular epimorphism} is a
coequalizer of a pair of morphisms. A \emph{pretopos} has finite
limits, disjoint finite coproducts stable under pullback, and stable
regular-epimorphism--monomorphism factorizations; moreover, every
internal equivalence relation is the kernel pair of its quotient.
Here an internal equivalence relation is an equivalence relation
expressed by a subobject of $X\times X$. A pretopos is
\emph{Boolean} if every subobject has a complement: a monomorphism
$A\to X$ then identifies $X$ with a coproduct $A+D$.

\begin{theorem}\label{concl:pretopos-examples}
The category of pointed objects of a Boolean pretopos satisfies
Axioms~\ref{ax:N}--\ref{ax:U}. The same holds for the full
subcategory of noninitial objects of a nondegenerate two-valued
Boolean pretopos.
\end{theorem}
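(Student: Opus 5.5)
The plan follows the structure of the proof of Theorem~\ref{concl:topos-examples}, with complements taking over the role that the right adjoint $\Pi_j$ played there. First I identify the intrinsic arrow classes in the two categories.

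In a pretopos, every morphism factors as a regular epimorphism followed by a monomorphism. I will show that the categorical epimorphisms of the two categories in the statement are exactly the underlying regular epimorphisms. Suppose a morphism $f:X\to Y$ has a proper image $I\to Y$. Booleanness gives $Y=I+D$ with $D$ noninitial. Take the two maps $Y\to Y+Y$, or the two maps into the pointed object $1+1$, that agree on $I$ and differ on $D$; they show that $f$ is not epic. In the pointed case I keep $I$ containing the basepoint, so both maps can be taken to preserve points. In case (b) the object $1+1$ is noninitial. Monicity is reflected as in the topos proof: adjoin a disjoint basepoint in the pointed case, and use noninitiality of witnesses in the two-valued case. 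The ambient factorization then restricts to these categories, which gives Axioms~\ref{ax:N} and~\ref{ax:F}, with strong monomorphisms the underlying monomorphisms.

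For Axioms~\ref{ax:Dpull} and~\ref{ax:Dpush} I use complements directly. A monomorphism $v:B\to Y$ splits $Y=B+E$. Pulling back a regular epimorphism $d:X\to Y$ gives $X=d^{-1}B+d^{-1}E$, because coproducts are stable. The restriction $d^{-1}B\to B$ is a regular epimorphism since regular epimorphisms are pullback stable.

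The pushout of $u:A\to X=A+D$ along $a:A\to B$ is $B+D$. The square
\[
\xymatrix@C=4em@R=2.5em{
A\ar[r]\ar[d]_a&A+D\ar[d]\\
B\ar[r]&B+D
}
\]
is a pullback by disjointness of coproducts. In the pointed case the basepoint lies in either $A$ or $D$, and in either case it is carried along. Axiom~\ref{ax:pointpushout} is automatic in the pointed case, since every object there has a unique point. In case (b), a point of $B+D$ factors through exactly one summand by two-valuedness. The fibre over a point of $D$ is terminal, and the fibre over a point of $B$ is the fibre of $a$, so the exceptional points are preserved.

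The main work is Axiom~\ref{ax:U}. Given $j:1\to B$, complement it to get $B=1+B'$, and set $C=A+B'$ with $d=q_A+\id_{B'}$ and $u$ the coprojection. The map $d$ is regular epic: in case (b) because $A$ is noninitial, so $q_A$ is regular epic in a two-valued pretopos; in the pointed case because $q_A$ is split. In the pointed case, $C$ is pointed from $A$ when $j$ is the basepoint of $B$, and from $B'$ otherwise.

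For terminality, take a competing factorization $d':C'\to B$ with pullback fibre $A$ over $j$. Then $C'=d'^{-1}(1)+d'^{-1}(B')=A+C''$, and the comparison is $\id_A+d'|_{C''}$. Uniqueness follows because the comparison must fix $A$ and lie over $B$ on the complementary summand; this uses disjointness of coproducts and the fact that coprojections are monic. I expect the pointed bookkeeping to be the main obstacle, namely checking that the comparison preserves basepoints in both subcases.
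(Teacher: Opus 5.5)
Your proposal is correct and follows the paper's proof essentially step for step. Like the paper, you detect proper images with maps into $1+1$ built from a complement, write the pushout as $B+D$ and the terminal filling as $A+B'$ with right map $q_A+\id_{B'}$, and read off the unique comparison $\id_A+g$ from the decomposition $C'=A+C''$.

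The pointed bookkeeping you expect to be the main obstacle is immediate. The only categorical point of a pointed object is its basepoint, so your ``otherwise'' subcase is vacuous, and any comparison $h$ preserves basepoints because $hu'=u$ with $u,u'$ pointed.
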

\begin{proof}
In the second case, the image of $X\to1$ is $1$ whenever
$X$ is noninitial, so these morphisms are regular epimorphisms.
In both cases the intrinsic epimorphisms are the underlying regular
epimorphisms. To detect a proper image, use its complement to
construct two distinct morphisms to $1+1$ agreeing on that image;
in the pointed case both preserve the first summand as basepoint.
Monomorphisms and image factorizations restrict as in
Theorem~\ref{concl:topos-examples}. Stability of regular
epimorphisms under pullback proves Axiom~\ref{ax:Dpull}.

For the pushout axiom, write $X=A+D$ using the complement of
the given monomorphism. The required square is
\[
\xymatrix@C=3.5em@R=2.3em{
A\ar[r]\ar[d]_a&A+D\ar[d]^{a+\id_D}\\
B\ar[r]&B+D.
}
\]
Disjointness and stability of coproducts make it a pullback as
well as a pushout. In the noninitial case, every point of $B+D$
factors through exactly one summand: its inverse images give a
coproduct decomposition of $1$, and two-valuedness makes one
summand initial. The fibres are therefore fibres of $a$ or
terminal objects. This proves Axiom~\ref{ax:pointpushout};
in the pointed case, the uniqueness of categorical points proves it.

Finally, complement the given point $j:1\to B$ and write
$B=1+D$. The terminal pullback factorization has middle object
$A+D$ and right morphism $q_A+\id_D$. This morphism is epic;
in the pointed case it is split epic. A competing factorization
decomposes over the two summands as $C'=A+E$, with its map to
$B$ of the form $q_A+g$ for $g:E\to D$. The unique comparison
is $\id_A+g$. It also preserves the point when points are
specified. This proves Axiom~\ref{ax:U}.
\end{proof}

These results include nonempty sets, finite nonempty sets, pointed
sets, and finite pointed sets. For any group $G$, they also include nonempty $G$-sets
and their finite counterparts, and pointed objects of categories
of presheaves and sheaves. For the finite $G$-set example, finite
limits, disjoint unions, invariant complements, and quotients by
invariant equivalence relations give a Boolean pretopos, and its
terminal singleton has exactly two subobjects.

\begin{remark}\label{concl:inhabited-caution}
The two-valued hypothesis cannot simply be omitted in the
unpointed case. In $\mathbf{Set}\times\mathbf{Set}$, take the
epimorphism $(2,1)\to(1,1)$ between inhabited objects and push it
out along the summand inclusion
$(2,1)\to(2,1)+(1,1)$. The resulting epimorphism has target
$(2,2)$. Its fibres over the two points whose first coordinate is
in the first summand are both $(2,1)$, hence nonterminal.
The original target has just one point. Thus
Axiom~\ref{ax:pointpushout} fails.
\end{remark}

\begin{lemma}\label{concl:poset-examples}
Every poset with a greatest element, regarded as a category,
satisfies Axioms~\ref{ax:N}--\ref{ax:U} and~\ref{ax:O}.
It satisfies Axiom~\ref{ax:finite} exactly when every upper
interval is finite, and Axiom~\ref{ax:Ddetect} exactly when
the elements other than the greatest element form an antichain.
\end{lemma}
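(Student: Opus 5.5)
We need to check the axioms directly in a poset $P$ with greatest element $\top$, viewed as a category. Every morphism there is both monic and epic, because between any two objects there is at most one arrow. So every morphism is an epimorphism. The strong monomorphisms are those monic $m$ having diagonals against all epis. Taking the square formed by $m$, the identity and $m$ itself, the lift is an inverse of $m$, so strong monomorphisms are exactly the isomorphisms, which in a poset are the identities. I would record this identification first, since every axiom refers to the two classes.

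With the classes settled, the first six axioms go as follows.
\begin{enumerate}
\item Axiom~\ref{ax:N}: the terminal object is $\top$.
\item Axiom~\ref{ax:F}: each $f$ factors as $f$ followed by an identity.
\item Axiom~\ref{ax:Dpull}: pulling back along an identity $v$ gives the square with $u=\id$ and $a=d$, which is a pullback with $a$ epic.
\item Axiom~\ref{ax:Dpush}: with $u$ an identity, take the pushout square with $c=a$ and $v=\id$; it is bicartesian.
\item Axiom~\ref{ax:pointpushout}: the only pushouts along strong monos are along identities, so stability is trivial.
\item Axiom~\ref{ax:U}: for $A$ and a point $j:\top\to B$, a point exists only if $B=\top$. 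An object of $\Fill(A,j)$ needs $u$ to be an identity, so $C=A$ and $d:A\to\top$. The square with bottom $\id_\top$ and top $\id_A$ is a pullback, so $\Fill$ has the single object $(A,q_A,\id_A)$, which is terminal.
\item Axiom~\ref{ax:O}: a square of identities trivially has a diagonal.
\end{enumerate}

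Axiom~\ref{ax:finite} holds exactly when every upper interval is finite. Strong monos into $X$ reduce to $\id_X$ alone, while epis out of $X$ are the arrows $X\to Y$ with $X\le Y$. Their number is the size of the upper interval ${\uparrow}X$, which gives the equivalence.

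Axiom~\ref{ax:Ddetect} is the step needing care. Points of $Y$ exist only when $Y=\top$. If $Y\neq\top$, the hypothesis on $d:X\to Y$ is vacuous, so the axiom demands that every arrow into a non-top $Y$ is an identity, i.e.\ nothing lies strictly below a non-top element. For $Y=\top$, the pullback of $d:X\to\top$ along $\id_\top$ has top map $\id_X$, so the fibre $X\to\top$ is an isomorphism only if $X=\top$; that case forces nothing. Hence the axiom holds iff no $X<Y$ with $Y\neq\top$, which says the elements other than the greatest one form an antichain. Conversely, a relation $X<Y<\top$ gives a non-identity epi $X\to Y$ with no points of $Y$, violating the axiom. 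The main obstacle is only being careful about the vacuous case, where $Y$ has no points; I expect the rest to be routine.
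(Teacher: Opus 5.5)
Your proof is correct and follows essentially the same route as the paper. You identify the strong monomorphisms as identities, check the first six axioms and Axiom~\ref{ax:O} as statements about identities and the unique point of the greatest element, count epimorphisms out of $x$ by its upper interval, and treat detection by separating codomains with no points from the greatest element. The one difference is cosmetic: for Axiom~\ref{ax:pointpushout} the paper observes that every object has at most one point, so every epimorphism lies in the class, whereas you observe that pushouts along identities are trivial; either suffices.
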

\begin{proof}
Every morphism is epic. A strong monomorphism must therefore be
invertible, and hence an identity. The required factorizations
are a morphism followed by an identity; the required pullbacks,
pushouts, and diagonals are consequently those involving
identities. Every object has at most one point, so
Axiom~\ref{ax:pointpushout} holds. A point $1\to B$ exists
only when $B=1$. The filling category then has the single
object $(A,q_A,\id_A)$, proving Axiom~\ref{ax:U}.

The epimorphisms out of $x$ correspond to the elements above
$x$, proving the finiteness assertion. A nonidentity morphism
with codomain below $1$ violates Axiom~\ref{ax:Ddetect}, since
that codomain has no points. For a morphism $x\to1$, its
pullback along the unique point of $1$ is the morphism itself,
so the antecedent of the detection axiom holds only for the
identity. This proves the last assertion. In particular, the
three-element chain satisfies the first eight axioms and fails
the ninth.
\end{proof}

\begin{remark}\label{concl:later-axiom-failures}
Small skeletons of finite nonempty sets and finite pointed sets
satisfy Axiom~\ref{ax:finite}: a fixed finite object admits
only finitely many incoming injections and outgoing surjections
to the chosen representatives. Axiom~\ref{ax:O} fails because
the subsets $\{0,1\}$ and $\{1,2\}$ of $\{0,1,2\}$ have
a nonempty intersection and neither contains the other. The
pointed version uses $\{*,a\}$ and $\{*,b\}$ inside
$\{*,a,b\}$. Nonidentity permutations violate the literal
identity conclusion in Axiom~\ref{ax:Ddetect}. In pointed
sets there is also a noninvertible counterexample: the surjection
$\{*,a,b\}\to\{*,c\}$ sending $a,b$ to $c$ has terminal
fibre over the unique categorical point. Nonempty sets and pointed
sets with infinite objects allowed fail Axiom~\ref{ax:finite} as well.
\end{remark}

\begin{remark}\label{concl:initial-and-abelian-obstructions}
Taking $j=\id_1$ in Axiom~\ref{ax:U} forces $q_A$ to be
epic: the top morphism of the pullback square is an isomorphism,
and its right morphism is epic. Thus a nondegenerate topos with
its initial object included fails this axiom.

Pointedness and exactness do not by themselves supply the sixth
axiom either. In a nonzero abelian category, choose $A\ne0$.
A pullback factorization for $A\to0\to A$ is an extension
$A\xrightarrow{u}C\xrightarrow{d}A$, with $u$ its kernel
inclusion and $d$ epic. The automorphism $\id_C+ud$ fixes
both maps and has inverse $\id_C-ud$, since $du=0$.
It is nonidentity, by monicity of $u$ and epicity of $d$.
Consequently no object of this filling category can be terminal,
although nonzero abelian categories satisfy the first five axioms.
\end{remark}

\section*{Index of notation}\label{sec:notation-index}
\addcontentsline{toc}{section}{Index of notation}
The entries are grouped by their use in the paper. Each reference
links to the introduction of the notation or to the indicated
definition of its abstract counterpart.

\begingroup
\small
\renewcommand{\arraystretch}{1.15}
\setlength{\LTpre}{8pt}
\setlength{\LTpost}{0pt}
\begin{longtable}{@{}>{\raggedright\arraybackslash}p{.23\textwidth}
                    >{\raggedright\arraybackslash}p{.52\textwidth}
                    >{\raggedright\arraybackslash}p{.19\textwidth}@{}}
\textbf{Notation}&\textbf{Meaning}&\textbf{Reference}\\
\hline
\endfirsthead
\textbf{Notation}&\textbf{Meaning}&\textbf{Reference}\\
\hline
\endhead
\multicolumn{3}{r}{\emph{Continued on the next page}}\\
\endfoot
\endlastfoot
\multicolumn{3}{@{}l}{\textbf{The category and its tree representation}}\\[3pt]
$\one$, $q_X$ & The terminal object and the unique morphism
  from $X$ to it. & \hyperref[ax:N]{Section~\ref*{sec:axiom-list}}\\
$\Fill(A,j)$ & The category of pullback factorizations specifying
  an insertion of $A$ at the point $j$. & \hyperref[ax:U]{Axiom~\ref*{ax:U}}\\
$L$, $\operatorname{ar}$, $P_\lambda$ & A ranked set, its arity
  function, and the indexed input places of a label.
  & \hyperref[def:ranked-set]{Definition~\ref*{def:ranked-set}}\\
$\C_L$, $\Dcat_0$, $\Ocat_0$ & The concrete matching category,
  its pruning maps, and its subtree embeddings.
  & \hyperref[sec:model]{Section~\ref*{sec:model}}\\
$\Dcat$, $\Ocat$ & The subcategories of epimorphisms and strong
  monomorphisms in $\C$.
  & \hyperref[sec:strong-positions]{Section~\ref*{sec:strong-positions}}\\
$\Occ(X)$, $\Out(X)$ & All strong monomorphisms into $X$ and all
  epimorphisms out of $X$, respectively.
  & \hyperref[sec:strong-positions]{Section~\ref*{sec:strong-positions}}\\
$\Comp(X)$, $X_i$ & The indecomposable strong monomorphisms into
  $X$, and the domain of a component $i$.
  & \hyperref[def:components]{Definition~\ref*{def:components}}\\
$u\inside v$ & The tree branching order: $u$ factors through $v$.
  & \hyperref[prop:paths]{Lemma~\ref*{prop:paths}}\\
$d|_u$, $u^d$ & The epimorphism and strong monomorphism in the
  normal factorization of $du$.
  & \hyperref[eq:residual]{Equation~\ref*{eq:residual}}\\
$\pi_u$, $X/u$ & The pruning which replaces the subtree selected
  by $u$ by an empty leaf, and its target.
  & \hyperref[sec:single-pruning]{Section~\ref*{sec:single-pruning}}\\
$d^*v$ & The pullback of a strong monomorphism $v$ along an
  epimorphism $d$.
  & \hyperref[lem:strongpullbacks]{Lemma~\ref*{lem:strongpullbacks}}\\
$\Icat$ & The subcategory of epimorphisms with nonterminal target,
  together with $\id_\one$.
  & \hyperref[sec:epi-restrictions]{Section~\ref*{sec:epi-restrictions}}\\
$\sigma_d$, $\Theta_X$ & The bijection of components induced by
  $d$, and the description of an epimorphism by its component
  restrictions or total pruning.
  & \hyperref[thm:epicclassification]{Theorem~\ref*{thm:epicclassification}}\\
$c(X)$ & The complexity of an object, equal to $|\Occ(X)|-1$.
  & \hyperref[def:complexity-normal]{Definition~\ref*{def:complexity-normal}}\\
$\rho_X$, $r(X)$ & The normalization which empties every child
  at the root, and its normal target.
  & \hyperref[thm:root]{Theorem~\ref*{thm:root}}\\
$\Phi$, $P_\phi$ & The derived root labels and the input-place
  set $\Comp(\phi)$ of a label.
  & \hyperref[def:labels]{Definition~\ref*{def:labels}}\\
$i_p^X$, $X_p$, $d_p$ & The component at the root place $p$,
  its domain, and the restriction of $d$ at this component.
  & \hyperref[eq:ports]{Section~\ref*{sec:roots}}\\
$\Pos(X)$, $X|_a$, $\emptyword$ & Positions represented by
  addresses, the subtree at address $a$, and the root address.
  & \hyperref[sec:model]{Section~\ref*{sec:model}};
  \hyperref[def:addresses]{Definition~\ref*{def:addresses}}\\
$\delta_K$, $X_K$ & The epimorphism specified by an effective
  cut $K$, and its target.
  & \hyperref[thm:cuts]{Theorem~\ref*{thm:cuts}}\\
$\operatorname{outer}(K\cup L)$ & The union of two cuts with
  addresses having a shorter selected prefix removed.
  & \hyperref[prop:cutcomposition]{Lemma~\ref*{prop:cutcomposition}}\\
$p_d$, $p_u$ & The functions on positions induced by a pruning
  and an occurrence; composition gives the action of a matching.
  & \hyperref[not:position-maps]{Section~\ref*{sec:cuts}}\\
$\Fill((A_i,j_i)_{i\in I})$ & The category of simultaneous
  fillings of the objects $A_i$ at the distinct points $j_i$.
  & \hyperref[def:finitefilling]{Definition~\ref*{def:finitefilling}}\\
$\phi((A_p)_{p\in P_\phi})$ & The object assembled with root
  label $\phi$ and child $A_p$ at each input place $p$.
  & \hyperref[not:assembly-operation]{Section~\ref*{sec:filling}}\\
$f_*u$, $\le_X$ & Direct image of a position under a matching,
  and the uniform linear order on positions.
  & \hyperref[not:occurrence-images]{Section~\ref*{sec:ordering}}\\
$\mathcal O_\C$, $\gamma$ & The recovered free non-symmetric
  operad and its composition by simultaneous filling.
  & \hyperref[thm:operad-recovery]{Theorem~\ref*{thm:operad-recovery}}\\
$\mathcal B_\Sigma$, $s(E)$, $L_\Sigma$ & Balanced bracketed
  expressions, the root pattern of an expression, and the set of
  nonempty root patterns.
  & \hyperref[def:bracketed]{Definition~\ref*{def:bracketed}}\\
$\mathcal E_\Sigma$, $\tau$ & The category of bracketed
  expressions and the correspondence with its expression trees.
  & \hyperref[not:bracket-category]{Section~\ref*{sec:bracketed}}\\
$\Sigma^*$, $L^*$ & The sets of finite words over the respective
  alphabets, including the empty word.
  & \hyperref[def:bracketed]{Section~\ref*{sec:bracketed}};
  \hyperref[def:wordcategory]{Section~\ref*{sec:words}}\\
$\mathcal W_L$, $f^r_{u,v}$ & The word category on $L$ and a
  matching with overlap length $r$.
  & \hyperref[def:wordcategory]{Definition~\ref*{def:wordcategory}}\\
$\varepsilon$, $|u|$, $\operatorname{pre}_r(u)$, $\operatorname{suf}_r(u)$
  & The empty word, word length, and the prefix and suffix of
  length $r$.
  & \hyperref[def:wordcategory]{Definition~\ref*{def:wordcategory}}\\[6pt]
\multicolumn{3}{@{}l}{\textbf{The form and its subobject and quotient data}}\\[3pt]
$\widehat{\mathcal T}$, $0$ & The tree category with an adjoined
  strict initial object, representing the tree with no root.
  & \hyperref[not:augmented-tree-category]{Section~\ref*{sec:tree-form-construction}}\\
$\mathcal E$, $\mathcal M$ & In the augmented category, prunings
  together with $\id_0$, and occurrences together with all maps
  from $0$.
  & \hyperref[not:augmented-tree-classes]{Section~\ref*{sec:tree-form-construction}}\\
$\mathsf S(X)$, $\mathsf Q(X)$ & The lattices of
  $\mathcal M$-subobjects and $\mathcal E$-quotients.
  & \hyperref[not:tree-subobjects-quotients]{Section~\ref*{sec:tree-form-construction}}\\
$[q,u]$, $\le_f$ & A quotient with an empty marking or a marked
  empty leaf, and a comparison of clusters over $f$.
  & \hyperref[def:tree-clusters]{Definition~\ref*{def:tree-clusters}}\\
$\mathsf G$, $\mathsf G(X)$ & The form of these clusters and its
  fibre over $X$.
  & \hyperref[def:tree-clusters]{Definition~\ref*{def:tree-clusters}}\\
$V_+(X)$ & The labeled vertices of $X$.
  & \hyperref[not:labeled-vertices]{Section~\ref*{sec:tree-form-images}}\\
$f_*^{\mathsf S}$, $f_{\mathsf S}^*$ & Direct and inverse images
  of subobjects, defined by factorization and pullback.
  & \hyperref[not:subobject-quotient-images]{Section~\ref*{sec:tree-form-images}}\\
$f_*^{\mathsf Q}$, $f_{\mathsf Q}^*$ & Direct and inverse images
  of quotients, defined by pushout and factorization.
  & \hyperref[not:subobject-quotient-images]{Section~\ref*{sec:tree-form-images}}\\
$I_f$, $K_f$ & The subobject and quotient represented by the
  two parts of the factorization of $f$.
  & \hyperref[not:matching-factor-parts]{Section~\ref*{sec:tree-form-images}}\\
$\alpha_X(A)$ & The quotient which collapses the subtree $A$;
  the empty subtree gives the bottom quotient.
  & \hyperref[not:subtree-collapse]{Section~\ref*{sec:tree-form-images}}\\
$\sigma_R(A)$ & The inverse image of the image of $A$ under
  the quotient $R$.
  & \hyperref[not:subtree-saturation]{Section~\ref*{sec:tree-form-images}}\\
$f_*L$, $f^*N$ & Direct and inverse images of clusters in a form.
  & \hyperref[not:cluster-images]{Section~\ref*{sec:noetherian-property}}\\
$\operatorname{Ker}f$, $\operatorname{Im}f$ & The kernel and
  image clusters, obtained as $f^*(\bot)$ and $f_*(\top)$.
  & \hyperref[not:form-kernel-image]{Section~\ref*{sec:noetherian-property}}\\
$n(L)$, $c(L)$ & The largest normal and conormal clusters
  contained in $L$.
  & \hyperref[not:normal-conormal-parts]{Section~\ref*{sec:noetherian-property}}\\
$(A,R)$ & A cluster described by a saturated subtree and a
  quotient that erases its labeled vertices.
  & \hyperref[eq:tree-pairs]{Equation~\ref*{eq:tree-pairs}}\\
$\mathfrak n_X(R)$, $\mathfrak c_X(A)$ & The normal cluster
  $(0,R)$ and the conormal cluster $(A,\alpha_X(A))$.
  & \hyperref[prop:cosub-join-parts]{Lemma~\ref*{prop:cosub-join-parts}}\\
$\mathcal C^+$ & The full subcategory of objects not isomorphic
  to the strict initial object.
  & \hyperref[not:nonempty-subcategory]{Section~\ref*{sec:tree-form-characterization}}\\
\end{longtable}
\endgroup

\end{document}